\newtheorem{theorem}{Theorem}[section]
\newtheorem{lemma}[theorem]{Lemma}
\newtheorem{proposition}[theorem]{Proposition}
\newtheorem{corollary}[theorem]{Corollary}
\theoremstyle{definition}
\newtheorem{definition}[theorem]{Definition}
\newtheorem{example}[theorem]{Example}
\theoremstyle{remark}
\newtheorem{remark}[theorem]{Remark}
\numberwithin{equation}{section}
\newcommand\B{\mathbb{B}}
\newcommand\C{\mathbb{C}}
\newcommand\Z{\mathbb{Z}}
\newcommand\T{\mathbb{T}}
\newcommand\cA{\mathcal{A}}
\newcommand\cB{\mathcal{B}}
\newcommand\cH{\mathcal{H}}
\newcommand\cL{\mathcal{L}}
\newcommand\cM{\mathcal{M}}
\newcommand\cN{\mathcal{N}}
\newcommand\cO{\mathcal{O}}
\newcommand\fA{\mathfrak{A}}
\newcommand\fQ{\mathfrak{Q}}
\newcommand\fS{\mathfrak{S}}
\newcommand\inpr[2]{\langle{#1,#2}\rangle}
\newcommand\biota{\bar{\iota}}
\newcommand\bkappa{\bar{\kappa}}
\newcommand\hgamma{\hat{\gamma}}
\newcommand\hrho{\hat{\rho}}
\newcommand\hxi{\hat{\xi}}
\newcommand\heta{\hat{\eta}}
\newcommand\hzeta{\hat{\zeta}}
\newcommand\bpi{\bar{\pi}}
\newcommand\bnu{\bar{\nu}}
\newcommand\br{\bar{r}}
\newcommand\brho{\bar{\rho}}
\newcommand{\id}{\mathrm{id}}
\newcommand{\tr}{\mathrm{tr}}
\newcommand{\Ad}{\mathrm{Ad}}
\newcommand{\Mor}{\mathrm{Mor}}
\newcommand{\Sect}{\mathrm{Sect}}
\newcommand{\End}{\mathrm{End}}
\newcommand{\Aut}{\mathrm{Aut}}
\newcommand{\Ang}{\mathrm{Ang}}
\newcommand{\Gal}{\mathrm{Gal}}
\newcommand{\opp}{\mathrm{opp}}
\newcommand{\tA}{\tilde{A}}
\newcommand{\tB}{\tilde{B}}
\newcommand{\tC}{\tilde{C}}
\newcommand{\tD}{\tilde{D}}
\newcommand{\tN}{\tilde{N}}
\newcommand{\tM}{\tilde{M}}
\newcommand{\tP}{\tilde{P}}
\newcommand{\tQ}{\tilde{Q}}
\newcommand{\tR}{\tilde{R}}
\newcommand{\tfQ}{\tilde{\fQ}}
\newcommand{\tlambda}{\tilde{\lambda}}
\newcommand{\quadri}{\begin{array}{ccc}
P & \subset& M \\
\cup & & \cup \\
N &\subset &Q 
\end{array}}
\newcommand{\tquadri}{\begin{array}{ccc}
\tP & \subset& \tM \\
\cup & & \cup \\
\tN &\subset &\tQ 
\end{array}}
\newcommand{\hpic}[2]{\mbox{$\begin{array}[c]{l} 
\includegraphics*[height=#2]{figs/#1}
\end{array}$}}
\newcommand{\vpic}[2]{\mbox{$\begin{array}[c]{l} 
\includegraphics*[width=#2]{figs/#1}
\end{array}$}}
\begin{document}

\title{Classification of noncommuting quadrilaterals of factors}
\author{Pinhas Grossman}
\author{Masaki Izumi}
\thanks{Work supported by JSPS}
\maketitle

\begin{abstract} A quadrilateral of factors is an irreducible 
inclusion of factors $N \subset M$ with intermediate subfactors 
$P$ and $Q$ such that $P$ and $Q$ generate $M$ and the intersection 
of $P$ and $Q$ is $N$. 
We investigate the structure of a noncommuting quadrilateral of factors 
with all the elementary inclusions $P\subset M$, $Q\subset M$, $N\subset P$, 
and $N\subset Q$ 2-supertransitive. 
In particular we classify noncommuting quadrilaterals with the indices of the 
elementary subfactors less than or equal to 4. 
We also compute the angles between $P$ and $Q$ for quadrilaterals coming 
from $\alpha$-induction and asymptotic inclusions. 
\end{abstract}



\section{Introduction}

If $M$ is a factor with a finite group $G$ acting by outer automorphisms, 
then $N=M^G$, the algebra of fixed points of the action, is an irreducible 
subfactor of $M$ with index equal to the order of $G$. 
It is well known that in this case the intermediate subalgebras 
$N \subset P \subset M$ are precisely the fixed point algebras of 
the subgroups of $G$ \cite{NT}. 

In this spirit one thinks of the classification of intermediate subfactors 
as a ``noncommutative Galois theory''. 
Motivated by von Neumann's study of projection lattices in Hilbert space 
as a ``continuous geometry'', Watatani proposed studying lattices of 
intermediate subfactors, which are finite for irreducible finite-index 
inclusions, as a quantization \cite{Wat}. 
Sano and Watatani introduced the notion of angles between subfactors, 
a numerical invariant which measures the degree of noncommutativity of 
pairs of subfactors \cite{SW}.  

A major step towards the classification of intermediate subfactors was 
Bisch's characterization of intermediate subfactors as \textit{biprojections} 
in planar algebras- elements of the standard invariant which are projections 
in both of the dual algebraic structures \cite{Bs1}. 
Bisch and Jones then described a generic construction of intermediate 
subfactors by constructing a universal planar algebra generated by a single biprojection, 
the Fuss-Catalan algebra with parameters corresponding to the indices \cite{BJ1}. 
These intermediate subfactors are \textit{supertransitive} in the sense 
that the standard invariants are minimal. 

It was hoped that a similar generic planar algebra for multiple intermediate 
subfactors could be described in terms of indices and invariants such as angles. 
Indeed, a tensor product does yield a generic construction of commuting pairs of 
intermediate subfactors, but constructing pairs with nontrivial angles has proven 
more difficult. 
In \cite{GJ}, Jones and the first-named author showed that there are essentially only $2$ examples of noncommuting irreducible quadrilaterals 
of factors such that all of the elementary inclusions are supertransitive. One is a quadrilateral of subgroups of the symmetric group $\fS_3 $ and the other comes from the GHJ family of subfactors and all the elementary inclusions have index $2+\sqrt{2} $. 

It is known that the structure of an intermediate subfactor $N\subset P\subset M$ is 
determined by the relationship between the two systems of $P$-$P$ bimodules arising from 
$N\subset P$ and $P\subset M$ (see \cite{BH},\cite{IKo1}). 
After \cite{GJ} appeared, the second-named author gave a short proof of the above mentioned 
result using simple computation of $P-P$ bimodules (see Remark \ref{4remrak no-extra}). 
It turns out from the proof that the assumption in \cite{GJ} is too restrictive from the 
view point of bimodules. 
Therefore to capture more interesting and general structure, one should relax the assumption 
of supertransitivity. 
Indeed, the first named author \cite{G} recently obtained a complete classification 
result of irreducible non-commuting quadrilaterals with $N\subset P$ and 
$N\subset Q$ isomorphic to the Jones subfactor of index less than 4 (without posing any 
assumption on $P\subset M$ or $Q\subset M$) and showed that a series of such 
quadrilaterals exists. 

In this paper we investigate quadrilaterals whose elementary inclusions 
$P\subset M$, $Q\subset M$, $N\subset P$, and $N\subset Q$ are each 2-supertransitive (an inclusion $L \subset K$ is 2-supertransitive if the complement of $L$ in the $L-L$ bimodule decomposition of $K$ is irreducible.) In this case the systems of $P-P $ bimodules for $N \subset P  $  and for $P \subset M$ are each generated by a single irreducible $P-P$ bimodule, and the question is how these two bimodules are related.             

A key tool in this analysis is the notion of second cohomology for subfactors, introduced by the second named author and Kosaki in \cite{IKo2}, which counts the inner conjugacy classes of subfactors sharing the same basic extension (as a bimodule class). For a quadrilateral with $N \subset P$ and $N \subset Q $ 2-supertransitive, noncommutativity is equivalent to the existence of an $N-N$ bimodule isomorphism from $P$ to $Q$ \cite{GJ}. If $N \subset P$ (dual) has trivial second cohomology, this $N-N$ bimodule isomorphism can actually be realized as an algebra isomorhism. In particular they showed that any 3-supertransitive subfactor has trivial second cohomology.  

Ultimately this leads to a classification of noncommuting quadrilaterals whose elementary subfactors are 2-supertransitive with trivial second cohomology into two basic types: those which are not cocommuting (``cocommuting'' means that the quadrilateral of commutants is a commuting square) and have $[M:P]=[P:N] $, like the $(2+\sqrt{2})^2$ example, and those which are cocommuting and have $[M:P]=[P:N]-1 $, like the $\fS_3$ example. The latter type is further subdivided according to the Galois group of $N \subset M$, which must be a subgroup of $\fS_3$. In the case that the elementary subfactors have index less than or equal to $4$ we examine all the possibilities and arrive at the following result.

\begin{theorem}
There are exactly seven noncommuting irreducible quadrilaterals of hyperfinite II$_1$ factors whose elementary inclusions all have indices less than or equal to $4 $, up to conjugacy.

If $ (G_{N \subset P}, G_{P \subset M})$ are the principal graphs of the elementary subfactors, the possible configurations are:

$$(A_7,A_7), \quad (E_7^{(1)},E_7^{(1)})$$
$$(A_5,A_3), \quad (D_6,A_4), \quad (E_7^{(1)}, A_5), \quad (E_6^{(1)},D_4)$$
$$ (D_6^{(1)},A_3)$$
   
\end{theorem}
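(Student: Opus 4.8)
The plan is to combine the structural dichotomy established above with the classification of index-$\le4$ principal graphs. The essential data of a noncommuting quadrilateral sit in the lower inclusions $N\subset P$ and $N\subset Q$, since noncommutativity is equivalent to an $N$-$N$ bimodule isomorphism $P\cong Q$; with index at most $4$ their principal graphs lie on the finite list $A_n,D_{2n},E_6,E_8$ together with the affine diagrams $\tilde A,\tilde D,\tilde E$ at index exactly $4$. For the machinery to apply I first restrict to those $G_{N\subset P}$ that are $2$-supertransitive (equivalently, those beginning with a length-two chain); the ones that are even $3$-supertransitive have trivial second cohomology, while for the remaining $2$-supertransitive affine cases the vanishing must be checked by hand. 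On this locus the dichotomy gives Type 1 ($[M:P]=[P:N]$, not cocommuting) or Type 2 ($[M:P]=[P:N]-1$, cocommuting, $\Gal(N\subset M)\le\fS_3$). The graphs that are not even $2$-supertransitive, notably $D_6^{(1)}$ (whose every leaf has a degree-three neighbour), fall outside the dichotomy and must be handled separately at the index-$4$ boundary; this is the source of the exceptional seventh quadrilateral.

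For Type 1 the order-two symmetry exchanging $P$ and $Q$, together with $[M:P]=[P:N]$, forces $G_{N\subset P}=G_{P\subset M}$, so the entire quadrilateral is encoded by a single fusion system carrying a self-dual generator and a compatible involution. I would then list the index-$\le4$ graphs admitting such a symmetry with closed fusion rules and eliminate candidates using the computed Sano--Watatani angle. This leaves exactly $(A_7,A_7)$ at index $2+\sqrt2$, realized by the $(2+\sqrt2)^2$ example, and $(E_7^{(1)},E_7^{(1)})$ at index $4$.

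For Type 2 the constraint $\Gal(N\subset M)\le\fS_3$ splits the analysis into the four subgroups $\{e\},\Z/2,\Z/3,\fS_3$. For each I would solve $[M:P]=[P:N]-1$ inside the index-$\le4$ list, match the Galois datum to the admissible principal-graph pair, and then establish existence by an explicit hyperfinite model -- group--subgroup inclusions in the group cases and $\alpha$-induction or GHJ constructions at index $4$ -- and uniqueness from the rigidity of the fusion and angle data. This produces $(A_5,A_3)$ for $\fS_3$ (the original $\fS_3$ quadrilateral), $(D_6,A_4)$ for the trivial group, and the two index-$4$ entries $(E_7^{(1)},A_5)$ and $(E_6^{(1)},D_4)$ for the remaining groups $\Z/2$ and $\Z/3$.

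It remains to treat the exceptional $(D_6^{(1)},A_3)$ directly: here $[P:N]=4$ and $[M:P]=2$, the $N$-$N$ bimodule isomorphism $P\cong Q$ cannot be upgraded to an algebra isomorphism because the relevant second cohomology is nontrivial, and the quadrilateral must be built and recognized at the index-$4$ boundary. Finally I would assemble the two Type 1, four Type 2, and one exceptional quadrilateral, show they are pairwise non-conjugate (distinct principal-graph pairs, or distinct angles when the pairs coincide), and confirm each is realized over the hyperfinite II$_1$ factor. I expect the main obstacle to be twofold: proving exhaustiveness, i.e.\ eliminating \emph{every} remaining combination of index-$\le4$ graphs by angle and fusion-rule constraints rather than by the clean index relations; and controlling the index-$4$ boundary, where the invariants are most delicate and where the failure of cohomological vanishing is precisely what has to be pinned down to secure both the existence and the uniqueness of the exceptional entry.
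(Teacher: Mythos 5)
Your skeleton — the supertransitive dichotomy, a case split by Galois group, and a separate treatment of a boundary case — is indeed the paper's skeleton, but your enumeration has a structural gap, and it shows up concretely in where you place $(E_6^{(1)},D_4)$. The dichotomy (Theorem \ref{4theorem (2,2)-supertransitive}) requires $(2,2)$-supertransitivity, i.e.\ 2-supertransitivity of \emph{all four} elementary inclusions, whereas you restrict only the lower graphs $G_{N\subset P}$, $G_{N\subset Q}$ and then assert that "on this locus the dichotomy gives Type 1 or Type 2." The branch you thereby skip — lower inclusions 2-supertransitive but $P\subset M$ not, equivalently $\xi^2$ containing a nontrivial automorphism so that $M$ is a crossed product $P\rtimes G$ with $G$ abelian — is exactly where $(E_6^{(1)},D_4)$ lives: there $P\subset M$ has principal graph $D_4$, i.e.\ $M=P\rtimes\Z/3\Z$, which is not 2-supertransitive, and $\Gal(M/N)\cong\fA_4$, which is not a subgroup of $\fS_3$. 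So this quadrilateral is not produced by any branch of the dichotomy, and in particular not by the $\Z/3\Z$ branch as you claim; Corollary \ref{4corollary Z/3Z} shows that under the dichotomy a $\Z/3\Z$ Galois group forces $[M:P]\geq(5+\sqrt{13})/2>4$ (that branch leads to the Haagerup subfactor), so it contributes nothing at index $\leq 4$. The repair is the paper's Lemma \ref{6lemma}: when $M=P\rtimes G$ with $G$ abelian of order $2,3,4$, the lower graph is forced to be $A_5$, $E_6$, or $E_6^{(1)}$; then $E_6$ is eliminated by its trivial second cohomology together with Corollary \ref{3corollary index}, $A_5$ yields $(A_5,A_3)$, and $E_6^{(1)}$ yields $(E_6^{(1)},D_4)$ with $N=M^{\fA_4}$.

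Your mechanism for the seventh entry $(D_6^{(1)},A_3)$ is also not viable as stated. The equivalence between noncommutativity and an $N$-$N$ bimodule isomorphism $P\cong Q$ — and with it the class $c(\fQ)$ in second cohomology — is only available when $N\subset P$ and $N\subset Q$ are 2-supertransitive; for $D_6^{(1)}$ the dual canonical endomorphism contains a nontrivial automorphism, so neither the equivalence nor any $H^2$ obstruction enters. The paper instead argues directly in Lemma \ref{6lemma},(3): if the lower graph is $D_n^{(1)}$, comparing the automorphisms occurring in $\biota\iota$ and $\biota_Q\iota_Q$ and using $P\cap Q=N$ forces $n=6$ and $[M:P]=2$, and an intermediate-subfactor argument then identifies $N\subset M$ with $M^{\mathfrak{D}_8}\subset M$ while ruling out the Kac--Paljutkin alternative. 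Without this you obtain neither the exclusion of $D_n^{(1)}$ for $n\neq 6$ nor existence and uniqueness of the exceptional quadrilateral. Your Type 1 analysis and the remaining three Type 2 identifications do track the paper's Classes I--III correctly, but as written the case analysis is not exhaustive and attaches a false Galois invariant to one of the seven quadrilaterals.
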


The fully supertransitive cases are $(A_7,A_7) $, which is the $(2+\sqrt{2})^2 $ example, and $(A_5,A_3 )$, which is the $\fS_3$ example, and all other cases have some extra structure. The new examples include a noncocommuting quadrilateral whose elementary subfactors have index $4$ $(E_7^{(1)},E_7^{(1)})$, a cocommuting quadrilateral whose indices are $(3+\sqrt{5})/2 $ and $(5+\sqrt{5})/2 $ $(D_6,A_4)$, and several cocommuting quadrilaterals arising from group actions. Note that any quadrilateral is either noncommuting, noncocommuting (and hence dual to a noncommuting quadrilateral), or else is both commuting and cocommuting, in which case there is no obstruction to the choices of $N \subset P $ and $N \subset Q$. 

It is also interesting to study such quadrilaterals with larger indices. In fact the exotic Haagerup subfactor appears as an elementary subfactor with maximal supertransitivity in both types of quadrilaterals and the Asaeda-Haagerup subfactor may appear as well. It is still unknown how many examples there are of the various types of quadrilaterals (although there is an infinite series of cocommuting quadrilaterals with 3-supertransitive elementary subfactors, coming from symmetric group actions.)

Second cohomolgy also turns out to be closely related to angles. For a noncommuting quadrilateral with  $N \subset P $ and $N \subset Q $ 2-supertransitive, there are at most two possibilities for the angle for each cohomolgy class of $N \subset P$ (dual). In particular, if $N \subset P $ is 3-supertransitive, the angle is always $\cos^{-1} 1/([P:N]-1) $. We also compute the angles in quadrilaterals arising from $\alpha $-induction (which are identified with the GHJ pairs of \cite{GJ}) and asymptotic inclusions, the former of which includes the forked Temperley-Lieb quadrilaterals of \cite{G}.  

The paper is organized as follows:

Section 2 sets forth the basic notation of quadrilaterals, sectors and Q-systems which are used throughout. We often deal with infinite factors since we are actually studying properties of the standard invariant.

Section 3 studies the relationship between second cohomology for subfactors and angles.

Section 4 proves that quadrilaterals whose elementary subfactors are 2-supertransitive with trivial second cohomology fall into the two basic types discussed above, and that for the cocommuting type the Galois group of the total inclusion is a subgroup of $\fS_3$.

Section 5 analyzes several classes of quadrilaterals according to the results of the previous section and for each class provides an example with maximal supertransitivity. 

Section 6 classifies all noncommuting irreducible quadrilaterals whose elementary subfactors have indices less than or equal to 4.

Section 7 computes the angles for quadrilaterals coming from $\alpha$-induction and asymptotic inclusions.

The Appendix constructs a Q-system from a sector in the Haagerup principal graph and shows that it is unique, filling in a gap for one of the examples in Section 5.

\section{Preliminaries}
We first fix notation used throughout this paper. 
We always assume that Hilbert spaces are separable and 
von Neumann algebras have separable preduals. 

\subsection{Quadrilaterals} 
For an inclusion of II$_1$ factors $N\subset M$, the Jones index, the Jones projection,  
and the trace preserving conditional expectation onto $N$ are denoted by $[M:N]$, 
$e_{N}$ 
and $E_{N}$ respectively \cite{J1}. 
We use the same symbols for an inclusion of properly infinite factors 
$N\subset M$ considering a unique minimum conditional expectation \cite{Ko}, 
\cite{Hi}. 
Although the Jones index $[M:N]$ does not necessarily coincides with the minimum 
index for a II$_1$ inclusion in general, no confusion arises  as we always consider 
extremal inclusions throughout this paper. 
For a II$_1$ factor, we denote by $\tr$ the unique normalize trace. 
For an inclusion of general factors $N\subset M$ of finite index, 
we use the same symbol $\tr$ for the restriction of $E_{N}$ to the 
relative commutant $M\cap N'$. 

Let $N \subset M$ be an inclusion of factors of finite index with associated tower 
$$N=M_{-1} \subset M=M_{0} \subset M_{1} \subset M_2\subset \cdots,$$ 
where $M_{k+1}$, $k \geq 0$ is the von Neumann algebra on the standard 
space of $M_k$ generated by $M_{k}$ and the Jones projection $e_{k+1}=e_{M_{k-1}}$. 
Each $e_{k}$ commutes with $N$, so $\{ 1,e_{1},..,e_{k} \}$ generates a *-subalgebra, 
which we will call $TL_{k+1}$, of the $k^{th}$ relative commutant $N' \cap M_{k}$. The following definition first appeared in \cite{J31}.

\begin{definition}
Call a finite-index subfactor $N \subset M$ 
 $k$-{\it supertransitive} (for $k>1$) if $N' \cap M_{k-1} = TL_k$. 
We will say $N\subset M$ is {\it supertransitive} if it is $k$-supertransitive
for all $k$.   
\end{definition}

Note that $N\subset M$ is $k$-supertransitive if and only if $M\subset M_1$ 
is $k$-supertransitive. 

Sano and Watatani \cite{SW} introduced the notion of angles between two subfactors. 

\begin{definition}
For two subfactors $P$ and $Q$ of a factor $M$, the {\it set of angles} 
$\Ang(P,Q)$ between $P$ and $Q$ is the spectrum of the angle 
operator of the two Jones projections $e_P$ and $e_Q$, that is, the 
spectrum of $\cos^{-1}\sqrt{e_Pe_Qe_P-e_P\wedge e_Q}$ where 
$e_Pe_Qe_P-e_P\wedge e_Q$ is regarded as the operator acting on its support. 
\end{definition}

Note that $\Ang(P,Q)=\Ang(Q,P)$ always holds. 

\begin{definition}
A {\it quadrilateral} of factors $\fQ=\quadri$ is 
an inclusion of factors $N\subset M$ of finite index with two intermediate subfactors 
$P\neq Q$ such that $P$ and $Q$ generate $M$ and $P\cap Q=N$. 
The subfactors $N\subset P$, $N\subset Q$, $P\subset M$, and $Q\subset M$ are said to be 
the elementary subfactors for $\fQ$. 
When $M\cap N'=\C$, the quadrilateral $\fQ$ is said to be {\it irreducible}. 
When $E_P$ commutes with $E_Q$, the quadrilateral $\fQ$ is said to be {\it commuting}. 
Let $M_1=J_MN'J_M$, $\hat{P}=J_MP'J_M$, and $\hat{Q}=J_MQ'J_M$ be the basic extensions of 
$M$ by $N$, $P$, and $Q$ respectively. 
Then $\hat{P}$ and $\hat{Q}$ are intermediate subfactors between 
$M\subset M_1$, which form the {\it dual quadrilateral} 
$\hat{\fQ}=
\begin{array}{ccc}
\hat{Q} & \subset& M_1 \\
\cup & & \cup \\
N &\subset &\hat{P} 
\end{array}$. 
The quadrilateral $\fQ$ is said to be {\it cocommuting} if $\hat{\fQ}$ is 
commuting. 
The quadrilateral $\fQ$ is said to be {\it $(p,q)$-supertransitive} if $P\subset M$ 
and $Q\subset M$ are $p$-supertransitive and $N\subset P$ and $N\subset Q$ are 
$q$-supertransitive. 
Two quadrilaterals $\fQ=\quadri$ and $\tfQ=\tquadri$ 
are said to be {\it conjugate} if there exists an isomorphism $\pi$ from $M$ onto $\tM$ 
such that $\pi(P)=\tP$ and $\pi(Q)=\tQ$. 
The two quadrilateral $\fQ$ and $\tfQ$ are said to be {\it flip conjugate} if either 
they are conjugate or there exists an isomorphism $\pi$ from $M$ onto $\tM$ such that 
$\pi(P)=\tQ$ and $\pi(Q)=\tP$.
\end{definition}

We denote by $\widehat{P\subset M}$ the inclusion $M\subset \hat{P}$ and 
by $\widehat{N\subset P}$ the inclusion $\check{P}\subset N$, 
where $\check{P}\subset N\subset P$ is the downward basic construction. 
Note that $\check{P}$ is uniquely determined up to inner conjugacy in $N$. 

When $\fQ=\quadri$ is a non-commuting quadrilateral of subfactors such that 
$N\subset P$ and $N\subset Q$ are 2-supertransitive, the two projections 
$e_P$ and $e_Q$ are of rank 2 in $M_1\cap N'$ and $e_P\wedge e_Q=e_N$. 
Thus ${\rm Ang}(P,Q)$ consists of at most one point, which will be denoted by 
$\Theta(P,Q)$ if $\fQ$ is non-commuting. 

The following lemma is essentially proved in \cite[Proposition 3.2.11, Theorem 3.3.4]{GJ} 
using Landau's result \cite{La2}: 

\begin{lemma} \label{2lemma angle} 
Let $\fQ=\quadri$ be an irreducible cocommuting quadrilaterals of 
subfactors such that $N\subset P$ and $N\subset Q$ are 2-supertransitive. 
If $[M:P]=[P:N]$, the quadrilateral $\fQ$ is commuting, and if 
$[M:P]\neq [P:N]$,  
$$\cos^2\Theta(P,Q)=\frac{[P:N]-[M:P]}{[M:P]([P:N]-1)}.$$
\end{lemma}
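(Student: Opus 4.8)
The plan is to work with the two rank-two Jones projections $e_P$ and $e_Q$ inside the relative commutant $M_1 \cap N'$ and to extract the angle from the operator $e_P e_Q e_P$. Since $N \subset P$ and $N \subset Q$ are 2-supertransitive and the quadrilateral is noncommuting with $e_P \wedge e_Q = e_N$, both $e_P$ and $e_Q$ dominate $e_N$ and each has rank $2$ over $e_N$; thus on the orthogonal complement of $e_N$ (inside the relevant support) there are rank-one projections $f_P = e_P - e_N$ and $f_Q = e_Q - e_N$, and $\cos^2\Theta(P,Q)$ is exactly the single nonzero eigenvalue of $f_P f_Q f_P$, equivalently the normalized value of $\tr(f_P f_Q)$ divided by $\tr(f_P)$. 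The strategy is therefore to compute the relevant traces of products of $e_P, e_Q, e_N$ in terms of the indices using the Markov/Pimsner-Popa relations, which is where the cocommuting hypothesis enters.

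First I would record the trace values dictated by the tower: writing $\lambda_1 = [M:P]^{-1}$, $\lambda_2 = [P:N]^{-1}$ and using that $\tr$ on $M_1 \cap N'$ comes from the conditional expectations, one has $\tr(e_P) = [M:P]^{-1}$ type identities (more precisely the values prescribed by the basic construction for the elementary inclusions), and $\tr(e_N) = ([M:P][P:N])^{-1} = [M:N]^{-1}$. Next I would use the cocommuting assumption, which says the dual quadrilateral is commuting, i.e. $E_{\hat P}$ and $E_{\hat Q}$ commute; translating through $J_M \cdot J_M$ and the relation between $e_{\hat P}, e_{\hat Q}$ and the original projections, this commuting square condition gives an extra algebraic relation among $e_P$, $e_Q$, and the Jones projection $e_M = e_1$ for $M \subset M_1$. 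The upshot of Landau's exchange relation / the Fuss–Catalan-type planar relations (this is exactly the input cited from \cite{La2}) is that on the two-dimensional spaces supporting $f_P, f_Q$ one can evaluate $\tr(e_P e_Q)$ in closed form purely in terms of $[M:P]$ and $[P:N]$.

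The computation then proceeds by expanding $\tr(f_P f_Q) = \tr(e_P e_Q) - \tr(e_N)$ (using $e_N \leq e_P, e_N \leq e_Q$ so that the cross terms collapse to $\tr(e_N)$), dividing by $\tr(f_P) = \tr(e_P) - \tr(e_N)$, and simplifying. One expects
$$\cos^2\Theta(P,Q) = \frac{\tr(e_P e_Q) - \tr(e_N)}{\tr(e_P) - \tr(e_N)},$$
and substituting the trace values obtained above should yield $\dfrac{[P:N]-[M:P]}{[M:P]([P:N]-1)}$ after clearing denominators. When $[M:P] = [P:N]$ the numerator $\tr(e_P e_Q) - \tr(e_N)$ vanishes, forcing $f_P f_Q = 0$, i.e. $e_P$ and $e_Q$ commute; combined with $e_P \wedge e_Q = e_N$ this is precisely the statement that the quadrilateral is commuting, handling the first case.

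The main obstacle I anticipate is the middle step: correctly deriving the value of $\tr(e_P e_Q)$ from the cocommuting hypothesis. The commuting-square condition on the dual lives in $M_1$ (equivalently $M \subset M_1$) rather than in $M_1 \cap N'$, so I must carefully transport it via the basic construction and the modular conjugation $J_M$, and then feed it into Landau's relation to pin down the quadratic expression. Getting the index bookkeeping exactly right — distinguishing $[M:P]$ from $[P:N]$ and keeping track of which elementary inclusion each Jones projection belongs to — is the delicate part; the rest is the short eigenvalue computation sketched above.
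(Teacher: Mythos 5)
Your reduction is sound: with $f_P=e_P-e_N$ and $f_Q=e_Q-e_N$ minimal projections in $M_1\cap N'$ (this is exactly what 2-supertransitivity of $N\subset P$ and $N\subset Q$ buys), one gets $e_Pe_Qe_P-e_N=f_Pf_Qf_P=cf_P$ with $c=\tr(f_Pf_Q)/\tr(f_P)$, so the whole lemma reduces to the single trace identity $\tr(e_Pe_Q)=\tr(e_P)\tr(e_Q)=[M:P]^{-1}[M:Q]^{-1}$. But that identity is precisely what your proposal never establishes: you assert that Landau's exchange relation "evaluates $\tr(e_Pe_Q)$ in closed form" and you yourself flag this as the main obstacle. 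As written, the central step is missing, and everything else is bookkeeping. (For comparison, the paper gives no proof at all: it states the lemma as "essentially proved in" \cite[Proposition 3.2.11, Theorem 3.3.4]{GJ} via Landau's result \cite{La2}, which is the same route you gesture at.) A second, smaller omission: the displayed formula needs $[M:Q]=[M:P]$ and $[Q:N]=[P:N]$, which are not hypotheses. They follow from noncommutativity: $f_Pf_Q\neq 0$ forces the two minimal projections to be equivalent in $M_1\cap N'$, hence to have equal traces in $M_1$, whence $[M:P]=[M:Q]$. Consequently the first assertion must be run by contradiction (assume noncommuting, get the index equalities, deduce $c=0$, contradiction), since nothing pins down $[M:Q]$ a priori in the commuting case.

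The good news is that the gap closes in two lines, with no exchange relation and no delicate transport through $J_M$: the projections $e_P$, $e_Q$ already live in the dual quadrilateral, since $e_P\in\hat{P}=\langle M,e_P\rangle$ and $e_Q\in\hat{Q}$. Cocommutation means the trace-preserving expectations $E_{\hat{P}}$, $E_{\hat{Q}}$ of $M_1$ commute, and commuting expectations compose to the expectation onto the intersection, so $E_{\hat{P}}E_{\hat{Q}}=E_{\hat{P}\cap\hat{Q}}=E_M$, where $\hat{P}\cap\hat{Q}=J_M(P\vee Q)'J_M=M$ because $P\vee Q=M$. Therefore
$$\tr(e_Pe_Q)=\tr\bigl(e_P\,E_{\hat{P}}(e_Q)\bigr)=\tr\bigl(e_P\,E_{\hat{P}}E_{\hat{Q}}(e_Q)\bigr)
=\tr\bigl(e_P\,E_M(e_Q)\bigr)=\frac{\tr(e_P)}{[M:Q]}=\frac{1}{[M:P][M:Q]},$$
using the $\hat{P}$-bimodularity of $E_{\hat{P}}$ together with $e_P\in\hat{P}$, the fact that $E_{\hat{Q}}(e_Q)=e_Q$, and the Markov property $E_M(e_Q)=[M:Q]^{-1}$ of the basic construction $Q\subset M\subset\hat{Q}$. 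Substituting into your quotient with $[M:Q]=[M:P]$ and $[M:N]=[M:P][P:N]$ gives numerator $\bigl([P:N]-[M:P]\bigr)/\bigl([M:P]^2[P:N]\bigr)$ and denominator $\bigl([P:N]-1\bigr)/\bigl([M:P][P:N]\bigr)$, hence $\cos^2\Theta(P,Q)=\dfrac{[P:N]-[M:P]}{[M:P]([P:N]-1)}$; and the vanishing of the numerator when $[M:P]=[P:N]$ forces $f_Pf_Q=0$, which is the commuting assertion.
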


We show an easy example of a noncommuting quadrilateral coming 
from a finite group action. 
For an automorphism group $G$ of a factor $R$, we denote by $R^G$ the fixed point 
subalgebra of $R$ under $G$. 
The next lemma follows from Lemma \ref{2lemma angle}. 

\begin{lemma} \label{2lemma group} Let $G$ be a finite group and $H$ and $K$ be subgroups of 
$G$  such that the natural actions of $G$ on $G/H$ and $G/K$ 
are 2-transitive. 
Assume that $[G:H]=[G:K]$, $[G:H]\neq [H:H\cap K]$, and 
$G$ acts on a factor $R$ as an outer automorphism group. 
We set $M=R^{K\cap H}$, $P=R^H$, $Q=R^K$, and $N=R^G$. 
Then $\fQ=\quadri$ is an irreducible noncommuting and cocommuting quadrilateral of 
factors such that $N\subset P$ and $N\subset Q$ are 2-supertransitive. 
The angle between $P$ and $Q$ is given by
$$\cos^2\Theta(P,Q)=\frac{[G:H]-[H:H\cap K]}{[H:H\cap K]([G:H]-1)}.$$
\end{lemma}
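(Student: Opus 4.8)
The plan is to verify the three structural hypotheses of Lemma \ref{2lemma angle} --- that $\fQ$ is an irreducible cocommuting quadrilateral with $N\subset P$ and $N\subset Q$ $2$-supertransitive --- and then read the angle off its formula. First I would invoke the Galois correspondence for the outer action: intermediate subfactors of $R^G\subset R$ correspond order-reversingly to subgroups of $G$, with the subfactor generated by $R^{H_1}$ and $R^{H_2}$ equal to $R^{H_1\cap H_2}$ and $R^{H_1}\cap R^{H_2}=R^{\langle H_1,H_2\rangle}$. Thus $P$ and $Q$ generate $R^{H\cap K}=M$ automatically, while $P\cap Q=R^{\langle H,K\rangle}$. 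Since a $2$-transitive action is primitive, $H$ and $K$ are maximal subgroups; as $H\neq K$ (which is forced once $\fQ$ is a genuine quadrilateral, i.e. $P\neq Q$) and $[G:H]=[G:K]$ gives $|H|=|K|$, we have $K\not\subseteq H$, so maximality yields $\langle H,K\rangle=G$ and hence $P\cap Q=R^G=N$. Irreducibility is then automatic from $M\cap N'\subseteq R\cap(R^G)'=\C$, and the elementary indices are $[P:N]=[G:H]$ and $[M:P]=[H:H\cap K]$.

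Next I would establish $2$-supertransitivity of $N\subset P=R^G\subset R^H$. Computing the relative commutant of the basic construction, $\dim(N'\cap\langle P,e_N\rangle)=\#(H\backslash G/H)$, the number of double cosets; equivalently, as an $N$-$N$ bimodule $L^2(P)$ is the permutation representation $\C[G/H]$. The complement of $N$ in $L^2(P)$ is therefore irreducible --- so $N'\cap\langle P,e_N\rangle=TL_2$ and the inclusion is $2$-supertransitive --- exactly when $\C[G/H]=\mathbf{1}\oplus(\text{irreducible})$, i.e. precisely when $|H\backslash G/H|=2$, which is the standard characterization of $2$-transitivity of $G$ on $G/H$. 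This holds by hypothesis, and symmetrically for $N\subset Q$.

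The main obstacle is cocommutativity, which I would obtain by identifying the dual quadrilateral. By the duality between fixed-point and crossed-product subfactors, the inclusion $R^G\subset R^{H\cap K}$ is basic-construction-dual to $R\rtimes(H\cap K)\subset R\rtimes G$, and this identification should carry $\hat\fQ=(M;\hat P,\hat Q;M_1)$ onto the crossed-product quadrilateral $(R\rtimes(H\cap K);\,R\rtimes H,\,R\rtimes K;\,R\rtimes G)$. Crossed-product quadrilaterals are always commuting, since $E_{R\rtimes H}E_{R\rtimes K}=E_{R\rtimes(H\cap K)}=E_{R\rtimes K}E_{R\rtimes H}$ --- each composition merely restricts the group support to $H\cap K$ --- so $\hat\fQ$ is commuting and $\fQ$ is cocommuting. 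The delicate point is checking this duality at the level of quadrilaterals while keeping track of the correct normalizations of the Jones projections; I would do this by analyzing the Jones tower of $R^G\subset R^{H\cap K}$, or cite the standard crossed-product/fixed-point duality \cite{NT}.

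With all hypotheses of Lemma \ref{2lemma angle} in place and $[M:P]=[H:H\cap K]\neq[G:H]=[P:N]$ by assumption, that lemma shows $\fQ$ is noncommuting and gives
$$\cos^2\Theta(P,Q)=\frac{[P:N]-[M:P]}{[M:P]([P:N]-1)}=\frac{[G:H]-[H:H\cap K]}{[H:H\cap K]([G:H]-1)},$$
which is the asserted formula.
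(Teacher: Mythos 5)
Your proposal is correct and takes essentially the same route as the paper, whose entire proof of this lemma is the one-line remark that it ``follows from Lemma \ref{2lemma angle}'': your argument supplies exactly the standard verifications that remark presupposes (Galois correspondence for the outer action, the double-coset/permutation-representation characterization of $2$-supertransitivity, and the fixed-point/crossed-product duality giving cocommutativity), all of which are sound. The one step you flagged as delicate does go through as outlined: with $p=\frac{1}{|H\cap K|}\sum_{l\in H\cap K}\lambda_l\in R\rtimes (H\cap K)$, the algebras $M_1$, $\hat{P}$, $\hat{Q}$ are identified with the corners $p(R\rtimes G)p$, $p(R\rtimes H)p$, $p(R\rtimes K)p$, and since conditional expectations are bimodule maps they compress to corners, so the identity $E_{R\rtimes H}E_{R\rtimes K}=E_{R\rtimes(H\cap K)}=E_{R\rtimes K}E_{R\rtimes H}$ passes to the dual quadrilateral.
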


\begin{example}\label{2example group}
For a finite set $F$, we denote by $\fS_F$ the set of permutations of $F$ and by 
$\fA_F$ the set of even permutations of $F$. 
Let $F_0=\{1,2,\cdots,n\}$, $F_1=\{1,2,\cdots,n-1\}$, and $F_2=\{2,3,\cdots,n\}$. 
Then $G=\fS_{F_0}$ ($G=\fA_{F_0}$), $H=\fS_{F_1}$ ($H=\fA_{F_1}$), and $K=\fS_{F_2}$ 
($K=\fA_{F_2}$) with $n\geq 3$ ($n\geq 4$) satisfies the assumption 
of Lemma \ref{2lemma group} and $\Theta(P,Q)=\cos^{-1}(1/(n-1))$. 
\end{example}

In \cite{GJ} Jones and the first-named author obtained the following theorem:

\begin{theorem}\label{2theorem no-extra} Let $\fQ=\quadri$ be an irreducible noncommuting 
quadrilateral of factors. 
If $\fQ$ is (6,6)-supertransitive, one of the following two occurs: 
\begin{itemize} 
\item [$(1)$] All the elementary subfactors of $\fQ$ are the $A_7$ subfactors and 
$\Theta(P,Q)=\cos^{-1}(\sqrt{2}-1)$. 
When $M$ is the hyperfinite II$_1$ factor, such a quadrilateral exists and is unique up to 
conjugacy. 
\item [$(2)$] $[M:P]=[M:Q]=2$, $[P:N]=[Q:N]=3$, $\Theta(P,Q)=\pi/3$, and 
the principal graphs of $N\subset P$ and $N\subset Q$ are $A_5$. 
In this case, there exists an outer action of the symmetric group $\fS_3$ of degree 3 
on $M$ such that $N$ is the fixed point algebra of the action. 
When $M$ is the hyperfinite II$_1$ factor, such a quadrilateral is unique up to 
conjugacy. 
\end{itemize}
\end{theorem}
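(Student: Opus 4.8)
The plan is to recast everything in the language of sectors (equivalently, $N$-$N$ and $P$-$P$ bimodules) and to exploit the rigidity forced by high supertransitivity. Write $\iota\colon N\hookrightarrow P$ and $\kappa\colon P\hookrightarrow M$ for the inclusion morphisms with conjugates $\bar\iota,\bar\kappa$, and decompose using $2$-supertransitivity of the elementary inclusions: $\iota\bar\iota\cong\id_P\oplus\sigma$ and $\bar\kappa\kappa\cong\id_P\oplus\tau$, where $\sigma,\tau$ are irreducible $P$-$P$ sectors. By the bimodule characterization of noncommutativity recalled in the introduction, $\fQ$ noncommuting is equivalent to the existence of an $N$-$N$ bimodule isomorphism $P\cong Q$; in particular $[P:N]=[Q:N]=:\lambda$, and since $[M:N]=[M:P]\,[P:N]=[M:Q]\,[Q:N]$ we also get $[M:P]=[M:Q]=:\mu$. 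The statistical dimensions are then $d(\sigma)=\lambda-1$ and $d(\tau)=\mu-1$, and the whole problem reduces to determining which pairs $(\sigma,\tau)$ can occur, together with the single angle $\Theta(P,Q)$ coming from the two rank-two projections $e_P,e_Q\in N'\cap M_1$ with $e_P\wedge e_Q=e_N$.

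The heart of the argument is the fusion structure of the $P$-$P$ system generated by $\sigma$. Six-supertransitivity of $N\subset P$ means $N'\cap M_k$ agrees with $TL_{k+1}$ for $k\le 5$, so $\sigma,\sigma^2,\sigma^3$ are governed by the Temperley--Lieb ($SU(2)$) fusion rules: setting $\sigma_0=\id$, $\sigma_1=\sigma$, one has $\sigma\sigma_j\cong\sigma_{j-1}\oplus\sigma_j\oplus\sigma_{j+1}$, producing irreducibles $\sigma',\sigma''$ of dimensions $[5]_q,[7]_q$ (where $\lambda=[2]_q^2$), unless a relation is forced at one of these levels. I would then use noncommutativity to force $\tau$ into this same category: because $M=P\vee Q$ and the $N$-$N$ complements of $N$ in $P$ and in $Q$ are isomorphic, the $N$-$N$ (hence $P$-$P$) fusion category of $N\subset M$ is generated by the single object underlying $\sigma$, so $\tau$ must be a subsector of a power $\sigma^{\otimes k}$. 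A dimension-and-multiplicity count, together with the angle computation (Lemma \ref{2lemma angle} in the cocommuting case, and the parallel computation in $\hat{\fQ}$ in the non-cocommuting case), then identifies $\tau$ with one of $\sigma,\sigma',\sigma''$.

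The decisive step---and the main obstacle---is to show this compatibility admits only two solutions. If the Temperley--Lieb fusion does not truncate before spin three, matching $d(\tau)=\mu-1$ against the available dimensions, and imposing that all fusion multiplicities are nonnegative integers and all dimensions are the correct algebraic integers, forces the principal graph of $N\subset P$ to close up exactly when $[7]_q=1$, i.e. when $\sigma''$ becomes an order-two automorphism. This is the $A_7$ point $\lambda=2+\sqrt2$, where one checks $\tau\cong\sigma'\,(\cong\sigma''\sigma)$, giving $\mu=\lambda=2+\sqrt2$ (the non-cocommuting type) and $\cos^2\Theta=3-2\sqrt2$, that is $\Theta(P,Q)=\cos^{-1}(\sqrt2-1)$, which is case $(1)$. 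The only alternative is an earlier truncation, which forces $\mu=2$ (so $\tau$ is itself an order-two automorphism), $\lambda=3$, principal graphs $A_5$, and by Lemma \ref{2lemma angle} $\cos^2\Theta=1/4$, i.e. $\Theta(P,Q)=\pi/3$; here the even $N$-$N$ fusion rules are those of $\fS_3$, which is case $(2)$. Excluding every higher truncation and every non-Temperley--Lieb possibility is where essentially all the work lies.

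Finally, for the hyperfinite existence and uniqueness claims I would argue at the level of Q-systems. In case $(2)$ the data are an outer action of $\fS_3$, so existence follows from Lemma \ref{2lemma group} and Example \ref{2example group} with $n=3$, and uniqueness from the uniqueness of outer $\fS_3$-actions on the hyperfinite II$_1$ factor. In case $(1)$ the quadrilateral is encoded by a Q-system in the $A_7$ (equivalently, the even part of $SU(2)_6$) fusion category together with the automorphism $\sigma''$; existence is exhibited by the GHJ construction, and uniqueness follows from the rigidity of that fusion category, which leaves only one conjugacy class of such Q-systems.
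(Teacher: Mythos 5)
Your outline shares its skeleton with the paper's own short proof (Remark \ref{4remrak no-extra}, built on Lemma \ref{4lemma xi-eta}): pass to the $P$-$P$ sectors $\xi$ (your $\sigma$) and $\eta$ (your $\tau$), let supertransitivity impose Temperley--Lieb fusion, and reduce to the dichotomy $A_5$ versus $A_7$. But the decisive steps are missing, and you concede as much (``excluding every higher truncation \dots is where essentially all the work lies''). What actually closes the argument is a pair of short Frobenius-reciprocity computations that never appear in your proposal. First, noncommutativity gives far more than ``$\tau$ lies in the tensor category generated by $\sigma$'': since $L^2(P)\cong L^2(Q)$ as $N$-$N$ bimodules, the irreducible $\hxi$ occurs with multiplicity at least $2$ in the dual canonical endomorphism $\biota\bkappa\kappa\iota$ of $N\subset M$, and Frobenius reciprocity turns this into $\dim(\eta,\xi^2)\geq 1$; combined with $[\xi]\neq[\eta]$ (forced by irreducibility of $N\subset M$, again via Frobenius reciprocity) and $4$-supertransitivity of $N\subset P$, this pins down $[\xi^2]=[\id_P]\oplus[\xi]\oplus[\eta]$ exactly. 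Your substitute---$\tau$ is a subsector of \emph{some} power $\sigma^k$, plus an unspecified ``dimension-and-multiplicity count''---cannot do this job: $6$-supertransitivity only controls the decompositions of $\sigma^k$ for $k\leq 3$, nothing in your sketch bounds $k$, and for $k\geq 4$ these powers may contain sectors outside the Temperley--Lieb family, so no dimension count identifies $\tau$ among $\sigma,\sigma',\sigma''$.

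Second, your exclusion of the untruncated (index $\geq 4$) possibility never invokes the supertransitivity of $P\subset M$, which is the actual mechanism. Once $\eta$ is not an automorphism, $\eta\iota$ is reducible, so $\dim(\eta^2,\xi)=\dim(\eta\iota,\eta\iota)-1\geq 1$, and $4$-supertransitivity of $P\subset M$ then yields the symmetric relation $[\eta^2]=[\id_P]\oplus[\eta]\oplus[\xi]$; comparing dimensions in the two fusion relations gives $d(\xi)=d(\eta)$ and $d(\xi)^2=1+2d(\xi)$, which is precisely what forces $[M:P]=[P:N]=2+\sqrt{2}$ and the $A_7$ graphs. Without this second relation, ``imposing that multiplicities are nonnegative integers'' produces no equation to solve, and nothing rules out an untruncated graph. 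Two smaller defects: the angle in case $(1)$ cannot be obtained from Lemma \ref{2lemma angle} applied to $\hat{\fQ}$, since that lemma requires a \emph{cocommuting} quadrilateral and $\hat{\fQ}$ is noncommuting rather than cocommuting; the paper instead derives $\Theta(P,Q)=\cos^{-1}\frac{1}{[P:N]-1}$ from the $Q$-system analysis (Corollary \ref{3corollary angle}). And in case $(1)$, uniqueness of the $Q$-system only determines $N\subset P\subset M$; to recover uniqueness of the quadrilateral one must also pin down the pair $(P,Q)$ inside $M$, which requires counting the intermediate subfactors $R$ with $L^2(R)\cong L^2(P)$ as $N$-$N$ bimodules (Lemma \ref{3lemma uniqueness}, applied as in Theorem \ref{4theorem uniqueness1}).
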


In Section 4, we give a new proof of the above theorem, except for the existence, 
relaxing the assumption. 
The theorem still holds if we assume that $\fQ$ is (3,4)-supertransitive.

\subsection{Sectors} 
To compute the angle between $P$ and $Q$, it is more 
convenient to work with properly infinite factors using sectors as we will see below. 
Thus we recall a few basic facts about sectors here. 
The reader is referred to \cite{I4} for details. 
Note that the structure of the intermediate subfactor lattice 
of an inclusion $N\subset M$, including information of the angle, only depends on 
the standard invariant (or paragroup, plannar algebra) of $N\subset M$.  
Every standard invariant realized in the type II$_1$ case is also realized in the properly 
infinite case and vice versa. 

For the reason stated above, we always assume in the proofs that the subfactors involved 
are properly infinite though we state results for general quadrilaterals of factors. 
When we show uniqueness results, we need to deal with quadrilaterals of hyperfinite II$_1$ 
factors. 
Since the classification theory of subfactors for the hyperfinite II$_1$ factor and 
the hyperfinite II$_\infty$ factor is the same as far as strongly amenable subfactors are 
concerned \cite{P1}, we may assume in the proofs that the factors involved are isomorphic to 
the hyperfinite II$_\infty$ factors in this case.


\begin{remark} When one discusses sectors, it is customary to assume that 
every factor involved is of type III. 
However the whole theory also works for general properly infinite factors. 
This is based on the following facts: (1) for an inclusion of 
properly infinite factors $N\subset M$, every non-zero projection 
in the relative commutant $N'\cap M$ is an infinite projection in $M$, and 
(2) a properly infinite factor has a unique representation whose commutant 
is also properly infinite. 
\end{remark}

Let $M$ and $N$ be properly infinite factors. 
We denote by $\Mor(N,M)$ the set of normal unital homomorphisms from 
$N$ to $M$. 
For two morphisms $\rho,\sigma\in \Mor(N,M)$, we denote by $(\rho,\sigma)$ 
the set of intertwiners between $\rho$ and $\sigma$ 
$$(\rho,\sigma)=\{v\in M;\; v\rho(x)=\sigma(x)v,\;\forall x\in N\}.$$  
When $\rho$ is irreducible, that is $M\cap \rho(N)'=\C$, 
the space $(\rho,\sigma)$ is a Hilbert space with an inner product 
$\inpr{v_1}{v_2}=v_2^*v_1$ for $v_1,v_2\in (\rho,\sigma)$. 
The two morphisms $\rho$ and $\sigma$ are said to be equivalent 
if there exists a unitary $u\in (\rho,\sigma)$. 
We denote by $\Sect(M,N)$ the quotient of $\Mor(N,M)$ by this 
equivalence relation.  
We denote by $[\rho]$ the equivalence class of $\rho$, which we call a sector. 
However, we often omit the quare bracket when no confusion arises. 
When $M=N$, we use the notation $\Mor(M,M)=\End(M)$ and 
$\Sect(M,M)=\Sect(M)$. 

For $M-N$ bimodule $X$ and $\rho\in \Mor(L,N)$, we 
denote by $X_\rho$ the $M-L$ bimodule defined by 
$$x\cdot \xi\cdot y=x\xi\rho(y),\quad \xi\in X,\;x\in M,\;y\in L.$$
For $\sigma\in \Mor(L,M)$, we denote by ${}_\sigma X$ the $L-N$ bimodule 
similarly defined. 

Let $L^2(M)$ be the standard Hilbert space of $M$, 
which is a $M-M$ bimodule with $x\xi y=xJ_M y^*J_M\xi$, 
where $J_M$ is the modular conjugation. 
Then it is known that for every $M-N$ bimodule $X$, there 
exists a unique sector $[\rho] \in \Sect(M,N)$ such that 
$X$ is equivalent to $L^2(M)_\rho$. 
With this correspondence, the relative tensor product of two bimodules 
is transformed into the composition of two morphisms. 
A direct sum of bimodules is easily translated into the sector language too (see \cite{I4}). 

We denote by $\Mor_0(N,M)$ the set of $\rho\in \Mor(N,M)$ whose 
image has finite index. 
For $\Mor_0(N,M)$, we denote by $d(\rho)$ the square root of $[M:\rho(N)]$, 
called the (statistical) dimension of $\rho$, which is multiplicative under 
composition of two sectors and is additive under direct sum. 
We use the notation $\Sect_0(M,N)$ etc. in an obvious sense. 

Corresponding to the complex conjugate bimodule of $L^2(M)_\rho$, 
the conjugate sector of $\rho$ is defined. 
For $\rho\in \Mor_0(N,M)$, the (equivalence class of) conjugate morphism 
$\bar{\rho}\in \Mor_0(M,N)$ is characterized by existence of two isometries 
$r_\rho\in (\id_N,\bar{\rho}\rho)$ and $\bar{r}_\rho\in (\id_M,\rho\bar{\rho})$ 
satisfying the following equation 
$$r_\rho^*\bar{\rho}(\bar{r}_\rho)=\frac{1}{d(\rho)},\quad 
\bar{r}_\rho^*\rho(r_\rho)=\frac{1}{d(\rho)}.$$
Throughout this paper, we fix such $r_\rho$ and $\bar{r}_\rho$ for each $\rho$. 
When $\rho$ is not self-conjugate, we can and do assume 
$r_{\bar{\rho}}=\bar{r}_\rho$. 
This is not the case for a self-conjugate $\rho$. 
In this case $r_\rho^*\rho(r_\rho)=\pm1/d(\rho)$ holds and so 
$r_{\rho}=\pm\bar{r}_\rho$ \cite{L1}. 
A self-conjugate endomorphism $\rho$ is said to be real if $r_{\bar{\rho}}=\bar{r}_\rho$ 
and said to be pseudo-real if $r_{\bar{\rho}}=-\bar{r}_\rho$. 

We set $\phi_\rho(x)=r_\rho^*\bar{\rho}(x)r_\rho$ for $x\in N$, which is called 
the standard left inverse of $\rho$. 
We denote by $E_\rho$ the minimum conditional expectation from $M$ onto the image of 
$\rho$. 
Then we have $E_\rho=\rho\phi_\rho$. 

For an inclusion of properly infinite factors $N\subset M$ of finite index, 
we denote by $\iota_{M,N}$ the inclusion map $\iota_{M,N}:N\hookrightarrow M$. 
Then the $M-N$ bimodule $X={}_ML^2(M)_N$ is nothing but $L^2(M)\iota_{M,N}$. 
Since ${}_ML^2(M_1)_M$ is isomorphic to $X\otimes_NX^*$, 
we have ${}_ML^2(M_1)_M\cong L^2(M)_{\iota_{M,N}\overline{\iota_{M,N}}}$. 
The endomorphism $\iota_{M,N}\overline{\iota_{M,N}}$ of $M$ is called the canonical 
endomorphism for $N\subset M$. 
The endomorphism $\overline{\iota_{M,N}}\iota_{M,N}$ of $N$ is called the dual 
canonical endomorphism, which is the canonical endomorphism for the 
inclusion $\overline{\iota_{M,N}}(M)\subset N$. 

We often use diagrams for computation of intertwiners. 
An intertwiner $s\in (\rho,\sigma\tau)$ is expressed by the diagram
$$s=\vpic{II1}{0.3in}$$
For $t\in (\sigma,\mu\nu)$ and $u\in (\tau,\xi\eta)$, the products 
$ts\in (\rho,\mu\nu\tau)$ and $\sigma(u)s\in (\rho,\sigma\xi\eta)$ are 
expressed as
$$ts=\vpic{II2}{0.5in},\quad \sigma(u)s=\vpic{II3}{0.5in}.$$
For special operators $1\in (\rho,\rho)$, 
$\sqrt{d(\rho)}r_\rho\in (\id,\bar{\rho}\rho)$, and 
$\sqrt{d(\rho)}\br_\rho^*\in (\id,\rho\bar{\rho})$, 
we use the following diagrammatic expressions: 
$$1=\vpic{II4}{0.1in},\quad \sqrt{d(\rho)}r_\rho=\vpic{II5}{0.4in},\quad 
\sqrt{d(\rho)}\br_\rho^*=\vpic{II6}{0.4in}.$$
Then the equations $d(\rho)\br_\rho^*\rho(r_\rho)=1$ and 
$d(\rho)\bar{\rho}(\br_\rho^*)r_\rho=1$ are expressed as
$$\vpic{II7}{0.6in}=\vpic{II4}{0.1in},\quad \vpic{II8}{0.6in}=\vpic{II4}{0.1in}$$

The following lemma holds (see \cite[Section 3]{ILP}):

\begin{theorem} \label{2theorem decomposition}
Let $N\subset M$ be an irreducible inclusion of properly infinite factors with finite index 
and let 
$$[\overline{\iota_{M,N}}\iota_{M,N}]=\bigoplus_{i=0}^m n_i[\rho_i]$$
be the irreducible decomposition, where $n_i$ is the multiplicity of 
$\rho_i$ in $\overline{\iota_{M,N}}\iota_{M,N}$. 
Then
\begin{itemize}
\item [$(1)$] $n_i\leq d(\rho_i)$. 
\item [$(2)$] Let $\cH_i=(\iota_{M,N},\iota_{M,N}\rho_i)$. 
For $s_1,s_2\in \cH_i$, 
$$E_N(s_1s_2^*)=\frac{\inpr{s_1}{s_2}}{d(\rho_i)}.$$
\item [$(3)$] $\dim \cH_i=n_i$. 
\item [$(4)$] Let $\{s(i)_j\}_{j=1}^{n_i}$ be an orthonormal basis of 
$\cH_i$. 
Then $x\in M$ has the following unique expansion 
$$x=\sum_{i=0}^m\sum_{j=1}^{n_i}s(i)_j^*x(i)_j$$
with coefficients $x(i)_j\in N$. 
The coefficients $x(i)_j$ is given by 
$$x(i)_j=d(\rho_i)E_N(s(i)_jx).$$ 
\end{itemize}
\end{theorem}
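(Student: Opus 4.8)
Write $\iota=\iota_{M,N}$ and $\biota=\overline{\iota_{M,N}}$, so that $\theta:=\biota\iota$ is the dual canonical endomorphism and $E_N=\phi_\iota$, that is, $E_N(y)=r_\iota^*\biota(y)r_\iota$ for $y\in M$. Since $N\subset M$ is irreducible, $\iota$ is irreducible and $\rho_0=\id$ occurs with $n_0=1$; moreover $d(\theta)=d(\biota)d(\iota)=d(\iota)^2=[M:N]$, so that $\sum_{i=0}^m n_id(\rho_i)=[M:N]$. The plan is to establish $(3)$ first from Frobenius reciprocity, then to prove the key identity $(2)$ by the intertwiner calculus, and finally to deduce $(1)$ and $(4)$ from $(2)$ together with this dimension count.

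For $(3)$ I would use Frobenius reciprocity in the explicit form $T\mapsto \iota(T)\br_\iota$, which maps $(\theta,\rho_i)$ into $\cH_i=(\iota,\iota\rho_i)$: for $T\in(\theta,\rho_i)$ and $x\in N$ one checks $\iota(T)\br_\iota\,\iota(x)=\iota(T)\iota\theta(x)\br_\iota=\iota(\rho_i(x)T)\br_\iota=\iota\rho_i(x)\,\iota(T)\br_\iota$, using $\br_\iota\in(\id_M,\iota\biota)$ and $T\theta(x)=\rho_i(x)T$. This map is a linear isomorphism (its inverse being the dual Frobenius map), whence $\dim\cH_i=\dim(\theta,\rho_i)=n_i$, the multiplicity of the irreducible $\rho_i$ in $\theta$.

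The heart of the matter is $(2)$. First, for $s_1,s_2\in\cH_i$ both $\inpr{s_1}{s_2}=s_2^*s_1\in(\iota,\iota)=\C$ and $E_N(s_1s_2^*)=\phi_\iota(s_1s_2^*)\in(\rho_i,\rho_i)=\C$ are scalars (the latter because $s_1s_2^*\in(\iota\rho_i,\iota\rho_i)$, so $\biota(s_1s_2^*)\in(\theta\rho_i,\theta\rho_i)$, and conjugating by $r_\iota\in(\id_N,\theta)$ lands it in $(\rho_i,\rho_i)$, which is trivial by irreducibility of $\rho_i$). By sesquilinearity it suffices to evaluate the scalar when $s_1=s_2=s$ is an isometry ($s^*s=1$) and to show $E_N(ss^*)=1/d(\rho_i)$. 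Here I would run the graphical computation dictated by $E_N=\phi_\iota$ and the conjugate (zig-zag) equations displayed above: capping the through-going $\iota$-strand of $ss^*$ with $r_\iota,r_\iota^*$ closes the diagram and leaves a single closed $\rho_i$-loop whose standard normalization contributes exactly the weight $1/d(\rho_i)$; equivalently, $w:=\biota(s)r_\iota$ is an isometry in $(\id,\theta\rho_i)$, and one transports $\phi_\iota$ to the standard left inverse $\phi_{\rho_i}$ of $\rho_i$, for which $1/d(\rho_i)$ is the statistical weight of the subobject $\iota\prec\iota\rho_i$. I expect this constant---verifying that the loop really produces $d(\rho_i)^{-1}$ and not some other dimension factor---to be the main obstacle, since it is precisely the analytic input that cannot be read off from the formal bimodule bookkeeping alone.

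Granting $(2)$, parts $(1)$ and $(4)$ are formal. For $(1)$, orthonormality of a basis $\{s(i)_j\}_{j=1}^{n_i}$ of $\cH_i$ makes each $s(i)_j$ an isometry with mutually orthogonal ranges, so $p_i:=\sum_{j}s(i)_js(i)_j^*$ is a projection with $p_i\le 1$; applying $E_N$ and using $(2)$ gives $E_N(p_i)=n_i/d(\rho_i)$, and $E_N(p_i)\le E_N(1)=1$ forces $n_i\le d(\rho_i)$. For $(4)$, set $b_{ij}:=\sqrt{d(\rho_i)}\,s(i)_j$. Then $(2)$, together with the vanishing of the cross terms $E_N(s(i)_js(i')_{j'}^*)\in(\rho_{i'},\rho_i)=0$ for $i\ne i'$, shows $E_N(b_{ij}b_{i'j'}^*)=\delta_{ii'}\delta_{jj'}$, i.e. $\{b_{ij}\}$ is $E_N$-orthonormal; and $\sum_{i,j}b_{ij}^*b_{ij}=\sum_i d(\rho_i)n_i=[M:N]=\mathrm{Ind}\,E_N$, so the system is complete. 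Hence $\{b_{ij}\}$ is a Pimsner--Popa basis and $x=\sum_{i,j}b_{ij}^*E_N(b_{ij}x)$; rewriting in terms of $s(i)_j$ gives the stated expansion with $x(i)_j=d(\rho_i)E_N(s(i)_jx)$, while uniqueness follows by applying $E_N(s(i')_{j'}\,\cdot\,)$ to a vanishing combination and invoking orthonormality.
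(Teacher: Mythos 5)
First, a point of reference: the paper does not prove Theorem~\ref{2theorem decomposition} at all; it states it with the pointer ``see \cite[Section 3]{ILP}''. So your proposal can only be measured against the standard sector-theoretic argument of that reference, and that is indeed the route you take. Within that route, your part $(3)$ is correct and complete (the map $T\mapsto\iota(T)\br_\iota$ from $(\theta,\rho_i)$ into $\cH_i$ is well defined exactly as you check, with inverse $s\mapsto d(\iota)\,r_\iota^*\biota(s)$ by the conjugate equations), part $(1)$ follows from $(2)$ and $(3)$ by your projection argument, and the derivation of the expansion and its uniqueness in $(4)$ from $E_N$-orthonormality is sound.

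The genuine gap is in part $(2)$, exactly where you flagged it: the constant is asserted, not derived, and the heuristic as stated is not quite right. Capping the $\iota$-strands of $ss^*$ with $r_\iota$ does not produce a closed $\rho_i$-loop; it produces $(r_\iota^*\biota(s))(r_\iota^*\biota(s))^*\in(\rho_i,\rho_i)=\C 1$, an open $\rho_i$-strand with a scalar coefficient, and computing that scalar is the entire content of $(2)$. You can close the gap with the tools you already set up. Put $T:=d(\iota)\,r_\iota^*\biota(s)\in(\theta,\rho_i)$, so $s=\iota(T)\br_\iota$. Using $r_\iota^*\theta(z)=zr_\iota^*$ for $z\in N$ and $r_\iota^*\biota(\br_\iota)=1/d(\iota)$ twice,
$$E_N(ss^*)=r_\iota^*\,\theta(T)\,\biota(\br_\iota)\,\biota(\br_\iota)^*\,\theta(T^*)\,r_\iota=\frac{TT^*}{d(\iota)^2},$$
while $s^*s=\br_\iota^*\iota(T^*T)\br_\iota=\phi_{\biota}(T^*T)$. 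Here $TT^*\in(\rho_i,\rho_i)$ is a nonnegative scalar, and $T^*T=(TT^*)\,VV^*$ for the isometry $V:=T^*/(TT^*)^{1/2}\in(\rho_i,\theta)$. Since $s^*s\in(\iota,\iota)=\C$, the scalar $\phi_\biota(VV^*)$ equals $\phi_\iota\phi_\biota(VV^*)=\phi_\theta(VV^*)$, where $\phi_\theta=\phi_\iota\circ\phi_\biota$ is the \emph{standard} left inverse of $\theta$. The single external input you must invoke (or prove) is that the standard left inverse weights irreducible components by dimension: $\phi_\theta(VV^*)=d(\rho_i)/d(\theta)=d(\rho_i)/d(\iota)^2$ for any isometry $V\in(\rho_i,\theta)$. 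Granting this, $s^*s=1$ forces $TT^*=d(\iota)^2/d(\rho_i)$, hence $E_N(ss^*)=1/d(\rho_i)$. This weight formula is exactly the norm-scaling of Frobenius rotations recorded in the paper's appendix as Lemma~\ref{Alemma normalization} (quoted there from \cite[Section 2.3]{I4}, see also \cite{LRo}), so it is a citable fact; but it is a theorem about standard solutions, not formal bookkeeping, and your proposal currently asserts the constant rather than deriving it.

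A secondary, smaller gap sits in your completeness step for $(4)$: the implication ``$E_N$-orthonormal plus $\sum_{i,j}b_{ij}^*b_{ij}=\mathrm{Ind}\,E_N$ implies complete'' is usually proved by a trace argument, and the factors here are properly infinite, so no trace is available. Two repairs: either apply the dual minimal expectation $E_M:M_1\to M$, which satisfies $E_M(e_N)=(\mathrm{Ind}\,E_N)^{-1}$ and is faithful, to the projection $1-\sum_{i,j}b_{ij}^*e_Nb_{ij}\in M_1\cap N'$; or avoid the basic extension altogether: by $(2)$ and $(3)$ the elements $W(i)_j:=\frac{\sqrt{d(\rho_i)}}{d(\iota)}T(i)_j^*\in(\rho_i,\theta)$ are isometries with mutually orthogonal ranges, $n_i$ of them in the $i$-th matrix summand of $(\theta,\theta)$, whence $\sum_{i,j}W(i)_jW(i)_j^*=1$, and then a direct computation gives $\sum_{i,j}d(\rho_i)s(i)_j^*E_N(s(i)_jx)=d(\iota)\,\br_\iota^*\iota(\biota(x)r_\iota)=d(\iota)\,x\,\br_\iota^*\iota(r_\iota)=x$ for all $x\in M$.
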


The fourth statement above says that $\{\sqrt{d(\rho_i)}s(i)_j^*\}_{ij}$ 
is a Pimsner-Popa basis \cite{PP}. 
The advantage of working on properly infinite factors is that we can choose each 
element of the basis from an intertwiner space, and so 
(4) is considered as a crossed product type decomposition. 

Let $A_i$ be the linear span of $\{s^*e_Nt;\;s,t\in \cH_i\}$. 
Then \cite[Section 3]{ILP} shows that $A_i$ is a simple component of 
$M\cap N'$ and 
$$M_1\cap N'=\bigoplus_{i=0}^mA_i.$$
We can introduce a right action of $A_i$ on $\cH_i$ by 
$$v\cdot (s^*e_Nt)=\frac{1}{d(\rho_i)}\inpr{v}{s}t,$$
which gives a $*$-isomorphism from the opposite algebra of $A_i$ 
onto $\B(\cH_i)$. 
Let $e(i)_{jk}=d(\rho_i)s(i)_j^*e_Ns(i)_k$. 
Then $\{e(i)_{jk}\}_{jk}$ is a system of matrix unit of $A_i$. 
Let $P$ be an intermediate subfactor and let 
$m_i=\dim (\iota_{P,N},\iota_{P,N}\rho_i)$. 
Then we may and do assume that $\{s(i)_j\}_{j=1}^{m_i}$ is an orthonormal basis 
for $\cH_i\cap P=(\iota_{P,N},\iota_{P,N}\rho_i)$.

\begin{lemma} \label{2lemma projection} Let the notation be as above. 
Then 
\begin{itemize}
\item [$(1)$] The restriction $E_P|_{\cH_i}$ of $E_P$ to $\cH_i$ is the projection 
onto $\cH_i\cap P=(\iota_{P,N},\iota_{P,N}\rho_i)$. 
\item [$(2)$] The Jones projection $e_P$ is expressed as 
$$e_P=\sum_{i=0}^m\sum_{j=1}^{m_i}e(i)_{jj}.$$
\item [$(3)$] Let $z_i$ be the unit of $A_i$. 
Then 
$$s\cdot z_ie_P=E_P(s),\quad \forall s\in \cH_i.$$
\end{itemize}
\end{lemma}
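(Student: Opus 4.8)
The plan is to prove the three statements in order, each feeding the next; the only real inputs are the orthogonality relation of Theorem~\ref{2theorem decomposition}(2) and the irreducibility $M\cap N'=\C$, which guarantees that the relevant intertwiner products $t^*s$ are scalars.

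For (1) I would first check that $E_P$ preserves $\cH_i$: for $s\in\cH_i$ and $x\in N\subset P$, bimodularity of $E_P$ together with the intertwining relation $sx=\rho_i(x)s$ gives $E_P(s)x=E_P(sx)=E_P(\rho_i(x)s)=\rho_i(x)E_P(s)$, so $E_P(s)\in\cH_i$, and since $E_P(s)\in P$ it lies in $\cH_i\cap P$. As $E_P$ is a conditional expectation it is idempotent and fixes $\cH_i\cap P$, so $E_P|_{\cH_i}$ is an idempotent with range exactly $\cH_i\cap P$. To see it is the \emph{orthogonal} projection I would show its kernel is orthogonal to its range: if $E_P(s)=0$ and $t\in\cH_i\cap P$, then $\inpr{s}{t}=t^*s\in M\cap N'=\C$ is a scalar and hence fixed by $E_P$, whereas $E_P(t^*s)=t^*E_P(s)=0$; thus $\inpr{s}{t}=0$. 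An idempotent whose kernel is orthogonal to its range is the orthogonal projection onto its range, which gives (1).

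For (2) I would deliberately avoid arguing through a Pimsner--Popa basis for $N\subset P$ (whose Jones relation lives in $\langle P,e_N\rangle$ and is easy to misread) and instead work on $L^2(M)$. Writing $\widehat{x}$ for the image of $x\in M$, I would apply $E_P$ to the reconstruction formula of Theorem~\ref{2theorem decomposition}(4). Since $E_N(s(i)_jx)\in N\subset P$, the $P$-bimodularity of $E_P$ and part (1) (which gives $E_P(s(i)_j)=s(i)_j$ for $j\le m_i$ and $0$ for $j>m_i$) kill all terms with $j>m_i$, yielding the identity $E_P(x)=\sum_i\sum_{j=1}^{m_i}d(\rho_i)s(i)_j^*E_N(s(i)_jx)$ for every $x\in M$. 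Setting $f=\sum_i\sum_{j=1}^{m_i}e(i)_{jj}\in M_1$ and using $e_N\widehat{y}=\widehat{E_N(y)}$, this says $f\widehat{x}=\widehat{E_P(x)}=e_P\widehat{x}$; as $M\widehat{1}$ is dense in $L^2(M)$, $f=e_P$. I expect this identification---pinning down that the partial matrix-unit sum is exactly $e_P$ rather than some other projection in $\bigoplus_iA_i$---to be the step needing the most care.

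Finally (3) is a short computation built on (2). From (2), $e_P=\sum_{i'}p_{i'}$ with $p_{i'}=\sum_{j=1}^{m_{i'}}e(i')_{jj}\in A_{i'}$, so multiplying by the unit $z_i$ of $A_i$ gives $z_ie_P=p_i$. Using the right action $v\cdot(s^*e_Nt)=d(\rho_i)^{-1}\inpr{v}{s}t$ one computes $s\cdot e(i)_{jj}=\inpr{s}{s(i)_j}s(i)_j$, whence $s\cdot z_ie_P=s\cdot p_i=\sum_{j=1}^{m_i}\inpr{s}{s(i)_j}s(i)_j$ is the orthogonal projection of $s$ onto $\cH_i\cap P$, which by (1) equals $E_P(s)$.
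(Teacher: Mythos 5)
Your proof is correct and takes essentially the same route as the paper's: part (2) is the paper's argument (apply $E_P$ to the expansion of Theorem~\ref{2theorem decomposition},(4), use part (1) to discard the terms with $j>m_i$, and identify the result with the matrix units acting on the standard space), and part (3) spells out the computation the paper leaves as ``follows from (2)''. The only cosmetic difference is in part (1): you show the idempotent $E_P|_{\cH_i}$ is the orthogonal projection by checking that its kernel is orthogonal to its range (using $M\cap N'=\C$ to make $t^*s$ a scalar), whereas the paper checks self-adjointness directly via $E_N\circ E_P=E_N$ and Theorem~\ref{2theorem decomposition},(2); the two verifications are interchangeable.
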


\begin{proof} (1) 
Note that $E_P(\cH_i)=\cH_i\cap P=(\iota_{P,N},\iota_{P,N}\rho_i)$ holds and 
$E_P|_{\cH_i}$ is an idempotent. 
Thus it suffices to show that $E_P|_{\cH_i}$ is self-adjoint. 
For $s,t\in \cH_i$, we have 
\begin{eqnarray*}
\inpr{E_P(s)}{t}&=&d(\rho_i)E_N(E_P(s)t^*)=d(\rho_i)E_N(E_P(E_P(s)t^*))\\
&=&d(\rho_i)E_N(E_P(s)E_P(t)^*))=\inpr{E_P(s)}{E_P(t)},
\end{eqnarray*}
which shows the statement. 

(2) Let $\varphi$ be a faithful normal state of $M$ satisfying 
$\varphi\cdot E_N=\varphi$.  
Then (1) and Theorem \ref{2theorem decomposition},(4) imply the following for 
$x\in M$ and the GNS cyclic and separating vector $\Omega$ for $\varphi$: 
\begin{eqnarray*}
e_Px\Omega&=&E_P(\sum_{i=0}^m\sum_{j=1}^{n_i}t(i)_j^*x(i)_j)\Omega
=\sum_{i=0}^m\sum_{j=1}^{n_i}E_P(t(i)_j)^*x(i)_j\Omega\\
&=&\sum_{i=0}^m\sum_{j=1}^{m_i}t(i)_j^*x(i)_j\Omega
=\sum_{i=0}^m d(\rho_i)\sum_{j=1}^{m_i}t(i)_j^*E_N(t(i)_jx)\Omega\\
&=&\sum_{i=0}^m d(\rho_i)\sum_{j=1}^{m_i}t(i)_j^*e_N t(i)_jx\Omega.
\end{eqnarray*}
This shows (2). 
(3) follows from (2). 
\end{proof}

\begin{remark}\label{2remark angle} 
The above lemma gives a powerful method to compute the angle between subfactors. 
Let $Q$ be another intermediate subfactor of $N\subset M$. 
To compute the angle between $P$ and $Q$, it suffices to compute 
the angle between two subspaces $(\iota_{P,N},\iota_{P,N}\rho_i)$ 
and $(\iota_{Q,N},\iota_{Q,N}\rho_i)$ of $(\iota_{M,N},\iota_{M,N}\rho_i)$ for those 
$\rho_i$ contained in 
$\overline{\iota_{P,N}}\iota_{P,N}$ and $\overline{\iota_{Q,N}}\iota_{Q,N}$. 
Equivalently, it suffices to compute the spectrum of the restriction of 
$E_P E_Q$ to $(\iota_{P,N},\iota_{P,N}\rho_i)$. 
\end{remark}

\begin{remark} \label{2remark angle2} 
Even when the inclusion $N\subset M$ is reducible, the angle $\Ang(P,Q)$ can be computed 
from $E_PE_Q|_{(\iota_{P,N},\iota_{P,N}\rho_i)}$ as long as 
$N\subset P$ and $N\subset Q$ are irreducible. 
Indeed we have $M=P\oplus\ker E_P$ and 
$$P=\bigoplus_{i=0}^m N(\iota_{P,N},\iota_{P,N}\rho_i).$$
Since $E_PE_QE_P$ preserves $(\iota_{P,N},\iota_{P,N}\rho_i)$, it suffice to compute 
$E_PE_Q|_{(\iota_{P,N},\iota_{P,N}\rho_i)}$ to obtain the eigenvalues 
of $E_PE_QE_P$. 
\end{remark}

\section{$Q$-systems and angles}
When $\quadri$ is an irreducible noncommuting quadrilateral of subfactors and 
$N\subset P$ and $N\subset Q$ are 2-supertransitive, 
it is observed in \cite[Lemma 4.2.1]{GJ} that $L^2(P)$ and $L^2(Q)$ are equivalent as 
$N-N$ bimodules, or in other words, $\overline{\iota_{P,N}}\iota_{P,N}$ is equivalent 
to $\overline{\iota_{Q,N}}\iota_{Q,N}$. 
Consequently, $[P:N]=[Q:N]$ and so $[M:P]=[M:Q]$. 
This follows from the fact that the two projections $e_P$ and $e_Q$ are 
equivalent in $M_1\cap N'$, which is identified with $\End_NL^2(M)_N$. 
Let $\check{P}$ and $\check{Q}$ be subfactors of $N$ such that 
$\check{P}\subset N\subset P$ and $\check{Q}\subset N\subset P$ are towers. 
It is known that under a certain situation, such as 3-supertransitivity 
of $N\subset P$ and $N\subset Q$, the equivalence 
${}_NL^2(P)_N\cong {}_NL^2(Q)_N$ implies inner conjugacy of $\check{P}$ and $\check{Q}$ 
in $N$ \cite{IKo2}, \cite{KL} though it is not necessarily the case in general. 

To characterize the canonical endomorphism, Longo \cite{L2} introduced 
the notion of $Q$-systems. 

\begin{definition} \label{3definition Q-system} Let $\cM$ be a properly infinite factor and 
let $\gamma\in \End_0(\cM)$. 
We say that $(\gamma,v,w)$ is a $Q$-system if the following three conditions are 
satisfied: 
\begin{itemize}
\item [$(1)$] $v\in (\id_\cM,\gamma)$ and $w\in (\gamma,\gamma^2)$ are isometries. 
\item [$(2)$] There exists a positive number $d$ such that 
$v^*w=w^*\gamma(v)=1/d$. 
\item [$(3)$] $ww=\gamma(w)w$.
\end{itemize}
Two $Q$-systems $(\gamma,v,w)$ and $(\gamma',v',w')$ 
are said to be equivalent if there exists a unitary $u\in (\gamma,\gamma')$ 
such that $v'=uv$ and $w'=u\gamma(u)wu^*$. 
\end{definition}

It is known that (3) is equivalent to $ww^*=\gamma(w^*)w$ under the  
assumption (1) and (2) \cite{LRo}, \cite{IKo2}. 
If $\gamma=\sigma\bar{\sigma}$ with $\sigma\in \Mor_0(\cN,\cM)$ and  
$v=\bar{r}_\sigma\in (\id_\cM,\gamma)$, $w=\sigma(r_\sigma)\in 
\sigma((\id_\cN,\bar{\sigma}\sigma))\subset (\gamma,\gamma^2)$, 
then $(\gamma,v,w)$ is a $Q$-system with $d=d(\sigma)$. 
The equivalence class of this $Q$-system only depends on the inner conjugacy class of 
$\sigma(\cN)$ in $\cM$. 
Thus we say that {\it $(\gamma,v,w)$ arises from $\sigma(\cN)\subset \cM$}. 
On the other hand, Longo \cite{L2} showed that any $Q$-system $(\gamma,v,w)$ 
arises from a subfactor $\cN\subset \cM$, determined by $E_{\cN}(x)=w^*\gamma(x)w$, 
and equivalence of two $Q$-systems exactly corresponds to inner conjugacy 
of the corresponding two subfactors. 

Let $\cN\subset \cM$ be an inclusion of properly infinite factors of finite 
index and let $(\gamma,v,w)$ be the $Q$-system arising from the inclusion. 
The second-named author and Kosaki \cite{IKo2} introduced the 
``second cohomology" $H^2(\cN\subset \cM)$, which is always a finite set, 
by the equivalence classes of $Q$-systems $(\gamma_1,v_1,w_2)$ such that $\gamma$ and $\gamma_1$ 
are equivalent. 
When $\cN$ is the fixed point algebra of $\cM$ by an outer action of 
a finite group $G$, then we can identify $H^2(\cN\subset \cM)$ with 
the second cohomology group $H^2(G,\T)$.

The following is \cite[Lemma 6.1]{IKo2}. 

\begin{lemma} \label{3lemma vanishing} 
Let $\cN\subset \cM$ be an irreducible inclusion of factors 
of finite index. 
If $\cN\subset \cM$ is 3-supertransitive, then $H^2(\cN\subset \cM)$ 
is trivial. 
\end{lemma}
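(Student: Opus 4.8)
The plan is to show that the canonical endomorphism $\gamma$ of $\cN\subset\cM$ carries, up to equivalence, a unique $Q$-system structure. I would first fix sectors by putting $\gamma=\iota_{\cM,\cN}\overline{\iota_{\cM,\cN}}\in\End(\cM)$. Since $\cN\subset\cM$ is irreducible and, being $3$-supertransitive, also $2$-supertransitive, and since $\cN\subset\cM$ is $2$-supertransitive exactly when $\cM\subset\cM_1$ is, the canonical endomorphism decomposes as $[\gamma]=[\id_\cM]\oplus[\delta]$ with $\delta$ irreducible; as $\gamma$ is self-conjugate and fixes $[\id_\cM]$, the constituent $\delta$ is self-conjugate as well. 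Fix isometries $v\in(\id_\cM,\gamma)$ and $t\in(\delta,\gamma)$ with $vv^*+tt^*=1$; irreducibility of $\cN\subset\cM$ makes $(\id_\cM,\gamma)$ one-dimensional, so $v$ is unique up to a phase. Given any $Q$-system representing a class of $H^2(\cN\subset\cM)$, its endomorphism is equivalent to $\gamma$, so after conjugating by a unitary intertwiner I may take it to be $(\gamma,v_1,w)$, and absorbing the phase of $v_1$ into the equivalence of Definition \ref{3definition Q-system} I may take $v_1=v$. Thus a class of $H^2$ is represented by a single isometry $w\in(\gamma,\gamma^2)$ satisfying conditions (1)--(3) of Definition \ref{3definition Q-system} with $v$ fixed, taken modulo the residual gauge $w\mapsto u\gamma(u)wu^*$ by unitaries $u\in(\gamma,\gamma)$ with $uv=v$, that is, $u=vv^*+\mu tt^*$ with $|\mu|=1$.

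The decisive input from $3$-supertransitivity is the fusion multiplicity $N^\delta_{\delta\delta}:=\dim(\delta,\delta^2)$. Applying the standard identification of a relative commutant with an intertwiner space to the inclusion $\cM\subset\cM_1$ — which is $3$-supertransitive exactly when $\cN\subset\cM$ is, and whose dual canonical endomorphism is $\gamma=[\id_\cM]\oplus[\delta]$ — gives $\dim(\iota_{\cM_1,\cM}\gamma,\iota_{\cM_1,\cM}\gamma)=4+N^\delta_{\delta\delta}$ by a direct Frobenius-reciprocity count, while $3$-supertransitivity forces this dimension to equal $\dim TL_3=5$; hence $N^\delta_{\delta\delta}=1$. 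The same bookkeeping shows $[\gamma^2]=2[\id_\cM]\oplus3[\delta]\oplus(\text{higher})$, so $\dim(\gamma,\gamma^2)=5$, which is the dimension of the space in which $w$ lives.

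I would then decompose the multiplication $m=w^*\colon\gamma^2\to\gamma$ into its components against the irreducible constituents of $\gamma^2$, using the isometries $v$ and $t$. The components $\id\cdot\id\to\id$, $\id\cdot\delta\to\delta$ and $\delta\cdot\id\to\delta$ are forced by unitality, hence by $v$ together with the normalization (2); the component $\delta\cdot\delta\to\id$ lies in the one-dimensional space $(\id_\cM,\delta^2)\cong(\delta,\delta)$ and is fixed, up to the scale set by $d$ and (2), by the standard solution $r_\delta$ of the conjugate equations. The only genuinely free datum is the $\delta\cdot\delta\to\delta$ component, a scalar $c=t^*\gamma(t)^*wt\in(\delta,\delta^2)$, whose ambient space is one-dimensional precisely because $N^\delta_{\delta\delta}=1$. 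A short computation of the gauge action shows $u=vv^*+\mu tt^*$ sends $c\mapsto\mu c$, so the phase of $c$ is pure gauge; meanwhile the isometry condition (1), the associativity (3), and the normalization (2) determine $|c|$ (expressing it through $d(\delta)$) and force $c\neq0$. Hence every admissible $w$ is equivalent to the one arising from $\cN\subset\cM$, and $H^2(\cN\subset\cM)$ is trivial.

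I expect the main obstacle to be this third step: verifying that every component of $m$ other than the $\delta\cdot\delta\to\delta$ one is genuinely rigid — determined by $(\gamma,v,d)$ alone — which requires careful diagrammatic bookkeeping of the $Q$-system relations, and then checking that associativity, combined with the isometry and normalization conditions, leaves no residual freedom in $|c|$ beyond the value realized by the given inclusion. The $2$- and $3$-supertransitivity hypotheses enter exactly to force $\dim(\id_\cM,\gamma)=1$ and $\dim(\delta,\delta^2)=1$, and it is the one-dimensionality of these two intertwiner spaces that collapses all the moduli.
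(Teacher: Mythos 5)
Your strategy is the one the paper itself uses (Lemma \ref{3lemma Q-system} together with the remark following it): decompose $[\gamma]=[\id_\cM]\oplus[\delta]$, fix $v$ and $t$, reduce a class in $H^2(\cN\subset \cM)$ to a single intertwiner $w\in(\gamma,\gamma^2)$ modulo the residual gauge $u=vv^*+\mu tt^*$, and use $3$-supertransitivity to make $(\delta,\delta^2)$ one-dimensional. The gap is in your fourth step, and it is not bookkeeping: your claims (a) that the $\delta\cdot\delta\to\id_\cM$ component of $w$ is rigid, i.e.\ a fixed multiple of $r_\delta$ for every $Q$-system, and (b) that the gauge sends $c\mapsto\mu c$, are mutually inconsistent. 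The same computation that gives (b) shows that $u=vv^*+\mu tt^*$ multiplies the $\delta\cdot\delta\to\id_\cM$ component by $\mu^2$; since gauge transforms of $Q$-systems are $Q$-systems, (a) would force $\mu^2=1$, and you could no longer rotate the phase of $c$ at will. In fact (a) is false: conditions $(1)$ and $(2)$ of Definition \ref{3definition Q-system} pin down only the \emph{modulus} of the $\delta\cdot\delta\to\id_\cM$ component, and its phase is exactly the free parameter $c$ appearing in the paper's formula $(3.3)$, which has to be removed by the gauge (it cannot be absorbed into $r_\delta$ either, since for a real self-conjugate $\delta$ the normalized $r_\delta$ is determined up to sign only).

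Once that phase is gauge-normalized, the residual gauge is only $\mu=\pm1$, acting on the $(\delta,\delta^2)$-component $s$ by a sign, and then the real problem appears: you must show that associativity, in this fixed gauge, admits at most the two solutions $\pm s$ out of the full circle $\{\omega s:\ |\omega|=1\}$ of isometries in the one-dimensional space $(\delta,\delta^2)$. Determining $|c|$, which is what your proposal aims at, is immediate from the isometry condition and says nothing about this. The paper's key point is that its equation $(3.2)$ is \emph{inhomogeneous} in $s$: rearranged, it reads $\sqrt{d(\delta)}\,(r_\delta-\delta(r_\delta))=(d(\delta)-1)(\delta(s)s-s^2)$, and the left-hand side is nonzero when $d(\delta)>1$ because $r_\delta^*r_\delta=1$ while $r_\delta^*\delta(r_\delta)=\pm1/d(\delta)$; substituting $\omega s$ multiplies the right-hand side by $\omega^2$ and hence forces $\omega^2=1$. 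This argument is absent from your proposal, and without it the proof does not close. A secondary point: $\dim TL_3=5$ fails at index $2$, where the Jones--Wenzl projection $p_3$ vanishes, the principal graph is $A_3$, and $(\delta,\delta^2)=0$ because $\delta$ is an automorphism; the lemma is trivial there, since the coefficient $\sqrt{(d(\delta)-1)/(d(\delta)+1)}$ of the $(\delta,\delta^2)$-term in $(3.3)$ vanishes, but your dimension count should exclude or treat that case separately.
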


When $\fQ=\quadri$ is a noncommuting quadrilateral of subfactors and 
$N\subset P$ and $N\subset Q$ are 2-supertransitive, 
the two inclusions $\check{P}\subset N$ and $\check{Q}\subset N$ give equivalent 
canonical endomorphisms as we observed above. 

\begin{definition} Let $\fQ=\quadri$ be an irreducible noncommuting quadrilateral  
of factors with 2-supertransitive $N\subset P$ and $N\subset Q$. 
We denote by $c(\fQ)$ the class of the $Q$-system arising from $\check{Q}\subset N$ 
in $H^2(\widehat{N\subset P})$. 
\end{definition}

The class $c(\fQ)$ is very much related to the angle $\Theta(P,Q)$ 
and there are at most two possibilities for $\Theta(P,Q)$ for 
a given class $c(\fQ)$ as we will see below. 

The following lemma is an easy consequence of the definition above: 

\begin{lemma} \label{3lemma trivial} 
For an irreducible noncommuting quadrilateral $\fQ=\quadri$ 
of factors with 2-supertransitive $N\subset P$ and $N\subset Q$, the following 
three conditions are equivalent: 
\begin{itemize}
\item [$(1)$] The class $c(\fQ)\in H^2(\widehat{N\subset P})$ is trivial. 
\item [$(2)$] The two subfactors $\check{P}$ and $\check{Q}$ are inner conjugate 
in $N$. 
\item [$(3)$] There exists an isomorphism $\pi:P\rightarrow Q$ of $P$ onto $Q$ 
such that the restriction $\pi|_N$ of $\pi$ to $N$ is the identity map. 
\end{itemize}
\end{lemma}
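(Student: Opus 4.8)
The plan is to prove $(1)\Leftrightarrow(2)$ directly from the definition of the second cohomology together with Longo's correspondence, and then to establish $(2)\Leftrightarrow(3)$ by moving between the downward basic constructions $\check{P},\check{Q}$ and the inclusions $N\subset P$, $N\subset Q$ via the sector calculus. For $(1)\Leftrightarrow(2)$ I would first recall that $\check{P}\subset N$ and $\check{Q}\subset N$ have the same canonical endomorphism on $N$ up to equivalence, as noted just before the definition of $c(\fQ)$; this is exactly what makes the $Q$-system arising from $\check{Q}\subset N$ represent a class in $H^2(\widehat{N\subset P})=H^2(\check{P}\subset N)$, whose base point is the class of the $Q$-system arising from $\check{P}\subset N$ itself. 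Then $c(\fQ)$ is trivial precisely when these two $Q$-systems are equivalent, and by Longo's result recalled after Definition \ref{3definition Q-system} (equivalence of $Q$-systems corresponds exactly to inner conjugacy of the associated subfactors) this says exactly that $\check{P}$ and $\check{Q}$ are inner conjugate in $N$. Hence $(1)\Leftrightarrow(2)$ is immediate from the definitions.

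For $(3)\Rightarrow(2)$, suppose $\pi:P\to Q$ is an isomorphism with $\pi|_N=\id$. Since $\pi$ fixes $N$ pointwise, $\pi\iota_{P,N}=\iota_{Q,N}$ as morphisms from $N$ to $Q$. Taking conjugates and using $\overline{\pi\iota_{P,N}}\cong\overline{\iota_{P,N}}\,\bar\pi$ together with $\bar\pi\cong\pi^{-1}$ (the conjugate of an isomorphism is its inverse), I obtain $\overline{\iota_{Q,N}}\cong\overline{\iota_{P,N}}\pi^{-1}$. Recalling that $\check{P}=\overline{\iota_{P,N}}(P)$ and $\check{Q}=\overline{\iota_{Q,N}}(Q)$ are representatives of the downward basic constructions, equivalence of these two morphisms produces a unitary $u\in N$ with $\check{Q}=u\,\overline{\iota_{P,N}}(\pi^{-1}(Q))\,u^*=u\check{P}u^*$, which is $(2)$.

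For $(2)\Rightarrow(3)$, suppose $u\in N$ is a unitary with $u\check{P}u^*=\check{Q}$. Then $\Ad u$ is an isomorphism of the pair $\check{P}\subset N$ onto $\check{Q}\subset N$, carrying the conditional expectation onto $\check{P}$ to that onto $\check{Q}$, and hence the Jones projection of $\check{P}\subset N$ to that of $\check{Q}\subset N$; by functoriality of the basic construction, and using that $N\subset P$ and $N\subset Q$ are the basic extensions of $\check{P}\subset N$ and $\check{Q}\subset N$ via identifications fixing $N$, it extends to an isomorphism $\tilde\pi:P\to Q$ with $\tilde\pi|_N=\Ad u$. I expect the main obstacle to be precisely here: the extension restricts to $\Ad u$ rather than the identity on $N$, so it is not yet the map required in $(3)$. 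The fix is the observation that $u\in N\subset Q$, so $\Ad u^*$ is an inner automorphism of $Q$, and $\pi:=\Ad u^*\circ\tilde\pi$ is then an isomorphism $P\to Q$ with $\pi|_N=\id$, giving $(3)$. The remaining implications are formal consequences of Longo's theorem and the conjugate-morphism calculus, so the entire content of the lemma concentrates in this inner-automorphism correction.
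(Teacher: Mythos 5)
Your proof is correct, and since the paper states this lemma without proof (calling it ``an easy consequence of the definition''), your argument supplies exactly the intended details: Longo's correspondence between equivalence of $Q$-systems and inner conjugacy of the associated subfactors gives $(1)\Leftrightarrow(2)$, while uniqueness/functoriality of the basic construction together with the correction by the inner automorphism $\mathrm{Ad}\, u^*$ of $Q$ gives $(2)\Leftrightarrow(3)$, and the conjugate-sector computation gives $(3)\Rightarrow(2)$. The only remark worth adding is that $(3)\Rightarrow(2)$ admits an even shorter route avoiding sector calculus: since $\pi|_N=\id$, applying $\pi$ to the tower $\check{P}\subset N\subset P$ shows that $\check{P}$ itself serves as a downward basic construction for $N\subset Q$, whence it is inner conjugate to $\check{Q}$ in $N$ by the uniqueness of the downward basic construction.
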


Since the $Q$-systems we encounter in this paper always have $\gamma$ decomposed into 
two irreducible components, we give a detailed description of the condition 
(3) of Definition \ref{3definition Q-system} in this case. 
Let $\sigma\in \End_0(\cM)$ be an irreducible endomorphism satisfying 
$[\id_\cM]\neq [\sigma]$ and let $\gamma=\id_{\cM}\oplus \sigma$. 
Note that for $\gamma$ to have a $Q$-system, the endomorphism $\sigma$ must be self-conjugate.
We fix isometries $v\in (\id_{\cM},\gamma)$ and $v_1\in (\sigma,\gamma)$, which means 
$\gamma(x)=vxv^*+v_1\sigma(x)v_1^*$. 
We give a description of a $Q$-system of the form $(\gamma,v,w)$ in terms of an isometry 
in $(\sigma,\sigma^2)$.  
It is easy to show that an isometry $w\in (\gamma,\gamma^2)$ satisfying (2) of 
Definition \ref{3definition Q-system} is of the following form and vice versa: 
$$w=\frac{v}{\sqrt{d(\sigma)+1}}+\frac{v_1\sigma(v)v_1^*}{\sqrt{d(\sigma)+1}}
+\sqrt{\frac{d(\sigma)}{d(\sigma)+1}}cv_1\sigma(v_1)r_\sigma v^*
+\sqrt{\frac{d(\sigma)-1}{d(\sigma)+1}}v_1\sigma(v_1)s v_1^*,$$
where $c$ is a complex number with $|c|=1$ and $s\in (\sigma,\sigma^2)$ 
is an isometry.  
Passing from $w$ to $u\gamma(u)wu^*$ with $u=vv^*+\bar{c}v_1v_1^*$ (or redefining $r_\sigma$) 
if necessary, we may always assume $c=1$ for a $Q$-system $(\gamma,v,w)$. 
We say that such a $Q$-system is normalized (with respect to $(v,v_1,r_\sigma)$). 
It is straightforward to show the following: 

\begin{lemma} \label{3lemma Q-system} 
Let $\sigma$, $\gamma$, $v$, and $v_1$ be as above. 
If $s\in (\sigma,\sigma^2)$ is an isometry satisfying 
\begin{equation}\sigma(s)r_\sigma=sr_\sigma,
\end{equation}
\begin{equation}\sqrt{d(\sigma)}r_\sigma+(d(\sigma)-1)s^2=\sqrt{d(\sigma)}\sigma(r_\sigma)
+(d(\sigma)-1)\sigma(s)s,
\end{equation}
and $w$ is given by 
\begin{equation}
w=\frac{v}{\sqrt{d(\sigma)+1}}+\frac{v_1\sigma(v)v_1^*}{\sqrt{d(\sigma)+1}}
+\sqrt{\frac{d(\sigma)}{d(\sigma)+1}}v_1\sigma(v_1)r_\sigma v^*
+\sqrt{\frac{d(\sigma)-1}{d(\sigma)+1}}v_1\sigma(v_1)s v_1^*,
\end{equation}
then $(\gamma,v,w)$ is a $Q$-system. 
Conversely, every normalized $Q$-system is of this form with an isometry 
$s\in (\sigma,\sigma^2)$ satisfying $(3.1)$ and $(3.2)$. 
If $s$ and $s'$ are isometries in $(\sigma,\sigma^2)$ satisfyingly $(3.1)$ and $(3.2)$, 
the corresponding $Q$-systems are equivalent if and only if $s=\pm s'$. 
\end{lemma}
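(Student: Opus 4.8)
The plan is to treat conditions (1) and (2) of Definition \ref{3definition Q-system} as already granted, and to concentrate everything on condition (3). By the parametrization recorded just before the lemma, any $w$ of the form (3.3) built from an isometry $s\in(\sigma,\sigma^2)$ is automatically an isometry in $(\gamma,\gamma^2)$ satisfying (2) (one checks directly that $v^*w=1/\sqrt{d(\sigma)+1}$, so the $Q$-system constant is $d=\sqrt{d(\sigma)+1}$). Hence both the direct and the converse assertions collapse to a single equivalence: for $w$ given by (3.3), the associativity relation $ww=\gamma(w)w$ holds if and only if $s$ satisfies (3.1) and (3.2). Granting this, the forward direction is immediate, and the converse follows because a normalized $Q$-system has its $w$ of the shape (3.3) for a \emph{unique} $s$, so condition (3) transfers to (3.1)--(3.2).

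The heart is therefore the computation of $ww=\gamma(w)w$, both sides lying in $(\gamma,\gamma^3)$. I would substitute (3.3) into each side and expand using only the realization $\gamma(x)=vxv^*+v_1\sigma(x)v_1^*$, the intertwining relations $vx=\gamma(x)v$ and $v_1\sigma(x)=\gamma(x)v_1$, the orthogonality relations $v^*v=v_1^*v_1=1$, $v^*v_1=0$, $vv^*+v_1v_1^*=1$, and the standard relations for the self-conjugate $\sigma$, namely $r_\sigma^*r_\sigma=1$, $d(\sigma)\,\bar r_\sigma^*\sigma(r_\sigma)=1$ and $r_\sigma^*\sigma(r_\sigma)=\pm 1/d(\sigma)$. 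To compare the two sides I would decompose $(\gamma,\gamma^3)$ by source and target: right multiplication by $v$ and by $v_1$ splits the source $\gamma=\id\oplus\sigma$, while compression by $v^*,v_1^*,\sigma(v_1)^*$ together with the $\sigma$-intertwiners $r_\sigma,s$ extracts the target components. Because $\sigma$ is irreducible with $[\id]\neq[\sigma]$, the spaces $(\id,\sigma)$, $(\sigma,\id)$ vanish and $(\sigma,\sigma)=\C$, so most component-equations are scalar identities already forced by (1) and (2). The new content survives in exactly two components: the one valued in $(\id,\sigma^3)$ yields (3.1), and the one valued in $(\sigma,\sigma^3)$ yields (3.2). (Both $r_\sigma$ and $\sigma(r_\sigma)$ genuinely lie in $(\sigma,\sigma^3)$, since membership in $(\id,\sigma^2)$ gives $r_\sigma x=\sigma^2(x)r_\sigma$ for all $x\in\cM$, whence also $r_\sigma\in(\sigma,\sigma^3)$; thus the two displayed relations are legitimate intertwiner identities.)

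I expect the \emph{main obstacle} to be precisely this bookkeeping: organizing the dozen or so cross terms of $ww$ and of $\gamma(w)w$, collapsing them with the $\sigma$-relations, and verifying that after stripping the outer isometries of the form $v_1\sigma(v_1)(\cdots)$ the residual identities are literally (3.1) and (3.2) and nothing further. The diagrammatic calculus of Section 2 is well adapted here: each side becomes a planar diagram in the single strand colour $\sigma$, decorated by the trivalent vertex $s$ and the cup/cap $r_\sigma$, and (3.1) and (3.2) are the two independent planar moves obtained according to whether the external $\gamma$-legs are coloured by $\id$ or by $\sigma$.

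Finally, for the equivalence criterion I would argue as follows. An equivalence between two normalized $Q$-systems sharing the fixed data $(v,v_1,r_\sigma)$ is a unitary $u\in(\gamma,\gamma)$ with $uv=v$ and $w'=u\gamma(u)wu^*$. Since $\gamma=\id\oplus\sigma$ has distinct irreducible summands, $(\gamma,\gamma)=\C vv^*+\C v_1v_1^*$, and the requirement $uv=v$ forces $u=vv^*+\beta v_1v_1^*$ with $|\beta|=1$. Using the recovery formulas $e\,s=\sigma(v_1)^*v_1^*\,w\,v_1$ and $b\,c\,r_\sigma=\sigma(v_1)^*v_1^*\,w\,v$ (with $b,e$ the coefficients of (3.3) and $c$ the phase normalized to $1$), a short computation with $v_1^*u=\beta v_1^*$, $v_1^*\gamma(u)=\sigma(u)v_1^*$ and $u^*v=v$ shows that the gauge transformation sends $s\mapsto\beta s$ and $c\mapsto\beta^2 c$. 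Normalization of both $Q$-systems means $c=c'=1$, so $\beta^2=1$, i.e.\ $\beta=\pm1$, and correspondingly $s'=\beta s=\pm s$; conversely $u=vv^*\pm v_1v_1^*$ realizes the equivalences $s'=\pm s$. This yields the stated criterion.
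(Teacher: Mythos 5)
Your proposal is correct: the paper itself offers no proof (it introduces the lemma with ``It is straightforward to show the following''), and your argument is precisely the intended verification --- the parametrization before the lemma disposes of conditions (1) and (2), the component decomposition of $ww=\gamma(w)w$ in $(\gamma,\gamma^3)$ leaves exactly the $(\id_\cM,\sigma^3)$- and $(\sigma,\sigma^3)$-valued identities (3.1) and (3.2), and the gauge computation $u=vv^*+\beta v_1v_1^*$ sending $(c,s)\mapsto(\beta^2c,\beta s)$ yields the equivalence criterion $s'=\pm s$. I checked the key steps (isometry of $w$, the two surviving components, and the gauge action) and they hold as you describe.
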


\begin{remark} When the $Q$-system $(\gamma,v,w)$ as above arises from a 3-supertransitive 
subfactor, we have $\dim(\sigma,\sigma^2)=1$. 
If $s,s'\in (\sigma,\sigma^2)$ are isometries satisfying (3.1) and (3.2), 
there exists a complex number $\omega$ of modulus 1 such that $s'=\omega s$ and 
(3.2) implies $\omega^2=1$. 
This proves Lemma \ref{3lemma vanishing} again. 
\end{remark}

We keep using the same notation as above and consider the case with an inclusion 
$\cM\subset \cM_1$ such that $\gamma=\biota\iota$, $v=r_\iota$, $w=\biota(\br_\iota)$ 
where $\iota=\iota_{\cM_1,\cM}$. 
We set 
$$t=\frac{d(\iota)}{\sqrt{d(\sigma)}}\iota(v_1^*)\br_\iota,$$
which is an isometry in $(\iota,\iota\sigma)$ thanks to Frobenius reciprocity 
\cite[Proposition 2.2]{I4}. 
Then Theorem \ref{2theorem decomposition},(4) shows that for every $x\in \cM_1$, we have 
$$x=E_{\cM}(x)+d(\sigma)E_{\cM}(xt^*)t.$$
In particular, 
$$t^2=E_{\cM}(t^2)+d(\sigma)E_\cM(t^2t^*)t,$$
$$t^*=E_\cM(t^*)+d(\sigma)E_{\cM}({t^*}^2)t=d(\sigma)E_{\cM}({t^*}^2)t,$$
where we use the fact $E_\cM(t)\in (\id_\cM,\sigma)=\{0\}$. 
Therefore the $*$-algebra structure of $\cM_1$ is completely determined by 
$E_\cM(t^2)$ and $E_\cM(t^2t^*)$, 
which can be computed in terms of the $Q$-system as follows:
\begin{eqnarray*}
E_\cM(t^2) &=&\frac{d(\iota)^2}{d(\sigma)}
r_\iota^*\biota(\iota(v_1^*)\br_\iota\iota(v_1^*)\br_\iota)r_\iota
=\frac{d(\iota)}{d(\sigma)}v_1^*\biota(\iota(v_1^*)\br_\iota)r_\iota\\
&=&\frac{d(\iota)}{d(\sigma)}\sigma(v_1^*)v_1^*wv
=\frac{1}{\sqrt{d(\sigma)}}r_\sigma, 
\end{eqnarray*}
\begin{eqnarray*}
E_\cM(t^2t^*)&=&\frac{d(\iota)^3}{\sqrt{d(\sigma)}^3}
r_\iota^*\biota(\iota(v_1^*)\br_\iota\iota(v_1^*)\br_\iota \br_\iota^*\iota(v_1))r_\iota 
=\frac{d(\iota)}{\sqrt{d(\sigma)}^3}
v_1^*\biota(\iota(v_1^*)\br_\iota) v_1\\
&=&\frac{d(\iota)}{\sqrt{d(\sigma)}^3}
\sigma(v_1^*)v_1^*wv_1=\sqrt{\frac{d(\sigma)-1}{d(\sigma)^3}}s.
\end{eqnarray*}

\begin{lemma}\label{3lemma algebra} Let the notation be as above. 
Then the following hold: 
$$t^2=\frac{1}{\sqrt{d(\sigma)}}r_\sigma+\sqrt{\frac{d(\sigma)-1}{d(\sigma)}}st,$$
$$t^*=\sqrt{d(\sigma)}r_\sigma^*t.$$
\end{lemma}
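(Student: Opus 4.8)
The plan is to read both identities straight off the crossed-product-type expansion
$$x=E_{\cM}(x)+d(\sigma)E_{\cM}(xt^*)t,\qquad x\in\cM_1,$$
which was recorded just above the statement as an instance of Theorem \ref{2theorem decomposition},(4): the dual canonical endomorphism of $\cM\subset\cM_1$ is $\gamma=\id_\cM\oplus\sigma$, so the only irreducible summands are $\id_\cM$ and $\sigma$, each with multiplicity one. Since $t\in(\iota,\iota\sigma)$ is an isometry, it is exactly the single orthonormal basis vector of $\cH_1=(\iota,\iota\sigma)$, the basis vector of $\cH_0=(\iota,\iota)=\C$ being $1$. Everything then rests on the two conditional-expectation values
$$E_{\cM}(t^2)=\frac{1}{\sqrt{d(\sigma)}}\,r_\sigma,\qquad
E_{\cM}(t^2t^*)=\sqrt{\frac{d(\sigma)-1}{d(\sigma)^3}}\,s,$$
which were already computed in the excerpt from the explicit form of the $Q$-system; I would take these as given.

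For the first identity I would apply the expansion to $x=t^2$, obtaining $t^2=E_{\cM}(t^2)+d(\sigma)E_{\cM}(t^2t^*)\,t$, and substitute the two values above. The only thing to check is the arithmetic of the second coefficient, namely $d(\sigma)\sqrt{(d(\sigma)-1)/d(\sigma)^3}=\sqrt{(d(\sigma)-1)/d(\sigma)}$, which yields exactly
$$t^2=\frac{1}{\sqrt{d(\sigma)}}\,r_\sigma+\sqrt{\frac{d(\sigma)-1}{d(\sigma)}}\,st.$$

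For the second identity I would apply the expansion to $x=t^*$. The term $E_{\cM}(t^*)$ drops out because $E_{\cM}(t^*)=E_{\cM}(t)^*$ lies in $(\id_\cM,\sigma)^*=\{0\}$, where irreducibility together with $[\id_\cM]\neq[\sigma]$ is what forces this intertwiner space to vanish; hence $t^*=d(\sigma)E_{\cM}({t^*}^2)\,t$. Since $E_{\cM}$ is $*$-preserving, $E_{\cM}({t^*}^2)=E_{\cM}(t^2)^*=\frac{1}{\sqrt{d(\sigma)}}\,r_\sigma^*$, and substituting gives $t^*=\sqrt{d(\sigma)}\,r_\sigma^*t$.

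There is no genuine analytic obstacle here: the two nontrivial computations — the values of $E_{\cM}(t^2)$ and $E_{\cM}(t^2t^*)$ — are carried out before the statement, and what remains is substitution plus one use of $E_{\cM}(x^*)=E_{\cM}(x)^*$. If anything, the point requiring care is bookkeeping, namely confirming that the normalization $d(\iota)/\sqrt{d(\sigma)}$ in the definition of $t$ is precisely what makes $t$ a unit vector in $(\iota,\iota\sigma)$, so that the single-generator expansion applies with no stray scalar, and that $E_{\cM}(t)=0$ so the expansion of $t^*$ carries no constant term.
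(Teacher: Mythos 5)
Your proof is correct and is essentially the paper's own argument: the paper states the expansion $x=E_{\cM}(x)+d(\sigma)E_{\cM}(xt^*)t$, computes $E_{\cM}(t^2)$ and $E_{\cM}(t^2t^*)$ from the $Q$-system immediately before the lemma, and the lemma follows by exactly the substitution and $*$-preservation argument you give. Nothing is missing.
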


\begin{corollary} Let $\cM$ be a properly infinite factor and let $\sigma\in \End_0(\cM)$ 
be an irreducible self-conjugate endomorphism. 
If there exists a $Q$-system for $\id_\cM\oplus \sigma$, then $[\sigma]$ is a real sector, 
that is,  $r_\sigma=\br_\sigma$. 
\end{corollary}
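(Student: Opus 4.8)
Write $d=d(\sigma)$ and set $\eta:=d\,r_\sigma^*\sigma(r_\sigma)$. Since $\sigma$ is self-conjugate we have $r_\sigma=\pm\br_\sigma$, and the conjugate equation $\br_\sigma^*\sigma(r_\sigma)=1/d$ forces $\eta\in\{+1,-1\}$, with $\sigma$ real precisely when $\eta=+1$. So the entire content of the corollary is that the Frobenius--Schur indicator $\eta$ cannot be $-1$. The plan is to extract $\eta$ from the structure constants of the $*$-algebra $\cM_1$ produced by Lemma \ref{3lemma algebra}, where I realize the hypothetical $Q$-system as $\gamma=\biota\iota=\id_\cM\oplus\sigma$ for an inclusion $\cM\subset\cM_1$, with the isometry $t\in(\iota,\iota\sigma)$ and the isometry $s\in(\sigma,\sigma^2)$ encoding the $Q$-system as in Lemma \ref{3lemma Q-system}.

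First I would record the elementary facts $r_\sigma^*r_\sigma=1$ and $r_\sigma^*s=0$ (the latter because $r_\sigma^*s\in(\sigma,\id_\cM)=\{0\}$ by irreducibility of $\sigma$ and $[\sigma]\ne[\id_\cM]$), together with the observation that both $r_\sigma^*\sigma(r_\sigma)$ and $\mu:=r_\sigma^*\sigma(s)s$ lie in $(\sigma,\sigma)=\C$. Composing the defining relation $(3.2)$ of $s$ on the left with $r_\sigma^*$ then collapses the left-hand side to $\sqrt{d}$ (using $r_\sigma^*r_\sigma=1$ and $r_\sigma^*s^2=(r_\sigma^*s)s=0$) and the right-hand side to $\eta/\sqrt{d}+(d-1)\mu$, giving the scalar identity
$$\sqrt{d}=\frac{\eta}{\sqrt{d}}+(d-1)\mu.$$
At this point everything reduces to computing the single scalar $\mu=r_\sigma^*\sigma(s)s$ independently of $(3.2)$; once $\mu$ is known, the displayed identity pins down $\eta$.

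The hard part will be precisely this independent evaluation of $\mu$. Here relation $(3.1)$, $\sigma(s)r_\sigma=sr_\sigma$, is the essential input: it is a rotational symmetry of the trivalent vertex $s$, and contracting it against the conjugate isometries $r_\sigma,\br_\sigma$ via the conjugate equations fixed throughout the paper expresses $\mu$ as a closed ``bubble'' whose value, substituted into the displayed identity, forces $\eta=+1$, i.e. $r_\sigma=\br_\sigma$ and $\sigma$ real. I expect the real obstacle to lie in carrying out this contraction cleanly: unlike the \emph{starred} contractions, which all trivialize through $r_\sigma^*s=0$, the quantity $r_\sigma^*\sigma(s)$ does not simplify directly from $(3.1)$, so the computation genuinely needs the diagrammatic straightening --- where the Frobenius--Schur sign is produced by bending the self-conjugate $\sigma$-leg --- rather than the intertwiner bookkeeping alone.
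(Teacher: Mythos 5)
Your first computation is correct: composing $(3.2)$ on the left with $r_\sigma^*$, and using $r_\sigma^*r_\sigma=1$, $r_\sigma^*s=0$ and $r_\sigma^*\sigma(r_\sigma)=\eta/d$ with $\eta=\pm1$, does yield
\begin{equation*}
\sqrt{d}=\frac{\eta}{\sqrt{d}}+(d-1)\mu,\qquad \mu=r_\sigma^*\sigma(s)s .
\end{equation*}
But your proof ends exactly where the content of the corollary begins, and the route you sketch for the missing step cannot work. Everything you allow yourself for the ``independent'' evaluation of $\mu$ --- the isometry condition $s^*s=1$, relation $(3.1)$, and the conjugate equations for $r_\sigma,\br_\sigma$ --- is invariant under the substitution $s\mapsto\omega s$ for an arbitrary phase $\omega$ (which leaves $r_\sigma,\br_\sigma$ untouched), whereas $\mu\mapsto\omega^2\mu$. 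Consequently no identity of the form $\mu=c$ with $c\neq 0$ can be a consequence of those hypotheses, and $\mu=0$ is excluded by your own displayed equation once $d>1$. The scalar $\mu$ is pinned down only by $(3.2)$, which breaks the phase symmetry merely down to $s\mapsto\pm s$ (this is exactly why equivalent $Q$-systems correspond to $s'=\pm s$ in Lemma \ref{3lemma Q-system}); so ``compute $\mu$ independently of $(3.2)$, then substitute'' is circular in principle, not just hard in practice. Norm estimates do not rescue the plan either: Cauchy--Schwarz gives $|\mu|\leq 1$, but the value $\mu=\frac{d+1}{\sqrt{d}(d-1)}$ corresponding to $\eta=-1$ drops below $1$ once $d$ exceeds roughly $3.4$.

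The input you are missing is a relation mixing the multiplication with the $*$-operation, and that is what the paper's one-line proof uses. You actually name the right tool in your opening sentence --- the extension $\cM\subset\cM_1$ and the isometry $t\in(\iota,\iota\sigma)$ --- but then never use it. Lemma \ref{3lemma algebra} gives $t^*=\sqrt{d}\,r_\sigma^*t$, a degree-mixing identity that destroys the phase symmetry above; applying the involution twice and using $tr_\sigma=\sigma(r_\sigma)t$,
\begin{equation*}
t=(t^*)^*=\sqrt{d}\,t^*r_\sigma=d\,r_\sigma^*tr_\sigma=d\,r_\sigma^*\sigma(r_\sigma)\,t ,
\end{equation*}
and since $t$ is a nonzero isometry this forces $r_\sigma^*\sigma(r_\sigma)=1/d$, i.e.\ $r_\sigma=\br_\sigma$. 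If you insist on working only with $s$, you must use $(3.2)$ together with its adjoint inside one computation, not quarantine $(3.2)$ to a single capping.
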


\begin{proof} Using one of the above formulae, we get 
$$t=(t^*)^*=\sqrt{d(\sigma)}(r_\sigma^*t)^*=\sqrt{d(\sigma)}t^*r_\sigma=d(\sigma)r_\sigma^*tr_\sigma
=d(\sigma)r_\sigma^*\sigma(r_\sigma) t,$$
which shows $r_\sigma^*\sigma(r_\sigma)=1/d(\sigma)$. 
\end{proof}

Now we show how the class $c(\fQ)$ determines the angle between $P$ and $Q$. 
Let $\fQ=\quadri$ be an irreducible quadrilateral of factors such that 
$N\subset P$ and $N\subset Q$ are 2-supertransitive. 
We assume that ${}_NL^2(P)_N$ and ${}_NL^2(Q)_N$ are equivalent. 
We apply the above argument to the case where $\cM_1=P$, $\cM=N$, $\iota=\iota_{P,N}$ and 
$\gamma=\biota\iota$. 
We set $\check{P}=\biota(P)$. 
We choose two isometries $v=r_\iota\in (\id,\gamma)$ and $v_1\in (\sigma,\gamma)$. 
Then we may assume that the $Q$-system $(\gamma,v,w_P=\biota(\br_\iota))$ 
is normalized with respect to $(v,v_1,r_\sigma)$. 
By choosing an appropriate representative of the conjugate sector of $[\iota_{Q,N}]$, 
we may also assume $\gamma=\overline{\iota_{Q,N}}\iota_{Q,N}$, $v=r_{\iota_{Q,N}}$, 
and that the $Q$-system $(\gamma,v,w_Q=\overline{\iota_{Q,N}}(\br_{\iota_{Q,N}}))$ 
is normalized with respect to $(v,v_1,r_\sigma)$. 
By definition, the class $c(\fQ)$ is given by $(\gamma,v,w_Q)$ in $H^2(\check{P}\subset N)$. 
Let $s_P$ and $s_Q$ be isometries in $(\sigma,\sigma^2)$ corresponding to 
$w_P$ and $w_Q$ through (3.3) respectively. 
We set 
$$t_P=\sqrt{\frac{d(\sigma)+1}{d(\sigma)}}\iota_{P,N}(v_1^*)\br_{\iota_{P,N}}
\in (\iota_{P,N},\iota_{P,N}\sigma),$$ 
$$t_Q=\sqrt{\frac{d(\sigma)+1}{d(\sigma)}}\iota_{Q,N}(v_1^*)\br_{\iota_{Q,N}}
\in (\iota_{Q,N},\iota_{Q,N}\sigma),$$
which are isometries. 

\begin{lemma}\label{3lemma quadratic} Let $\fQ=\quadri$ be an irreducible 
quadrilateral of factors such that $N\subset P$ and $N\subset Q$ are 2-supertransitive. 
If $L^2(P)$ and $L^2(Q)$ are equivalent as $N-N$ bimodules, then 
\begin{itemize}
\item [$(1)$] $\inpr{t_P}{t_Q}$ satisfies the following quadratic equation:
$$\inpr{t_P}{t_Q}^2-\frac{d(\sigma)-1}{d(\sigma)}\inpr{s_P}{s_Q}\inpr{t_P}{t_Q}
-\frac{1}{d(\sigma)}=0.$$
\item [$(2)$] $\inpr{s_P}{s_Q}$ and $\inpr{t_P}{t_Q}$ are real numbers. 
\end{itemize}
\end{lemma}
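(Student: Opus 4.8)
The plan is to reduce both statements to the $*$-algebra relations of Lemma~\ref{3lemma algebra}, applied simultaneously to $N\subset P$ and to $N\subset Q$. First I would record that all the inner products in the statement are honest scalars: since $\fQ$ is irreducible, $(\iota_{M,N},\iota_{M,N})=\C$, so $(\iota_{M,N},\iota_{M,N}\sigma)$ is a Hilbert space, and $t_P$, $t_Q$ both lie in it because each already intertwines $\iota_{M,N}$ with $\iota_{M,N}\sigma$ inside $P$, resp.\ $Q$. Hence $a:=\inpr{t_P}{t_Q}=t_Q^*t_P\in(\iota_{M,N},\iota_{M,N})=\C$, and likewise $b:=\inpr{s_P}{s_Q}=s_Q^*s_P\in(\sigma,\sigma)=\C$. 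Writing $d=d(\sigma)$ and applying Lemma~\ref{3lemma algebra} to $P$ and to $Q$ gives $t_P^2=\tfrac1{\sqrt d}r_\sigma+\sqrt{\tfrac{d-1}{d}}\,s_Pt_P$ and $t_P^*=\sqrt d\,r_\sigma^*t_P$, together with the identical pair of relations for $t_Q,s_Q$.

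For (1) I would compute $\inpr{t_P^2}{t_Q^2}=(t_Q^*)^2t_P^2$ in two ways. On one hand $t_Q^*t_P=a$ is a central scalar, so $(t_Q^*)^2t_P^2=t_Q^*(t_Q^*t_P)t_P=a^2$. On the other hand, substituting the square relations and their adjoints and expanding, the two cross terms carry factors $r_\sigma^*s_P\in(\sigma,\id)=\{0\}$ and $s_Q^*r_\sigma\in(\id,\sigma)=\{0\}$ and thus drop out, while $r_\sigma^*r_\sigma=1$ and $s_Q^*s_P=b$ survive; this yields $(t_Q^*)^2t_P^2=\tfrac1d+\tfrac{d-1}{d}ab$. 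Equating the two expressions gives exactly $a^2-\tfrac{d-1}{d}b\,a-\tfrac1d=0$.

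For (2) the key remark is that reality of $b$ forces reality of $a$: by (1), $a$ is a root of the quadratic $X^2-\tfrac{d-1}{d}bX-\tfrac1d$, whose coefficients are then real and whose discriminant $\bigl(\tfrac{d-1}{d}b\bigr)^2+\tfrac4d$ is strictly positive, so both roots are real. Thus everything reduces to showing $b=\inpr{s_P}{s_Q}\in\R$. Here I would exploit that $\sigma$ is a \emph{real} sector ($r_\sigma=\br_\sigma$, by the Corollary following Lemma~\ref{3lemma algebra}): the real structure provides an antiunitary involution on the Hilbert space $(\sigma,\sigma^2)$, the defining relations $(3.1)$ and $(3.2)$ for the isometries $s_P$ and $s_Q$ are invariant under it, and this confines $s_P$ and $s_Q$ to the real form it fixes; for vectors in that real form the inner product coincides with its own complex conjugate, so $b\in\R$.

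The routine steps are the bookkeeping of (1) and the discriminant reduction; the genuine obstacle is the last point — verifying, through the Frobenius/diagrammatic calculus, that $(3.1)$–$(3.2)$ really do pin $s_P,s_Q$ to the real form determined by $r_\sigma=\br_\sigma$. When $N\subset P$ is $3$-supertransitive this is immediate, since $\dim(\sigma,\sigma^2)=1$ and $(3.2)$ already forces $s_P=\pm s_Q$, whence $b=\pm1$; the real content of (2) is to handle the genuinely $2$-supertransitive case where $(\sigma,\sigma^2)$ is higher dimensional.
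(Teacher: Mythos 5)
Your part (1) is correct and coincides with the paper's own proof: both compute $(t_Q^2)^*t_P^2$ once as $(t_Q^*t_P)^2$, using that $\inpr{t_P}{t_Q}=t_Q^*t_P$ is a scalar by irreducibility of $N\subset M$, and once by substituting the relations of Lemma \ref{3lemma algebra}, the cross terms vanishing because $(\sigma,\id_N)=(\id_N,\sigma)=\{0\}$. Your reduction of ``$\inpr{t_P}{t_Q}$ real'' to ``$\inpr{s_P}{s_Q}$ real'' via the discriminant of the quadratic is also sound.

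Part (2), however, has a genuine gap, and it is the very one you flag: you never prove that $\inpr{s_P}{s_Q}$ is real, and the argument you sketch for it cannot work as stated. Invariance of the defining equations $(3.1)$--$(3.2)$ under an antiunitary involution $J$ of $(\sigma,\sigma^2)$ only shows that $J$ permutes the solution set; it does not ``confine'' any individual solution to the fixed real form (compare: $z^3=1$ is invariant under complex conjugation, yet has non-real solutions). This distinction is precisely the point in the regime the lemma is aimed at: when $N\subset P$ is $2$- but not $3$-supertransitive, $(\sigma,\sigma^2)$ may contain several solutions giving inequivalent $Q$-systems (nontrivial second cohomology, as in the $PSL(n,q)$ example in the Remark following Corollary \ref{3corollary index}), and a priori $J$ could exchange distinct solutions. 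What one would actually need is the Frobenius-type identity $s^*=\sqrt{d(\sigma)}\,\sigma(r_\sigma^*)\,s$, valid for every isometry $s$ arising from a $Q$-system, together with the fact that $K(u):=\sqrt{d(\sigma)}\,u^*\sigma(r_\sigma)$ is an antiunitary involution for the inner product $\inpr{u}{v}=v^*u$; both statements are true, but each requires a proof (the first can be extracted from the algebra relations behind Lemma \ref{3lemma algebra}, the second from standardness of $r_\sigma$), and neither appears in your proposal. The paper sidesteps all of this by proving the reality statements in the opposite order: by Lemma \ref{2lemma projection}, $E_P(t_Q)=\inpr{t_Q}{t_P}t_P$ and $E_Q(t_P)=\inpr{t_P}{t_Q}t_Q$, so computing $E_N(t_Pt_Q)$ once through $E_P$ and once through $E_Q$ yields $\inpr{t_Q}{t_P}r_\sigma/\sqrt{d(\sigma)}=\inpr{t_P}{t_Q}r_\sigma/\sqrt{d(\sigma)}$, hence $\inpr{t_P}{t_Q}=\overline{\inpr{t_P}{t_Q}}$ is real; reality of $\inpr{s_P}{s_Q}$ then falls out of the quadratic in (1), which also forces $\inpr{t_P}{t_Q}\neq 0$. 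You should either adopt that short argument or supply proofs of the two missing facts above.
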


\begin{proof} (1) Lemma \ref{3lemma algebra} implies 
$$\inpr{t_P}{t_Q}^2={t_Q^2}^*t_P^2=\frac{1}{d(\sigma)}
+\frac{d(\sigma)-1}{d(\sigma)}\inpr{s_P}{s_Q}\inpr{t_P}{t_Q}.$$ 

(2) Thanks to Lemma \ref{2lemma projection}, we have 
$E_P(t_Q)=\inpr{t_Q}{t_P}t_P$ and $E_Q(t_P)=\inpr{t_P}{t_Q}t_Q$. 
Thus 
$$E_N(t_Pt_Q)=E_N(E_P(t_Pt_Q))=\inpr{t_Q}{t_P}E_N(t_P^2)
=\frac{\inpr{t_Q}{t_P}}{\sqrt{d(\sigma)}}r_\sigma,$$
$$E_N(t_Pt_Q)=E_N(E_Q(t_Pt_Q))=\inpr{t_P}{t_Q}E_N(t_Q^2)
=\frac{\inpr{t_P}{t_Q}}{\sqrt{d(\sigma)}}r_\sigma,$$
which shows that $\inpr{t_P}{t_Q}$ is real. 
This and (1) show that $\inpr{s_P}{s_Q}$ is also real. 
\end{proof}

Note that $\dim(\iota_{P,N},\iota_{P,N}\sigma)=\dim(\iota_{Q,N},\iota_{Q,N}\sigma)=1$ 
by Frobenius reciprocity. 
Thus Remark \ref{2remark angle} shows $\cos\Theta(P,Q)=|\inpr{t_P}{t_Q}|$. 

\begin{theorem} \label{3theorem angle} 
Let $\fQ=\quadri$ be an irreducible quadrilateral of factors such that 
$N\subset P$ and $N\subset Q$ are 2-supertransitive. 
If $L^2(P)$ and $L^2(Q)$ are equivalent as $N-N$ bimodules, then
\begin{itemize}
\item [$(1)$] $\fQ$ is not commuting. 
\item [$(2)$] The angle $\Theta(P,Q)$ is given by 
\begin{eqnarray*}
\cos\Theta(P,Q)&=&\frac{\sqrt{(d(\sigma)-1)^2\inpr{s_P}{s_Q}^2+4d(\sigma)}
+(d(\sigma)-1)|\inpr{s_P}{s_Q}|}{2d(\sigma)}\\
{\rm or}\;&=&\frac{\sqrt{(d(\sigma)-1)^2\inpr{s_P}{s_Q}^2+4d(\sigma)}
-(d(\sigma)-1)|\inpr{s_P}{s_Q}|}{2d(\sigma)}.
\end{eqnarray*}
\end{itemize}
\end{theorem}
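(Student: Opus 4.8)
The plan is to treat this as essentially a corollary of the preceding computations. The starting point is the identity $\cos\Theta(P,Q)=|\inpr{t_P}{t_Q}|$ recorded just before the statement, which follows from Remark \ref{2remark angle} once Frobenius reciprocity guarantees that the intertwiner spaces $(\iota_{P,N},\iota_{P,N}\sigma)$ and $(\iota_{Q,N},\iota_{Q,N}\sigma)$ are one-dimensional, spanned by the unit vectors $t_P$ and $t_Q$. With this in hand, both assertions reduce to analyzing the quadratic equation
$$\inpr{t_P}{t_Q}^2-\frac{d(\sigma)-1}{d(\sigma)}\inpr{s_P}{s_Q}\inpr{t_P}{t_Q}-\frac{1}{d(\sigma)}=0$$
of Lemma \ref{3lemma quadratic}(1), whose coefficients are real by Lemma \ref{3lemma quadratic}(2).

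For part (1) I would argue by contradiction. Using $E_Q(t_P)=\inpr{t_P}{t_Q}t_Q$ and $E_P(t_Q)=\inpr{t_Q}{t_P}t_P$ (as in the proof of Lemma \ref{3lemma quadratic}), one gets $E_PE_Q(t_P)=|\inpr{t_P}{t_Q}|^2t_P$. If $\fQ$ were commuting then $E_PE_Q=E_N$, and since $t_P\in(\iota_{P,N},\iota_{P,N}\sigma)$ satisfies $E_N(t_P)=0$, this would force $\inpr{t_P}{t_Q}=0$. But substituting $\inpr{t_P}{t_Q}=0$ into the quadratic leaves $-1/d(\sigma)=0$, impossible since $d(\sigma)$ is finite and positive. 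Hence $\inpr{t_P}{t_Q}\neq0$, the angle is not $\pi/2$, and $\fQ$ is noncommuting.

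For part (2) I would solve the quadratic directly. Writing $d=d(\sigma)$ and $\beta=\inpr{s_P}{s_Q}$, the roots are
$$\inpr{t_P}{t_Q}=\frac{(d-1)\beta\pm\sqrt{(d-1)^2\beta^2+4d}}{2d}.$$
The discriminant is strictly positive (reconfirming reality), and the product of the roots is $-1/d<0$, so they have opposite signs; since $\sqrt{(d-1)^2\beta^2+4d}>(d-1)|\beta|$, the positive root $x_+$ and the negative root $x_-$ have absolute values $|x_+|=\frac{\sqrt{(d-1)^2\beta^2+4d}+(d-1)\beta}{2d}$ and $|x_-|=\frac{\sqrt{(d-1)^2\beta^2+4d}-(d-1)\beta}{2d}$. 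Since $\cos\Theta(P,Q)=|\inpr{t_P}{t_Q}|$ is one of $|x_+|,|x_-|$, distinguishing the two signs of $\beta$ shows that in either case the unordered pair $\{|x_+|,|x_-|\}$ coincides with the two displayed values, now written with $(d-1)|\beta|=(d-1)|\inpr{s_P}{s_Q}|$. This gives exactly the stated pair of possibilities.

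The only delicate point is this last bit of bookkeeping: because we do not know a priori which root $\inpr{t_P}{t_Q}$ realizes, nor the sign of $\beta$, one must verify that replacing $\beta$ by $|\beta|$ yields the same unordered pair of candidate cosines. This is precisely what produces the ``or'' in the statement — the two values correspond to the sign ambiguity $s_Q\mapsto-s_Q$ (equivalently, to the two inner conjugacy classes realizing the class $c(\fQ)$), and for a given quadrilateral only one of them is the actual angle. Everything else is a routine substitution into the already-established quadratic identity, so I expect no serious obstacle beyond keeping careful track of signs.
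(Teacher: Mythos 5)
Your proposal is correct and takes essentially the same approach as the paper: the paper's proof of (1) likewise observes that commutativity would force $\inpr{t_P}{t_Q}=0$, which is impossible by the quadratic equation of Lemma \ref{3lemma quadratic},(1), and obtains (2) by solving that same quadratic. Your additional bookkeeping (the identity $E_PE_Q=E_N$ for commuting squares, and the check that replacing $\inpr{s_P}{s_Q}$ by its absolute value yields the same unordered pair of roots) only makes explicit what the paper leaves implicit.
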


\begin{proof} (1) If $\fQ$ were commuting, we would have $\inpr{t_P}{t_Q}=0$, which 
never occurs due to Lemma \ref{3lemma quadratic},(1). 
(2) follows from Lemma \ref{3lemma quadratic} too. 
\end{proof}

Note that $|\inpr{s_P}{s_Q}|$ is a numerical invariant of the class $c(\fQ)$. 
Therefore the above theorem says that there are only two possibilities of 
the angle $\Theta(P,Q)$ given $c(\fQ)$. 

\begin{corollary} \label{3corollary angle} 
Let $\fQ=\quadri$ be an irreducible noncommuting quadrilateral of factors 
such that $N\subset P$ and $N\subset Q$ are 2-supertransitive. 
Then
$$0<\Theta(P,Q)\leq \cos^{-1}\frac{1}{[P:N]-1}.$$
The equality holds if and only if the class $c(\fQ)\in H^2(\check{P}\subset N)$ 
is trivial. 
In particular, if $N\subset P$ is 3-supertransitive, 
$$\Theta(P,Q)= \cos^{-1}\frac{1}{[P:N]-1}.$$
\end{corollary}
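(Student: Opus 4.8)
The plan is to read off the two candidate values of $\cos\Theta(P,Q)$ from Theorem \ref{3theorem angle} and to locate their common lower bound, viewing everything as a function of the single number $x:=|\inpr{s_P}{s_Q}|$. First I would record two elementary facts. Since $N\subset P$ is 2-supertransitive, $\gamma=\overline{\iota_{P,N}}\iota_{P,N}=\id_N\oplus\sigma$ with $\sigma$ irreducible, so $[P:N]=d(\gamma)=1+d(\sigma)$ and hence $d:=d(\sigma)=[P:N]-1$; noncommutativity forces $d>1$ (if $d=1$ the bound below would read $0<\Theta\le\cos^{-1}1=0$, so no such quadrilateral exists). Moreover $s_P,s_Q$ are isometries in the Hilbert space $(\sigma,\sigma^2)$, hence unit vectors, so Cauchy--Schwarz gives $x\in[0,1]$, with $x=1$ exactly when $s_P=\pm s_Q$.

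Next I would analyze the two functions
$$f_\pm(x)=\frac{\sqrt{(d-1)^2x^2+4d}\pm(d-1)x}{2d}$$
appearing in Theorem \ref{3theorem angle} on the interval $[0,1]$. A direct differentiation shows $f_+$ is increasing and $f_-$ is decreasing, and the endpoint values are $f_+(0)=f_-(0)=1/\sqrt d$, $f_+(1)=1$, and $f_-(1)=1/d$. Consequently $f_+(x)\ge 1/\sqrt d>1/d$ for every $x$, while $f_-(x)\ge 1/d$ with equality precisely at $x=1$. As $\cos\Theta(P,Q)$ equals one of $f_+(x)$ or $f_-(x)$, in either case $\cos\Theta(P,Q)\ge 1/([P:N]-1)$, which is exactly $\Theta(P,Q)\le\cos^{-1}(1/([P:N]-1))$. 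The strict lower bound $\Theta(P,Q)>0$, i.e. $\cos\Theta(P,Q)<1$, I would deduce from $P\ne Q$: since $e_P$ and $e_Q$ both have rank $2$ with $e_P\wedge e_Q=e_N$ but $e_P\ne e_Q$, the angle operator is nonzero, so the degenerate value $f_+(1)=1$ is never attained.

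For the equality assertion I would argue by branch selection. If $c(\fQ)$ is trivial, then by Lemma \ref{3lemma trivial} together with Lemma \ref{3lemma Q-system} the $Q$-systems attached to $P$ and $Q$ are equivalent, forcing $s_P=\pm s_Q$ and hence $x=1$; then $\cos\Theta(P,Q)\in\{f_-(1),f_+(1)\}=\{1/d,1\}$, and since $\Theta(P,Q)>0$ rules out the value $1$, we conclude $\cos\Theta(P,Q)=1/d$. Conversely, if $\cos\Theta(P,Q)=1/d$, then because $f_+>1/d$ everywhere this value can only be $f_-(x)$, and the strict monotonicity of $f_-$ forces $x=1$; equality in Cauchy--Schwarz gives $s_P=\pm s_Q$, so the two $Q$-systems are equivalent (Lemma \ref{3lemma Q-system}) and $c(\fQ)$ is trivial. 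Finally, when $N\subset P$ is $3$-supertransitive, so is the dual inclusion $\widehat{N\subset P}=(\check P\subset N)$, whence $H^2(\check P\subset N)$ is trivial by Lemma \ref{3lemma vanishing}; thus $c(\fQ)$ is automatically the trivial class and the equality $\Theta(P,Q)=\cos^{-1}(1/([P:N]-1))$ follows.

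The only genuinely delicate point is the branch-selection step: when $x=1$ the two candidate values are $\cos^{-1}(1/d)$ and $0$, and I must certify that it is the former that is realized. I expect to dispose of this using nothing more than noncommutativity together with $P\neq Q$, which gives $\Theta(P,Q)>0$; the monotonicity and endpoint estimates for $f_\pm$ are routine calculus, and the passage between $x=1$, the relation $s_P=\pm s_Q$, and triviality of $c(\fQ)$ is furnished directly by Lemmas \ref{3lemma Q-system} and \ref{3lemma trivial}.
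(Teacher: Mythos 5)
Your proposal is correct and takes essentially the same route as the paper's proof: the paper also reads the two candidate values of $\cos\Theta(P,Q)$ off Theorem \ref{3theorem angle}, notes the inequality $\frac{1}{d(\sigma)}\leq f_-\bigl(|\inpr{s_P}{s_Q}|\bigr)$ with equality exactly when $\inpr{s_P}{s_Q}=\pm 1$, and performs the branch selection by observing that $|\inpr{t_P}{t_Q}|=1$ would force $P=Q$. Your monotonicity analysis of $f_\pm$, the Cauchy--Schwarz step linking $|\inpr{s_P}{s_Q}|=1$ to $s_P=\pm s_Q$ and hence (via Lemma \ref{3lemma Q-system}) to triviality of $c(\fQ)$, and the appeal to Lemma \ref{3lemma vanishing} for the 3-supertransitive clause merely spell out what the paper's very terse proof leaves implicit.
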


\begin{proof} Note that $|\inpr{t_P}{t_Q}|=1$ never occurs since $|\inpr{t_P}{t_Q}|=1$ 
would imply $P=Q$. 
The statement follows from 
$$\frac{1}{d(\sigma)}\leq \frac{\sqrt{(d(\sigma)-1)^2\inpr{s_P}{s_Q}^2+4d(\sigma)}
-(d(\sigma)-1)|\inpr{s_P}{s_Q}|}{2d(\sigma)},$$
where the quality holds if and only if $\inpr{s_P}{s_Q}=\pm 1$. 
\end{proof}

\begin{corollary} \label{3corollary index} 
Let $\fQ=\quadri$ be an irreducible noncommuting quadrilateral of factors 
such that $N\subset P$ and $N\subset Q$ are 2-supertransitive. 
If $\fQ$ is cocommuting, then 
$$[M:P]\leq [P:N]-1.$$
The equality holds if and only if the class $c(\fQ)\in H^2(\check{P}\subset N)$ 
is trivial. 
In particular, if $\fQ$ is cocommuting and $N\subset P$ is 3-supertransitive, 
$$[M:P]= [P:N]-1.$$
\end{corollary}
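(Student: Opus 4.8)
The plan is to derive this purely as a consequence of Lemma \ref{2lemma angle} and Corollary \ref{3corollary angle}, combined by an elementary manipulation of the index inequality; no new bimodule analysis is needed.

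First I would record the setup. Since $\fQ$ is noncommuting with $N\subset P$ and $N\subset Q$ 2-supertransitive, the observation opening this section gives ${}_NL^2(P)_N\cong {}_NL^2(Q)_N$, so that $[P:N]=[Q:N]$, $[M:P]=[M:Q]$, and $\Theta(P,Q)$ is a single well-defined angle. Because $\fQ$ is cocommuting, Lemma \ref{2lemma angle} applies; and since $\fQ$ is noncommuting we cannot have $[M:P]=[P:N]$ (that equality would force commutativity), so
$$\cos^2\Theta(P,Q)=\frac{[P:N]-[M:P]}{[M:P]([P:N]-1)}.$$
In particular the numerator must be nonnegative, giving the preliminary bound $[M:P]\le [P:N]$ and ensuring all the denominators appearing below are strictly positive (note $[P:N]>1$).

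Next I would bring in Corollary \ref{3corollary angle}, which gives $\Theta(P,Q)\le \cos^{-1}\frac{1}{[P:N]-1}$, equivalently
$$\cos^2\Theta(P,Q)\ge \frac{1}{([P:N]-1)^2}.$$
Combining this with the displayed formula from Lemma \ref{2lemma angle} and clearing the positive denominators reduces, after a short simplification, to $[P:N]\bigl([P:N]-1-[M:P]\bigr)\ge 0$, hence to $[M:P]\le [P:N]-1$. This establishes the main inequality.

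For the equality statement I would simply track when the combined inequality is tight: $[M:P]=[P:N]-1$ holds precisely when $\cos^2\Theta(P,Q)=1/([P:N]-1)^2$, i.e.\ when $\cos\Theta(P,Q)=1/([P:N]-1)$ (the positive root is the relevant one since $\Theta(P,Q)\in(0,\cos^{-1}\frac{1}{[P:N]-1}]$), which by the equality clause of Corollary \ref{3corollary angle} occurs if and only if $c(\fQ)$ is trivial. The final ``in particular'' assertion is then immediate: if $N\subset P$ is 3-supertransitive, Corollary \ref{3corollary angle} already yields $\Theta(P,Q)=\cos^{-1}\frac{1}{[P:N]-1}$ (equivalently $c(\fQ)$ is trivial by Lemma \ref{3lemma vanishing}), so $[M:P]=[P:N]-1$. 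The argument is essentially mechanical once both inequalities are in hand; the only point demanding care is the bookkeeping of signs and positivity of denominators in the algebra, and checking that the tightness of the final index bound matches exactly the equality case of Corollary \ref{3corollary angle}.
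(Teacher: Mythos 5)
Your proof is correct and is exactly the paper's argument: the paper's own proof of this corollary consists of the single line "This follows from Lemma \ref{2lemma angle} and Lemma \ref{3corollary angle}," and you have simply supplied the (accurate) algebraic bookkeeping — combining $\cos^2\Theta(P,Q)=\frac{[P:N]-[M:P]}{[M:P]([P:N]-1)}$ with $\cos\Theta(P,Q)\ge\frac{1}{[P:N]-1}$ to get $[P:N]\bigl([P:N]-1-[M:P]\bigr)\ge 0$, and matching the equality cases. No gaps; the tracking of the equality clause through Corollary \ref{3corollary angle} and the 3-supertransitive case via Lemma \ref{3lemma vanishing} are handled correctly.
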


\begin{proof} This follows from Lemma \ref{2lemma angle} and Lemma \ref{3corollary angle}. 
\end{proof}

\begin{remark}
There is an example of a quadrilateral $\fQ$ fulfilling the assumption of 
Corollary \ref{3corollary index} with non-trivial $c(\fQ)$. 
Let $q$ be a prime power and let $F$ be a finite field with $q$ elements. 
We set $G=PSL(n,q)$ with $n\geq 3$, which naturally acts on the projective space $PF^{n-1}$. 
It is known that this action is 2-transitive. 
We denote by ${F^n}^*$ the dual space of $F^n$, on which $G$ naturally acts too. 
Let $\{e_i\}_{i=1}^n$ be the canonical basis of $F^n$ and 
let $\{e_i^*\}_{i=1}^n$ be the dual basis of $\{e_i\}_{i=1}^n$. 
Let $H$ be the stabilizer of $F e_1$ and let $K$ be the stabilizer of 
$F e_1^*$. 
Then we have $[G:H]=[G:K]=(q^n-1)/(q-1)$ and $[H:H\cap K]=[K:H\cap K]=q^{n-1}$. 
Let $\fQ$ be the quadrilateral constructed by Lemma \ref{2lemma group} with these 
$G$, $H$, and $K$. 
Then  $[M:P]$ is strictly smaller than $[P:N]-1$. 
In this case we have $\cos\Theta(P,Q)=q^{-n/2}$. 

\end{remark}

\begin{lemma}
\label{3lemma uniqueness} 
Let $\fQ=\quadri$ be an irreducible noncommuting quadrilateral of factors such that 
$N\subset P$ and $N\subset Q$ are 2-supertransitive and $H^2(\widehat{N\subset P})$ 
is trivial. 
Let $[\overline{\iota_{P,N}}\iota_{P,N}]=[\id_N]\oplus [\sigma]$. 
If $\dim(\iota_{M,N},\iota_{M,N}\sigma)=2$ and there is an intermediate subfactor 
$N\subset R\subset M$ other than $P$ and $Q$ such that $L^2(R)$ is equivalent to $L^2(P)$ 
as $N-N$ bimodules, then $[P:N]=3$.  
\end{lemma}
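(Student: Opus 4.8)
The plan is to reduce the statement to elementary linear algebra inside the two-dimensional Hilbert space $\cH_\sigma=(\iota_{M,N},\iota_{M,N}\sigma)$. Write $d=d(\sigma)$; since $[P:N]=d(\overline{\iota_{P,N}}\iota_{P,N})=1+d(\sigma)=1+d$, the assertion is equivalent to $d=2$. Because $L^2(P)$, $L^2(Q)$ and $L^2(R)$ are all equivalent as $N$-$N$ bimodules, each $L\in\{P,Q,R\}$ satisfies $\overline{\iota_{L,N}}\iota_{L,N}\cong\id_N\oplus\sigma$, so by Frobenius reciprocity the subspace $\cH_\sigma\cap L=(\iota_{L,N},\iota_{L,N}\sigma)$ is one-dimensional, spanned by a unit vector $t_L$ (for $L=P,Q$ these are the $t_P,t_Q$ already constructed, and I would build $t_R$ in the same way). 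First I would record that distinct intermediate subfactors give distinct lines: by the crossed-product-type expansion of Theorem \ref{2theorem decomposition},(4) the subfactor $L$ is recovered from the line $\C t_L$, so $\C t_P$, $\C t_Q$, $\C t_R$ are three distinct lines in the plane $\cH_\sigma$.

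Next I would determine the pairwise inner products. Since $H^2(\widehat{N\subset P})$ is trivial, I normalize the three $Q$-systems arising from $\check P\subset N$, $\check Q\subset N$ and $\check R\subset N$ with respect to a single common triple $(v,v_1,r_\sigma)$, exactly as in the discussion preceding Lemma \ref{3lemma quadratic}; Lemma \ref{3lemma Q-system} then forces the corresponding isometries in $(\sigma,\sigma^2)$ to agree up to sign, say $s_L=\epsilon_L s$ with $\epsilon_L\in\{\pm1\}$, whence $\inpr{s_L}{s_{L'}}=\epsilon_L\epsilon_{L'}$. The computation behind Lemma \ref{3lemma quadratic} is purely pairwise: it uses only the algebra relations of Lemma \ref{3lemma algebra} for the two subfactors involved together with Lemma \ref{2lemma projection}, and never the quadrilateral hypotheses $P\vee Q=M$ or $P\cap Q=N$. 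It therefore applies to each of the three pairs, even though $(P,R)$ and $(Q,R)$ need not be quadrilaterals, and each $x_{LL'}:=\inpr{t_L}{t_{L'}}$ is real and satisfies $x_{LL'}^2-\tfrac{d-1}{d}\epsilon_L\epsilon_{L'}x_{LL'}-\tfrac1d=0$. Since $(d-1)^2+4d=(d+1)^2$, the two roots are $-\epsilon_L\epsilon_{L'}/d$ and $\epsilon_L\epsilon_{L'}$; the second has modulus $1$ and would force $t_{L'}\in\C t_L$, i.e. $L=L'$, so it is discarded, leaving $x_{LL'}=-\epsilon_L\epsilon_{L'}/d$.

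Finally I would use that three unit vectors are linearly dependent in the two-dimensional space $\cH_\sigma$, so their Gram matrix
$$G=\begin{pmatrix}1&a&b\\ a&1&c\\ b&c&1\end{pmatrix},\qquad a=x_{PQ},\quad b=x_{PR},\quad c=x_{QR},$$
is singular: $\det G=1-a^2-b^2-c^2+2abc=0$. Here $a^2=b^2=c^2=1/d^2$, while the product of the three signs is
$$(-\epsilon_P\epsilon_Q)(-\epsilon_P\epsilon_R)(-\epsilon_Q\epsilon_R)=-(\epsilon_P\epsilon_Q\epsilon_R)^2=-1,$$
so $abc=-1/d^3$ independently of the individual signs $\epsilon_L$. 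Substituting yields $1-3/d^2-2/d^3=0$, that is $d^3-3d-2=(d-2)(d+1)^2=0$, whose only root with $d\geq1$ is $d=2$. Hence $[P:N]=1+d=3$.

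The main obstacle I anticipate is the second step: arranging a single normalization $(v,v_1,r_\sigma)$ that serves all three subfactors at once, so that Lemma \ref{3lemma Q-system} really delivers $s_L=\pm s$, and checking that the conclusions of Lemmas \ref{3lemma algebra} and \ref{3lemma quadratic} persist for the pairs $(P,R)$ and $(Q,R)$ that are not assumed to be quadrilaterals. Once the three inner products are known to equal $\pm1/d$ with product of signs $-1$, the singular Gram matrix computation is routine and forces $d=2$; the pleasant feature is that the sign product is automatically $-1$, so no case analysis is needed.
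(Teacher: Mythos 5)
Your proof is correct and follows essentially the same route as the paper's: both arguments extend the pairwise quadratic relation of Lemma \ref{3lemma quadratic} (together with triviality of $H^2(\widehat{N\subset P})$ and Lemma \ref{3lemma Q-system}, so that the isometries $s_L$ agree up to sign) to the pairs involving $R$, discard the modulus-one root since distinct intermediate subfactors give distinct lines, and then exploit two-dimensionality of $(\iota_{M,N},\iota_{M,N}\sigma)$. The only difference is cosmetic: the paper expands $t_R$ in the basis $\{t_P,t_Q\}$ and imposes $\|t_R\|=1$, with a short case analysis on the sign $\epsilon$, whereas you impose vanishing of the Gram determinant, which makes the sign product $-1/d(\sigma)^3$ automatic and removes the case analysis.
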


\begin{proof} Let $t_P$ and $t_Q$ be as in Lemma \ref{3lemma quadratic}. 
We may and do assume $\inpr{t_P}{t_Q}=1/d(\sigma)$ by replacing $t_Q$ with $-t_Q$ 
if necessary. 
Assume that there exists an intermediate subfactor $N\subset R\subset M$ other than 
$P$ and $Q$ such that $L^2(R)$ is equivalent to $L^2(P)$ as $N-N$ bimodules. 
Then we may choose an isometry $t\in (\iota_{R,N},\iota_{R,N}\sigma)$ 
such that $\inpr{t}{t_P}=1/d(\sigma)$, 
$\inpr{t}{t_Q}=\epsilon/d(\sigma)$ with $\epsilon\in \{1,-1\}$, 
and $E_N(t^2)=1/\sqrt{d(\sigma)}r_\sigma$. 
Since $\dim(\iota_{M,N},\iota_{M,N}\sigma)=2$, the intertwiner space 
$(\iota_{M,N},\iota_{M,N}\sigma)$ is spanned by $t_P$ and $t_Q$. 
Therefore, the first two equalities imply
$$t=\frac{d(\sigma)-\epsilon}{d(\sigma)^2-1}(t_P+\epsilon t_Q).$$
Since $t$ is an isometry, we get 
\begin{eqnarray*}
1&=&\inpr{t}{t}= \frac{(d(\sigma)-\epsilon)^2}{(d(\sigma)^2-1)^2}
\inpr{t_P+\epsilon t_Q}{t_P+\epsilon t_Q}
=\frac{(d(\sigma)-\epsilon)^2}{(d(\sigma)^2-1)^2}\frac{2(d(\sigma)+\epsilon)}{d(\sigma)}\\
&=&\frac{2(d(\sigma)-\epsilon)}{d(\sigma)(d(\sigma)^2-1)}
\end{eqnarray*}
If $\epsilon=1$, we would have $d(\sigma)=1$, which contradicts 
$\dim(\iota_{M,N},\iota_{M,N}\sigma)=2$ and Theorem \ref{2theorem decomposition},(1). 
Therefore we get $\epsilon=-1$ and $d(\sigma)=2$, which means $[P:N]=3$. 
\end{proof}

\begin{remark} \label{3remark no-auto} Let $\fQ=\quadri$ be an 
irreducible noncommuting quadrilateral of factors. 
Then there exists $\sigma$ contained both in $\overline{\iota_{P,N}}\iota_{P,N}$ 
and in $\overline{\iota_{Q,N}}\iota_{Q,N}$ so that the multiplicity of 
$\sigma$ in $\overline{\iota_{M,N}}\iota_{M,N}$ is at least 2. 
Therefore $\sigma$ is not an automorphism due to Theorem \ref{2theorem decomposition},(1). 
In particular, the subfactor $N\subset P$ is not the crossed product by a finite group action.
\end{remark}

\section{(2,2)-supertransitive quadrilaterals}
In what follows, for a non-commuting quadrilateral $\quadri$ of properly infinite 
factors with 2-supertransitive elementary subfactors, 
we use the following notation: the symbols $\iota$, $\kappa$, $\iota_Q$, and $\kappa_Q$ 
denote the inclusion maps 
$$\iota:N\hookrightarrow P,\quad \kappa:P \hookrightarrow M,\quad 
\iota_Q:N\hookrightarrow Q,\quad \kappa_Q:Q \hookrightarrow M.$$
Since $\biota\iota$, $\iota\biota$, etc are decomposed into two irreducible sectors, 
one of which is the identity sector, 
we choose representatives $\xi,\eta\in \End_0(P)$, $\xi_Q,\eta_Q\in \End_0(Q)$, 
$\hxi,\hxi_Q\in \End_0(N)$, and $\heta,\heta_Q\in \End_0(M)$ satisfying 
$$[\iota\biota]=[\id_P]\oplus [\xi],\quad [\bkappa\kappa]=[\id_P]\oplus[\eta],$$
$$[\iota_Q\biota_Q]=[\id_Q]\oplus [\xi_Q],\quad [\bkappa_Q\kappa_Q]=[\id_Q]\oplus[\eta_Q],$$
$$[\biota\iota]=[\biota_Q\iota_Q]=[\id_N]\oplus [\hxi],$$
$$[\kappa\bkappa]=[\id_M]\oplus [\heta],\quad [\kappa_Q\bkappa_Q]=[\id_M]\oplus[\heta_Q] .$$
Note that neither $\xi$ nor  $\hxi$ is an automorphism (see Remark \ref{3remark no-auto}). 
When $N\subset P$ and $N\subset Q$ are 3-supertransitive, the sectors $\xi\iota$ and 
$\xi_Q\iota_Q$ are decomposed into two irreducible components and 
we use the following notation: 
$$[\xi\iota]=[\iota]\oplus [\iota'],\quad [\xi_Q\iota_Q]=[\iota_Q]\oplus [\iota_Q'].$$
In this case, we also have 
$$[\iota\hxi]=[\iota]\oplus [\iota'],\quad [\iota_Q\hxi_Q]=[\iota_Q]\oplus [\iota_Q'].$$
In the same way, when $P\subset M$ and $Q\subset M$ are 3-supertransitive, we use the 
following notation: 
$$[\kappa\eta]=[\kappa]\oplus [\kappa'],\quad [\kappa_Q\eta_Q]=[\kappa_Q]\oplus [\kappa_Q'],$$
$$[\heta\kappa]=[\kappa]\oplus[\kappa'],\quad [\heta_Q\kappa_Q]=[\kappa_Q]\oplus[\kappa_Q'].$$

In the sequel we often use the techniques developed in \cite{I1}. 
For example, let $\rho,\sigma$ be irreducible endomorphisms of $P$ associated with 
the inclusion $N\subset P$. 
Then $\rho\xi$ contains $\sigma$ if and only if either $[\rho]=[\sigma]$ or 
the distance between $\rho$ and $\sigma$ in the dual principal graph of $N\subset P$ is two. 
When $[\rho]\neq [\sigma]$, the multiplicity of $\sigma$ in $\rho\xi$ is the number 
of the paths from $\rho$ to $\sigma$. 
The multiplicity of $\rho$ in $\rho\xi$ is $\sum_{i=1}^mn_i^2-1$ where 
$[\rho\iota]=\oplus_i^m n_i[\tau_i],$ is the irreducible decomposition of $\rho\iota$. 
In particular, the inclusion $N\subset P$ is 3-supertransitive if and only if 
the multiplicity of $\xi$ in $\xi^2$ is one. 

\begin{lemma} \label{4lemma xi-eta} 
Let  $\fQ=\quadri$ be an irreducible noncommuting quadrilateral of factors. 
We assume that $\fQ$ is (2,2)-supertransitive. 
Then 
\begin{itemize} 
\item [(1)] $[\xi]\neq [\eta]$. 
\item [(2)] $\xi^2$ contains $\eta$. 
Consequently, the depth of $N\subset P$ is at least 4. 
\end{itemize}
\end{lemma}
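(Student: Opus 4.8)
The plan is to argue entirely at the level of sectors, translating the hypotheses into the fusion rules in $\End(P)$ and $\End(N)$ coming from the given decompositions $[\iota\biota]=[\id_P]\oplus[\xi]$, $[\biota\iota]=[\id_N]\oplus[\hxi]$, and $[\bkappa\kappa]=[\id_P]\oplus[\eta]$. The one computation that drives both parts is obtained by squaring the canonical endomorphism of $N\subset P$: since $[\biota\iota]=[\id_N]\oplus[\hxi]$,
$$[(\iota\biota)^2]=[\iota(\biota\iota)\biota]=[\iota\biota]\oplus[\iota\hxi\biota]=[\id_P]\oplus[\xi]\oplus[\iota\hxi\biota],$$
while directly $[(\iota\biota)^2]=[\id_P]\oplus2[\xi]\oplus[\xi^2]$; comparing gives the basic relation $[\xi]\oplus[\xi^2]=[\iota\hxi\biota]$. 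I also record that $\kappa\iota=\iota_{M,N}$ and $\overline{\iota_{M,N}}=\biota\bkappa$, so that $[\overline{\iota_{M,N}}\iota_{M,N}]=[\biota\bkappa\kappa\iota]$.

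For (1) I would argue by contradiction. If $[\xi]=[\eta]$, then $[\bkappa\kappa]=[\iota\biota]$, hence
$$[\overline{\iota_{M,N}}\iota_{M,N}]=[\biota\bkappa\kappa\iota]=[(\biota\iota)^2]=[\id_N]\oplus2[\hxi]\oplus[\hxi^2].$$
Since $\biota\iota$ is self-conjugate, its nontrivial summand $\hxi$ is self-conjugate, so $\dim(\id_N,\hxi^2)=\dim(\hxi,\hxi)=1$, and $[\id_N]$ occurs in the dual canonical endomorphism $\overline{\iota_{M,N}}\iota_{M,N}$ with multiplicity $2$. But that multiplicity equals $\dim(\iota_{M,N},\iota_{M,N})=\dim(N'\cap M)$, which is $1$ by irreducibility of $\fQ$ — a contradiction. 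Hence $[\xi]\neq[\eta]$.

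For (2) I would feed (1) into the basic relation. As $[\eta]\neq[\xi]$, Frobenius reciprocity gives
$$\dim(\eta,\xi^2)=\dim(\eta,\iota\hxi\biota)=\dim(\biota\eta\iota,\hxi).$$
From $[\biota\eta\iota]=[\biota\bkappa\kappa\iota]\ominus[\biota\iota]=[\overline{\iota_{M,N}}\iota_{M,N}]\ominus([\id_N]\oplus[\hxi])$ I then get
$$\dim(\biota\eta\iota,\hxi)=\dim(\hxi,\overline{\iota_{M,N}}\iota_{M,N})-1.$$
This is the single place where noncommutativity is used: Remark \ref{3remark no-auto} supplies a sector common to $\overline{\iota_{P,N}}\iota_{P,N}$ and $\overline{\iota_{Q,N}}\iota_{Q,N}$ with multiplicity at least $2$ in $\overline{\iota_{M,N}}\iota_{M,N}$, and by 2-supertransitivity and irreducibility that sector can only be $\hxi$, so $\dim(\hxi,\overline{\iota_{M,N}}\iota_{M,N})\geq2$. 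Therefore $\dim(\eta,\xi^2)\geq1$, i.e. $\xi^2$ contains $\eta$.

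For the depth assertion, $\eta$ is an irreducible $P$-$P$ sector with $[\eta]\neq[\id_P]$ (as $[M:P]>1$) and $[\eta]\neq[\xi]$ by (1), whereas the only even vertices within distance $2$ of $\id_P$ on the principal graph of $N\subset P$ are $[\id_P]$ and $[\xi]$, which is exactly the content of 2-supertransitivity. Since $[\eta]\subset[\xi^2]$ places $\eta$ at distance at most $4$, it sits at distance exactly $4$, so the depth of $N\subset P$ is at least $4$. I do not expect any single calculation to be hard; the main obstacle is spotting the two reductions, namely that collapsing $[\xi]=[\eta]$ forces a double occurrence of $[\id_N]$ in the dual canonical endomorphism (hence reducibility), and that $\eta\subset\xi^2$ is controlled solely by the multiplicity of $\hxi$ in $\overline{\iota_{M,N}}\iota_{M,N}$. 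Care is needed with the self-conjugacy of $\hxi$ and with the Frobenius-reciprocity bookkeeping, since both multiplicity counts must be exact.
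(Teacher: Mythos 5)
Your proposal is correct and takes essentially the same route as the paper: part (1) is the same irreducibility-plus-Frobenius count (the paper gets it directly from $1=\dim(\kappa\iota,\kappa\iota)=\dim(\id_P\oplus\eta,\id_P\oplus\xi)$, you phrase the identical computation as a contradiction via a double occurrence of $[\id_N]$ in $\biota\bkappa\kappa\iota$), and part (2) rests on exactly the paper's key input, namely that $\hxi$ occurs with multiplicity at least $2$ in $[\overline{\iota_{M,N}}\iota_{M,N}]$ — which the paper extracts from the $N$-$N$ bimodule equivalence $L^2(P)\cong L^2(Q)$ and you extract from Remark \ref{3remark no-auto} — followed by the same Frobenius bookkeeping, since your relation $[\iota\hxi\biota]=[\xi]\oplus[\xi^2]$ is precisely the identity the paper uses in the line $\dim(\bkappa\kappa,\iota\hxi\biota)=\dim(\bkappa\kappa,\iota\biota\iota\biota)-\dim(\bkappa\kappa,\iota\biota)$. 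Both justifications of the multiplicity-two input are legitimate and non-circular, so there is nothing to fix.
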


\begin{proof} (1) Since $N\subset M$ is irreducible, we have 
$\dim(\kappa\iota,\kappa\iota)=1$. 
Thus Frobenius reciprocity implies 
$$1=\dim(\kappa\iota,\kappa\iota)=(\bkappa\kappa,\iota\biota)=(\id_P\oplus \eta,\id_P\oplus \xi),$$
which shows $[\xi]\neq [\eta]$. 

(2) Since $\fQ$ is noncommuting and 
$$L^2(P)\cong L^2(Q)\cong L^2(N)\oplus L^2(N)_{\hxi},$$
as $N-N$ bimodules, the $N-N$ bimodule $L^2(M)$ contains $L^2(N)_{\hxi}$ with multiplicity 
at least two. 
Thus 
\begin{eqnarray*}2&\leq& \dim(\biota\bkappa\kappa\iota,\hxi)=\dim (\bkappa\kappa,\iota\hxi\biota)
=\dim  (\bkappa\kappa,\iota\biota\iota\biota)-\dim(\bkappa\kappa,\iota\biota)
\\ &=&\dim(\id_P\oplus \eta,\xi\oplus\xi^2)=1+\dim(\eta,\xi^2),
\end{eqnarray*}
which shows the statement. 
\end{proof}

\begin{remark} \label{4remrak no-extra} Using the above lemma, we give a short proof of 
Theorem \ref{2theorem no-extra} only assuming (4,4)-supertransitivity of 
$\fQ$ (see Corollary \ref{4corollary no-extra} and Corollary \ref{5corollary 6-3} below for 
sharper results in this direction). 
Since $N\subset P$ is 4-supertransitive, Lemma \ref{4lemma xi-eta} shows 
$$[\xi^2]=[\id_P]\oplus [\xi]\oplus [\eta].$$ 
If the principal graph is $A_5$, $\eta$ is an automorphism and 
$M$ is the crossed product $P\rtimes_\eta \Z/2\Z$. 
Therefore the proof of \cite[Theorem 3.1]{I2} implies that there exists an 
outer action of $\fS_3$ on $M$ such that $N$ is the fixed point algebra of the action. 
Assume now that the principal graph of $N\subset M$ is not $A_5$. 
Then we have $\dim(\eta\iota,\eta\iota)\geq 2$. 
Frobenius reciprocity implies 
\begin{eqnarray*}
\dim(\eta^2,\xi)&=&\dim(\eta,\eta\xi)=\dim(\eta,\eta\iota\biota)-\dim(\eta,\eta)
=\dim(\eta\iota,\eta\iota)-1\\
&\geq&1. 
\end{eqnarray*}
Since $P\subset M$ is 4-supertransitive, we get $[\eta^2]=[\id_P]\oplus [\eta]\oplus [\xi]$. 
Therefore $d(\xi)=d(\eta)$ and $d(\xi)^2=1+2d(\xi)$, 
which shows that $[M:P]=[P:N]=2+\sqrt{2}=4\cos^2(\pi/8)$ and all the elementary subfactors are the $A_7$ subfactor. 
Uniqueness of such a quadrilateral follows from  Lemma \ref{3lemma vanishing} 
and Lemma \ref{3lemma uniqueness} as $\eta$ is uniquely determined by $N\subset P$ 
(see Theorem \ref{4theorem uniqueness1} below for details). 
\end{remark} 

The following is one of the main results in this paper, which says that  
irreducible noncommuting (2,2)-supertransitive quadrilaterals with trivial cohomological 
obstruction are classified into four classes and one exceptional case. 

\begin{theorem}\label{4theorem (2,2)-supertransitive} 
Let  $\fQ=\quadri$ be an irreducible noncommuting quadrilateral of factors. 
We assume that $\fQ$ is (2,2)-supertransitive and the class $c(\fQ)\in H^2(\widehat{N\subset P})$ 
is trivial. 
Then 
$$\Theta(M,N)=\cos^{-1}\frac{1}{[P:N]-1}.$$
Moreover, 
\begin{itemize}
\item [$(1)$] If $\fQ$ is not cocommuting and the class $c(\hat{\fQ})$ in $H^2(P\subset M)$ 
is trivial, then $[M:P]=[P:N]$.  
In this case, there exists an outer automorphism $\alpha$ of $P$ 
such that $[\eta]=[\alpha\xi]=[\xi \alpha^{-1}]$ and  $\xi^2$ contains $\eta$. 
$($In consequence, the automorphism $\alpha$ is contained in $\xi^3$.$)$ 
\item [$(2)$] If $\fQ$ is cocommuting, then $[M:P]=[P:N]-1$ and $[\xi]=[\bkappa\kappa_Q\pi]$, 
where $\pi$ is an isomorphism from $P$ to $Q$ such that the restriction of $\pi$ to 
$N$ is the identity map. 
In this case, the Galois group 
$$\mathrm{Gal}(M/N)=\{\theta\in \Aut(M);\;\theta(x)=x,\; \forall x\in N\}$$ 
is either trivial or isomorphic to $\Z/2\Z$,$\Z/3\Z$, or the symmetric group 
$\fS_3$ of degree 3. 

If $\mathrm{Gal}(M/N)=\{\id_M,\theta\}\cong \Z/2\Z$, then $\theta$ switches $P$ and $Q$ and 
$[\xi]=[\bkappa\theta\kappa]$. 

If $\mathrm{Gal}(M/N)=\{\id_M,\theta,\theta^2\}\cong \Z/3\Z$, then either $\theta(P)=Q$ 
or $\theta(Q)=P$ holds and $[\xi]=[\bkappa\theta\kappa]=[\bkappa\theta^2\kappa]$. 

If $\mathrm{Gal}(M/N)\cong \fS_3$, then $N$ is the fixed point algebra of 
an outer action of $\fS_3$ on $M$ and $P$ and $Q$ are the fixed point algebras of 
two different order two elements in $\fS_3$. 
\end{itemize}
\end{theorem}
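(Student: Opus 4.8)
Since $c(\fQ)$ is trivial, Lemma \ref{3lemma trivial} furnishes an isomorphism $\pi\colon P\to Q$ with $\pi|_N=\id$; hence $\kappa_Q\pi\iota=\kappa\iota=\iota_{M,N}$, and the asserted angle $\cos^{-1}(1/([P:N]-1))$ is exactly the equality case of Corollary \ref{3corollary angle}. The whole argument is organised around the single endomorphism $\beta:=\bkappa\kappa_Q\pi\in\End_0(P)$, with $d(\beta)=[M:P]$. First I would compute its self-intertwiners: Frobenius reciprocity with $[\kappa\bkappa]=[\id_M]\oplus[\heta]$ and $[\kappa_Q\bkappa_Q]=[\id_M]\oplus[\heta_Q]$ gives $\dim(\beta,\beta)=1+\dim(\heta,\heta_Q)$. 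By the dual of \cite[Lemma 4.2.1]{GJ}, $\fQ$ is cocommuting precisely when $[\heta]\neq[\heta_Q]$, so $\beta$ is irreducible in the cocommuting case and a sum of two distinct irreducibles otherwise. Next, restricting to $N$ and using $\beta\iota=\bkappa\kappa\iota=\iota\oplus\eta\iota$ together with $\dim(\eta,\iota\biota)=0$, one gets $\dim(\id_P,\beta)+\dim(\xi,\beta)=1$; and $\id_P$ contained in $\beta$ would force $\kappa\cong\kappa_Q\pi$, hence (both restrict to $\iota_{M,N}$ on $N$ and $M\cap N'=\C$) $P=Q$, a contradiction. Thus $\xi$ is contained in $\beta$ with multiplicity one in every case.

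In case $(2)$, cocommutativity makes $\beta$ irreducible, so $[\beta]=[\xi]$ and comparing dimensions gives $[M:P]=d(\xi)=[P:N]-1$ (reproving Corollary \ref{3corollary index}). In case $(1)$, non-cocommutativity gives $\beta=\xi\oplus\tau$ with $\tau$ irreducible of dimension $[M:P]-([P:N]-1)$; applying the same computation to $\hat{\fQ}$ (which is again noncommuting, non-cocommuting, with $c(\hat{\fQ})$ trivial) produces a second irreducible of dimension $[P:N]-([M:P]-1)$. As both dimensions are at least $1$, this forces $[M:P]=[P:N]$ and $d(\tau)=1$, so $\tau=:\alpha$ is an automorphism. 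Then $\alpha$ contained in $\beta$ means $\kappa\alpha\cong\kappa_Q\pi$, whence $\beta=\bkappa\kappa\alpha=\alpha\oplus\eta\alpha$; matching summands with $\xi\oplus\alpha$ forces $[\eta\alpha]=[\xi]$, that is $[\eta]=[\xi\alpha^{-1}]$, and taking conjugates (with $\xi,\eta$ real) $[\eta]=[\alpha\xi]$. Here $\alpha$ is outer, since $[\alpha]=[\id_P]$ would give $[\eta]=[\xi]$ against Lemma \ref{4lemma xi-eta}$(1)$, and $\alpha$ is contained in $\xi^3$ because $\dim(\alpha,\xi^3)=\dim(\alpha\xi,\xi^2)=\dim(\eta,\xi^2)\geq1$ by Lemma \ref{4lemma xi-eta}$(2)$.

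For the Galois group in case $(2)$ I would first record that $\Gal(P/N)=\{\id\}$: an automorphism of $P$ fixing $N$ is, as a sector, contained in $\iota\biota=\id_P\oplus\xi$, hence (being an automorphism while $\xi$ is not) inner, hence trivial by irreducibility; the same holds for $\Gal(Q/N)$. Consequently $\Gal(M/N)$ acts faithfully on the finite set $\mathcal R$ of intermediate $R$ with ${}_NL^2(R){}_N\cong{}_NL^2(P){}_N$, because an element fixing every member fixes $P$ and $Q$ pointwise and so is $\id$ on $P\vee Q=M$. Moreover $\pi$ is the unique isomorphism $P\to Q$ fixing $N$, so if $\theta(P)=Q$ then $\theta\kappa=\kappa_Q\pi$ and $[\bkappa\theta\kappa]=[\beta]=[\xi]$; the $\Z/2\Z$ and $\Z/3\Z$ identities follow from this applied to $\theta$, $\theta^2$ and the partners of $P$.

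The remaining, and I expect hardest, point is the bound $|\mathcal R|\leq3$, with equality only when $[P:N]=3$. Each $R\in\mathcal R$ gives a line in $\cH_\hxi=(\iota_{M,N},\iota_{M,N}\hxi)$ carrying the $Q$-system datum $E_N(t_R^2)=r_\hxi/\sqrt{d(\hxi)}$, and any two such lines meet at angle $\cos^{-1}(1/([P:N]-1))$; Lemma \ref{3lemma uniqueness} already shows that a third line forces $[P:N]=3$. The obstacle is to control this configuration of lines when $\dim\cH_\hxi>2$ and conclude $|\mathcal R|\leq3$. Granting it, $\Gal(M/N)\hookrightarrow\fS_3$, so the only possibilities are $\{\id\}$, $\Z/2\Z$, $\Z/3\Z$, $\fS_3$, and in the order-$6$ case $[M:N]=[M:P][P:N]=6=|\Gal(M/N)|$ gives $N=M^{\Gal(M/N)}$ with $P,Q$ the fixed-point algebras of two transpositions.
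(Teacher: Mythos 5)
Your opening moves are sound, and in fact your treatment of case $(1)$ is a genuinely different (and slicker) route than the paper's: you get $[M:P]=[P:N]$ and the automorphism by running the multiplicity-one computation $\dim(\xi,\bkappa\kappa_Q\pi)=1$ simultaneously for $\fQ$ and for $\hat{\fQ}$ and comparing the two leftover dimensions $[M:P]-([P:N]-1)\geq 1$ and $[P:N]-([M:P]-1)\geq 1$, whereas the paper uses triviality of $c(\hat{\fQ})$ to produce a unitary $v$ with $vQv^*=P$ and sets $\alpha=\mathrm{Ad}(v)\circ\pi$ directly. The angle statement, the claim $[\xi]=[\bkappa\kappa_Q\pi]$, and the index identity $[M:P]=[P:N]-1$ in case $(2)$ are also correct and essentially as in the paper.

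The genuine gap is the Galois-group classification, and it is not merely the missing bound you acknowledge: the refinement you plan to use is false. You want ``$|\mathcal{R}|\leq 3$, with equality only when $[P:N]=3$,'' where $\mathcal{R}$ is the set of intermediate subfactors bimodule-equivalent to $P$. But the paper's Class IV example (Theorem \ref{5theorem IV}, built on the Haagerup subfactor \cite{AH}) satisfies every hypothesis of Theorem \ref{4theorem (2,2)-supertransitive} case $(2)$, has $\Gal(M/N)\cong\Z/3\Z$ and $[P:N]=(5+\sqrt{13})/2\neq 3$, and its Galois orbit $\{P,\theta(P),\theta^2(P)\}$ already gives $|\mathcal{R}|\geq 3$. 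So Lemma \ref{3lemma uniqueness} cannot be upgraded in the way you hope: its hypothesis $\dim(\iota_{M,N},\iota_{M,N}\hxi)=2$ simply fails there (the multiplicity is $3$, cf.\ Corollary \ref{4corollary Z/3Z}), and once $\dim\cH_{\hxi}>2$ a configuration of pairwise-equiangular lines in $\cH_{\hxi}$ is not bounded by $3$, so no counting of lines alone can close this. Worse, your intended chain ``$\Z/3\Z$ or $\fS_3$ acts faithfully on at most $3$ points $\Rightarrow|\mathcal{R}|=3\Rightarrow[P:N]=3\Rightarrow[M:N]=6$'' would, applied to the $\Z/3\Z$ case, prove $[P:N]=3$, contradicting that example; so the plan cannot be completed as stated. (A smaller secondary gap: even granting $\Gal(M/N)\cong\Z/2\Z$, faithfulness on $\mathcal{R}$ only tells you the nontrivial element moves \emph{some} member of $\mathcal{R}$, not that it carries $P$ to $Q$.)

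What the paper does instead is purely sector-theoretic: $\Gal(M/N)$ is identified with the group of automorphism sectors contained in $[\kappa\iota\biota\bkappa]=[\id_M]\oplus[\heta]\oplus[\kappa_Q\pi\bkappa]\oplus[\heta\kappa_Q\pi\bkappa]$. If $\heta$ is an automorphism, then $[M:P]=2$, $[P:N]=3$, $N\subset P$ is the $A_5$ subfactor, and the Goldman-type theorem of \cite{I2} produces the outer $\fS_3$ action; this is the only way $\fS_3$ arises. If $\heta$ is not an automorphism, a Frobenius-reciprocity argument shows any nontrivial Galois sector must occur in $\kappa_Q\pi\bkappa$, forcing $[\theta\kappa]=[\kappa_Q\pi]$ and $\theta(P)=Q$; then either $\heta\theta\heta$ contains no automorphism (so $\Gal(M/N)\cong\Z/2\Z$) or it contains some $[\beta]$, in which case $G=\{[\id_M],[\theta]\}\cup[\beta]\cdot G_1$ with $G_1=\Gal(M/\heta(M))$ a subgroup satisfying $\#G=2+\#G_1$, and divisibility of $\#G_1$ in $\#G$ eliminates everything except $G_1$ trivial, i.e.\ $\Gal(M/N)\cong\Z/3\Z$. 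This decomposition-and-counting argument inside $\kappa\iota\biota\bkappa$ is the ingredient your proposal is missing.
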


\begin{proof} The angle between $P$ and $Q$ was already computed in 
Theorem \ref{3theorem angle}. 
Lemma \ref{3lemma trivial} shows that since $c(\fQ)$ is trivial there exists an 
isomorphism $\pi$ from $P$ onto $Q$ such that the restriction of $\pi$ to $N$ is trivial.  
This means $\iota_Q=\pi\iota$. 
Thus $\kappa\iota=\kappa_Q\iota_Q$ implies $\kappa\iota=\kappa_Q\pi\iota$ and 
\begin{eqnarray*}1&=&\dim(\kappa\iota,\kappa_Q\pi\iota)=\dim(\kappa\iota\biota,\kappa_Q\pi)
=\dim(\kappa,\kappa_Q\pi)+\dim(\kappa\xi,\kappa_Q\pi)\\
&=&\dim(\kappa,\kappa_Q\pi)+\dim(\xi,\bkappa\kappa_Q\pi).
\end{eqnarray*}
We claim $[\kappa]\neq [\kappa_Q\pi]$, and in consequence, claim that $\xi$ is contained in 
$\bkappa\kappa_Q\pi$. 
Indeed, if $[\kappa]=[\kappa_Q\pi]$ were the case, there would exist a unitary $u$ in 
$M$ such that $x=u\pi(x)u^*$ holds for all $x\in P$. 
In particular, the unitary $u$ commutes with all $x\in N$ and $u$ is a scalar. 
However, this means $P=Q$, which is a contradiction. 
Therefore the claim holds. 

(1) Assume that $\fQ$ is not cocommuting and the class $c(\hat{\fQ})$ in $H^2(P\subset M)$ 
is trivial. 
Then Lemma \ref{3lemma trivial} applied to the dual quadrilateral $\hat{\fQ}$ implies that 
$P$ and $Q$ are inner conjugate in $M$ and there exists a unitary $v\in M$ satisfying 
$vQv^*=P$. 
We introduce an automorphism $\alpha$ of $P$ by setting 
$\alpha(x)=v\pi(x)v^*$ for $x\in P$. 
This implies $[\kappa\alpha]=[\kappa_Q\pi]$ and so 
$$[\bkappa\kappa_Q\pi]=[\bkappa\kappa\alpha]=[\alpha]\oplus [\eta\alpha].$$
Since $\xi$ is contained in $\bkappa\kappa_Q\pi$, either $[\xi]=[\alpha]$ or 
$[\xi]=[\alpha\eta]$ holds. 
Note that $[\xi]=[\alpha]$ never occurs thanks to Remark \ref{3remark no-auto}. 
Thus we have $[\xi]=[\eta\alpha]$ and so $[\eta]=[\xi\alpha^{-1}]$, which 
is equal to $[\alpha\xi]$ for $\eta$ is self-conjugate. 
Since $\xi^2$ contains $\eta$ and $\id_P$, we conclude that $\xi^3$ 
contains $\alpha$. 

(2) Assume that $\fQ$ is cocommuting. 
Then Theorem \ref{3theorem angle},(1) applied to the dual quadrilateral $\hat{\fQ}$ implies that 
$[\kappa\bkappa]\neq [\kappa_Q\bkappa_Q]$ and so 
$$\dim(\bkappa\kappa_Q\pi,\bkappa\kappa_Q\pi)=\dim (\kappa\bkappa,\kappa_Q\bkappa_Q)=1.$$
This shows that $\bkappa\kappa_Q\pi$ is irreducible and $[\xi]=[\bkappa\kappa_Q\pi]$, 
which implies  
$$[P:N]-1=d(\xi)=d(\kappa)d(\kappa_Q)=[M:P].$$
(This is also shown in Corollary \ref{3corollary index} with help of 
Lemma \ref{2lemma angle} though we don't need to use it here.) 

Note that the Galois group of the inclusion $N\subset M$ is isomorphic to the 
set of 1-dimensional sectors $G$ contained in $\kappa\iota\biota\bkappa$. 
(Note that since $N\subset M$ is irreducible, the multiplicity of any automorphism in 
$\kappa\iota\biota\bkappa$ is at most one.) 
We assume that $G$ is non-trivial now. 
Using $[\xi]=[\bkappa\kappa_Q\pi]$, we get 
\begin{eqnarray*}[\kappa\iota\biota\bkappa]
 &=& [\kappa\bkappa]\oplus [\kappa\xi\bkappa]=[\id_M]\oplus [\heta]\oplus 
 [\kappa\bkappa\kappa_Q\pi\bkappa]\\
 &=&[\id_M]\oplus [\heta]\oplus [\kappa_Q\pi\bkappa]
 \oplus [\heta\kappa_Q\pi\bkappa].
\end{eqnarray*}

Assume that $\heta$ is an automorphism. 
Then $[M:P]=2$ and so $[P:N]=3$. 
Since $N\subset P$ is 2-supertransitive, it is the $A_5$ subfactor. 
Therefore, the proof of \cite[Theorem 3.1]{I2} shows that there exists a unique outer 
action of $\fS_3$ on $M$ such that $N$ is the fixed point algebra of the action. 

Assume that $\heta$ is not an automorphism. 
We claim that $\kappa_Q\pi\bkappa$ contains an automorphism. 
By the assumption of non-triviality of $\Gal(M/N)$, either 
$\kappa_Q\pi\bkappa$ or $\heta\kappa_Q\pi\bkappa$ contains an automorphism. 
Assume that the latter contains an automorphism, say $\alpha$. 
Then Frobenius reciprocity implies 
$$1=\dim (\heta\kappa_Q\pi\bkappa,\alpha)=\dim (\kappa_Q\pi\bkappa,\heta\alpha),$$
and $\kappa_Q\pi\bkappa$ contains $\heta\alpha$. 
Since $d(\kappa_Q\pi\bkappa)=d(\heta\alpha)+1$, we conclude that $\kappa_Q\pi\bkappa$ 
contains an automorphism.  

Let $\theta$ be an automorphism contained in $\kappa_Q\pi\bkappa$.  
Then Frobenius reciprocity implies $[\theta\kappa]=[\kappa_Q\pi]$. 
We may assume that $\theta\kappa=\kappa_Q\pi$ holds by choosing an appropriate 
representative of $[\theta]$. 
Then $\theta(P)=Q$ and $\theta\kappa\iota=\kappa_Q\pi\iota=\kappa_Q\iota_Q=\kappa\iota$, 
which shows $\theta\in \mathrm{Gal}(M,N)$. 
Using $[\theta\kappa]=[\kappa_Q\pi]$, we get 
\begin{eqnarray*}[\kappa\iota\biota\bkappa]
 &=&[\id_M]\oplus [\heta]\oplus [\theta\kappa\bkappa]
 \oplus [\heta\theta\kappa\bkappa]\\
 &=&[\id_M]\oplus [\theta]\oplus [\heta]\oplus [\theta\heta]\oplus [\heta\theta]
 \oplus [\heta\theta\heta].
\end{eqnarray*}
If $\heta\theta\heta$ does not contain any automorphism, then 
$\Gal(M,N)=\{\id_M,\theta\}\cong \Z/2\Z$. 

Assume that $\heta\theta\heta$ does contain an automorphism, say, $\beta$. 
Then Frobenius reciprocity again implies $[\heta\theta]=[\beta\heta]$ 
(and $[\theta\heta]=[\heta\beta]$), and  
$$[\kappa\iota\biota\bkappa]=[\id_M]\oplus [\theta]\oplus [\heta]\oplus [\theta\heta]
\oplus [\heta\theta] \oplus [\beta\heta^2].$$
Therefore 
$$G=\{[\id_M],[\theta]\}\cup\{[\beta\gamma]\}_{\gamma\in \Gal(M/\heta(M))}.$$ 
Since $[\beta]\in G$, the group $G_1=\{[\gamma]\}_{\gamma\in \Gal(M/\heta(M))}$ 
is a subgroup of $G$ such that $\# G=2+\# G_1$. 
Since $\#G_1$ divides $\#G$, either $G_1$ is trivial or $\#G_1=2$. 
If $G_1$ is trivial, we get $\Gal(M/N)\cong \{[\id_M],[\theta],[\beta]\} \cong \Z/3\Z$. 

Suppose that $G_1=\{[\id_M],[\gamma]\}$. 
Then $G=\{[\id_M],[\theta],[\beta],[\beta\gamma]\}$ and $[\theta]=[\gamma]$. 
This would imply $[\theta\heta]=[\gamma\heta]=[\heta]$ and 
$[\heta\theta\heta]=[\heta^2]$ would contain $[\id_M]$. 
However, this means that $\kappa\iota\biota\bkappa$ contain $\id_M$ with 
multiplicity two, which is a contradiction. 
Therefore $G_1$ is trivial. 
\end{proof}

\begin{corollary} \label{4corollary noncommuting-noncocommuting} 
Let $\fQ=\quadri$ be a (3,3)-supertransitive irreducible quadrilateral of 
factors such that $\fQ$ is neither commuting nor cocommuting. 
Then $\Theta(P,Q)=\cos^{-1}1/([P:N]-1)$, $[M:P]=[P:N]$ and 
there exists an outer automorphism $\alpha$ of $P$ such that 
$[\eta]=[\alpha\xi]=[\xi \alpha^{-1}]$.  
Moreover, 
\begin{itemize}
\item [$(1)$] The automorphism $\alpha$ satisfies $[\alpha\iota']=[\iota']$. 
\item [$(2)$] $\dim (\kappa\iota,\kappa\iota\hxi)=2$. 
\item [$(3)$] The Galois group $\Gal(M/N)$ is either trivial or isomorphic to $\Z/2\Z$. 
It is  isomorphic to $\Z/2\Z$ if and only if the order of $[\alpha]$ is two. 
When $\Gal(M/N)=\{\id,\theta\}$, the automorphism $\theta$ switches $P$ and $Q$. 
\end{itemize}
\end{corollary}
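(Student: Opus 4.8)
The plan is to reduce the displayed conclusions to case $(1)$ of Theorem~\ref{4theorem (2,2)-supertransitive} and then to extract $(1)$--$(3)$ from the fusion rules it produces. Since $\fQ$ is $(3,3)$-supertransitive, the inclusions $N\subset P$ and $P\subset M$ are $3$-supertransitive; hence $\widehat{N\subset P}=\check P\subset N$ is $3$-supertransitive as well (supertransitivity is unchanged under the downward basic construction), and Lemma~\ref{3lemma vanishing} makes $H^2(\widehat{N\subset P})$ and $H^2(P\subset M)$ trivial, so $c(\fQ)$ and $c(\hat\fQ)$ vanish. As $\fQ$ is neither commuting nor cocommuting, Theorem~\ref{4theorem (2,2)-supertransitive}$(1)$ gives directly $\Theta(P,Q)=\cos^{-1}(1/([P:N]-1))$, the equality $[M:P]=[P:N]$, and an outer automorphism $\alpha$ of $P$ with $[\eta]=[\alpha\xi]=[\xi\alpha^{-1}]$. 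From these relations I would first record $[\eta\alpha]=[\xi]$ and $[\xi\alpha]=[\alpha^{-1}\xi]$, the latter yielding the identity
\[
[\eta^2]=[\alpha\xi\alpha\xi]=[\alpha\alpha^{-1}\xi\xi]=[\xi^2].
\]
Writing $\alpha=\Ad(v)\pi$ with $\pi\iota=\iota_Q$ and $vQv^*=P$ (as in the proof of the theorem), I also note $\alpha\iota=\Ad(v)\iota_Q$, so by innerness of $\Ad(v)$ in $M$ one has $[\kappa\alpha\iota]=[\kappa_Q\iota_Q]=[\kappa\iota]$, while $[\alpha\iota]\neq[\iota]$ (otherwise $v$ would commute with $N$, hence be scalar, forcing $P=Q$).

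To prove $(1)$ I would transport the question to the $M$-side, where $[\kappa\alpha\iota]=[\kappa\iota]$ is available. Comparing the two decompositions of $[\kappa\alpha\xi\iota]=[\kappa\eta\iota]$, namely $[\kappa\alpha\iota]\oplus[\kappa\alpha\iota']$ and $[\kappa\iota]\oplus[\kappa'\iota]$ (using $[\kappa\eta]=[\kappa]\oplus[\kappa']$), cancellation of the common summand $[\kappa\iota]$ gives $[\kappa\alpha\iota']=[\kappa'\iota]$. Applying $\bkappa$ and using $[\bkappa\kappa]=[\id_P]\oplus[\eta]$, $[\eta\alpha]=[\xi]$, $[\bkappa\kappa']=[\eta^2]\ominus[\id_P]$ and $[\eta^2]=[\xi^2]$, I obtain $[\alpha\iota']\oplus[\xi\iota']=[\xi^2\iota]\ominus[\iota]$; since $[\xi\iota']=[\xi^2\iota]\ominus[\iota]\ominus[\iota']$ this forces $[\alpha\iota']=[\iota']$. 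Claim $(2)$ is then a multiplicity count: $\dim(\kappa\iota,\kappa\iota\hxi)$ is the multiplicity of $\hxi$ in $\biota\bkappa\kappa\iota=\biota\iota\oplus\biota\eta\iota$, equal to $1+\dim(\eta\iota,\iota\hxi)$, and $(1)$ gives $[\eta\iota]=[\alpha\iota]\oplus[\iota']$, so $\dim(\eta\iota,\iota\hxi)=\dim(\alpha\iota\oplus\iota',\iota\oplus\iota')=1$ (the degenerate index-three case being set aside via Lemma~\ref{3lemma uniqueness}).

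For $(3)$ the key is that $(2)$ is exactly the hypothesis $\dim(\iota_{M,N},\iota_{M,N}\hxi)=2$ of Lemma~\ref{3lemma uniqueness}; that lemma shows (for $[P:N]>3$) that $P$ and $Q$ are the only intermediate subfactors whose $L^2$ is equivalent to $L^2(P)$ as an $N$-$N$ bimodule. Any $\theta\in\Gal(M/N)$ fixes $N$, hence permutes this pair, giving a homomorphism $\Gal(M/N)\to\Z/2\Z$; its kernel is trivial because a $\theta$ fixing both $P$ and $Q$ restricts to $\Gal(P/N)$ and $\Gal(Q/N)$, which are trivial as $\xi$ is not an automorphism, whence $\theta=\id_M$. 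Thus $\Gal(M/N)$ is trivial or $\Z/2\Z$, and its nontrivial element switches $P$ and $Q$. A switch $\theta$ satisfies $[\theta\kappa]=[\kappa_Q\pi]=[\kappa\alpha]$ and $\theta^2=\id_M$, so iterating yields $[\kappa]=[\theta^2\kappa]=[\kappa\alpha^2]$, i.e.\ $[\alpha^2]=[\id_P]$; thus $\Gal(M/N)\cong\Z/2\Z$ forces $[\alpha]$ to have order two. I expect the converse --- constructing the involution from the condition $[\alpha^2]=[\id_P]$ (which makes the relations symmetric, $[\eta]=[\alpha\xi]=[\xi\alpha]$) and thereby realizing $[\kappa\alpha]=[\theta\kappa]$ with $\theta$ an automorphism inside $\kappa\alpha\bkappa$ --- to be the main obstacle; everything else is bookkeeping in fusion rules already fixed by Theorem~\ref{4theorem (2,2)-supertransitive}.
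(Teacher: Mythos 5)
Your reduction to Theorem \ref{4theorem (2,2)-supertransitive}(1) via Lemma \ref{3lemma vanishing} is exactly the paper's first step, and your arguments for (1), (2), the dichotomy in (3), and the forward implication of the ``iff'' are correct but follow genuinely different routes: for (1) you cancel $[\kappa\iota]$ from the two decompositions of $[\kappa\eta\iota]$ and then apply $\bkappa$, where the paper argues on the $P$-side ($\iota'\prec\alpha\iota\oplus\alpha\iota'$ by Frobenius, then a dimension count); for the statement that $\Gal(M/N)$ embeds in $\Z/2\Z$ with the nontrivial element switching $P$ and $Q$, you use Lemma \ref{3lemma uniqueness} plus triviality of $\Gal(P/N)$, which is cleaner than the paper's method of locating all automorphism sectors inside $[\kappa\iota\biota\bkappa]=[\id_M]\oplus[\heta]\oplus[\kappa\alpha\bkappa]\oplus[\kappa'\alpha\bkappa]$; and your derivation of $[\alpha^2]=[\id_P]$ from $[\kappa]=[\theta^2\kappa]=[\kappa\alpha^2]$ (Frobenius against $[\bkappa\kappa]=[\id_P]\oplus[\eta]$, $\eta$ not an automorphism) is simpler than the paper's route through $\alpha^2\prec\xi^2$.

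However, there is an essential gap: you never prove the converse implication, that $[\alpha]$ of order two forces $\Gal(M/N)\cong\Z/2\Z$. You defer it as ``the main obstacle,'' but it is part of the statement, and it is precisely the point where second cohomology must enter; no amount of fusion-rule bookkeeping will produce it, because the fusion data only shows that $\kappa\alpha$ and $\kappa$ have equivalent dual canonical endomorphisms, not that an automorphism $\theta$ of $M$ with $[\theta\kappa]=[\kappa\alpha]$ actually exists. The paper's argument runs: if $[\alpha^2]=[\id_P]$ then
$$[\overline{(\kappa\alpha)}\,\kappa\alpha]=[\alpha^{-1}(\id_P\oplus\eta)\alpha]=[\id_P]\oplus[\eta]=[\bkappa\kappa],$$
so the subfactors $\alpha^{-1}\bkappa(M)$ and $\bkappa(M)$ of $P$ share the same canonical endomorphism class; since $H^2(\widehat{P\subset M})$ is trivial (Lemma \ref{3lemma vanishing}, as $P\subset M$ is 3-supertransitive), they are inner conjugate in $P$, which yields $\theta\in\Aut(M)$ with $[\theta\kappa]=[\kappa\alpha]$. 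Then $\theta$ is outer (else $[\kappa\alpha]=[\kappa]$, contradicting triviality of the Galois group of $\bkappa(M)\subset P$), and $[\theta]\prec[\kappa\iota\biota\bkappa]$ by Frobenius, so $\Gal(M/N)$ is nontrivial.

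A second, smaller gap: both in (2) (to get $\dim(\alpha\iota\oplus\iota',\iota\oplus\iota')=1$) and in your use of Lemma \ref{3lemma uniqueness} you must exclude $[P:N]=3$, i.e.\ $[\alpha\iota]=[\iota']$, but citing Lemma \ref{3lemma uniqueness} for this is circular/misplaced: that lemma \emph{concludes} $[P:N]=3$ under certain hypotheses, it does not rule it out. The correct exclusion is the one in the paper's proof of (1): if $[P:N]=3$, then $N\subset P$ is the $A_5$ subfactor, whose fusion rules do not allow $\xi^2$ to contain $\alpha\xi$ with $[\alpha\xi]\neq[\xi]$, contradicting Lemma \ref{4lemma xi-eta}(2). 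With that substitution and the cohomological argument above supplied, your proposal becomes a complete (and in parts more streamlined) proof.
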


\begin{proof} The first part follows from Lemma \ref{3lemma vanishing} and 
Theorem \ref{4theorem (2,2)-supertransitive}. 

(1) Since $\alpha\xi$ is contained in $\iota'\biota$, by Frobenius reciprocity 
$\iota'$ is contained in $\alpha\xi\iota=\alpha\iota\oplus \alpha\iota'$. 
Since $d(\iota')=d(\iota)(d(\iota)^2-2)$, 
the equality $[\iota']=[\alpha\iota]$ would imply that $N\subset P$ is the $A_5$ subfactor, 
which does not allow $\xi^2$ to contain $\alpha\xi$, and so $[\alpha\iota']=[\iota']$. 

(2) Thanks to Lemma \ref{4lemma xi-eta}, we have 
\begin{eqnarray*}
[\biota\bkappa\kappa\iota]&=&[\biota\iota]\oplus [\biota\eta\iota]
=[\id_N]\oplus[\hxi]\oplus [\biota\alpha\xi\iota]
=[\id_N]\oplus[\hxi]\oplus [\biota\alpha\iota]\oplus [\biota\alpha\iota']\\
&=&[\id_N]\oplus[\hxi]\oplus [\biota\alpha\iota]\oplus [\biota\iota'].
\end{eqnarray*}
Frobenius reciprocity implies 
$$\dim (\biota\alpha\iota,\hxi)=\dim (\alpha\iota,\iota\hxi)=\dim (\alpha\iota,\iota)
+\dim (\alpha\iota,\iota')=\dim (\alpha\iota,\iota),$$
where we use $[\alpha\iota]\neq [\iota']$ again. 
Since the Galois group of $N\subset P$ is trivial, we have $\dim (\alpha\iota,\iota)=0$ 
and $\dim (\biota\alpha\iota,\hxi)=0$. 
Since $N\subset P$ is 3-supertransitive, the endomorphism $\biota\iota'$ contains 
$\hxi$ with multiplicity one and we get the statement. 

(3) The Galois group $\Gal(M/N)$ is isomorphic to the set of 1 dimensional sectors 
contained in $\kappa\iota\biota\bkappa$. 
By a similar computation as above, 
$$[\kappa\iota\biota\bkappa]=[\kappa\bkappa]\oplus [\kappa\xi\bkappa]
=[\id_M]\oplus [\heta]\oplus [\kappa\eta\alpha\bkappa]
=[\id_M]\oplus [\heta]\oplus [\kappa\alpha\bkappa]\oplus [\kappa'\alpha\bkappa].$$
Since the depth of $P\subset M$ is at least 4 due to Lemma \ref{4lemma xi-eta}, 
the endomorphism $\heta$ is not an automorphism. 
If $\kappa'\alpha\bkappa$ contained an automorphism $\theta$, Frobenius reciprocity 
would imply $[\kappa'\alpha]=[\theta\kappa]$ and $d(\kappa)=d(\kappa')$, which is a 
contradiction. 

Assume that $\Gal(M/N)$ is not trivial. 
Then $\kappa\alpha\bkappa$ contains an automorphism, say, $\theta$. 
Then Frobenius reciprocity implies $[\theta\kappa]=[\kappa\alpha]$ and 
$$[\id_P]\oplus [\alpha\xi]=[\bkappa\kappa]=[\alpha^{-1}\bkappa\kappa\alpha]
=[\id_P]\oplus [\alpha^{-1}\alpha\xi\alpha]=[\id_P]\oplus [\xi\alpha].$$
Since $[\alpha\xi]=[\xi\alpha^{-1}]$, this shows that $\xi^2$ contains $\alpha^2$. 
If $\alpha^2$ were outer, we would have $[\alpha^2\iota]=[\iota']$ as 
$\iota'\biota$
would contain $\alpha^2$. 
However this contradicts $d(\iota)\neq d(\iota')$. 
Thus the order of $[\alpha]$ is two. 
This implies $[\theta^2\kappa]=[\kappa\alpha^2]=[\kappa]$. 
Since the Galois group of $P\subset M$ is trivial, we get $[\theta^2]=[\id_M]$. 
Let $\beta$ be another automorphism contained in $\kappa\alpha\bkappa$. 
Then the above argument shows that $[\theta\kappa]=[\beta\kappa]$ and 
$[\beta^{-1}\theta\kappa]=[\kappa]$, which implies $[\beta^{-1}\theta]=[\id_M]$. 
Therefore $\Gal(M/N)\cong \Z/2\Z$. 

Assume now that the order of $[\alpha]$ is two. 
Then 
$$[\overline{(\kappa\alpha)}\kappa\alpha]=[\alpha^{-1}(\id_P\oplus \eta)\alpha]
=[\id_P]\oplus [\alpha^{-1}\eta\alpha]=[\id_P]\oplus [\eta]=[\bkappa\kappa].$$ 
Since $H^2(\widehat{P\subset M})$ is trivial, the subfactors 
$\alpha^{-1}\bkappa(M)$ and $\bkappa(M)$ are inner conjugate in $P$ and so 
there exists an automorphism $\theta$ of $M$ such that $[\theta\kappa]=[\kappa\alpha]$. 
Since the Galois group of $\bkappa(M)\subset P$ is trivial, we have 
$[\kappa\alpha]\neq [\kappa]$ and $\theta$ is outer. 
The same computation as above implies that $[\theta]$ is contained in 
$[\kappa\iota\biota\bkappa]$ and so $\Gal(M/N)$ is not trivial. 

The above argument shows that when $\Gal(M/N)$ is isomorphic to $\Z/2\Z$, we have 
$[\theta\kappa]=[\kappa\alpha]=[\kappa_Q\pi]$ and we can choose a representative 
$\theta$ of $[\theta]$ so that $\theta\kappa=\kappa_Q\pi$ holds.  
Then $\theta(P)=Q$ and $\theta\kappa\iota=\kappa_Q\pi\iota=\kappa_Q\iota_Q=\kappa\iota$, 
and so $\Gal(M/N)=\{\id,\theta\}$. 

\end{proof}

\begin{corollary} \label{4corollary Z/3Z} 
Let  $\fQ=\quadri$ be an irreducible noncommuting quadrilateral of factors 
such that $\fQ$ is (2,2)-supertransitive and the class $c(\fQ)$ is trivial. 
Assume that $\fQ$ is cocommuting and the Galois group $\Gal(M/N)$ is isomorphic to 
$\Z/3\Z$. 
Then $[M:P]\geq(5+\sqrt{13})/2$ and $\fQ$ is neither (4,2)-supertransitive nor 
(2,3)-supertransitive. 
If $\fQ$ is (3,2)-supertransitive, the dual principal graph (to be more precise, the 
induction-reduction graph for $M-M$ and $M-P$ bimodules) contains one of 
the   graphs of the Haagerup subfactor. 
\end{corollary}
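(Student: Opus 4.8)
The plan is to reduce all three assertions to a single multiplicity inequality in the $M$–$M$ fusion and then read each one off by fusion-ring bookkeeping. Write $n=[M:P]=[P:N]-1$, so that $d(\xi)=d(\hxi)=n$ and $d(\eta)=d(\heta)=n-1$. By Theorem~\ref{4theorem (2,2)-supertransitive},(2) there is a generator $\theta$ of $\Gal(M/N)\cong\Z/3\Z$ with $\theta(P)=Q$ and $[\xi]=[\bkappa\theta\kappa]=[\bkappa\theta^2\kappa]$, and the proof of that theorem shows $\heta\theta\heta$ contains the third Galois automorphism $\theta^2$. Since $\theta$ has order $3$ and $\theta(P)=Q$, the subfactor $R=\theta(Q)=\theta^2(P)$ is an intermediate subfactor distinct from $P$ and $Q$ (a homomorphism $\Z/3\Z\to\fS_2$ is trivial, so $\theta$ cannot preserve $\{P,Q\}$), and $\theta$ cyclically permutes $P,Q,R$; because $\theta|_N=\id$, all three carry the $N$–$N$ bimodule $[\id_N]\oplus[\hxi]$. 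The first computation I would do is the ``twist relation'': from $\dim(\heta\theta\heta,\theta^2)\ge 1$ and Frobenius reciprocity one gets $\dim(\theta\heta\theta,\heta)\ge 1$, hence $[\theta\heta\theta]=[\heta]$ (both irreducible of dimension $n-1$). Equivalently $[\heta\theta]=[\theta^2\heta]$, $[\theta\heta]=[\heta\theta^2]$, and $[\heta\theta\heta]=[\theta^2\heta^2]$; in particular $[\bkappa\theta\heta\theta\kappa]=[\bkappa\heta\kappa]$, so $[\xi^2]=[\bkappa\theta\kappa\bkappa\theta\kappa]=[\xi]\oplus[\bkappa\heta\kappa]$, while $[\kappa\xi\bkappa]=[\theta]\oplus[\theta\heta]\oplus[\heta\theta]\oplus[\heta\theta\heta]$.

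Next I would establish the key inequality $\dim(\eta,\xi^2)\ge 2$. By $\theta$–symmetry the three pairs $\{P,Q\}$, $\{Q,R\}$, $\{R,P\}$ are noncommuting quadrilaterals sharing the angle $\cos^{-1}(1/([P:N]-1))$ forced by $c(\fQ)$ trivial (Corollary~\ref{3corollary angle}). I would then rerun the computation of Lemma~\ref{3lemma uniqueness} using $t_R=\theta^2(t_P)$ in place of the hypothesis that $H^2(\widehat{N\subset P})$ vanishes: since $\theta^2$ fixes $N$ pointwise, $t_R\in(\iota_{R,N},\iota_{R,N}\hxi)$ and $E_N(t_R^2)=\theta^2(E_N(t_P^2))=E_N(t_P^2)$, so $t_R$ automatically meets the normalisations required there, and the equal–angle symmetry fixes the inner products $\inpr{t_R}{t_P},\inpr{t_R}{t_Q}$ up to sign. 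If $\dim(\iota_{M,N},\iota_{M,N}\hxi)=2$ the argument of Lemma~\ref{3lemma uniqueness} then forces $[P:N]=3$, whence $\heta$ is an automorphism and $\Gal(M/N)\cong\fS_3$ by the proof of Theorem~\ref{4theorem (2,2)-supertransitive}, contradicting $\Z/3\Z$. Hence $\dim(\iota_{M,N},\iota_{M,N}\hxi)\ge 3$, which (as $[\iota\biota]=[\id_P]\oplus[\xi]$) is exactly $\dim(\eta,\xi^2)\ge 2$. Using $[\xi^2]=[\xi]\oplus[\bkappa\heta\kappa]$, the relation $[\kappa\eta\bkappa]=[\heta^2]\oplus[\heta]$, and Frobenius reciprocity, I would record $\dim(\eta,\xi^2)=1+\dim(\heta^2,\heta)$ and $\dim(\xi,\xi^2)=1+\dim(\heta^2,\theta\heta)$. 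Set $j=\dim(\heta^2,\heta)$ and $k=\dim(\heta^2,\theta\heta)=\dim(\heta^2,\theta^2\heta)$ (the last equality by Frobenius reciprocity and self-duality of $\heta$); then $j\ge 1$, and $N\subset P$ is $3$–supertransitive if and only if $k=0$.

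I would kill $(2,3)$–supertransitivity and produce $k\ge 1$ in one stroke by showing $N\subset P$ is not $3$–supertransitive. Indeed, if it were, then $[\xi\iota]=[\iota]\oplus[\iota']$ with $\iota'$ irreducible, $[\xi^2]=[\id_P]\oplus[\iota'\biota]$, and $d(\iota')=d(\iota)(n-1)=d(\eta\iota)$, so $\dim(\eta,\xi^2)=\dim(\eta\iota,\iota')\le 1$, against $\dim(\eta,\xi^2)\ge 2$. Thus $k\ge 1$, so $\heta^2$ contains the four distinct irreducibles $\id_M,\heta,\theta\heta,\theta^2\heta$; comparing dimensions gives $(n-1)^2=d(\heta)^2\ge 1+3(n-1)$, i.e. $n-1\ge(3+\sqrt{13})/2$ and $[M:P]=n\ge(5+\sqrt{13})/2$.

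For the two supertransitivity statements I would play the four–constituent lower bound for $\heta^2$ against the rigidity of supertransitivity of $P\subset M$. Since $p$–supertransitivity of $P\subset M$ coincides with that of its basic extension, $4$–supertransitivity makes the dual principal graph $A$–type through depth $4$, forcing $[\heta^2]=[\id_M]\oplus[\heta]\oplus[\heta']$ with a single new irreducible $\heta'$; this cannot accommodate the two distinct new constituents $\theta\heta,\theta^2\heta$, so $\fQ$ is not $(4,2)$–supertransitive. Finally, if $\fQ$ is $(3,2)$–supertransitive then $3$–supertransitivity of $P\subset M$ yields $j=\dim(\heta,\heta^2)=1$ together with $[\heta\kappa]=[\kappa]\oplus[\kappa']$, $\kappa'$ irreducible; combined with the twist relation and $\heta^2\supseteq[\id_M]\oplus[\heta]\oplus[\theta\heta]\oplus[\theta^2\heta]$, the six $M$–$M$ sectors $\theta^a\heta^b$ and the $M$–$P$ sectors generated by $\kappa,\kappa'$ realise the fusion pattern of the Haagerup even system, so the induction–reduction graph for $M$–$M$ and $M$–$P$ bimodules contains one of the Haagerup graphs. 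The main obstacle is the middle step: making the three–subfactor argument of Lemma~\ref{3lemma uniqueness} run under the weaker hypothesis that only $c(\fQ)$—rather than all of $H^2(\widehat{N\subset P})$—is trivial, by substituting the $\theta$–equivariance $t_R=\theta^2(t_P)$ for cohomological vanishing; once $\dim(\eta,\xi^2)\ge 2$ is secured everything else is fusion–ring computation, the most delicate part being the explicit identification of the $(3,2)$ configuration with the precise Haagerup graph.
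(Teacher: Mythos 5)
Your proposal is correct in substance, but its central step takes a genuinely different --- and much heavier --- route than the paper's. The paper stays inside the single decomposition $[\kappa\iota\biota\bkappa]=[\id_M]\oplus[\theta]\oplus[\heta]\oplus[\theta\heta]\oplus[\theta^2\heta]\oplus[\theta^2\heta^2]$ (imported, like your twist relation, from the proof of Theorem~\ref{4theorem (2,2)-supertransitive}): since $\heta^2$ contains $\heta$ --- automatic here, because $\heta$ is not an automorphism, so $\heta\kappa\supseteq\kappa$ is reducible --- the last summand contributes a second copy of $\theta^2\heta$, and the invariance $[\theta\kappa\iota]=[\kappa\iota]$ forces the multiplicities of $\heta$, $\theta\heta$, $\theta^2\heta$ in $\kappa\iota\biota\bkappa$ to agree; hence $\heta^2\supseteq\id_M\oplus\heta\oplus\theta\heta\oplus\theta^2\heta$ in one stroke, and the index bound, the failure of $(4,2)$- and $(2,3)$-supertransitivity, and (in the $(3,2)$ case) $[\kappa'\bkappa]=[\heta^2]\ominus[\id_M]\supseteq[\heta]\oplus[\theta\heta]\oplus[\theta^2\heta]$ --- the triple point giving the Haagerup graph --- all drop out. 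Your replacement of this by the three-subfactor argument (running Lemma~\ref{3lemma uniqueness} on $P$, $Q$, $R=\theta^2(P)$) is valid: you correctly observe that the proof of that lemma only uses \emph{pairwise} triviality of the $Q$-system classes, which here follows from triviality of $c(\fQ)$ and from $\theta$-equivariance (indeed $\overline{\iota_{R,N}}(R)=\biota\theta(\theta^2(P))=\check{P}$, so Lemma~\ref{3lemma trivial} gives triviality for the pair $(P,R)$), rather than from vanishing of all of $H^2(\widehat{N\subset P})$; and the contradiction through $[P:N]=3\Rightarrow\Gal(M/N)\cong\fS_3$ is sound. That weakening of Lemma~\ref{3lemma uniqueness} is a genuine refinement and is the one thing your route buys. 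But the detour is redundant: what it yields, $\dim(\eta,\xi^2)\geq 2$, equals $1+\dim(\heta^2,\heta)$ by your own bookkeeping, and $\dim(\heta^2,\heta)\geq 1$ is automatic as above, so no geometry of three subfactors is needed for it.

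Two smaller repairs. First, the equality $\dim(\heta^2,\theta\heta)=\dim(\heta^2,\theta^2\heta)$ does not follow from Frobenius reciprocity and self-duality alone (those only return tautologies here); you need the twist relation, e.g. $[\theta\heta^2]=[\heta^2\theta]$, whence $\dim(\heta^2,\theta^2\heta)=\dim(\theta\heta^2,\heta)=\dim(\heta^2\theta,\heta)=\dim(\heta^2,\heta\theta^2)=\dim(\heta^2,\theta\heta)$; equivalently, conjugation by $\theta$ fixes $[\heta^2]$ and cyclically permutes $[\heta]$, $[\theta^2\heta]$, $[\theta\heta]$, so all three multiplicities coincide --- which also gives $j=k$ and collapses your argument to the paper's. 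Second, in the $(3,2)$ case the appeal to ``the fusion pattern of the Haagerup even system'' claims more than you need and more than you have established; the containment $[\kappa'\bkappa]\supseteq[\heta]\oplus[\theta\heta]\oplus[\theta^2\heta]$, immediate from $[\kappa'\bkappa]=[\heta^2]\ominus[\id_M]$ under 3-supertransitivity of $P\subset M$, already exhibits the required triple point and hence the desired subgraph.
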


\begin{proof} 
Let $\Gal(M/N)=\{\id_M,\theta,\theta^2=\theta^{-1}\}$.
The proof of Theorem \ref{4theorem (2,2)-supertransitive} shows that 
$[\theta\heta]=[\heta\theta^{-1}]$, the endomorphism $\heta^2$ does not contain any 
non-trivial automorphism and 
$$[\kappa\iota\biota\bkappa]=[\id_M]\oplus [\theta]\oplus [\heta]\oplus [\theta\heta]
\oplus [\theta^2\heta] \oplus [\theta^2\heta^2].$$
Note that $[\heta]$, $[\theta\heta]$, $[\theta^2\heta]$ are distinct sectors. 
Since $\heta^2$ contains $\heta$, the endomorphism $\kappa\iota\biota\bkappa$ 
contains $\theta^2\heta$ with multiplicity at least 2. 
Since $[\theta\kappa\iota]=[\kappa\iota]$, the multiplicities of $\heta$ and $\theta\heta$ 
in $\kappa\iota\biota\bkappa$ are also at least 2, which implies 
$\heta^2$ contains $\id_M\oplus \heta\oplus \theta\heta\oplus \theta^2\heta$. 
This shows that $P\subset M$ is not 4-supertransitive and 
$d(\heta)^2\geq 1+3d(\heta)$, which implies $[M:P]=1+d(\heta)\geq (5+\sqrt{13})/2$. 
Note that $N\subset P$ is 3-supertransitive if and only if $\dim(\xi,\xi^2)=1$. 
Using $[\xi]=[\bkappa\theta\kappa]$, we get 
\begin{eqnarray*}
\dim(\xi,\xi^2) &=&\dim(\bkappa\theta\kappa,\bkappa\theta\kappa\bkappa\theta\kappa)
=\dim(\kappa\bkappa\theta\kappa\bkappa,\theta\kappa\bkappa\theta)\\
&=&\dim((\id_M\oplus \heta)\theta(\id_M\oplus \heta),\theta(\id_M\oplus \heta)\theta)\\
&=&\dim(\theta\oplus \theta\heta\oplus \theta^2\heta\oplus \theta^2\heta^2, 
\theta^2\oplus\heta)\\
&=&\dim(\theta^2\heta^2, \theta^2\oplus\heta)\geq 2,
\end{eqnarray*}
which shows that $N\subset P$ is not 3-supertransitive. 

Assume that $\fQ$ is (3,2)-supertransitive. 
Then $\kappa'\bkappa$ contains $\heta\oplus\theta\heta\oplus\theta^2\heta$. 
and the dual principal graph of $P\subset M$ contains the following graph,
$$\hpic{IV1}{1.0in}$$
which is one the principal graphs of the Haagerup subfactor \cite{AH}. 
\end{proof}

In Section 5 and in the Appendix, we will show that there exists a (3,2)-supertransitive 
quadrilateral of factors in the above class such that $P\subset M$ is the Haagerup 
subfactor and that such a quadrilateral is unique up to flip conjugacy. 

\begin{corollary} \label{4corollary etaiota=iota'}
Let  $\fQ=\quadri$ be an irreducible noncommuting quadrilateral of factors.  
Assume that $\fQ$ is cocommuting and (2,3)-supertransitive. 
Then $\Theta(P,Q)=\cos^{-1}1/([P:N]-1)$, $[M:P]=[P:N]-1$, 
and the Galois group $\Gal(M/N)$ is either trivial or isomorphic to $\Z/2\Z$ 
or $\fS_3$. 
Moreover, 
\begin{itemize}
\item [$(1)$] Two equalities $[\eta\iota]=[\iota']$ and 
$[\biota\bkappa\kappa\iota]=[\id_M]\oplus [\hxi]\oplus [\biota\iota']$ hold. 
In consequence, the endomorphism $\iota'\biota$ contains $\eta$ with multiplicity one 
and $[\iota'\biota]=[\eta]\oplus [\eta\xi].$
\item [$(2)$] $\dim (\kappa\iota,\kappa\iota\hxi)=2$. 
\item [$(3)$] Assume that $\Gal(M/N)$ is isomorphic to $\Z/2\Z$ and 
$\Gal(M/N)=\{\id_M,\theta\}$. 
Then 
$$[\kappa\iota\biota\bkappa]=[\id_M]\oplus [\theta]\oplus[\heta]\oplus[\theta\heta]\oplus 
[\heta\theta]\oplus [\heta\theta\heta],$$
and $\dim(\kappa\iota\biota\bkappa,\kappa\iota\biota\bkappa)\geq 7$. 
The sectors $[\heta]$, $[\theta\heta]$, $[\heta\theta]$, and $[\theta\heta\theta]$ are 
distinct. 
The endomorphism $\heta\theta\heta$ contains $\theta\heta\theta$ with multiplicity one 
and it does not contain either any automorphism or any of $\heta$, $\theta\heta$, $\heta\theta$.  
The endomorphism $\heta^2$ does not contain any of $\theta\heta$, $\heta\theta$, 
$\theta\heta\theta$. 
\item[$(4)$] The endomorphism $\xi^2$ contains $\bpi\eta_Q\pi$ and $[\bpi\eta_Q\pi\iota]=[\iota']$ holds. 
Assume that $H^2(\widehat{P\subset M})$ is trivial. 
Then the Galois group $\Gal(M/N)$ is trivial if and only if 
$[\eta]\neq [\bpi\eta_Q\pi]$. 
Assume further that  $\Gal(M/N)$ is trivial.  
Then the dual principal graph of $N\subset P$ contains $D_6$ in such a way that every endpoint 
of $D_6$ is also an endpoint of the dual principal graph. 
\item [$(5)$] Assume that $P\subset M$ is 3-supertransitive and $\Gal(M/N)$ is trivial. 
Then $Q\subset M$ is 3-supertransitive as well and 
$$[\kappa\iota\biota\bkappa]=[\id_M]\oplus [\heta]\oplus 2[\kappa_Q\pi\bkappa]
\oplus[\kappa_Q'\bkappa_Q]=[\id_M]\oplus [\heta_Q]\oplus 2[\kappa_Q\pi\bkappa]
\oplus[\kappa'\bkappa].$$  
The sector $[\kappa_Q\pi\bkappa]$ is irreducible and self-conjugate and 
$[\kappa']=[\kappa'_Q\pi]$ holds. 
Unless the principal graph of $P\subset M$ is $A_4$, the inequality 
$\dim(\kappa\iota\biota\bkappa,\kappa\iota\biota\bkappa)\geq 8$ holds.
\end{itemize}
\end{corollary}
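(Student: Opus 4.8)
\textbf{Plan of proof for Corollary \ref{4corollary etaiota=iota'}.}
The overarching strategy is to run the machinery of sector computations established in the proof of Theorem \ref{4theorem (2,2)-supertransitive}, now specialized to the cocommuting (2,3)-supertransitive case, where the extra 3-supertransitivity of $N\subset P$ gives the decompositions $[\xi\iota]=[\iota]\oplus[\iota']$ and $[\iota\hxi]=[\iota]\oplus[\iota']$. The first part (angle, index equality, and the trichotomy of $\Gal(M/N)$) is immediate from Corollary \ref{3corollary angle}, Corollary \ref{3corollary index}, and Theorem \ref{4theorem (2,2)-supertransitive},(2), once one observes that the $\Z/3\Z$ case is excluded: indeed Corollary \ref{4corollary Z/3Z} shows that $\Z/3\Z$ forces $N\subset P$ to fail 3-supertransitivity, contradicting our standing hypothesis. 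So the Galois group is trivial, $\Z/2\Z$, or $\fS_3$.

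\textbf{Parts (1) and (2).} First I would compute $[\biota\bkappa\kappa\iota]$ directly. From Lemma \ref{4lemma xi-eta} we have $[\bkappa\kappa]=[\id_P]\oplus[\eta]$, so $[\biota\bkappa\kappa\iota]=[\biota\iota]\oplus[\biota\eta\iota]$. The key move is to identify the summand $[\biota\eta\iota]$: using $[\eta\iota]$ and Frobenius reciprocity against the 3-supertransitive decomposition of $N\subset P$, one sees that $\eta\iota$ must equal $\iota'$ by a dimension count ($d(\eta\iota)=d(\eta)d(\iota)$ matches $d(\iota')=d(\iota)(d(\iota)^2-2)$ precisely because $d(\eta)=d(\xi)=d(\iota)^2-1$ is forced, and $\eta\iota$ cannot contain $\iota$ since $N\subset M$ is irreducible). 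This yields $[\eta\iota]=[\iota']$, hence $[\biota\bkappa\kappa\iota]=[\id_N]\oplus[\hxi]\oplus[\biota\iota']$, giving the first claim in (1); the consequence $[\iota'\biota]=[\eta]\oplus[\eta\xi]$ follows by Frobenius reciprocity from $[\eta\iota]=[\iota']$ together with $[\iota\biota]=[\id_P]\oplus[\xi]$. For (2), the equality $[\biota\bkappa\kappa\iota]=[\id_N]\oplus[\hxi]\oplus[\biota\iota']$ combined with $\dim(\biota\iota',\hxi)=1$ (from 3-supertransitivity of $N\subset P$, as in Corollary \ref{4corollary noncommuting-noncocommuting},(2)) gives $\dim(\kappa\iota,\kappa\iota\hxi)=\dim(\biota\bkappa\kappa\iota,\hxi)=2$.

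\textbf{Parts (3) and (4).} For (3) I would take $\Gal(M/N)=\{\id_M,\theta\}$, invoke the decomposition of $[\kappa\iota\biota\bkappa]$ already derived in the proof of Theorem \ref{4theorem (2,2)-supertransitive},(2) with the $\Z/2\Z$ group, and then extract the fine structure: distinctness of $[\heta],[\theta\heta],[\heta\theta],[\theta\heta\theta]$ follows by comparing left/right $\theta$-translates and using that $\theta$ is outer, while the containment and non-containment statements for $\heta\theta\heta$ and $\heta^2$ come from Frobenius-reciprocity pairings against the six listed sectors (e.g. $\dim(\heta\theta\heta,\theta\heta\theta)=\dim(\heta,\theta\heta\theta\theta\heta)$, reduced via $\theta^2=\id$). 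For (4), the statement $\xi^2\supset\bpi\eta_Q\pi$ with $[\bpi\eta_Q\pi\iota]=[\iota']$ is the dual-side analogue of (1): one transports the decomposition through the isomorphism $\pi$ and uses $[\xi]=[\bkappa\kappa_Q\pi]$ from Theorem \ref{4theorem (2,2)-supertransitive},(2). The criterion for triviality of $\Gal(M/N)$ in terms of $[\eta]\neq[\bpi\eta_Q\pi]$ follows because a nontrivial Galois element produces exactly the coincidence $[\eta]=[\bpi\eta_Q\pi]$ (this is where the triviality of $H^2(\widehat{P\subset M})$ is used, to pass from a bimodule isomorphism to an automorphism as in Lemma \ref{3lemma trivial} applied to $\hat{\fQ}$). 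The $D_6$-embedding claim then follows by assembling the irreducibles $\id_N,\hxi,\iota,\iota',\ldots$ into the dual principal graph and checking the valences force a $D_6$ with endpoints at the graph's endpoints.

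\textbf{Part (5) and the main obstacle.} Assuming $P\subset M$ is 3-supertransitive with trivial Galois group, I would first derive $Q\subset M$ 3-supertransitive by symmetry through $\pi$, then compute $[\kappa\iota\biota\bkappa]$ using both $[\xi]=[\bkappa\kappa_Q\pi]$ and the 3-supertransitive splittings $[\kappa\eta]=[\kappa]\oplus[\kappa']$, $[\kappa_Q\eta_Q]=[\kappa_Q]\oplus[\kappa_Q']$; irreducibility and self-conjugacy of $[\kappa_Q\pi\bkappa]$ and the identification $[\kappa']=[\kappa'_Q\pi]$ emerge from matching the two expressions for the decomposition. The final inequality $\dim(\kappa\iota\biota\bkappa,\kappa\iota\biota\bkappa)\geq 8$ in the non-$A_4$ case is a multiplicity count from the squared norm of the sector. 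I expect the main obstacle to be part (4): controlling $\xi^2$ and the $D_6$-embedding requires carefully tracking multiplicities of several irreducibles simultaneously and ruling out accidental coincidences among $\iota'$, $\bpi\eta_Q\pi\iota$, and their conjugates, where the interplay between the 3-supertransitivity of $N\subset P$ and the cohomological hypothesis on $\widehat{P\subset M}$ is most delicate.
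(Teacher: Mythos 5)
Your overall architecture (reduce the first part to Theorem \ref{4theorem (2,2)-supertransitive} and Corollary \ref{4corollary Z/3Z}, then compute sectors item by item) matches the paper's, and your parts (2) and (4) are essentially the paper's arguments. However, two steps contain genuine errors.

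First, in part (1) your dimension count is based on the false equality $d(\eta)=d(\xi)=d(\iota)^2-1$. This contradicts the index relation $[M:P]=[P:N]-1$ that you yourself state in the opening paragraph: in the cocommuting case one has $d(\eta)=[M:P]-1=[P:N]-2=d(\iota)^2-2$, so $d(\eta)=d(\xi)-1$ (the equality $d(\eta)=d(\xi)$ is precisely the signature of the \emph{non}cocommuting case in Theorem \ref{4theorem (2,2)-supertransitive},(1)). With your value the count fails: $d(\eta\iota)=d(\iota)\bigl(d(\iota)^2-1\bigr)=d(\iota')+d(\iota)$, which together with your (correct) observation that $\eta\iota$ cannot contain $\iota$ would yield a contradiction rather than $[\eta\iota]=[\iota']$. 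The correct argument, as in the paper, is: $\eta\subset\xi^2$ (Lemma \ref{4lemma xi-eta}) and 3-supertransitivity of $N\subset P$ give $\iota'\subset\eta\iota$, and then $d(\eta\iota)=d(\iota)\bigl(d(\iota)^2-2\bigr)=d(\iota')$ forces equality.

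Second, part (5) is circular. The isomorphism $\pi:P\rightarrow Q$ only identifies $N\subset P$ with $N\subset Q$; it does not extend to $M$ — indeed, when $\Gal(M/N)$ is trivial there is \emph{no} automorphism of $M$ carrying $P$ to $Q$ — so 3-supertransitivity of $Q\subset M$ cannot be obtained ``by symmetry through $\pi$.'' You then invoke the splitting $[\kappa_Q\eta_Q]=[\kappa_Q]\oplus[\kappa_Q']$, which presupposes exactly what must be proved. In the paper this is the conclusion, not an input: one first shows $\kappa_Q\pi\bkappa$ is irreducible and self-conjugate (using $[\eta]\neq[\bpi\eta_Q\pi]$ from part (4)), writes $[\kappa\iota\biota\bkappa]$ in two ways (once via $P$, once via $Q$, the latter needing only 2-supertransitivity), and deduces from $[\heta]\neq[\heta_Q]$ and a dimension count that $[\heta_Q\kappa_Q\pi]=[\kappa_Q\pi]\oplus[\kappa']$, which simultaneously yields the 3-supertransitivity of $Q\subset M$ and $[\kappa'_Q\pi]=[\kappa']$. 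A lesser but real gap is in part (3): the multiplicity-one statement $\dim(\heta\theta\heta,\theta\heta\theta)=1$ cannot be obtained by Frobenius manipulations of $\heta$ and $\theta$ alone; it is the translation of $\dim(\xi,\xi^2)=1$ (3-supertransitivity of $N\subset P$) through $[\xi]=[\bkappa\theta\kappa]$, and the distinctness $[\heta]\neq[\theta\heta\theta]$ uses cocommutativity via Theorem \ref{3theorem angle},(1) applied to $\hat{\fQ}$, not merely outerness of $\theta$.
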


\begin{proof} The first part follows from Lemma \ref{3lemma trivial}, 
Theorem \ref{4theorem (2,2)-supertransitive}, and 
Corollary \ref{4corollary Z/3Z}. 

(1) Let $d=d(\iota)=[P:N]-1$. 
Then we have $d(\xi)=d^2-1$ and $d(\iota')=d(d^2-2)$. 
On the other hand, we have $d(\eta)=[M:P]-1=[P:N]-2=d^2-2$. 
Since $\eta\iota$ contains $\iota'$ and $d(\eta\iota)=d(\iota')$, we 
get $[\eta\iota]=[\iota']$. 

(2) Using (1), we get  
$$[\biota\bkappa\kappa\iota]= [\biota\iota]\oplus[\biota\eta\iota]=
[\id_N]\oplus [\hxi]\oplus [\biota\iota'].$$
Since $N\subset P$ is 3-supertransitive, $\biota\iota'$ contains $\hxi$ with multiplicity one 
and $\dim(\kappa\iota,\kappa\iota\hxi)=\dim(\biota\bkappa\kappa\iota,\hxi)=2$. 

(3) The proof of Theorem \ref{4theorem (2,2)-supertransitive} shows that 
$\heta\theta\heta$ contain no automorphism, $[\theta\kappa]=[\kappa_Q\pi]$, 
$[\xi]=[\bkappa\theta\kappa]$, and 
$$[\kappa\iota\biota\bkappa]=
 [\id_M]\oplus [\theta]\oplus [\heta]\oplus [\theta\heta]\oplus [\heta\theta]
 \oplus [\heta\theta\heta].$$
If $[\theta\heta]=[\heta]$, we would have $[\heta\theta\heta]=[\heta^2]$, which 
contains $\id_M$. 
Thus $[\theta\heta]\neq [\heta]$. 
In the same way, we get $[\heta\theta]\neq [\heta]$ and 
$[\theta\heta]\neq[\theta\heta\theta]\neq [\heta\theta]$. 
If $[\heta]=[\theta\heta\theta]$, we would have 
$$[\kappa\bkappa]=[\id_M]\oplus[\heta]=[\id_M]\oplus[\theta\heta\theta]=[\theta\kappa\bkappa\theta]
=[\kappa_Q\pi\bpi\bkappa_Q]=[\kappa_Q\bkappa_Q],$$
which contradicts the assumption that $\fQ$ is cocommuting thanks to 
Theorem \ref{3theorem angle},(1). 
Thus $[\heta]\neq [\theta\heta\theta]$ and $[\theta\heta]\neq[\heta\theta]$. 
Since $N\subset P$ is 3-supertransitive, we have $\dim (\xi,\xi^2)=1$, and 
\begin{eqnarray*}
1&=&\dim(\xi,\xi^2)=\dim(\bkappa\theta\kappa,\bkappa\theta\kappa\bkappa\theta\kappa)
=\dim(\kappa\bkappa\theta\kappa\bkappa,\theta\kappa\bkappa\theta)\\
&=&\dim((\id_M\oplus \heta)\theta(\id_M\oplus \heta),\theta(\id_M\oplus \heta)\theta)\\
&=&\dim(\theta\oplus\theta\heta\oplus\heta\theta\oplus \heta\theta\heta,
\id_M\oplus \theta\heta\theta)\\
&=&\dim(\heta\theta\heta,\theta\heta\theta),
\end{eqnarray*}
which means that $\heta\theta\heta$ contains $\theta\heta\theta$ with multiplicity one. 
Since $[\theta\kappa\iota]=[\kappa\iota]$, the multiplicities of 
$\heta$, $\theta\heta$, $\heta\theta$ and $\theta\heta\theta$ in 
$\kappa\iota\biota\bkappa$ should be the same, which is the same as 
$\heta\theta\heta$ contains $\theta\heta\theta$ with multiplicity one. 
Thus $\heta\theta\heta$ does not contain any of $\heta$, $\theta\heta$, $\heta\theta$. 
Thanks to Frobenius reciprocity we conclude that $\heta^2$ does not contain any of 
$\theta\heta$, $\heta\theta$, $\theta\heta\theta$. 

(4) Since $[\xi]=[\bkappa\kappa_Q\pi]=[\bpi\bkappa_Q\kappa]$, we have 
$$[\xi^2]=[\bpi\bkappa_Q\kappa\bkappa\kappa_Q\pi]=
[\bpi\bkappa_Q\kappa_Q\pi]\oplus[\bpi\bkappa_Q\heta\kappa_Q\pi],$$
which contains $\bpi\eta_Q\pi$. 
Since $N\subset P$ is 3-supertransitive and $\bpi\eta_Q\pi$ (which 
is not equivalent to $\id$) is contained in $\xi^2$, 
the morphism $\iota'\biota$ contains $\bpi\eta_Q\pi$ and so 
$\bpi\eta_Q\pi\iota$ contains $\iota'$. 
Since $d(\iota')=d(\bpi\eta_Q\pi\iota)$, we conclude 
$[\bpi\eta_Q\pi\iota]=[\iota']$. 

The proof of Theorem \ref{4theorem (2,2)-supertransitive} shows that 
$\Gal(M/N)$ is non-trivial if and only if there exists and automorphism 
$\theta$ of $M$ such that $[\theta\kappa]=[\kappa_Q\pi]$. 
Assume first that such an automorphism $\theta$ exists. 
Then $[\bkappa\kappa]=[\bpi\bkappa_Q\kappa_Q\pi]$ and 
$[\eta]=[\bpi\eta_Q\pi]$ holds. 
Assume conversely that $[\eta]=[\bpi\eta_Q\pi]$ holds. 
Then 
$$[\bkappa\kappa]=[\id_P]\oplus [\eta]=[\bpi(\id_Q\oplus \eta_Q)\pi]
=[\bpi\bkappa_Q\kappa_Q\pi].$$
Since $H^2(\widehat{P\subset M})$ is trivial, the subfactors $\bkappa(M)$ and 
$\bpi\bkappa_Q(M)$ are inner conjugate in $P$ and there exists 
an automorphism $\theta$ of $M$ such that $[\theta\kappa]=[\kappa_Q\pi]$. 
Therefore the Galois group $\Gal(M/N)$ is trivial if and only if 
$[\eta]\neq [\bpi\eta_Q\pi]$. 

Now we assume that $\Gal(M/N)$ is trivial. 
Then the dual principal graph of $N\subset P$ is as follows: 
$$\hpic{IV2}{1.0in}$$
which contains $D_6$. 

(5) 
Since $[\eta]\neq [\bpi\eta_Q\pi]$, 
$$\dim(\kappa_Q\pi\bkappa,\kappa_Q\pi\bkappa)
=\dim(\bpi\bkappa_Q\kappa_Q\pi,\bkappa\kappa)
=\dim(\id_P\oplus \bpi\eta_Q\pi,\id_P\oplus \eta)=1,$$
and $\kappa_Q\pi\bkappa$ is irreducible. 
Since $[\xi]=[\bkappa\kappa_Q\pi]$ is self-conjugate, 
$$1=\dim(\bkappa\kappa_Q\pi,\bpi\bkappa_Q\kappa)
=\dim(\kappa_Q\pi\bkappa,\kappa\bpi\bkappa_Q),$$
and $\kappa_Q\pi\bkappa$ is self-conjugate. 
This and the proof of Theorem \ref{4theorem (2,2)-supertransitive} imply  
\begin{eqnarray*}
[\kappa\iota\biota\bkappa]&=&[\id_M]\oplus[\heta]\oplus [\kappa_Q\pi\bkappa]\oplus 
[\heta\kappa_Q\pi\bkappa]
=[\id_M]\oplus[\heta]\oplus [\kappa_Q\pi\bkappa]\oplus 
[\heta\kappa\bpi\bkappa_Q]\\
&=&[\id_M]\oplus[\heta]\oplus 2[\kappa_Q\pi\bkappa]\oplus 
[\kappa'\bpi\bkappa_Q].
\end{eqnarray*}
On the other hand, replacing $P$ by $Q$, we get
$$[\kappa\iota\biota\bkappa]=[\id_M]\oplus[\heta_Q]\oplus [\kappa_Q\pi\bkappa]\oplus 
[\heta_Q\kappa_Q\pi\bkappa].$$
Since $[\heta]\neq [\heta_Q]$, the endomorphism $\heta_Q\kappa_Q\pi\bkappa$ contains 
$\heta$ and 
$$1\leq \dim (\heta_Q\kappa_Q\pi\bkappa,\heta)=\dim (\heta_Q\kappa_Q\pi,\heta\kappa)
=\dim (\heta_Q\kappa_Q\pi,\kappa\oplus\kappa').$$
If $\heta_Q\kappa_Q\pi$ contained $\kappa$, we would have 
$[\heta_Q]=[\kappa\bpi\bkappa_Q]$, which contradicts $d(\heta_Q)\neq d(\kappa\bpi\bkappa_Q)$. 
Therefore $\heta_Q\kappa_Q\pi$ contains $\kappa_Q\pi$ and $\kappa'$, and so 
$[\heta_Q\kappa_Q\pi]=[\kappa_Q\pi]\oplus [\kappa']$ as we have 
$d(\heta_Q\kappa_Q\pi)=d(\kappa_Q\pi)+d(\kappa')$. 
This shows that $Q\subset M$ is 3-supertransitive and $[\kappa_Q'\pi]=[\kappa']$, 
which finishes the proof. 
\end{proof}

About uniqueness, we have the following:

\begin{theorem}\label{4theorem uniqueness1} Let $\fQ=\quadri$ and 
$\tilde{\fQ}=\tquadri$ be irreducible noncommuting and noncocommuting 
quadrilaterals of factors such that 
$\fQ$ and $\tfQ$ are (3,3)-supertransitive. 
Assume that there exists an isomorphism $\Phi$ from $P$ onto 
$\tilde{P}$ such that $\Phi(N)=\tN$. 
If there exists no sector $\sigma$ contained in $\xi^2$ such that 
$[\xi]\neq [\sigma]$, $[\eta]\neq [\sigma]$, and $d(\sigma)=d(\eta)$, then 
$\fQ$ and $\tfQ$ are conjugate. 
\end{theorem}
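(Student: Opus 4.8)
The plan is to show that the isomorphism $\Phi$, which a priori only identifies the inclusions $N\subset P$ and $\tN\subset\tP$, in fact propagates to a conjugacy of the whole quadrilaterals. First I would collect the structural data. Since $\fQ$ is $(3,3)$-supertransitive, noncommuting and noncocommuting, Corollary \ref{4corollary noncommuting-noncocommuting} gives $[M:P]=[P:N]$, hence $d(\eta)=d(\xi)$, and Lemma \ref{4lemma xi-eta} gives $[\eta]\neq[\xi]$ together with $\eta\subset\xi^2$; moreover the multiplicity of $\xi$ in $\xi^2$ is one by $3$-supertransitivity, so $[\xi]$ and $[\eta]$ are two distinct components of $\xi^2$ sharing the common dimension $d(\eta)$. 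All of this applies verbatim to $\tfQ$. I also record that $H^2(P\subset M)$ is trivial by Lemma \ref{3lemma vanishing}, as $P\subset M$ is $3$-supertransitive; this will be used to reconstruct $M$.

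The crux is to upgrade $\Phi$ to an identification at the level of $\eta$. Because $\Phi(N)=\tN$, the isomorphism $\Phi$ carries $N\subset P$ onto $\tN\subset\tP$, hence carries the canonical endomorphism of $N\subset P$, whose class is $[\iota\biota]=[\id_P]\oplus[\xi]$, onto that of $\tN\subset\tP$, whose class is $[\id_{\tP}]\oplus[\tilde\xi]$; thus $[\Phi\xi\Phi^{-1}]=[\tilde\xi]$. Consequently $\Phi$ induces a dimension-preserving bijection from the irreducible components of $\xi^2$ onto those of $\tilde\xi^2$, carrying $[\xi]$ to $[\tilde\xi]$. Now the standing hypothesis is precisely that $\eta$ is the \emph{only} component of $\xi^2$ other than $\xi$ of dimension $d(\eta)$; since $d(\xi)=d(\eta)$, the components of $\xi^2$ of dimension $d(\eta)$ are exactly $\{[\xi],[\eta]\}$, and the transported statement for $\tfQ$ shows those of $\tilde\xi^2$ are exactly $\{[\tilde\xi],[\tilde\eta]\}$. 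As $\Phi$ already sends $[\xi]$ to $[\tilde\xi]$, it must send $[\eta]$ to $[\tilde\eta]$, i.e. $[\Phi\eta\Phi^{-1}]=[\tilde\eta]$. This is the only place where the sector hypothesis enters.

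Next I would reconstruct $M$. The inclusion $P\subset M$ is encoded by a $Q$-system on $P$ with $\gamma=\bkappa\kappa$ and $[\gamma]=[\id_P]\oplus[\eta]$, and since $H^2(P\subset M)$ is trivial this $Q$-system is the unique one with that underlying sector; the same holds for $\tP\subset\tM$. Transporting the $Q$-system of $P\subset M$ through $\Phi$ produces a $Q$-system on $\tP$ with underlying sector $[\id_{\tP}]\oplus[\Phi\eta\Phi^{-1}]=[\id_{\tP}]\oplus[\tilde\eta]$, which by triviality of $H^2(\tP\subset\tM)$ is equivalent to the defining $Q$-system of $\tM$. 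By Longo's correspondence between $Q$-systems and extensions, $\Phi$ then extends to an isomorphism $\pi\colon M\to\tM$ with $\pi|_P=\Phi$, so in particular $\pi(P)=\tP$ and $\pi(N)=\tN$.

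It remains to check $\pi(Q)=\tQ$. The image $\pi(Q)$ is an intermediate subfactor $\tN\subset\pi(Q)\subset\tM$, and since $\fQ$ is noncommuting we have ${}_NL^2(Q)_N\cong{}_NL^2(P)_N$, whence ${}_{\tN}L^2(\pi(Q))_{\tN}\cong{}_{\tN}L^2(\tP)_{\tN}$. By Corollary \ref{4corollary noncommuting-noncocommuting}(2), $\dim(\iota_{\tM,\tN},\iota_{\tM,\tN}\tilde{\hxi})=2$, so Lemma \ref{3lemma uniqueness} applies to $\tfQ$: unless $[\tP:\tN]=3$, the only intermediate subfactors whose $L^2$ is $\tN$-$\tN$ equivalent to $L^2(\tP)$ are $\tP$ and $\tQ$. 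Since $\pi$ is injective and $P\neq Q$, we have $\pi(Q)\neq\pi(P)=\tP$, forcing $\pi(Q)=\tQ$; thus $\pi$ is a conjugacy of $\fQ$ and $\tfQ$. I expect the main obstacle to be the residual case $[P:N]=3$: here $N\subset P$ is forced to be the $A_5$ subfactor, extra intermediate subfactors may appear, and $\pi(Q)$ must be pinned down separately, presumably through the Galois-group analysis of Corollary \ref{4corollary noncommuting-noncocommuting}(3).
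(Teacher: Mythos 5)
Your proof follows the same route as the paper's own: second-cohomology vanishing from $3$-supertransitivity (Lemma \ref{3lemma vanishing}), transport of $[\eta]$ onto $[\tilde{\eta}]$ via the canonical endomorphism of $N\subset P$ together with Lemma \ref{4lemma xi-eta}, Corollary \ref{4corollary noncommuting-noncocommuting} and the dimension hypothesis, extension of $\Phi$ to an isomorphism $M\to\tM$ through uniqueness of the $Q$-system, and identification of the image of $Q$ by Lemma \ref{3lemma uniqueness}. These steps are executed correctly, and a notational slip is harmless: the $Q$-systems you classify live on $P$ with underlying sector $[\id_P]\oplus[\eta]=[\bkappa\kappa]$, so the relevant group in the paper's conventions is $H^2(\widehat{P\subset M})$ rather than $H^2(P\subset M)$; since $k$-supertransitivity passes to the dual inclusion (see the remark after Definition 2.1), both vanish and nothing is affected.

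The one genuine gap is at the end: you leave the residual case $[\tP:\tN]=3$ open and propose to handle it ``through the Galois-group analysis of Corollary \ref{4corollary noncommuting-noncocommuting},(3).'' No separate treatment is needed, because that case is vacuous, and this follows from the data you collected in your own first paragraph. If $[P:N]=3$, then $2$-supertransitivity forces the principal graph of $N\subset P$ to be $A_5$ (the only other index-$3$ graph, $D_4$, has second relative commutant of dimension $3$). For $A_5$ the even $P$-$P$ sectors are $[\id_P]$, $[\xi]$, and a single automorphism, with $[\xi^2]=[\id_P]\oplus[\xi]\oplus[\mathrm{automorphism}]$; hence $[\xi]$ is the \emph{only} irreducible component of $\xi^2$ of dimension $2$. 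This is incompatible with the constraints you recorded for Class I quadrilaterals: $\eta\subset\xi^2$, $[\eta]\neq[\xi]$, and $d(\eta)=d(\xi)=2$ (Lemma \ref{4lemma xi-eta} combined with $[M:P]=[P:N]$ from Corollary \ref{4corollary noncommuting-noncocommuting}). So no irreducible noncommuting, noncocommuting $(3,3)$-supertransitive quadrilateral has $[P:N]=3$ --- consistent with the paper's remark in Section 5.1 that the $A_7$ quadrilateral is the only one in Class I with $[P:N]<4$ --- and therefore the alternative conclusion of Lemma \ref{3lemma uniqueness} can never occur: one always has $\pi(Q)\in\{\tP,\tQ\}$, whence $\pi(Q)=\tQ$ unconditionally. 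With this observation inserted, your proof is complete and coincides with the paper's.
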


\begin{proof} Since $P\subset M$ and $\tP\subset \tM$ are 3-supertransitive, 
$H^2(\widehat{P\subset M})$ and $H^2(\widehat{\tP\subset \tM})$ are trivial 
thanks to Lemma \ref{3lemma vanishing}. 
Note that we have $[\iota_{\tP,\tN}\overline{\iota_{\tP,\tN}}]
=[\Phi\iota_{P,N}\overline{\iota_{P,N}}\Phi^{-1}]$. 
Therefore Lemma \ref{4lemma xi-eta} implies $[\overline{\iota_{\tM,\tP}}\iota_{\tM,\tP}]
=[\Phi\overline{\iota_{M,P}}\iota_{M,P}\Phi^{-1}]$, which   
shows that $\Phi$ extends to an isomorphism $\Psi$ from $M$ onto $\tM$ 
as $H^2(\widehat{P\subset M})$ and $H^2(\widehat{\tP\subset \tM})$ are trivial. 
Lemma \ref{3lemma uniqueness} and Corollary \ref{4corollary noncommuting-noncocommuting} 
show $\Psi(Q)=\tQ$. 
\end{proof}

\begin{theorem}\label{4theorem uniqueness2} Let $\fQ=\quadri$ and 
$\tilde{\fQ}=\tquadri$ be irreducible noncommuting and cocommuting 
quadrilaterals of factors such that $H^2(\widehat{P\subset M})$ and 
$H^2(\widehat{\tP\subset \tM})$ are trivial and $\fQ$ and $\tfQ$ are (2,3)-supertransitive. 
Assume that there exists an isomorphism $\Phi$ from $P$ onto $\tilde{P}$ such that $\Phi(N)=\tN$. 
Then 
\begin{itemize} 
\item [$(1)$] If $\Gal(M/N)\cong \Z/2\Z$ and $\eta$ is the only sector $\sigma$ contained 
in $\xi^2$ such that $[\sigma\iota]=[\iota']$, 
then $\fQ$ and $\tfQ$ are conjugate. 
\item [$(2)$] If $\Gal(M/N)$ and $\Gal(\tM/\tN)$ are trivial and there exists only two sectors $\sigma$ contained 
in $\xi^2$ such that $[\sigma\iota]=[\iota']$,
then $\fQ$ and $\tfQ$ are flip conjugate. 
\end{itemize}
\end{theorem}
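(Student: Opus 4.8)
The plan is to reduce everything to the $N\subset P$ standard invariant transported by $\Phi$, use the combinatorial characterization of $\eta$ inside $\xi^2$ to match the inclusions $P\subset M$ and $\tP\subset\tM$, and then invoke triviality of the relevant second cohomology to extend $\Phi$ to all of $M$. First I would record that, since $\Phi(N)=\tN$, the isomorphism $\Phi$ carries the canonical and dual canonical endomorphisms of $N\subset P$ to those of $\tN\subset\tP$; in sector language $[\Phi\xi\Phi^{-1}]=[\tilde\xi]$, and $\Phi$ induces a fusion-preserving isomorphism of the dual principal graph of $N\subset P$ onto that of $\tN\subset\tP$, in particular sending $[\iota']$ to $[\tilde{\iota}']$. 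Consequently $\Phi$ carries the set $S=\{\sigma\subset\xi^2 : [\sigma\iota]=[\iota']\}$ bijectively onto the corresponding set $\tilde S$ for $\tfQ$. By Lemma \ref{4lemma xi-eta} and Corollary \ref{4corollary etaiota=iota'}(1) we have $\eta\in S$ and $\tilde\eta\in\tilde S$, while Corollary \ref{4corollary etaiota=iota'}(4) places $\bpi\eta_Q\pi$ in $S$ as well, with $[\eta]=[\bpi\eta_Q\pi]$ precisely when $\Gal(M/N)$ is nontrivial.

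In Case $(1)$, $\Gal(M/N)\cong\Z/2\Z$ forces $[\eta]=[\bpi\eta_Q\pi]$ by Corollary \ref{4corollary etaiota=iota'}(4), and the hypothesis $\#S=1$ gives $S=\{\eta\}$; since $\Phi(S)=\tilde S$ is then also a singleton containing $\tilde\eta$, we obtain $[\Phi\eta\Phi^{-1}]=[\tilde\eta]$, hence $[\Phi\bkappa\kappa\Phi^{-1}]=[\overline{\iota_{\tM,\tP}}\iota_{\tM,\tP}]$. Triviality of $H^2(\widehat{P\subset M})$ and $H^2(\widehat{\tP\subset\tM})$ then lets $\Phi$ extend to an isomorphism $\Psi:M\to\tM$ with $\Psi|_P=\Phi$, exactly as in the proof of Theorem \ref{4theorem uniqueness1}. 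Finally, $\Psi(Q)$ is an intermediate subfactor of $\tN\subset\tM$ distinct from $\tP$ whose $N$--$N$ bimodule is $[\id_N]\oplus[\hxi]$; since Corollary \ref{4corollary etaiota=iota'}(2) gives $\dim(\iota_{\tM,\tN},\iota_{\tM,\tN}\hxi)=2$, Lemma \ref{3lemma uniqueness} shows that $\tP$ and $\tQ$ are the only such intermediate subfactors, whence $\Psi(Q)=\tQ$ and $\fQ,\tfQ$ are conjugate.

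In Case $(2)$, triviality of $\Gal(M/N)$ gives $[\eta]\neq[\bpi\eta_Q\pi]$, so $S=\{\eta,\bpi\eta_Q\pi\}$ by hypothesis, and $\Phi$ sends $\eta$ to either $\tilde\eta$ or $\overline{\tilde\pi}\tilde\eta_Q\tilde\pi$. In the first alternative the argument of Case $(1)$ produces $\Psi:M\to\tM$ with $\Psi(P)=\tP$ and $\Psi(Q)=\tQ$. In the second, I would replace $\Phi$ by $\Phi'=\tilde\pi\circ\Phi:P\to\tQ$, which still fixes $\tN$ and, using $\overline{\tilde\pi}=\tilde\pi^{-1}$, satisfies $[\Phi'\bkappa\kappa\Phi'^{-1}]=[\overline{\iota_{\tM,\tQ}}\iota_{\tM,\tQ}]$; triviality of the cohomology again extends $\Phi'$ to $\Psi':M\to\tM$ with $\Psi'(P)=\tQ$, and Lemma \ref{3lemma uniqueness} forces $\Psi'(Q)=\tP$. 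Either way $\fQ$ and $\tfQ$ are flip conjugate.

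The main obstacle is the last step in each case — verifying that the extension maps $Q$ onto the correct intermediate subfactor. This rests on showing that $P$ and $Q$ exhaust the intermediate subfactors whose $N$--$N$ bimodule agrees with that of $P$, for which Lemma \ref{3lemma uniqueness} is decisive but requires the exceptional low-index case $[P:N]=3$ to be separated off and treated through the explicit Galois description coming from Theorem \ref{4theorem (2,2)-supertransitive}. A secondary delicate point is bookkeeping the flip: one must check that the two ways $\Phi$ can match $\eta$ correspond precisely to conjugacy versus genuine flip conjugacy, which is exactly the dichotomy recorded by Corollary \ref{4corollary etaiota=iota'}(4).
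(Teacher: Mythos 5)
Your proposal is correct and in substance follows the paper's own argument: transport the dual principal graph of $N\subset P$ by $\Phi$, identify $[\Phi\eta\Phi^{-1}]$ among the sectors of $\xi^2$ carrying $[\iota]$ to $[\iota']$, extend $\Phi$ using triviality of second cohomology, and pin down the image of $Q$ with Lemma \ref{3lemma uniqueness}, the index-$3$ exception being excluded because in the cocommuting case it forces $\Gal(M/N)\cong\fS_3$.

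The one place you genuinely deviate is the second alternative of case $(2)$, and it needs a patch. You pass to $\Phi'=\tilde{\pi}\circ\Phi:P\to\tQ$ and invoke ``triviality of the cohomology'' to extend $\Phi'$ to an isomorphism $M\to\tM$; but the cohomology governing that extension is $H^2(\widehat{\tQ\subset\tM})$, i.e.\ uniqueness of $Q$-systems on $\tQ$ for $[\id_{\tQ}\oplus\tilde{\eta}_Q]$, which is \emph{not} among the hypotheses: only $H^2(\widehat{P\subset M})$ and $H^2(\widehat{\tP\subset \tM})$ are assumed, and in case $(2)$ Corollary \ref{4corollary etaiota=iota'},(4) gives $[\tilde{\eta}]\neq[\overline{\tilde{\pi}}\tilde{\eta}_Q\tilde{\pi}]$, so these concern inequivalent canonical endomorphisms and neither assumption applies verbatim. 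Your step is rescued by one observation you should make explicit: since $[\Phi'(\bkappa\kappa)\Phi'^{-1}]=[\overline{\iota_{\tM,\tQ}}\iota_{\tM,\tQ}]$, conjugation by $\Phi'$ induces a bijection between equivalence classes of $Q$-systems for $[\bkappa\kappa]$ on $P$ and for $[\overline{\iota_{\tM,\tQ}}\iota_{\tM,\tQ}]$ on $\tQ$, so triviality of $H^2(\widehat{P\subset M})$ transfers to $H^2(\widehat{\tQ\subset\tM})$. The paper avoids this transfer altogether by composing on the source instead: $\Phi\pi^{-1}:Q\to\tP$ carries $[\overline{\iota_{M,Q}}\iota_{M,Q}]$ to $[\overline{\iota_{\tM,\tP}}\iota_{\tM,\tP}]$, so the extension argument uses exactly the assumed $H^2(\widehat{\tP\subset\tM})$, producing $\Psi$ with $\Psi(Q)=\tP$, after which Lemma \ref{3lemma uniqueness} gives $\Psi(P)=\tQ$. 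Either route works; if you keep yours, state the transfer of $Q$-system uniqueness explicitly rather than appealing to an unstated cohomology hypothesis.
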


\begin{proof} The proof of (1) is the same as that of Theorem \ref{4theorem uniqueness1}. 
In (2), 
$$[\overline{\iota_{\tM,\tP}}\iota_{\tM,\tP}]
=[\Phi(\id_P\oplus \bpi\eta_Q\pi)\Phi^{-1}]
=[\Phi\pi^{-1}\overline{\iota_{M,Q}}\iota_{M,Q}\pi\Phi^{-1}]$$ 
may occur instead of $[\overline{\iota_{\tM,\tP}}\iota_{\tM,\tP}]
=[\Phi\overline{\iota_{M,P}}\iota_{M,P}\Phi^{-1}]$. 
In this case, the two quadrilaterals are flip conjugate. 
\end{proof}

We can improve the assumption of Theorem \ref{2theorem no-extra}. 

\begin{corollary} \label{4corollary no-extra}
Let  $\fQ=\quadri$ be an irreducible noncommuting quadrilateral of factors. 
Then 
\begin{itemize} 
\item [$(1)$] If $\fQ$ is not cocommuting and (3,4)-supertransitive, 
then all the elementary subfactors are isomorphic to $A_7$ subfactors. 
There exists a unique such quadrilateral up to isomorphism when the factors 
are hyperfinite II$_1$ factors. 
\item [$(2)$] If $\fQ$ is cocommuting and (2,4)-supertransitive, then there exists 
an outer action of $\fS_3$ on $M$ such that $N$ is the fixed point algebra of the action. 
There exists a unique such quadrilateral up to isomorphism when the factors 
are hyperfinite II$_1$ factors. 
\end{itemize}
\end{corollary}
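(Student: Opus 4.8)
The plan is to reduce both statements to the structural results already established for $(2,2)$-supertransitive quadrilaterals, the point being that the extra supertransitivity now assumed pins down the fusion rule $[\xi^2]=[\id_P]\oplus[\xi]\oplus[\eta]$ and thereby converts the dichotomy of Theorem \ref{4theorem (2,2)-supertransitive} into an explicit computation of indices. First I would record the inputs common to both parts. In either case $N\subset P$ is $4$-supertransitive, hence $3$-supertransitive, so Lemma \ref{3lemma vanishing} gives that $H^2(\widehat{N\subset P})$ is trivial and therefore $c(\fQ)$ is trivial; in part (1) the hypothesis that $P\subset M$ is $3$-supertransitive likewise makes $H^2(\widehat{P\subset M})$, and hence $c(\hat{\fQ})$, trivial. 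Since $\fQ$ is noncommuting and $(2,2)$-supertransitive, Lemma \ref{4lemma xi-eta} shows $[\xi]\neq[\eta]$ and $\eta\subset\xi^2$; combining this with $4$-supertransitivity of $N\subset P$, which as in Remark \ref{4remrak no-extra} forces exactly one new irreducible summand in $\xi^2$, I obtain
$$[\xi^2]=[\id_P]\oplus[\xi]\oplus[\eta],\qquad d(\xi)^2=1+d(\xi)+d(\eta).$$
Using $d(\xi)=[P:N]-1$ and $d(\eta)=[M:P]-1$, this single quadratic relation is what determines the index in each part.

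For part (1), $\fQ$ is noncommuting, not cocommuting, and $(3,3)$-supertransitive, so Corollary \ref{4corollary noncommuting-noncocommuting} applies and yields $[M:P]=[P:N]$, i.e.\ $d(\eta)=d(\xi)$, together with an outer automorphism $\alpha$ of $P$ with $[\eta]=[\alpha\xi]$. Substituting $d(\eta)=d(\xi)$ into the quadratic gives $d(\xi)^2-2d(\xi)-1=0$, hence $d(\xi)=1+\sqrt2$ and $[P:N]=[M:P]=2+\sqrt2=4\cos^2(\pi/8)$. Since this index lies below $4$ and the only principal graph of Coxeter number $8$ realizable as a principal graph is $A_7$ (the graph $D_5$ being excluded by non-realizability, and any early branching by the supertransitivity hypotheses), the inclusions $N\subset P$ and $P\subset M$, and by the $P\leftrightarrow Q$ symmetry also $N\subset Q$ and $Q\subset M$, all have principal graph $A_7$. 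For uniqueness over the hyperfinite factor I would use uniqueness of the hyperfinite $A_7$ subfactor to produce an isomorphism $\Phi\colon P\to\tP$ with $\Phi(N)=\tN$, and then apply Theorem \ref{4theorem uniqueness1}, whose hypothesis holds vacuously here since the only summands of $\xi^2$ are $\id_P,\xi,\eta$ and none of them is a $\sigma$ with $[\sigma]\neq[\xi],[\eta]$ and $d(\sigma)=d(\eta)$.

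For part (2), $\fQ$ is cocommuting with $c(\fQ)$ trivial, so Theorem \ref{4theorem (2,2)-supertransitive}(2) (equivalently Corollary \ref{3corollary index}) gives $[M:P]=[P:N]-1$, i.e.\ $d(\eta)=d(\xi)-1$. Substituting into the quadratic yields $d(\xi)^2=2d(\xi)$, whence $d(\xi)=2$, $[P:N]=3$ and $[M:P]=2$. A $2$-supertransitive subfactor of index $3$ has principal graph $A_5$ (the only other index-$3$ graph, $D_4$, is not $2$-supertransitive), so $N\subset P$ is the $A_5$ subfactor and $\eta$ is an order-two automorphism with $M=P\rtimes_\eta\Z/2\Z$. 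At this point the argument coincides with the $A_5$ case inside the proof of Theorem \ref{4theorem (2,2)-supertransitive}(2): the proof of \cite[Theorem 3.1]{I2} produces an outer action of $\fS_3$ on $M$ with $N=M^{\fS_3}$, and $P,Q$ are the fixed-point algebras of two distinct order-two subgroups. Uniqueness over the hyperfinite factor then follows from uniqueness of the outer $\fS_3$-action there, together with the fact that $\fS_3$ acts transitively on ordered pairs of distinct order-two subgroups.

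The step I expect to be most delicate is the passage from the index value to the precise principal graphs of the \emph{upper} inclusions $P\subset M$ and $Q\subset M$ in part (1): I assume only that these are $3$-supertransitive, so I must rely on the non-realizability of $D_5$ as a principal graph (rather than on a fusion computation as for $N\subset P$) to conclude they are $A_7$. The remaining inputs---the dimension arithmetic, the verification of the hypothesis of Theorem \ref{4theorem uniqueness1}, and the appeal to \cite{I2}---are routine once the fusion rule $[\xi^2]=[\id_P]\oplus[\xi]\oplus[\eta]$ is in hand.
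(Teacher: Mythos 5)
Your proof is correct and takes essentially the same route as the paper: 4-supertransitivity of $N\subset P$ yields $[\xi^2]=[\id_P]\oplus[\xi]\oplus[\eta]$, the noncocommuting (resp.\ cocommuting) structure results force $d(\eta)=d(\xi)$ (resp.\ $d(\eta)=d(\xi)-1$), the resulting quadratic pins down $A_7$ (resp.\ $A_5$ together with the $\fS_3$ action via \cite{I2}), and uniqueness follows from Theorem \ref{4theorem uniqueness1} (resp.\ Jones' uniqueness of outer actions plus the Galois correspondence). The only cosmetic difference is that in part (2) you derive $[M:P]=[P:N]-1$ directly from Theorem \ref{4theorem (2,2)-supertransitive}(2) and Corollary \ref{3corollary index}, whereas the paper routes it through the equivalent relation $[\eta\iota]=[\iota']$ of Corollary \ref{4corollary etaiota=iota'}.
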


\begin{proof} Since $N\subset P$ is 4-supertransitive, we have 
$[\xi^2]=[\id_P]\oplus [\xi]\oplus [\eta]$. 

(1) Since $\fQ$ is not cocommuting, there exists an automorphism $\theta$ of $P$ such that 
$[\eta]=[\theta\xi]$ and so we get $d(\xi)^2=1+2d(\xi)$. 
This shows that the principal graphs of $N\subset P$ and $P\subset Q$ are $A_7$. 
The uniqueness follows from Theorem \ref{4theorem uniqueness1}. 

(2) Since $\fQ$ is cocommuting, we have $[\eta\iota]=[\iota']$ and so the principal graph of 
$N\subset P$ is $A_5$. 
The rest of the proof has already been stated in Remark \ref{4remrak no-extra} except for 
uniqueness, which follows from the uniqueness of outer actions of finite groups by 
Jones \cite{J1} and the Galois correspondence. 
\end{proof}

\section{Classification I} 
According to our discussions in the last section, we consider the following 
four classes of quadrilaterals $\fQ$ of factors. 
We assume that every quadrilateral $\fQ=\quadri$ of factors appearing in this section 
is irreducible and noncommuting. \\
Class I: $\fQ$ is noncocommuting and (3,3)-supertransitive. \\
Class II: $\fQ$ is cocommuting and (2,3)-supertransitive. 
The Galois group $\Gal(M/N)$ is trivial. \\
Class III: $\fQ$ is cocommuting and (2,3)-supertransitive. 
The Galois group $\Gal(M/N)$ is isomorphic to $\Z/2\Z$. \\
Class IV: $\fQ$ is cocommuting and (3,2)-supertransitive. 
The Class $c(\fQ)$ is trivial. 
The Galois group  $\Gal(M/N)$ is isomorphic to $\Z/3\Z$.

For each class, we show that there exists an example of a quadrilateral and we 
seek an example with maximal supertransitivity. 

We keep using the notation of sectors such as $\iota$, $\kappa$, etc. as in the previous section, 
We often use explicit formulae of fusion rules of subfactors and the reader is 
refered to \cite{I1} for them. 

\subsection{Class I}
The structure of quadrilaterals in Class I is relatively easy to describe. 

\begin{theorem}\label{5theorem I1} Let $N\subset P$ be an irreducible 
3-supertransitive inclusion of factors. 
Let $\iota=\iota_{P,N}$ and let 
$[\iota_{P,N}\overline{\iota_{P,N}}]=[\id_P]\oplus [\xi]$ and $[\xi\iota]
=[\iota]\oplus [\iota']$ be the irreducible decomposition. 
Assume that there exists an outer automorphism $\alpha\in \Aut(P)$ such that 
$[\xi]\neq [\alpha\xi]=[\xi\alpha^{-1}]$ and $\xi^2$ contains $\alpha\xi$. 
If $\id_P\oplus \alpha\xi$ has a $Q$-system, then there exists a unique irreducible 
noncommuting quadrilateral $\fQ=\quadri$ in Class I such that $[\eta]=[\alpha\xi]$. 
\end{theorem}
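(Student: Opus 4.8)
The plan is to build $M$ from the given $Q$-system, to produce $Q$ as an inner perturbation $v^*Pv$ of $P$ inside $M$, and then to read the quadrilateral axioms and Class~I membership off fusion-rule computations, with uniqueness coming from the rigidity supplied by vanishing second cohomology. Applying Longo's theorem to the $Q$-system on $\id_P\oplus\alpha\xi$ gives an inclusion $P\subset M$ with $[\bkappa\kappa]=[\id_P]\oplus[\eta]$ and $[\eta]=[\alpha\xi]$; since $\eta$ is irreducible, $P\subset M$ is $2$-supertransitive and $[M:P]=1+d(\eta)=1+d(\xi)=[P:N]$. I would first record the consequences of $[\alpha\xi]=[\xi\alpha^{-1}]$: it rearranges to $[\alpha\xi\alpha]=[\xi]$ and $[\xi\alpha]=[\alpha^{-1}\xi]$, whence $[\eta^2]=[\xi^2]$. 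From $3$-supertransitivity of $N\subset P$, the sector $\xi$ is not an automorphism and $\Gal(P/N)$ is trivial, so $[\alpha^{\pm1}\iota]$ differs from $[\iota]$ (as $\alpha$ is outer), and from $[\iota']$ as well once $[P:N]\neq 3$. A short Frobenius computation then yields $\dim(\iota,\eta\iota)=\dim(\alpha^{-1}\iota,\iota\oplus\iota')=0$, so $N\subset M$ is irreducible.

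The central construction step is the production of $Q$. Using $[\alpha\xi\alpha]=[\xi]$ I compute
$$\dim(\kappa\iota,\kappa\alpha\iota)=\dim(\iota,\alpha\iota)+\dim(\iota,\alpha\xi\alpha\iota)=0+\dim(\iota,\xi\iota)=1.$$
Since $\kappa\iota$ and $\kappa\alpha\iota$ are irreducible of equal dimension, this forces $[\kappa\iota]=[\kappa\alpha\iota]$, i.e. there is a unitary $v\in M$ with $vxv^*=\alpha(x)$ for all $x\in N$. I then set $Q=v^*Pv$, so that $\pi:=\Ad(v^*)\circ\alpha$ carries $P$ onto $Q$ and restricts to the identity on $N$; in particular $N\subset Q\subset M$ and $L^2(Q)\cong L^2(P)$ as $N$--$N$ bimodules. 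To see $P\neq Q$ I check $[\kappa]\neq[\kappa\alpha]$ via $\dim(\kappa,\kappa\alpha)=\dim(\id_P\oplus\eta,\alpha)=0$ (as $\alpha$ is outer and $\eta=\alpha\xi$ with $\xi\neq\id_P$); were $P=Q$, the unitary $v$ would normalise $P$ and implement $\alpha$, which triviality of $\Gal(P/N)$ would upgrade to $[\kappa]=[\kappa\alpha]$, a contradiction.

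With $P\neq Q$ in hand, the quadrilateral and Class~I properties should fall out. Since $N\subset P$ and $P\subset M$ are $2$-supertransitive, their dual canonical endomorphisms have exactly two irreducible summands, so neither inclusion admits a proper intermediate subfactor; combined with $[P:N]=[Q:N]$, $[M:P]=[M:Q]$ and $P\neq Q$, this forces $P\cap Q=N$ and $P\vee Q=M$, so $\fQ=\quadri$ is an irreducible quadrilateral. As $L^2(P)\cong L^2(Q)$, its class $c(\fQ)$ is trivial and Theorem~\ref{3theorem angle} shows $\fQ$ is noncommuting, while $[M:P]=[P:N]>[P:N]-1$ and Corollary~\ref{3corollary index} rule out cocommutativity. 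Finally $N\subset P$ and $N\subset Q$ are $3$-supertransitive by hypothesis, and for $P\subset M$ I must verify $\dim(\eta,\eta^2)=1$; via $[\eta^2]=[\xi^2]$ and $[\eta]\neq[\id_P],[\xi]$ this equals the multiplicity of $\alpha\xi$ in $\xi^2$, which the hypothesis provides. This multiplicity bookkeeping is the point I expect to need the most care, since the natural fusion manipulations relating it to $\dim(\xi,\xi^2)=1$ turn out circular, so one must lean on the precise content of ``$\xi^2$ contains $\alpha\xi$'' (as multiplicity one). This places $\fQ$ in Class~I with $[\eta]=[\alpha\xi]$.

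For uniqueness I would exploit that $P\subset M$ is $3$-supertransitive, so $H^2(\widehat{P\subset M})$ is trivial by Lemma~\ref{3lemma vanishing}; hence the $Q$-system realising $\id_P\oplus\eta$, and therefore the inclusion $P\subset M$ extending the fixed $N\subset P$ with $[\eta]=[\alpha\xi]$, is unique up to an isomorphism fixing $P$ and $N$. The partner $Q$ is then determined as well: $v$ is unique up to a phase because $(\kappa\iota,\kappa\alpha\iota)$ is one-dimensional, and by Lemma~\ref{3lemma uniqueness}, using $\dim(\iota_{M,N},\iota_{M,N}\hxi)=2$ from Corollary~\ref{4corollary noncommuting-noncocommuting}, there is no intermediate subfactor other than $P$ and $Q$ with $L^2$ equivalent to $L^2(P)$ unless $[P:N]=3$. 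Thus any Class~I quadrilateral with the prescribed $[\eta]$ is conjugate to $\fQ$, exactly along the lines of Theorem~\ref{4theorem uniqueness1}; the exceptional value $[P:N]=3$ is the $A_5$ situation and is subsumed by the $\fS_3$ analysis.
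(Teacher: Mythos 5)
You follow essentially the same route as the paper (Longo's theorem, the identity $[\kappa\iota]=[\kappa\alpha\iota]$ producing the unitary $v$, the partner $Q$ as an inner perturbation of $P$, then the quadrilateral bookkeeping), and most steps are sound, but there is one genuine gap, located exactly where you flagged it: the multiplicity of $\alpha\xi$ in $\xi^2$. The hypothesis asserts only that $\xi^2$ \emph{contains} $\alpha\xi$; you are not entitled to reread it as ``contains with multiplicity one.'' That number is not incidental bookkeeping: $\dim(\alpha\xi,(\alpha\xi)^2)=\dim(\alpha\xi,\xi^2)$ is exactly what yields (a) 3-supertransitivity of $P\subset M$, hence membership of $\fQ$ in Class I, and (b) uniqueness of the $Q$-system for $\id_P\oplus\alpha\xi$ via Lemma~\ref{3lemma Q-system}, on which the whole uniqueness assertion rests. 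Your alternative route to (b) --- invoking triviality of $H^2(\widehat{P\subset M})$ via Lemma~\ref{3lemma vanishing} --- presupposes that $P\subset M$ is 3-supertransitive, which is again the same unproven multiplicity-one statement; so as written the proposal establishes neither Class~I membership nor uniqueness.

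The gap is fillable, and the computation is not circular; the trick you were missing is to pass to the odd level by Frobenius reciprocity instead of manipulating even sectors. Since $[\xi\iota\biota]=[\xi]\oplus[\xi^2]$ and $[\alpha\xi]\neq[\xi]$,
$$\dim(\alpha\xi,\xi^2)=\dim(\alpha\xi,\xi\iota\biota)=\dim(\alpha\xi\iota,\xi\iota)
=\dim(\alpha\iota\oplus\alpha\iota',\,\iota\oplus\iota'),$$
and the right-hand side equals $1$ once one knows $[\alpha\iota]\neq[\iota]$ (outerness of $\alpha$ and triviality of $\Gal(P/N)$), $[\alpha\iota]\neq[\iota']$, and $[\alpha\iota']=[\iota']$. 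The last two follow as in the proof of Corollary~\ref{4corollary noncommuting-noncocommuting}: from $[\xi^2]=[\id_P]\oplus[\iota'\biota]$ the containment hypothesis gives $\alpha\xi\prec\iota'\biota$, hence $\iota'\prec\alpha\xi\iota=\alpha\iota\oplus\alpha\iota'$; and $[\iota']=[\alpha\iota]$ would force $d(\iota')=d(\iota)$, i.e.\ $[P:N]=3$, i.e.\ the $A_5$ subfactor, whose $\xi^2$ contains no dimension-two sector besides $\xi$ itself --- contradicting the hypothesis. Hence $[\alpha\iota']=[\iota']$ and the multiplicity is one. Note that this also shows your residual $[P:N]=3$ caveat is vacuous: the $A_5$ case is incompatible with the hypotheses, rather than ``subsumed by the $\fS_3$ analysis.'' With this in hand, the rest of your argument goes through and coincides with the paper's; your two genuine variants --- the Galois-group argument for $P\neq Q$, and the use of Corollary~\ref{3corollary index} in place of inner conjugacy plus Theorem~\ref{3theorem angle} applied to the dual quadrilateral for noncocommutativity --- are both correct.
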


\begin{proof} Assume that there exists a $Q$-system for $\id_P\oplus \alpha\xi$. 
We claim that a $Q$-system with $\id_P\oplus \alpha\xi$ is unique up to 
equivalence. 
For this, it suffices to show $\dim (\alpha\xi,(\alpha\xi)^2)=1$ thanks to Lemma \ref{3lemma Q-system}. 
As in the proof of Corollary \ref{4corollary Z/3Z}, we have 
$[\alpha\iota']=[\iota']$, $[\alpha\iota]\neq [\iota]$, and $d(\iota)\neq d(\iota')$. 
Thus 
\begin{eqnarray*}
\dim(\alpha\xi,(\alpha\xi)^2)&=&\dim(\alpha\xi,\xi^2)=\dim(\alpha\xi,\xi\iota\biota)-\dim(\alpha\xi,\xi)=
\dim(\alpha\xi\iota,\xi\iota)\\
&=&\dim(\alpha\iota\oplus \alpha\iota',\iota\oplus \iota')=1,
\end{eqnarray*}
which shows the claim. 
The claim implies that there exists a unique factor $M$ containing $N$ such that 
$[\bkappa\kappa]=[\id_P]\oplus [\alpha\xi]$ where $\kappa=\iota_{M,P}$. 
Since $[\xi]\neq [\alpha\xi]$, the inclusion $N\subset M$ is irreducible. 

We next claim that $\kappa\alpha\iota$ is irreducible and 
$[\kappa\alpha\iota]=[\kappa\iota]$. 
Indeed, we have
$$\dim(\kappa\alpha\iota,\kappa\alpha\iota)=\dim(\bkappa\kappa\alpha,\alpha\iota\biota)
=\dim(\alpha\oplus \alpha\xi\alpha,\alpha\oplus \alpha\xi)=1,$$
$$\dim(\kappa\alpha\iota,\kappa\iota)=\dim(\bkappa\kappa\alpha,\iota\biota)
=\dim(\alpha\oplus \alpha\xi\alpha,\id_P\oplus \xi)=1,$$
which show the claim. 
The claim shows that there exists a unitary $v\in M$ such that 
$v\alpha(x)v^*=x$ holds for every $x\in N$. 
We set $\pi(y)=v\alpha(y)v^*$ for $y\in P$ and set $Q=\pi(P).$ 
By construction $Q$ is an intermediate subfactor between $N$ and $M$ such that 
$\iota_{Q,N}=\pi\iota$, $[\overline{\iota_{Q,N}}\iota_{Q,N}]=[\biota\iota]$, 
and $[M:P]=[M:Q]$. 

Suppose $P=Q$. 
Then $\pi$ would be an automorphism of $P$, which satisfies $[\kappa\pi]=[\kappa\alpha]$. 
Frobenius reciprocity implies that  $\pi\alpha^{-1}$ is contained in 
$\bkappa\kappa=\id_P\oplus \alpha\xi$ and so $[\pi]=[\alpha]$.  
Thus there exists a unitary $u$ in $P$ such that $v\alpha(y)v^*=u\alpha(y)u^*$ for all 
$y\in P$ and so 
$u^*v\in P'\cap M=\C$. 
This shows that $v\in P$ and $[\alpha\iota]=[\iota]$, which is a contradiction.  
Therefore $P\neq Q$. 

Since $N\subset P$ is 3-supertransitive, it has no intermediate subfactor and 
$P\cap Q=N$. 
Since $\dim (\alpha\xi,(\alpha\xi)^2)=1$, the inclusion $P\subset M$ is 3-supertransitive 
too and $M$ is generated by $P$ and $Q$. 
Therefore $\fQ=\quadri$ is an irreducible quadrilateral of factors. 
Theorem \ref{3theorem angle},(1) shows that $\fQ$ is noncommuting. 
Since $P$ and $Q$ are inner conjugate in $M$, Theorem \ref{3theorem angle},(1) applied to 
the dual quadrilateral $\hat{\fQ}$ shows that $\fQ$ is noncocommuting.  
\end{proof}

The above theorem shows that quadrilaterals in Class I is completely determined by 
the subfactor $N\subset P$ and $\alpha$. 

Jones and the first-named author \cite{GJ} showed that there exists a unique quadrilateral 
of the hyperfinite II$_1$ factors such that all the elementary subfactors are the the $A_7$ 
subfactor. 
It is easy to show that this is the only quadrilateral in Class I satisfying $[P:N]<4$. 
Other than this example,  we know two subfactors satisfying the assumption 
of Theorem \ref{5theorem I1}, namely, the $E_7^{(1)}$ subfactor and 
the Haagerup subfactors. 

Let $N\subset P$ be the $E_7^{(1)}$ subfactor. 
Then the dual principal graph is as follows: 
$$\hpic{V1}{1.0in}$$
Note that the category of $P-P$ bimodules for the  $E_7^{(1)}$ subfactor is isomorphic 
to the category $\hat{\fA_4}$ of the unitary representations of the alternating group 
$\fA_4$. 
It is observed in \cite[Corollary 4.2]{I8} that there exists an automorphism of 
the category $\hat{\fA_4}$ that flips the two representations corresponding to 
$\xi$ and $\alpha\xi$. 
Thus $\id_P\oplus \alpha\xi$ has a $Q$-system and gives rise to an irreducible 
noncommuting quadrilateral in Class I whose elementary subfactors are the $E_7^{(1)}$ subfactor. 
Note that this quadrilateral satisfies the assumption of Theorem \ref{4theorem uniqueness1} 
and such a quadrilateral is unique. 

Since it is easy to see that no other subfactors of index less than or equal to 4 fit into the statement of Theorem \ref{4theorem (2,2)-supertransitive},(1), 
we get the following: 

\begin{theorem} \label{5theorem I2}
Let $\fQ=\quadri$ be a quadrilateral of factors in Class I such 
that $[M:P]\leq 4$. 
Then one of the following two cases occurs: 
\begin{itemize}
\item [(1)] The principal graphs of all the elementary subfactors are $A_7$. 
\item [(2)] The principal graphs of all the elementary subfactors are $E_7^{(1)}$.  
\end{itemize}
In each case, such a quadrilateral of hyperfinite II$_1$ factors exists and is unique 
up to conjugacy.  
\end{theorem}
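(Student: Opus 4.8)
The plan is to read off the possible quadrilaterals from the two structural results already available for this class and then to carry out a finite case check. By Corollary~\ref{4corollary noncommuting-noncocommuting} a quadrilateral in Class~I has $[M:P]=[P:N]$, so the hypothesis $[M:P]\le 4$ becomes $[P:N]\le 4$, and by Theorem~\ref{5theorem I1} the entire quadrilateral is recovered from the $3$-supertransitive inclusion $N\subset P$ together with an outer automorphism $\alpha$ of $P$ satisfying $[\eta]=[\alpha\xi]=[\xi\alpha^{-1}]$, $[\xi]\ne[\alpha\xi]$, and $\alpha\xi$ contained in $\xi^2$ (with $\id_P\oplus\alpha\xi$ admitting a $Q$-system). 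The first step is to extract the numerical constraint this forces: since $\xi^2$ contains the three distinct sectors $\id_P$, $\xi$, $\alpha\xi$ and $d(\alpha\xi)=d(\xi)>1$, comparing dimensions gives $d(\xi)^2\ge 1+2d(\xi)$, hence $d(\xi)=[P:N]-1\ge 1+\sqrt 2$ and $2+\sqrt 2\le [P:N]\le 4$. It therefore suffices to list the $3$-supertransitive subfactors of index in $[2+\sqrt 2,4]$ and, for each, to decide whether its system of $P$-$P$ sectors contains an invertible $\alpha$ with the stated properties.

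The second step is this case check, using the classification of subfactors of index $\le 4$ together with the explicit fusion rules of \cite{I1}. For index strictly below $4$ the principal graph is of type $A$, $D_{\mathrm{even}}$, $E_6$, or $E_8$; here $E_6$ is only $2$-supertransitive and is discarded, while the inequality $d(\xi)\ge 1+\sqrt 2$ restricts the $A_n$ family to $n\ge 7$. Among these, a nontrivial invertible even sector exists only for $n$ odd, namely the order-two endpoint sector, and then $[\alpha\xi]$ is the even sector at distance $n-3$ from $\xi$; since $\xi^2=\id_P\oplus\xi\oplus\tau$ with $\tau$ at distance $4$, the requirement $\alpha\xi\subset\xi^2$ forces $n-3=4$, i.e.\ $n=7$. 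For $D_{2n}$ with $2n\ge 6$ and for $E_8$ a direct dimension count shows that every nontrivial even sector has dimension strictly greater than $1$, so no automorphism $\alpha$ is available. This leaves $A_7$ as the only candidate below index $4$. At index exactly $4$ the graph is an extended Dynkin diagram or one of $A_\infty$, $D_\infty$, $A_{-\infty,\infty}$; $3$-supertransitivity (the graph must be a simple path to depth $3$ from $*$) eliminates every case except the long-armed ones $E_7^{(1)}$ and $E_8^{(1)}$ and the line $A_\infty$. Of these, $A_\infty$ and $E_8^{(1)}$ possess no nontrivial invertible $P$-$P$ sector (for $E_8^{(1)}$ because the associated binary icosahedral group is perfect), whereas $E_7^{(1)}$ does, as exhibited in the discussion preceding the theorem via the automorphism of $\hat{\fA_4}$ from \cite[Corollary~4.2]{I8}. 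Hence $A_7$ and $E_7^{(1)}$ are the only possibilities.

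The final step is existence and uniqueness. Existence of the $A_7$ configuration is \cite{GJ}, and existence of the $E_7^{(1)}$ configuration follows from Theorem~\ref{5theorem I1} once the $Q$-system for $\id_P\oplus\alpha\xi$ is in hand, which is exactly what the preceding discussion supplies. For uniqueness up to conjugacy, the subfactors with principal graph $A_7$ and $E_7^{(1)}$ are amenable and hence uniquely realized on the hyperfinite factor, providing an isomorphism $\Phi$ of the inclusions $N\subset P$; Theorem~\ref{4theorem uniqueness1} then upgrades $\Phi$ to a conjugacy of the quadrilaterals, its hypothesis being met because in each case $\xi^2$ contains no sector $\sigma$ with $[\sigma]\ne[\xi],[\eta]$ and $d(\sigma)=d(\eta)$. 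The step I expect to be most delicate is the index-$4$ bookkeeping: correctly determining the supertransitivity of each extended Dynkin diagram (so that only $E_7^{(1)}$, $E_8^{(1)}$, $A_\infty$ survive that filter) and then certifying the presence or absence of a suitable invertible $P$-$P$ sector; by contrast, in the index-$<4$ range the dimension bound already does most of the work.
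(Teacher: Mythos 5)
Your proposal is correct and takes essentially the same route as the paper: the reduction of a Class I quadrilateral to the data $(N\subset P,\alpha)$ via Corollary \ref{4corollary noncommuting-noncocommuting} and Theorem \ref{5theorem I1}, a case check through the index $\leq 4$ classification (which the paper compresses into ``it is easy to see''), and existence/uniqueness via \cite{GJ}, the categorical automorphism from \cite{I8}, and Theorem \ref{4theorem uniqueness1}. Your explicit bound $2+\sqrt{2}\leq [P:N]\leq 4$ from $\id_P\oplus\xi\oplus\alpha\xi\subset\xi^2$, together with the invertible-sector filter on the surviving $3$-supertransitive graphs, supplies exactly the bookkeeping the paper leaves to the reader.
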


Let $N\subset P$ be the Haagerup subfactor \cite{AH}. 
Since the Haagerup subfactor is not self-dual, we specify the dual principal graph 
(the induction-reduction graph of the $P-P$ and $P-N$ bimodules) as below.  
$$\hpic{V2}{1.0in}$$
It is known that $\alpha^3=\id_P$ and $\alpha\xi=\xi\alpha^{-1}$ holds. 
Since $\id_P\oplus \xi$ has a $Q$-system, so does 
$\alpha^{-1}(\id_P\oplus \xi)\alpha=\id_P\oplus \alpha\xi$. 
Thus there exists an irreducible noncommuting quadrilateral of factors in Class I  
whose elementary subfactors are the Haagerup subfactor. 
Although the quadrilateral arising from $N\subset P$ does not satisfy 
the assumption of Theorem \ref{4theorem uniqueness1}, we have 
$[\kappa\alpha]=[\kappa_Q\pi]$ and so $[\bpi\eta_Q\pi]
=[\alpha^{-1}\alpha\xi\alpha]=[\alpha^2\xi]$. 
Thus the same proof of Theorem \ref{4theorem uniqueness2},(2) works and 
we can show uniqueness of such a quadrilateral up to flip conjugacy. 

We don't know if there are infinitely many mutually non-conjugate 
quadrilaterals of the hyperfinite II$_1$ factors in Class I. 
\subsection{Class II} 
The following two are the main theorems of this subsection: 

\begin{theorem} \label{5theorem II1} 
Let $\fQ=\quadri$ be a quadrilateral of factors in Class II such that 
$\fQ$ is (5,3)-supertransitive. 
Then 
\begin{itemize}
\item [$(1)$] If $\fQ$ is (6,3)-supertransitive, the principal graphs of $P\subset M$ and 
$Q\subset M$ are  $A_4$ and those of $N\subset P$ and $N\subset Q$ are $D_6$. 
\item [$(2)$] If $\fQ$ is not (6,3)-supertransitive, the principal graphs of 
$P\subset M$ and $Q\subset M$ are $E_8^{(1)}$.  
\end{itemize}
In each case, such a quadrilateral of the hyperfinite II$_1$ factors exists and is unique up to conjugacy. 
\end{theorem}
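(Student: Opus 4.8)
The plan is to reduce everything to the fusion data already assembled in Corollary~\ref{4corollary etaiota=iota'}, to determine the principal graph of $P\subset M$ by a Perron--Frobenius analysis of the dual principal graph of $N\subset P$, and then to obtain existence and uniqueness from $Q$-systems and Theorem~\ref{4theorem uniqueness2}. First I would fix the numerology. Since $\fQ$ lies in Class~II it is cocommuting, $(2,3)$-supertransitive and has $\Gal(M/N)$ trivial, so Corollary~\ref{4corollary etaiota=iota'} applies verbatim: writing $d=d(\iota)$ we have $[P:N]=d^{2}$, $[M:P]=d^{2}-1$, $[\eta\iota]=[\iota']$, and the sector $\eta':=\bpi\eta_Q\pi$ is a $P$--$P$ sector with $d(\eta')=d(\eta)=d^{2}-2$, \emph{distinct} from $\eta$ (this is exactly where triviality of the Galois group enters), satisfying $[\eta'\iota]=[\iota']$ and $\eta'\subset\xi^{2}$. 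I would also record $[\xi]=[\bkappa\kappa_Q\pi]$, that $Q\subset M$ is $3$-supertransitive with $[\kappa']=[\kappa_Q'\pi]$, and that $\mu:=\kappa_Q\pi\bkappa$ is irreducible and self-conjugate of dimension $d^{2}-1$.

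To pin down the graphs I would work in the dual principal graph of $N\subset P$. By $3$-supertransitivity it is the string $\id_P-\iota-\xi-\iota'$ out to depth $3$, and Corollary~\ref{4corollary etaiota=iota'} shows $\iota'$ is joined to $\xi$ and to the two $D_6$-endpoints $\eta,\eta'$. Decomposing $\xi^{2}=\id_P\oplus\xi\oplus\eta\oplus\eta'\oplus(\cdots)$ and matching statistical dimensions produces one further depth-$4$ vertex $\zeta'$ of dimension $d^{4}-5d^{2}+5$, present precisely when $d^{2}>(5+\sqrt5)/2$, and the Perron--Frobenius relation at $\iota'$ is then saturated by $\{\xi,\eta,\eta',\zeta'\}$. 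Applying the eigenvalue equation at $\zeta'$ itself (its dimension is $\ge 1$, and its edge to $\iota'$ already accounts for $d(\iota')$) gives $d\,d(\zeta')\ge d(\iota')$, i.e. $d^{4}-6d^{2}+7\ge 0$; hence either $\zeta'$ is absent, forcing $d^{2}=(5+\sqrt5)/2$, or $d^{2}\ge 3+\sqrt2$.

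The case split then runs as follows. If $\fQ$ is $(6,3)$-supertransitive, the target is to show $\zeta'$ is absent, whence $d^{2}=(5+\sqrt5)/2$, the dual graph of $N\subset P$ is exactly $D_6$, $[M:P]=(3+\sqrt5)/2=4\cos^{2}(\pi/5)$, and $P\subset M$ (hence, by the symmetry $[\kappa']=[\kappa_Q'\pi]$, also $Q\subset M$) is the supertransitive subfactor of that index, namely $A_4$. If $\fQ$ is not $(6,3)$-supertransitive, then $P\subset M$ is a string branching exactly at depth $5$; since $\eta$ generates an amenable, hence ADE or affine-ADE, even system, the only graph branching at depth $5$ from the distinguished vertex with the correct index is $E_8^{(1)}$, forcing $d^{2}=5$ and $[M:P]=4$. \textbf{The main obstacle is precisely the pinning of the index to these two exact values.} The fusion and Perron--Frobenius constraints above are by themselves consistent with a whole family of candidate strings --- $A_5,A_6,A_7,\dots$ and $A_\infty$ in the first case and the subcritical fork $D_8$ in the second --- and eliminating these requires combining the bound $d^{2}\ge 3+\sqrt2$ (which already kills $A_5,A_6$) with the rigidity that every endpoint of the embedded $D_6$ stays an endpoint of the whole graph, with the over-determination of having to realise $N\subset P$ (index $d^{2}$) and $P\subset M$ (index $d^{2}-1$) simultaneously, and ultimately with the non-existence of subfactors carrying the remaining spurious supercritical graphs.

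Finally, for existence I would, exactly as in Theorem~\ref{5theorem I1}, build each quadrilateral from a $Q$-system: in case~$(1)$ a $Q$-system for $\id_P\oplus\eta$ inside the (Fibonacci) even system of the $D_6$ subfactor, and in case~$(2)$ the index-$4$ datum with $E_8^{(1)}$ graph, checking in each case that the resulting $N\subset M$ carries a second intermediate subfactor $Q$ making $\fQ$ cocommuting of the asserted supertransitivity. Uniqueness follows from Theorem~\ref{4theorem uniqueness2}(2): since $[\sigma\iota]=[\iota']$ forces $d(\sigma)=d^{2}-2$, the sectors $\eta$ and $\eta'$ are the only two summands $\sigma\subset\xi^{2}$ with $[\sigma\iota]=[\iota']$ (the vertex $\zeta'$ is excluded because $d^{4}-6d^{2}+7\ne 0$ at both admissible indices), so any two such quadrilaterals are flip conjugate; the flip $P\leftrightarrow Q$ being implemented inside each model, this upgrades to conjugacy of the hyperfinite II$_1$ quadrilaterals.
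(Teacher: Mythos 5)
Your setup (the numerology from Corollary \ref{4corollary etaiota=iota'}, the identification of the two target configurations, and the appeal to Theorem \ref{4theorem uniqueness2} / $Q$-systems at the end) is sound, but the proposal has a genuine gap exactly at the point you yourself flag as ``the main obstacle'': nothing in it actually pins the index to $(5+\sqrt{5})/2$ or $5$. Your Perron--Frobenius estimate on the dual principal graph of $N\subset P$ only excludes the window $(5+\sqrt{5})/2<d^2<3+\sqrt{2}$, and the way you propose to eliminate the remaining candidates is circular: the assertion that the even system generated by $\eta$ is ``amenable, hence ADE or affine-ADE'' presupposes $[M:P]\le 4$, which is precisely what must be proved --- for index $>4$ there is no such list to invoke (the Haagerup subfactor is amenable and is on no such list). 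Appealing to ``the non-existence of subfactors carrying the remaining spurious supercritical graphs'' is not an argument but a restatement of the problem. There is a secondary gap in case (1): you never derive the absence of the extra depth-four vertex from $(6,3)$-supertransitivity; $6$-supertransitivity is a condition on the graph of $P\subset M$, and transporting it to the depth-four part of the dual graph of $N\subset P$ requires exactly the kind of fusion computation you omit. (It also matters for your uniqueness step, where you need to know that the depth-four remainder of $\xi^2$ contains no summand $\sigma$ with $[\sigma\iota]=[\iota']$; your reason ``$d^4-6d^2+7\ne 0$'' only makes sense if that remainder is a single irreducible, which you have not shown.)

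The paper closes this gap by a fusion-rule computation on the $P\subset M$ side rather than a graph-norm estimate. Using $[\xi]=[\bkappa\kappa_Q\pi]$ and the self-conjugacy of $\kappa_Q\pi\bkappa$ from Corollary \ref{4corollary etaiota=iota'},(5), one first shows $[\eta\bpi\eta_Q\pi]=[\xi]\oplus[\eta']$, where $\eta'$ is the depth-four $P$-$P$ sector of $P\subset M$; one then computes $[\eta\xi][\xi]$ and $[\eta][\xi^2]$ and compares them to determine $[\eta\eta']$. Five-supertransitivity of $P\subset M$ forces the multiplicity of $\eta'$ in $\eta\eta'$ to be one, which (i) determines $[\bkappa\kappa'']$ and shows $P\subset M$ can never be $6$-supertransitive in this branch --- so case (1) follows by contraposition together with the separate $A_4$ proposition, which supplies $D_6$ and the explicit model inside the even sectors of the $D_6$ subfactor --- and (ii) identifies $[\kappa'']=[\kappa\,\bpi\eta_Q\pi]=[\heta\kappa_Q\pi]$. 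Equating statistical dimensions in this last identity gives $d(\kappa)^4-5d(\kappa)^2+4=0$, hence $[M:P]=4$ and the graph $E_8^{(1)}$; no classification of graphs beyond index $4$ is ever needed. Your existence and uniqueness sketches inherit the same defect (one cannot ``build the index-$4$ datum'' before knowing the index is $4$); the paper instead constructs case (2) concretely from an outer $\fA_5$-action twisted by two inequivalent $2$-dimensional projective representations, and obtains uniqueness from the fact that $\xi$ is determined by $P\subset M$, the uniqueness of the $Q$-system for $\id_P\oplus\xi$ under $3$-supertransitivity, and Lemma \ref{3lemma uniqueness}.
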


\begin{theorem} \label{5theorem II2} 
Let $\fQ=\quadri$ be a quadrilateral of factors in Class II such that 
$[M:P]\leq 4$. 
Then one of the following holds:
\begin{itemize}
\item [$(1)$] The principal graphs of $P\subset M$ and $Q\subset M$ are $A_4$ and those of 
$N\subset P$ and $N\subset Q$ are $D_6$. 
Such a quadrilateral of the hyperfinite II$_1$ factors exists and is unique up to conjugacy. 
\item [$(2)$] The principal graphs of $P\subset M$ and $Q\subset M$ are $E_6$ and 
the dual principal graphs of $N\subset P$ and $N\subset Q$ are as below. 
$$\hpic{V4}{1.0in}$$
Such a quadrilateral of the hyperfinite II$_1$ factors exists. 
\item [$(3)$] The principal graphs of $P\subset M$ and $Q\subset M$ are $E_8^{(1)}$. 
Such a quadrilateral of the hyperfinite II$_1$ factors exists and is unique up to conjugacy. 
\end{itemize}
\end{theorem}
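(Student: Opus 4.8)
The plan is to turn the bound $[M:P]\le 4$ into a finite list of candidate principal graphs for $P\subset M$ and then, using the rigid fusion structure of Corollary~\ref{4corollary etaiota=iota'}, to decide for each candidate whether a compatible $N\subset P$ occurs. First I would record what Class~II supplies through Corollary~\ref{4corollary etaiota=iota'}: the angle is $\cos^{-1}1/([P:N]-1)$, the index identity $[M:P]=[P:N]-1$ holds, and, since $\Gal(M/N)$ is trivial and $H^2(\widehat{P\subset M})$ is trivial for the subfactors in play (automatic when $P\subset M$ is $3$-supertransitive by Lemma~\ref{3lemma vanishing}, and otherwise checked directly for the index-$\le4$ graphs occurring), the dual principal graph of $N\subset P$ contains $D_6$ with each endpoint of $D_6$ an endpoint of the whole graph. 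As the graph norm is monotone under passing to subgraphs and equals $\sqrt{[P:N]}$, this forces $[P:N]\ge\|D_6\|^2=(5+\sqrt5)/2$, whence
$$\frac{3+\sqrt5}{2}\le [M:P]=[P:N]-1\le 4,\qquad [P:N]\le 5.$$
In particular $\eta$ is not an automorphism (that would give $[M:P]=2$, below the bound) and both elementary subfactors have index at most $5$.

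Next I would split on the supertransitivity of $P\subset M$. If $P\subset M$ is $5$-supertransitive, then $\fQ$ is $(5,3)$-supertransitive and Theorem~\ref{5theorem II1} applies; the graph $A_4$ of its case~$(1)$ is only $3$-supertransitive, so that branch is impossible and we land in its case~$(2)$, giving $P\subset M=E_8^{(1)}$, $[M:P]=4$, $[P:N]=5$, which is case~$(3)$. This disposes at once of every candidate of supertransitivity $\ge 5$ (the $A_n$ with $n\ge6$, the $D_{2n}$ with $2n\ge8$, and $A_\infty$ at index $4$), none of which is $E_8^{(1)}$ and hence all of which contradict Theorem~\ref{5theorem II1}. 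If instead $P\subset M$ is not $5$-supertransitive, the remaining $2$-supertransitive graphs of index $\le4$ are $A_4,A_5,D_6,E_6,E_8$ at index $<4$ and the affine $E_6^{(1)},E_7^{(1)}$ at index $4$. For each I would read off $[P:N]=[M:P]+1$ and test it against the requirement that $[P:N]$ be the index of a $3$-supertransitive subfactor whose dual principal graph carries the $D_6$ configuration above. The value $[M:P]=3$ $(A_5)$ gives $[P:N]=4$ and is excluded because no index-$4$, $3$-supertransitive subfactor can be $A$-like to depth~$3$ and still contain the three endpoints of $D_6$ as endpoints; $[M:P]=(5+\sqrt5)/2$ $(D_6)$ gives $[P:N]=(7+\sqrt5)/2$ and $[M:P]=4\cos^2(\pi/30)$ $(E_8)$ gives $[P:N]=4\cos^2(\pi/30)+1$, neither of which is the index of such a subfactor; and $E_6^{(1)},E_7^{(1)}$, both forcing $[P:N]=5$, are excluded by a direct computation showing the required index-$5$ partner cannot occur (the $2$-supertransitive generator $\heta$, of dimension $[M:P]-1=3$, is incompatible with these affine graphs). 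What survives is exactly $[M:P]=(3+\sqrt5)/2$, where $P\subset M=A_4$ and Corollary~\ref{4corollary etaiota=iota'}$(4)$ forces $N\subset P=D_6$ rather than the self-dual $A_9$, giving case~$(1)$; and $[M:P]=2+\sqrt3$, where $P\subset M=E_6$ and $[P:N]=3+\sqrt3$ with $N\subset P$ the displayed dual graph, giving case~$(2)$.

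Finally I would settle existence and uniqueness. In case~$(3)$ both follow directly from Theorem~\ref{5theorem II1}. In case~$(1)$ the only sectors $\sigma\subset\xi^2$ with $[\sigma\iota]=[\iota']$ are $\eta$ and its flip partner $\bpi\eta_Q\pi$, since the $D_6$ fusion graph admits no competing sector of dimension $d(\eta)$, so Theorem~\ref{4theorem uniqueness2}$(2)$ gives flip conjugacy and hence conjugacy; no uniqueness is asserted in case~$(2)$, matching the fact that the $E_6$ data admits a third such $\sigma$. For existence I would exhibit in each case a concrete $Q$-system for $\id_P\oplus\eta$ together with the isomorphism $\pi\colon P\to Q$ realizing $[\xi]=[\bkappa\kappa_Q\pi]$, using the constructions of this section: the $A_4/D_6$ and $E_8^{(1)}$ quadrilaterals from the relevant $SU(2)$-type and group-theoretic models, and the $E_6$ quadrilateral from the index-$3+\sqrt3$ subfactor with the displayed dual graph.

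The main obstacle will be the two places where the structure theory does not decide things for free. The harder eliminations are not a single inequality but a graph-by-graph check that the $3$-supertransitive partner demanded by $[P:N]=[M:P]+1$ together with the $D_6$-endpoint condition genuinely fails to exist; the borderline index-$4$ possibilities ($A_5$ with $[P:N]=4$, and the affine $E_6^{(1)},E_7^{(1)}$ with $[P:N]=5$) are the delicate ones and require the dimension constraint $d(\heta)=3$ and the exact shape of the index-$5$ partner, as well as care over the triviality of $H^2(\widehat{P\subset M})$ at index $4$. The second obstacle is existence in cases~$(1)$ and $(2)$: producing the $Q$-systems is a genuine construction rather than a deduction, the $E_6$ case in particular requiring one to build the index-$3+\sqrt3$ subfactor with the prescribed dual graph and to verify it closes into a quadrilateral.
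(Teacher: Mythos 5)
Your skeleton --- reduce to a finite list of candidate principal graphs for $P\subset M$, dispatch the highly supertransitive candidates through Theorem~\ref{5theorem II1}, and control $N\subset P$ via Corollary~\ref{4corollary etaiota=iota'} --- is indeed the paper's skeleton, but two of your steps do not hold up. First, a factual error in the supertransitivity bookkeeping: every $A_n$ subfactor is supertransitive ($k$-supertransitive for all $k$), so $A_4$ is not ``only $3$-supertransitive''; the $(A_4,D_6)$ quadrilateral is itself $(6,3)$-supertransitive and lies in your $5$-supertransitive branch, where Theorem~\ref{5theorem II1} permits \emph{both} of its conclusions, not only $E_8^{(1)}$. (Conversely $E_8$, branching at depth $5$, is only $4$-supertransitive, which is exactly why the paper cannot kill it with Theorem~\ref{5theorem II1} and needs a separate lemma.) This mislabeling happens not to lose any case, since $A_4$ and $A_5$ reappear in your other branch, but the more serious problem is that your eliminations of $D_6$, $E_8$, $E_7^{(1)}$ and $E_6^{(1)}$ as graphs for $P\subset M$ have no argument behind them. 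For $D_6$ and $E_8$ you write that $[P:N]=[M:P]+1$ ``is not the index of such a subfactor''; but these values exceed $4$, where indices are not quantized and no classification is available, so nothing can be excluded by inspecting the number, and the $D_6$-endpoint condition on a dual graph of unknown index cannot be checked either. The paper's actual mechanism, which your proposal never invokes, is a dimension count: Lemma~\ref{5lemma dim} gives $\dim(\kappa\iota\biota\bkappa,\kappa\iota\biota\bkappa)=6+\dim(\kappa'\bkappa',\kappa_Q\bkappa_Q)$ once $P\subset M$ is $3$-supertransitive, and the $D_6$ (resp.\ $E_8$, $E_7^{(1)}$) fusion rules force this to equal $7$ --- because cocommutativity gives $[\heta]\neq[\heta_Q]$ and no other sector of $\kappa'\bkappa'$ can match $\heta_Q$ --- contradicting the bound $\geq 8$ of Corollary~\ref{4corollary etaiota=iota'},(5) for Class II when the graph is not $A_4$.

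The index-$4$ cases expose a second structural defect: your framework rests on Corollary~\ref{4corollary etaiota=iota'},(4), whose hypothesis is triviality of $H^2(\widehat{P\subset M})$, and for $E_6^{(1)}$ that cohomology is \emph{not} trivial (the paper notes this explicitly and must replace the Galois-group criterion by a long fusion computation with $\beta$, $\sigma$, $\rho$ ending in a multiplicity contradiction for $\heta$ and $\heta_Q$). So ``otherwise checked directly'' is not a gap you can close --- the check fails, and a genuinely different argument is required there; likewise the one-line claim that $d(\heta)=3$ is ``incompatible'' with $E_7^{(1)}$ is no substitute for the irreducibility of $\bkappa'\kappa_Q\pi$ that drives the paper's exclusion. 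Two smaller debts remain: in case $(1)$, Theorem~\ref{4theorem uniqueness2},(2) yields only \emph{flip} conjugacy, and promoting this to conjugacy requires exhibiting an automorphism of $P$ preserving $N$ that interchanges the two relevant sectors in $\xi^2$ (the paper obtains it from the period-two automorphism realizing $D_6$ as $A_9\rtimes\Z/2\Z$ and its dual action); and existence in cases $(1)$ and $(2)$ is deferred rather than proved, whereas the paper constructs case $(1)$ inside the $D_6$ fusion category ($P=\rho_1(\cM)$, $Q=\rho_2(\cM)$, $N=\rho_1(\cN)=\rho_2(\cN)$) and case $(2)$ from the GHJ pair for $E_6$. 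Without these ingredients the classification and the existence/uniqueness assertions of the theorem remain open.
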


In the sequel, the dimension of the space 
$(\kappa\iota\biota\bkappa,\kappa\iota\biota\bkappa)$ often gives important information 
about a quadrilateral and we give useful formulae about it first. 

\begin{lemma} \label{5lemma dim} 
Let $\fQ=\quadri$ be an irreducible noncommuting quadrilateral of factors 
such that $\fQ$ is cocommuting and (2,3)-supertransitive. 
Then $\iota'\biota$ contains $\xi$ and $\eta$ with multiplicity one, the equality 
$[\iota'\biota]=[\eta]\oplus [\eta\xi]$ holds, and 
$$ \dim(\kappa\iota\biota\bkappa,\kappa\iota\biota\bkappa)
=5+\dim(\eta\xi,\eta\xi).$$
If moreover $P\subset M$ is 3-supertransitive, then 
$[\eta\xi]=[\xi]\oplus [\bkappa'\kappa_Q\pi]$ and 
$$ \dim(\kappa\iota\biota\bkappa,\kappa\iota\biota\bkappa)
=6+\dim(\bkappa'\kappa_Q\pi,\bkappa'\kappa_Q\pi)
=6+\dim(\kappa'\bkappa',\kappa_Q\bkappa_Q).$$
\end{lemma}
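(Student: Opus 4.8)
The plan is to prove the three assertions in turn, using only Frobenius reciprocity and the fusion data already recorded in Corollary \ref{4corollary etaiota=iota'}. For the statement about $\iota'\biota$ I would start from the identity $[\eta\iota]=[\iota']$ of Corollary \ref{4corollary etaiota=iota'},(1), which gives $[\iota'\biota]=[\eta\iota\biota]=[\eta(\id_P\oplus\xi)]=[\eta]\oplus[\eta\xi]$. The multiplicities then drop out of Frobenius reciprocity: since $[\xi\iota]=[\iota]\oplus[\iota']$ with $\iota'$ irreducible, $\dim(\iota'\biota,\xi)=\dim(\iota',\xi\iota)=1$, while $\dim(\iota'\biota,\eta)=\dim(\iota',\eta\iota)=\dim(\iota',\iota')=1$. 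The same manipulations record the fusion facts I use below, namely $\dim(\xi,\eta\xi)=\dim(\eta\xi,\xi)=\dim(\eta,\xi^2)=1$ and $\dim(\eta,\eta\xi)=\dim(\xi,\eta^2)=0$.

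For the dimension formula I would write $\iota\biota=\id_P\oplus\xi$, so that $\kappa\iota\biota\bkappa=\kappa\bkappa\oplus\kappa\xi\bkappa$, and evaluate
$$\dim(\kappa\iota\biota\bkappa,\kappa\iota\biota\bkappa)=\dim(\kappa\bkappa,\kappa\bkappa)+2\dim(\kappa\bkappa,\kappa\xi\bkappa)+\dim(\kappa\xi\bkappa,\kappa\xi\bkappa)$$
term by term. Because $[\kappa\bkappa]=[\id_M]\oplus[\heta]$ with $\heta$ irreducible, the first term is $2$. Peeling the outer $\kappa,\bkappa$ off by Frobenius reciprocity and using $[\bkappa\kappa]=[\id_P]\oplus[\eta]$, the middle term becomes $\dim(\id_P\oplus\eta,(\id_P\oplus\eta)\xi)=0$ by the vanishing facts above, and the last term becomes $2+\dim(\xi,\eta\xi)+\dim(\xi,\eta\xi\eta)=3+\dim(\xi,\eta\xi\eta)$. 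Hence $\dim(\kappa\iota\biota\bkappa,\kappa\iota\biota\bkappa)=5+\dim(\xi,\eta\xi\eta)$, and a further application of Frobenius reciprocity gives $\dim(\xi,\eta\xi\eta)=\dim(\eta\xi,\xi\eta)$.

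The step I expect to be the real obstacle is the identification $\dim(\eta\xi,\xi\eta)=\dim(\eta\xi,\eta\xi)$ needed to match the stated formula. Since $\xi$ and $\eta$ are self-conjugate (the former because $\id_P\oplus\xi$ carries a $Q$-system, by the corollary following Lemma \ref{3lemma algebra}), one has $\xi\eta=\overline{\eta\xi}$, so the required equality is precisely the assertion that $\eta\xi$ is self-conjugate, i.e.\ $[\eta\xi]=[\xi\eta]$. In general only $\dim(\eta\xi,\xi\eta)\le\dim(\eta\xi,\eta\xi)$ holds, with equality exactly when $[\eta\xi]=[\xi\eta]$, so this is a genuine point and not a formality. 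I would try to force it from reality of the $P$--$P$ sectors that occur: these are all built from the canonical endomorphism $\iota\biota$ of the $3$-supertransitive inclusion $N\subset P$ together with the self-conjugate $\eta$, and the aim would be to show every irreducible constituent of $\eta\xi$ is self-conjugate, whence $[\eta\xi]=[\overline{\eta\xi}]=[\xi\eta]$.

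Finally, under the additional hypothesis that $P\subset M$ is $3$-supertransitive, I would substitute $[\xi]=[\bkappa\kappa_Q\pi]$ from Theorem \ref{4theorem (2,2)-supertransitive},(2) and $[\eta\bkappa]=[\overline{\kappa\eta}]=[\bkappa]\oplus[\bkappa']$, which yields
$$[\eta\xi]=[\eta\bkappa\kappa_Q\pi]=[\bkappa\kappa_Q\pi]\oplus[\bkappa'\kappa_Q\pi]=[\xi]\oplus[\bkappa'\kappa_Q\pi].$$
Since $\xi$ occurs in $\eta\xi$ with multiplicity one (first part), it is absent from $\bkappa'\kappa_Q\pi$, so $\dim(\eta\xi,\eta\xi)=1+\dim(\bkappa'\kappa_Q\pi,\bkappa'\kappa_Q\pi)$ and the total becomes $6+\dim(\bkappa'\kappa_Q\pi,\bkappa'\kappa_Q\pi)$. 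The remaining equality is pure Frobenius reciprocity: cancelling the invertible $\pi$ and moving $\kappa_Q$ and $\bkappa'$ across gives
$$\dim(\bkappa'\kappa_Q\pi,\bkappa'\kappa_Q\pi)=\dim(\bkappa'\kappa_Q,\bkappa'\kappa_Q)=\dim(\kappa_Q\bkappa_Q,\kappa'\bkappa')=\dim(\kappa'\bkappa',\kappa_Q\bkappa_Q),$$
which completes the argument.
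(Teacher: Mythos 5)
Your strategy is viable and most of it checks out: the identification $[\iota'\biota]=[\eta]\oplus[\eta\xi]$ with the stated multiplicities, the term-by-term evaluation giving $\dim(\kappa\iota\biota\bkappa,\kappa\iota\biota\bkappa)=5+\dim(\xi,\eta\xi\eta)=5+\dim(\eta\xi,\xi\eta)$, and the entire $3$-supertransitive half (the substitution $[\xi]=[\bkappa\kappa_Q\pi]$, the conclusion $\dim(\xi,\bkappa'\kappa_Q\pi)=0$ from multiplicity one of $\xi$ in $\eta\xi$, and the closing Frobenius identities) are all correct. But the step you flagged is indeed a genuine gap, not a formality, and you do not close it: you need $[\eta\xi]=[\xi\eta]$, i.e.\ self-conjugacy of $\eta\xi$, to pass from $5+\dim(\eta\xi,\xi\eta)$ to $5+\dim(\eta\xi,\eta\xi)$. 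Moreover, your proposed repair --- prove that every irreducible constituent of $\eta\xi$ is self-conjugate --- is stronger than necessary (one only needs the decomposition of $\eta\xi$ to be stable under conjugation) and threatens to be circular: for instance, in the $3$-supertransitive case the constituent $\bkappa'\kappa_Q\pi$ is known to be self-conjugate only as a consequence of statements of exactly this kind (cf.\ Corollary \ref{4corollary etaiota=iota'},(5)).

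The paper closes precisely this point by a short observation which you can import into your argument unchanged. Since $N\subset P$ is $3$-supertransitive, $[\iota\hxi]=[\iota]\oplus[\iota']$, hence
$$[\iota\biota\iota\biota]=[\iota(\id_N\oplus\hxi)\biota]=2[\iota\biota]\oplus[\iota'\biota].$$
The sector $[\iota\biota\iota\biota]$ is self-conjugate because $[\overline{\iota\biota}]=[\iota\biota]$ (it is a canonical endomorphism), and $2[\iota\biota]$ is self-conjugate; since conjugation permutes irreducible sectors preserving multiplicities, the complementary summand $[\iota'\biota]$ is self-conjugate as well. Combined with $[\iota'\biota]=[\eta]\oplus[\eta\xi]$ and $[\overline{\eta}]=[\eta]$, this yields $[\eta\xi]=[\overline{\eta\xi}]=[\xi\eta]$, which is exactly your missing identity; with it inserted, your proof is complete. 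For comparison, the paper runs the dimension count on the $N$--$N$ side rather than your $M$--$M$ side: it writes $\dim(\kappa\iota\biota\bkappa,\kappa\iota\biota\bkappa)=\dim(\biota\bkappa\kappa\iota,\biota\bkappa\kappa\iota)$, uses $[\biota\bkappa\kappa\iota]=[\id_N]\oplus[\hxi]\oplus[\biota\iota']$ to reach $4+\dim(\biota\iota',\biota\iota')$, and then applies the same self-conjugacy argument to $\biota\iota'$ and $\iota'\biota$ to land on $4+\dim(\iota'\biota,\iota'\biota)=5+\dim(\eta\xi,\eta\xi)$. So the two computations are dual to one another, hit the identical obstruction, and require the identical fix.
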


\begin{proof}
By Corollary \ref{4corollary etaiota=iota'},(1) and Frobenius reciprocity, we have 
\begin{eqnarray*}\dim(\kappa\iota\biota\bkappa,\kappa\iota\biota\bkappa)
&=&\dim (\biota\bkappa\kappa\iota,\biota\bkappa\kappa\iota)
=\dim(\id_N\oplus \hxi\oplus \biota\iota',\id_N\oplus \hxi\oplus \biota\iota')\\
&=&2+2\dim(\iota\hxi,\iota')+\dim(\biota\iota',\biota\iota')=
4+\dim(\biota\iota',\biota\iota').
\end{eqnarray*}
Since $[\biota\iota\biota\iota]=2[\biota\iota]\oplus[\biota\iota']$, the sector 
$[\biota\iota']$ is self-conjugate, and so is $[\iota'\biota]$ for a similar reason. 
Thus we get 
$$\dim(\biota\iota',\biota\iota')=\dim(\biota\iota',\biota'\iota)=
\dim(\iota'\biota,\iota\biota')=\dim(\iota'\biota,\iota'\biota).$$
The statement follows from Corollary \ref{4corollary etaiota=iota'},(1) now. 
\end{proof}

In view of Corollary \ref{4corollary etaiota=iota'},(4), a canonical candidate 
of a quadrilateral in Class II with the smallest index is that with the $D_6$ subfactor 
for $N\subset P$. 

\begin{proposition} Let $\fQ=\quadri$ be a quadrilateral in Class II or Class III 
such that the principal graph of $P\subset M$ is $A_4$. 
Then $\fQ$ is in Class II and the principal graph of $N\subset P$ is $D_6$. 
Such a quadrilateral of the hyperfinite II$_1$ factors exists and is 
unique up to conjugacy. 
\end{proposition}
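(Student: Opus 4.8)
The plan is to read off both elementary subfactors from the single hypothesis that $P\subset M$ has principal graph $A_4$, and then to settle existence and uniqueness. First I would record the forced numerology. Writing $\phi=(1+\sqrt5)/2$, the graph $A_4$ gives $[M:P]=(3+\sqrt5)/2=\phi^2$, so $d(\heta)=\phi$ and $\heta^2=\id_M\oplus\heta$; in particular the even part of $A_4$ is Fibonacci, so $P\subset M$ is automatically $3$-supertransitive. Since $\fQ$ is cocommuting and $(2,3)$-supertransitive, Corollary \ref{4corollary etaiota=iota'} yields $[M:P]=[P:N]-1$, whence $[P:N]=(5+\sqrt5)/2$, $d(\xi)=\phi^2$ and $d(\iota)=\sqrt{(5+\sqrt5)/2}$, which is exactly the graph norm of $D_6$.

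Next I would rule out Class III. Suppose $\Gal(M/N)\cong\Z/2\Z=\{\id_M,\theta\}$. By Corollary \ref{4corollary etaiota=iota'},(3) the endomorphism $\heta\theta\heta$ contains $\theta\heta\theta$ with multiplicity one and contains no automorphism. But $d(\heta\theta\heta)=d(\heta)^2=\phi^2$ while $d(\theta\heta\theta)=d(\heta)=\phi$, so the complement of $\theta\heta\theta$ in $\heta\theta\heta$ has dimension $\phi^2-\phi=1$ and is therefore an automorphism, a contradiction. Hence $\fQ$ lies in Class II; in particular $\Gal(M/N)$ is trivial and $[\eta]\neq[\bpi\eta_Q\pi]$ by Corollary \ref{4corollary etaiota=iota'},(4).

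Now I would compute the dual principal graph of $N\subset P$ directly. A dimension count ($d(\xi)^2=\phi^4=3\phi+2$, minus $d(\id_P)+d(\xi)=2+\phi$, leaving $2\phi$) combined with Lemma \ref{4lemma xi-eta} and Corollary \ref{4corollary etaiota=iota'},(4) gives $[\xi^2]=[\id_P]\oplus[\xi]\oplus[\eta]\oplus[\zeta]$ with $\zeta=\bpi\eta_Q\pi$ and $d(\eta)=d(\zeta)=\phi$, the four constituents being distinct because we are in Class II. Using $\xi\iota=\iota\oplus\iota'$ together with $[\eta\iota]=[\iota']$ and $[\bpi\eta_Q\pi\iota]=[\iota']$ from Corollary \ref{4corollary etaiota=iota'},(1),(4), one reads off the edges: $\id_P$ joins only $\iota$; $\iota$ joins $\id_P$ and $\xi$; $\xi$ joins $\iota$ and $\iota'$; and $\iota'$ is trivalent, joining $\xi$, $\eta$ and $\zeta$, while $\eta$ and $\zeta$ are leaves. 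This is precisely $D_6$, and since $D_6$ is self-dual the principal graph of $N\subset P$ is $D_6$ as well.

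Finally, for existence I would run the construction of Theorem \ref{5theorem I1} in the cocommuting setting: starting from the $D_6$ subfactor $N\subset P$, take the fork-tip $P$-$P$ bimodule $\eta$, verify from the $D_6$ even fusion rules that $\eta$ is real, self-conjugate and satisfies $\eta^2=\id_P\oplus\eta$, and conclude via Lemma \ref{3lemma Q-system} (uniqueness of the $Q$-system following from Lemma \ref{3lemma vanishing}, since the resulting $P\subset M$ is the $3$-supertransitive $A_4$ inclusion) that $\id_P\oplus\eta$ carries a $Q$-system. This produces $M\supset P$ and, exactly as in Theorem \ref{5theorem I1}, an intermediate subfactor $Q$ forming a noncommuting cocommuting quadrilateral in Class II with the stated graphs. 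Uniqueness then follows from Theorem \ref{4theorem uniqueness2},(2): its hypotheses hold because $H^2(\widehat{P\subset M})$ is trivial, $\Gal(M/N)$ is trivial, and the only sectors $\sigma\subset\xi^2$ with $[\sigma\iota]=[\iota']$ are the two tips $\eta$ and $\zeta$; the residual flip is turned into a genuine conjugacy by the $D_6$ graph automorphism exchanging $\eta$ and $\zeta$. I expect the main obstacle to be the existence half, namely confirming from the $D_6$ fusion data that the tip $\eta$ is Fibonacci and self-conjugate, so that the required $Q$-system really exists and the construction closes up to an $A_4$ inclusion $P\subset M$.
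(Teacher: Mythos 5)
Your reduction to Class II and your determination of the graph are both sound: the Class III exclusion via $d(\heta)^2-d(\heta)=1$ is exactly the paper's argument, and your direct computation $[\xi^2]=[\id_P]\oplus[\xi]\oplus[\eta]\oplus[\bpi\eta_Q\pi]$ (closed by the dimension count $1+\phi^2+2\phi=\phi^4$, with the two tips distinct because the Galois group is trivial) is a legitimate alternative to the paper's shortcut, which observes that the index forces $A_9$ or $D_6$ and kills $A_9$ by Theorem \ref{2theorem no-extra}.

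The existence half, however, has a genuine gap. First, fusion rules plus self-conjugacy of $\eta$ do not by themselves produce a $Q$-system: Lemma \ref{3lemma Q-system} only characterizes $Q$-systems as solutions of (3.1)--(3.2), it does not solve those equations (this is precisely why the paper needs an Appendix for the Haagerup case). That part is repairable, since $\eta$ is realized as an actual endomorphism of $P$ and $[\eta\bar{\eta}]=[\id_P]\oplus[\eta]$, so the subfactor $\eta(P)\subset P$ itself supplies the $Q$-system. The step that cannot be repaired as stated is the construction of $Q$ ``exactly as in Theorem \ref{5theorem I1}'': that construction produces $Q=\pi(P)$ with $\pi=\Ad(v)\circ\alpha$, i.e.\ $Q$ \emph{inner conjugate} to $P$ in $M$, and inner conjugacy forces $[\kappa\bkappa]=[\kappa_Q\bkappa_Q]$, whence $\fQ$ is noncocommuting by Theorem \ref{3theorem angle},(1) applied to $\hat{\fQ}$ --- contradicting the cocommuting Class II situation you are trying to realize. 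The paper's construction is structurally different: it builds $P=\rho_1(\cM)$ and $Q=\rho_2(\cM)$ \emph{simultaneously} inside $M=\cM$ as images of the two fork-tip endomorphisms of a $D_6$ subfactor $\cN\subset\cM$, using the fusion relation $[\rho_1\nu]=[\rho_2\nu]=[\mu]$ to choose representatives with $\rho_1\nu=\rho_2\nu$, so that both $P$ and $Q$ contain $N=\mu(\cN)$. Finally, your uniqueness upgrade from flip conjugacy to conjugacy is also incomplete: a graph automorphism of $D_6$ exchanging the two tips need not be induced by an automorphism of the factor $P$ carrying $N$ to $N$; the paper obtains such an automorphism nontrivially, from Kawahigashi's identification of the $D_6$ subfactor as the $\Z/2\Z$ orbifold of the $A_9$ subfactor, the dual action supplying the required symmetry.
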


\begin{proof} 
From Theorem \ref{4theorem (2,2)-supertransitive}, we see 
$[P:N]=[M:P]+1=(5+\sqrt{5})/2=4\cos^2(\pi/10)$, which shows that the principal graph of 
$N\subset P$ is either $A_9$ or $D_6$.  
Thanks to Theorem \ref{2theorem no-extra}, the former never occurs. 
If $\fQ$ were in Class III, Corollary \ref{4corollary etaiota=iota'},(3) would imply that 
$\heta\theta\heta$ does not contain any automorphism and it contains $\theta\heta\theta$. 
However, this contradicts $d(\heta\theta\heta)-d(\theta\heta\theta)=d(\heta)^2-d(\heta)=1$ and $\fQ$ is 
in Class II. 

In the case of the hyperfinite II$_1$ factors, uniqueness up to flip conjugacy follows from 
Theorem \ref{4theorem uniqueness2},(2) and the uniqueness of the $D_6$ subfactor. 
Let $[\xi^2]=[\id_P]\oplus [\xi]\oplus [\xi']\oplus[\xi'']$ be the irreducible decomposition 
of $\xi^2$. 
To prove uniqueness up to conjugacy, it suffices to show that there exists an automorphism 
$\alpha$ of $P$ such that $\alpha(N)=N$ and $[\alpha\xi'\alpha^{-1}]=[\xi'']$ 
(see the proof of Theorem \ref{4theorem uniqueness2},(2)).   
It is shown in \cite{Ka} that there exists an automorphism $\beta$ of period two 
on the $A_9$ subfactor $\tN\subset \tP$ such that 
$$(N\subset P)\cong (\tN\rtimes_\beta\Z/2\Z\subset \tP\rtimes_\beta \Z/2\Z).$$
The dual action $\hat{\beta}$ of $\beta$ acts on the higher relative commutants of 
$N\subset P$ non-trivially. 
It is routine work to show that $\hat{\beta}$ does the right job. 

Finally we show existence. 
Let $\cN\subset \cM$ be an inclusion of factors whose principal graph is $D_6$. 
We use the following parameterization of sectors associated with $\cN\subset \cM$. 
$$\hpic{V12}{2.0in}$$
Recall the fusion rule for the $D_6$ subfactors \cite[Proposition 3.10]{I1}: 
$$[\rho^2]=[\id_\cM]\oplus [\rho]\oplus [\rho_1]\oplus [\rho_2],\quad [\rho_1\rho_2]=[\rho_2\rho_1]=[\rho],$$
$$[\rho_1^2]=[\id_\cM]\oplus [\rho_1],\quad [\rho_2^2]=[\id_\cM]\oplus [\rho_2], $$
$$[\mu]=[\rho_1\nu]=[\rho_2\nu]=[\nu\sigma_1]=[\nu\sigma_2],$$
where $\nu=\iota_{\cM,\cN}$. 
We can take representatives $\rho_1$ and $\rho_2$ satisfying 
$\mu=\rho_1\nu=\rho_2\nu$. 
We set $M=\cM$, $P=\rho_1(\cM)$, $Q=\rho_2(\cM)$, and $N=\mu(\cN)=\rho_1(\cN)=\rho_2(\cN)$. 
Then $d(\rho_1)=d(\rho_2)=(1+\sqrt{5})/2$, which means $[M:P]=[M:Q]=4\cos^2\pi/5$, and so 
the principal graphs of $P\subset M$ and $Q\subset M$ are $A_4$. 
By construction, the principal graphs of $N\subset P$ and $N\subset Q$ are $D_6$. 
Since $\mu$ is irreducible, the inclusion $N\subset M$ is irreducible and 
$\fQ=\quadri$ is an irreducible quadrilateral. 
Since $[\rho_1^2]=[\id_\cM]\oplus [\rho_1]$ and $[\rho_2^2]=[\id_\cM]\oplus [\rho_2]$, we have 
$[\iota_{M,P}\overline{\iota_{M,P}}]=[\id_M]\oplus [\rho_1]$ and 
$[\iota_{M,Q}\overline{\iota_{M,Q}}]=[\id_M]\oplus [\rho_2]$, which implies that 
$\fQ$ is cocommuting. 
Therefore $[M:P]\neq [P:N]$ shows that the quadrilateral $\fQ$ is not commuting.  
\end{proof}

\begin{remark} 
The above construction shows that the principal graph and the dual principal graph of $N\subset M$ 
are the same, which can be easily computed from 
$$[\mu\overline{\mu}]=[\rho_1\nu\bar{\nu}\rho_1]
=[\rho_1(\id_M\oplus\rho)\rho_1]=[\id_M]\oplus [\rho_1]\oplus[\rho_2]\oplus 2[\rho],$$
$$[\rho_1\mu]=[\rho_1^2\nu]=[\nu]\oplus [\rho_1\nu]=[\nu]\oplus [\mu],$$
$$[\rho_2\mu]=[\rho_2^2\nu]=[\nu]\oplus [\rho_2\nu]=[\nu]\oplus [\mu],$$
$$[\rho\mu]=[\rho\rho_1\nu]=[\rho\nu]\oplus[\rho_2\nu]=[\nu]\oplus 2[\mu],$$
$$[\nu\bar{\mu}]=[\nu\bar{\nu}\rho_1]=[\rho_1]\oplus [\rho\rho_1]=[\rho_1]\oplus [\rho_2]\oplus [\rho].$$
By symmetry of $\mu$ and $\bar{\mu}$, there exist two more intermediate subfactors 
$L$ and $R$ between $N$ and $M$ such that the principal graphs of $L\subset M$ and 
$R\subset M$ are $D_6$ and those of $N\subset L$ and $N\subset R$ are $A_4$. 
Since $\iota_{M,L}\overline{\iota_{M,L}}$ and $\iota_{M,R}\overline{\iota_{M,R}}$ are contained 
in $\mu\bar{\mu}$, we see that they are equivalent to $\id_M\oplus \rho$. 
There is no intermediate subfactor other than $P,Q,L,R$. 
Indeed, if $S$ is an intermediate subfactor, the endomorphism $\iota_{M,S}\overline{\iota_{M,S}}$ 
is contained in $\mu\bar{\mu}$. 
Computation of the indices $[M:S]$ and $[S:N]$ shows that all the possibilities of  
$\iota_{M,S}\overline{\iota_{M,S}}$ are exhausted by $P,Q,L,R$. 
Thus Lemma \ref{3lemma uniqueness} applied to the dual quadrilateral implies the claim.  
\end{remark}

\begin{proposition} Let $\fQ=\quadri$ be a quadrilateral in Class II or Class III 
such that the principal graph of $P\subset M$ is $A_5$. 
Then $\fQ$ is in Class III and the principal graph of $N\subset P$ is $E_7^{(1)}$. 
When $M$ is the hyperfinite II$_1$ factor, such a quadrilateral exists and is unique 
up to conjugacy. 
\end{proposition}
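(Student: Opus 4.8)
The plan is to determine the indices, identify $N\subset P$ as the $E_7^{(1)}$ subfactor, decide between Class II and Class III, and then settle existence and uniqueness over the hyperfinite factor.

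First I would read off the indices. The principal graph $A_5$ of $P\subset M$ gives $[M:P]=4\cos^2(\pi/6)=3$; since $\fQ$ is cocommuting and $(2,3)$-supertransitive, Theorem~\ref{4theorem (2,2)-supertransitive},(2) yields $[P:N]=[M:P]+1=4$. Hence $N\subset P$ is $3$-supertransitive of index exactly $4$, so $d(\xi)=[P:N]-1=3$, while $d(\eta)=[M:P]-1=2$, and Lemma~\ref{4lemma xi-eta},(2) tells us that $\eta$ is contained in $\xi^2$.

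Next I would identify the principal graph. Subfactors of index $4$ have principal graph an affine Dynkin diagram (or one of the infinite-depth graphs), and I would single out the $3$-supertransitive ones whose even system, the fusion category in which $\xi$ and $\eta$ live, contains an irreducible object of dimension $2$, as forced by $\eta\subset\xi^2$. The graphs $\tilde A_n$, $D_n^{(1)}$ and $E_6^{(1)}$ branch at depth $\le 3$ and so are not $3$-supertransitive; the fully supertransitive $A_\infty$ graph and $E_8^{(1)}$ are $3$-supertransitive, but their even systems are generated by a $3$-dimensional object and contain no $2$-dimensional irreducible (the even dimensions being $1,3,5,\dots$ and $1,3,3,4,5$, respectively). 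This leaves only $E_7^{(1)}$, whose even system is $\mathrm{Rep}(\fS_4)$ with object dimensions $1,1,2,3,3$: here $\xi$ is a $3$-dimensional object, $\eta$ is the unique $2$-dimensional object, and $\xi^2=\id_P\oplus\xi\oplus\eta\oplus\mu$ with $\mu$ the remaining $3$-dimensional object and $\eta$ of multiplicity one, consistent with $3$-supertransitivity. Thus the principal graph of $N\subset P$ is $E_7^{(1)}$.

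With the even system known, I would exclude Class II by a dimension count. In $\mathrm{Rep}(\fS_4)$ one computes $[\eta\xi]=[\xi]\oplus[\mu]$, a sum of two distinct irreducibles, so $\dim(\eta\xi,\eta\xi)=2$ and Lemma~\ref{5lemma dim} gives $\dim(\kappa\iota\biota\bkappa,\kappa\iota\biota\bkappa)=5+2=7$. If $\fQ$ were in Class II (cocommuting with $\Gal(M/N)$ trivial), then since $P\subset M$ is $3$-supertransitive with principal graph $A_5\neq A_4$, Corollary~\ref{4corollary etaiota=iota'},(5) would force this dimension to be at least $8$, a contradiction. Hence $\Gal(M/N)$ is nontrivial, and by Theorem~\ref{4theorem (2,2)-supertransitive},(2) it is isomorphic to $\Z/2\Z$; that is, $\fQ$ is in Class III.

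Finally I would handle existence and uniqueness in the hyperfinite case. For uniqueness, note that the only sector $\sigma\subset\xi^2$ with $[\sigma\iota]=[\iota']$ must have dimension $d(\iota')/d(\iota)=2$ and hence equals $\eta$, the unique $2$-dimensional object; since the hyperfinite $E_7^{(1)}$ subfactor is unique, there is an isomorphism $\Phi\colon P\to\tP$ with $\Phi(N)=\tN$, and Theorem~\ref{4theorem uniqueness2},(1) then gives conjugacy. For existence I would realize $\mathrm{Rep}(\fS_4)$ in $\End(P)$ as the even system of the $E_7^{(1)}$ subfactor $N\subset P$, build $M\supset P$ from the $Q$-system for $\id_P\oplus\eta$ carried by the $\mathrm{Rep}(\fS_3)$-subcategory generated by $\eta$ (so that $P\subset M$ is the $A_5$ subfactor), and obtain $Q=\theta(P)$ from an order-two automorphism $\theta$ realizing the symmetry exchanging $\xi$ and $\mu$, verifying as in Theorem~\ref{5theorem I1} that $P\cap Q=N$, that $P$ and $Q$ generate $M$, and that the quadrilateral is noncommuting; this quadrilateral may also be realized through the $\alpha$-induction construction of Section~7. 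The main obstacle is exactly this existence step: producing the automorphism $\theta$ with $\theta(P)=Q\neq P$ and checking that the configuration closes up into a genuine quadrilateral, which relies on the $\Z/2\Z$ symmetry of the $E_7^{(1)}$ even system rather than on a purely group-theoretic model; the index bookkeeping and the Class II exclusion are comparatively routine.
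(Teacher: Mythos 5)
Your index bookkeeping, the identification of $E_7^{(1)}$, the exclusion of Class II, and the uniqueness argument are all essentially correct, but the first two follow a genuinely different route from the paper's. The paper never invokes the classification of norm-two graphs: it uses the $A_5$ fusion of $P\subset M$ directly -- there is an outer period-two automorphism $\alpha$ of $P$ with $[\kappa\eta]=[\kappa]\oplus[\kappa\alpha]$, and combining this with $[\xi]=[\bkappa\kappa_Q\pi]$ from Theorem \ref{4theorem (2,2)-supertransitive},(2) gives $[\eta\xi]=[\xi]\oplus[\alpha\xi]$, so Corollary \ref{4corollary etaiota=iota'},(1) writes down the dual principal graph of $N\subset P$ explicitly; Class III then follows from Corollary \ref{4corollary etaiota=iota'},(4), since the only dimension-two sector in $\xi^2$ is $\eta$, forcing $[\bpi\eta_Q\pi]=[\eta]$ and $\Gal(M/N)$ nontrivial. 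Your alternative -- eliminate the other index-$4$ graphs using $3$-supertransitivity and the presence of the dimension-two object $\eta$ in the even system (Lemma \ref{4lemma xi-eta},(2)), then rule out Class II because Lemma \ref{5lemma dim} gives $\dim(\kappa\iota\biota\bkappa,\kappa\iota\biota\bkappa)=5+2=7$ while Corollary \ref{4corollary etaiota=iota'},(5) would force $\geq 8$ -- is sound, and avoids the explicit $A_5$ computation at the cost of appealing to the list of norm-$2$ bipartite graphs and their even-vertex dimensions. (One slip in wording: branching at depth exactly $3$ is compatible with $3$-supertransitivity -- $E_7^{(1)}$ itself branches there -- so what kills $A_n^{(1)}$, $D_n^{(1)}$, $E_6^{(1)}$ is branching at depth $\leq 2$. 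Also, your fusion computations only need the Grothendieck ring of the even system, which is determined by the graph; you should not assert a tensor equivalence with $\mathrm{Rep}(\fS_4)$, which is not established.) Your uniqueness step coincides with the paper's: Theorem \ref{4theorem uniqueness2},(1), with $\eta$ the unique sector $\sigma$ in $\xi^2$ satisfying $[\sigma\iota]=[\iota']$, plus uniqueness of the hyperfinite $E_7^{(1)}$ subfactor.

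The genuine gap is existence, which you flag but do not close. The paper's existence proof is one line: apply Example \ref{2example group} (via Lemma \ref{2lemma group}) with $G=\fS_{\{1,2,3,4\}}$, $H=\fS_{\{1,2,3\}}$, $K=\fS_{\{2,3,4\}}$, i.e.\ $M=R^{H\cap K}$, $P=R^H$, $Q=R^K$, $N=R^G$ for an outer action of $\fS_4$ on the hyperfinite factor $R$; then $N\subset P$ is the group--subgroup subfactor for $\fS_3\subset\fS_4$, whose principal graph is $E_7^{(1)}$, while $P\subset M$ is $2$-supertransitive of index $3$, hence $A_5$. Your plan instead requires an automorphism $\theta$ of $M$, trivial on $N$, with $\theta(P)=Q\neq P$, manufactured from a symmetry of the even system -- exactly the step you admit you cannot carry out. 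Note that the mechanism of Theorem \ref{5theorem I1} is not available here: that construction makes $Q$ inner conjugate to $P$, which is what forces Class I quadrilaterals to be noncocommuting, whereas in the cocommuting setting one needs the Galois-type construction of Theorem \ref{5theorem IV}, and making that rigorous abstractly is Appendix-level work (as it is for the Haagerup case). Moreover your fallback via Section 7 is false: at level $k=4$ the $\alpha$-induction/GHJ pairs either degenerate ($G=A_5$ gives $P=Q$), or have elementary graphs $D_4$ and empty angle set by Theorem \ref{7theorem angle} (there $J=\{0,4\}$, so the quadrilateral is commuting); in any case the only nontrivial angle available at level $4$ is $\cos^{-1}(1/\sqrt{3})$, whereas this quadrilateral has $\Theta(P,Q)=\cos^{-1}(1/3)$. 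So the existence half of the proposition really does require the group-action model (or an equally concrete construction), not the sketch given.
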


\begin{proof} 
Since the principal graph of $P\subset M$ is $A_5$, 
there exists an outer automorphism of period two $\alpha\in \Aut(P)$ such that 
$[\kappa\eta]=[\kappa]\oplus [\kappa\alpha]$. 
Since $[\xi]=[\bkappa\kappa_Q\pi]$, we get 
$$[\eta\xi]=[\eta\bkappa\kappa_Q\pi]=[\bkappa\kappa_Q\pi]\oplus [\alpha\bkappa\kappa_Q\pi]
=[\xi]\oplus [\alpha\xi].$$
Therefore Corollary \ref{4corollary etaiota=iota'},(1) shows that the dual principal 
graph of $N\subset P$ is 
$$\hpic{V3}{1.0in}$$
which is $E_7^{(1)}$. 
Existence of such a quadrilateral follows from Example \ref{2example group} with 
$G=\fS_{\{1,2,3,4\}}$ and uniqueness follows from Theorem \ref{4theorem uniqueness2}. 
Corollary \ref{4corollary etaiota=iota'},(4) shows that $\fQ$ is in Class III.
\end{proof}

\begin{proposition} Let $\fQ=\quadri$ be a quadrilateral in Class II or Class III 
such that the principal graph of $P\subset M$ is $E_6$. 
Then $\fQ$ is in Class II and the dual principal graph of $N\subset P$ is 
as in Theorem \ref{5theorem II2}
and $[P:N]=3+\sqrt{3}$. 
There exists a quadrilateral of the hyperfinite II$_1$ factors satisfying the above property, 
which arises from the GHJ pair \cite{GJ} (see Example \ref{7example E6}) for 
$E_6$ with $*$ given by the vertex with the smallest entry of 
the Perron-Frobenius eigenvector.  
\end{proposition}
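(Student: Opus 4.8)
The plan is to fix the two indices, extract the fusion rules that the $E_6$ structure of $P\subset M$ forces, use them to eliminate Class III, read off the dual principal graph of $N\subset P$, and finally build the example by $\alpha$-induction. First I would record the indices: since the principal graph of $P\subset M$ is $E_6$, $[M:P]=4\cos^2(\pi/12)=2+\sqrt3$. As $\fQ$ is $(2,3)$-supertransitive, $N\subset P$ is $3$-supertransitive, hence so is the downward basic construction $\widehat{N\subset P}$, and Lemma \ref{3lemma vanishing} makes $c(\fQ)\in H^2(\widehat{N\subset P})$ trivial. Theorem \ref{4theorem (2,2)-supertransitive} applied to the cocommuting quadrilateral then gives $[M:P]=[P:N]-1$, so $[P:N]=3+\sqrt3$ and $\Theta(P,Q)=\cos^{-1}(1/(2+\sqrt3))$.

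The technical core is the fusion analysis of $P\subset M$. From $[\kappa\bkappa]=[\id_M]\oplus[\heta]$ with $d(\heta)=1+\sqrt3$ and the identity $[\kappa\eta\bkappa]=[\heta^2]\oplus[\heta]$, a dimension count forces $[\heta^2]=[\id_M]\oplus2[\heta]\oplus[\hzeta]$, where $\hzeta$ is an order-two automorphism of $M$ (the far endpoint of the long arm of $E_6$) with $[\hzeta\heta]=[\heta\hzeta]=[\heta]$. Writing $[\kappa\eta]=[\kappa]\oplus[\kappa']\oplus[\kappa'']$ for the three irreducible $M$-$P$ bimodules, with $d(\kappa')=\sqrt{2+\sqrt3}$ and $d(\kappa'')=\sqrt2$, I would check that $\hzeta$ interchanges $\kappa,\kappa'$ and fixes $\kappa''$, whence $[\kappa''\bkappa'']=[\id_M]\oplus[\hzeta]$, $[\kappa'\bkappa']=[\id_M]\oplus[\heta]$ and $[\kappa'\bkappa]=[\heta]\oplus[\hzeta]$. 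Combining $[\eta\bkappa]=[\bkappa]\oplus[\bkappa']\oplus[\bkappa'']$ with $[\xi]=[\bkappa\kappa_Q\pi]$ from Theorem \ref{4theorem (2,2)-supertransitive},$(2)$, I then obtain $[\eta\xi]=[\xi]\oplus[\bkappa'\kappa_Q\pi]\oplus[\bkappa''\kappa_Q\pi]$, three mutually inequivalent irreducibles, so that $\dim(\eta\xi,\eta\xi)=3$. Getting every one of these $E_6$ even-part fusion rules correct, and verifying the inequivalence of the three summands, is the step I expect to be the main obstacle.

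The class is then decided by a single dimension computed two ways. On one hand Lemma \ref{5lemma dim} gives $\dim(\kappa\iota\biota\bkappa,\kappa\iota\biota\bkappa)=5+\dim(\eta\xi,\eta\xi)=8$. On the other hand, if $\fQ$ were in Class III with $\Gal(M/N)=\{\id_M,\theta\}$, then since $\heta\theta\heta$ properly contains $\theta\heta\theta$ it is reducible, while $[\heta^2]=[\id_M]\oplus2[\heta]\oplus[\hzeta]$ forces $\dim(\heta\theta\heta,\heta\theta\heta)=1+\dim(\hzeta,\theta\hzeta\theta)\le2$; hence $\heta\theta\heta$ is the sum of exactly two irreducibles, and the description of $[\kappa\iota\biota\bkappa]$ in Corollary \ref{4corollary etaiota=iota'},$(3)$ resolves into seven distinct multiplicity-one sectors, giving $\dim(\kappa\iota\biota\bkappa,\kappa\iota\biota\bkappa)=7$. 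This contradiction eliminates Class III, so $\Gal(M/N)$ is trivial and $\fQ$ lies in Class II. Feeding the decomposition of $[\eta\xi]$ into $[\iota'\biota]=[\eta]\oplus[\eta\xi]$ (Corollary \ref{4corollary etaiota=iota'},$(1)$) exhibits the $P$-$P$ bimodules $\id_P,\xi,\eta,\bkappa'\kappa_Q\pi,\bkappa''\kappa_Q\pi$ together with their adjacencies, and a comparison of statistical dimensions identifies the dual principal graph of $N\subset P$ with the graph of Theorem \ref{5theorem II2},$(2)$.

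For existence I would run the $\alpha$-induction construction of the GHJ pair of \cite{GJ} (see Example \ref{7example E6}) for the $E_6$ graph with $*$ the vertex of smallest Perron--Frobenius weight, and verify directly that the resulting irreducible quadrilateral is noncommuting, cocommuting and $(2,3)$-supertransitive with $P\subset M$ of type $E_6$; by the first part of the proof it then automatically falls into Class II. No uniqueness statement is claimed in this case.
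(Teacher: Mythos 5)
Your proposal is correct and takes essentially the same route as the paper's proof: both fix the indices via Theorem \ref{4theorem (2,2)-supertransitive}, use the $E_6$ fusion rules (which the paper simply cites from \cite{I1} rather than re-deriving) to show $[\eta\xi]$ is a sum of three distinct irreducibles, apply Lemma \ref{5lemma dim} to get $\dim(\kappa\iota\biota\bkappa,\kappa\iota\biota\bkappa)=8$, kill Class III by the bound $\dim(\heta\theta\heta,\heta\theta\heta)=1+\dim(\theta\hzeta\theta,\hzeta)\leq 2$ against Corollary \ref{4corollary etaiota=iota'},(3), read off the dual principal graph from $[\iota'\biota]=[\eta]\oplus[\eta\xi]$, and delegate existence to the GHJ/$\alpha$-induction construction of Example \ref{7example E6}. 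The only deviations are cosmetic: your $\hzeta,\kappa',\kappa''$ are the paper's $\beta,\kappa\alpha,\kappa'$, and you phrase the contradiction as $7\neq 8$ where the paper phrases it as ``$\dim(\heta\theta\heta,\heta\theta\heta)$ would have to be $3$ but is at most $2$.''
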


\begin{proof} From Theorem \ref{4theorem (2,2)-supertransitive}, we see 
$[P:N]=[M:P]+1=3+\sqrt{3}$. 
We recall the fusion rule for the $E_6$ subfactors \cite[p.968]{I1}: 
$$[\eta^2]=[\id_P]\oplus [\alpha]\oplus 2[\eta],\quad [\alpha\eta]=[\eta\alpha]=[\eta],
\quad [\alpha^2]=[\id_P],$$  
$$[\kappa\eta]=[\kappa]\oplus [\kappa']\oplus [\kappa\alpha],\quad [\bkappa\kappa']=[\eta],$$
$$[\heta^2]=[\id_M]\oplus [\beta]\oplus 2[\heta],\quad 
[\heta\kappa]=[\kappa]\oplus [\kappa']\oplus [\beta\kappa],$$
with $[\beta\kappa]=[\kappa\alpha]$, 
$d(\kappa)=\sqrt{2+\sqrt{3}}$, $d(\kappa')=\sqrt{2}$, $d(\alpha)=d(\beta)=1$, 
$d(\eta)=d(\heta)=1+\sqrt{3}$, $d(\xi)=2+\sqrt{3}$.

From $[\xi]=[\bkappa\kappa_Q\pi]$, we get 
$[\eta\xi]=[\eta\bkappa\kappa_Q\pi]=[\xi]\oplus[\bkappa'\kappa_Q\pi]\oplus [\alpha\xi].$ 
Since 
$$\dim(\bkappa'\kappa_Q\pi,\bkappa'\kappa_Q\pi)=\dim(\kappa'\bkappa',\kappa_Q\bkappa_Q)
=\dim(\id_M\oplus \beta,\id_M\oplus \heta_Q)=1,$$
the endomorphism $\bkappa'\kappa_Q\pi$ is irreducible, and 
Lemma \ref{5lemma dim} implies  
$\dim(\kappa\iota\biota\bkappa,\kappa\iota\biota\bkappa)=8.$

Suppose that $\fQ$ is in Class III. 
Then thanks to Corollary \ref{4corollary etaiota=iota'},(3), we have 
$$[\kappa\iota\biota\bkappa]=[\id_M]\oplus [\theta]\oplus [\heta]\oplus[\theta\heta]\oplus 
[\heta\theta]\oplus [\heta\theta\heta],$$
and $\heta\theta\heta$ does not contain any other irreducible components above.  
Therefore we must have $\dim(\heta\theta\heta,\heta\theta\heta)=3$. 
However, 
\begin{eqnarray*}
\dim(\heta\theta\heta,\heta\theta\heta)&=&
\dim(\theta\heta^2\theta,\heta^2)=
\dim(\id_M\oplus \theta\beta\theta\oplus 2\theta\heta\theta, 
\id_M\oplus\beta\oplus 2\heta)\\
&=&1+\dim(\theta\beta\theta,\beta),
\end{eqnarray*}
which is a contradiction. 
Therefore $\fQ$ is in Class II. 
By Corollary \ref{4corollary etaiota=iota'},(4), we get 
$[\bpi\eta_Q\pi]=[\bkappa'\kappa_Q\pi]$ and the principal graph of $N\subset P$ 
is as stated above.  
\end{proof}

We exclude $A_7$, $D_6$, and $E_8$ for the principal graph of $P\subset M$. 

\begin{lemma}
There exists no quadrilateral $\fQ=\quadri$ in Class II or Class III  such that the principal 
graph of $P\subset M$ is $A_7$. 
\end{lemma}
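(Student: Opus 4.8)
The plan is to suppose such a $\fQ$ exists and extract a contradiction purely from the $A_7$ fusion rules, by computing $\dim(\kappa\iota\biota\bkappa,\kappa\iota\biota\bkappa)$ in two ways and comparing. First I would fix the numerics: since $\fQ$ is cocommuting, Theorem \ref{4theorem (2,2)-supertransitive} gives $[M:P]=4\cos^2(\pi/8)=2+\sqrt2$ and $[P:N]=3+\sqrt2$, and since $P\subset M$ has principal graph $A_7$ its $M$-$M$ fusion system is the even part of $SU(2)$ at level $6$. Concretely I would record $[\heta^2]=[\id_M]\oplus[\heta]\oplus[\zeta]$, $[\zeta^2]=[\id_M]\oplus[\heta]\oplus[\zeta]$, $[\heta\zeta]=[\heta]\oplus[\zeta]\oplus[\omega]$ and $[\kappa\eta]=[\kappa]\oplus[\kappa']$, where $\zeta$ (the spin-$2$ bimodule) and $\omega$ (the spin-$3$ bimodule, invertible of order two) have dimensions $1+\sqrt2$ and $1$, and $[\kappa'\bkappa']=[\id_M]\oplus[\heta]\oplus[\zeta]\oplus[\omega]$. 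Because $A_7$ is supertransitive, $P\subset M$ is $3$-supertransitive, so Lemma \ref{5lemma dim} and all of Corollary \ref{4corollary etaiota=iota'} are available.

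Evaluating the refined formula of Lemma \ref{5lemma dim} with these rules, and using $[\kappa_Q\bkappa_Q]=[\id_M]\oplus[\heta_Q]$ together with $[\heta_Q]\neq[\heta]$ (cocommutativity), I expect
$$\dim(\kappa\iota\biota\bkappa,\kappa\iota\biota\bkappa)=6+\dim(\kappa'\bkappa',\kappa_Q\bkappa_Q)=7+\delta,$$
where $\delta=1$ if $[\heta_Q]=[\zeta]$ and $\delta=0$ otherwise (here $[\heta_Q]\neq[\omega]$ by dimension). This one computation drives both cases.

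For Class II I would quote Corollary \ref{4corollary etaiota=iota'},(5): as $A_7\neq A_4$ it forces $\dim(\kappa\iota\biota\bkappa,\kappa\iota\biota\bkappa)\geq 8$, hence $[\heta_Q]=[\zeta]$. I would then contradict $3$-supertransitivity of $N\subset P$. From $[\xi]=[\bkappa\kappa_Q\pi]=[\bpi\bkappa_Q\kappa]$ one gets $[\xi^2]=[\id_P]\oplus[\eta]\oplus[\bkappa\heta_Q\kappa]$, so $\dim(\xi,\xi^2)=\dim(\kappa\xi\bkappa,\heta_Q)$ with $\kappa\xi\bkappa=(\id_M\oplus\heta)\kappa_Q\pi\bkappa$. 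Since $\kappa_Q\pi\bkappa$ is irreducible of dimension $2+\sqrt2$ (Corollary \ref{4corollary etaiota=iota'},(5)) while $[\heta\heta_Q]=[\heta\zeta]=[\heta]\oplus[\zeta]\oplus[\omega]$ has no summand of that dimension, this multiplicity is $0$, contradicting $\dim(\xi,\xi^2)=1$.

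For Class III, where $\Gal(M/N)=\{\id_M,\theta\}$ and $[\heta_Q]=[\theta\heta\theta]$ (from $[\theta\kappa]=[\kappa_Q\pi]$), I would split on $\delta$. If $[\heta_Q]=[\zeta]$ then $[\heta^2]$ contains $[\zeta]=[\theta\heta\theta]$, contradicting the assertion in Corollary \ref{4corollary etaiota=iota'},(3) that $\heta^2$ does not contain $\theta\heta\theta$. If instead $[\heta_Q]\notin\{[\heta],[\zeta]\}$, I would rewrite $[\heta\theta\heta]=[\heta\heta_Q\theta]$ and compute $\dim(\heta\theta\heta,\heta\theta\heta)=\dim(\heta\heta_Q,\heta\heta_Q)=1$: the only terms that could survive would force the spin-$2$ object of the $Q$-system to equal $\heta$ or $\zeta$, and uniqueness of the decompositions of $[\heta^2]$ and $[\zeta^2]$ then gives $[\heta_Q]=[\zeta]$ or $[\heta_Q]=[\heta]$, both excluded. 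Feeding $\dim(\heta\theta\heta,\heta\theta\heta)=1$ into the decomposition of Corollary \ref{4corollary etaiota=iota'},(3) yields $\dim(\kappa\iota\biota\bkappa,\kappa\iota\biota\bkappa)=6$, whereas Lemma \ref{5lemma dim} forces it to be at least $7$. The hardest part is precisely this Class III step: the $3$-supertransitivity test that kills Class II is vacuous here because $\kappa_Q\pi\bkappa$ splits as $\theta\oplus\theta\heta$, so one must instead locate the contradiction in the interaction between the Galois flip $\theta$ and the spin-$2$ object $\zeta$, that is, in comparing the two $A_7$ subsystems attached to $P$ and to $Q$.
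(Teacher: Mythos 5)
Your proof is correct: each dimension count checks out against the $SU(2)_6$ fusion rules, and all three of your contradictions (Class II; Class III with $\delta=1$; Class III with $\delta=0$) are valid, with the auxiliary facts you need (irreducibility of $\kappa_Q\pi\bkappa$ in Class II, $[\heta_Q]=[\theta\heta\theta]$ in Class III) correctly available from Corollary \ref{4corollary etaiota=iota'}. It also genuinely differs from the paper's proof in one of its two halves, so a comparison is worth recording. The paper splits on whether $\bkappa'\kappa_Q\pi$ is irreducible, which by Lemma \ref{5lemma dim} is exactly your dichotomy $\delta=0$ versus $\delta=1$. For $\delta=0$ the two arguments essentially coincide: the paper likewise rules out Class II via Corollary \ref{4corollary etaiota=iota'},(5) and then inspects $\dim(\heta\theta\heta,\heta\theta\heta)=\dim(\theta\heta^2\theta,\heta^2)$; it shows that the value $2$ required by Corollary \ref{4corollary etaiota=iota'},(3) would force one of $[\heta]=[\theta\zeta\theta]$, $[\heta_Q]=[\zeta]$, $[\zeta]=[\theta\zeta\theta]$, and eliminates each by the same conjugate-and-compare move you apply to $[\zeta]=[\theta\zeta\theta]$, whereas you make all cross terms vanish and land on the value $1$; this is the same computation organized differently. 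The genuine divergence is at $\delta=1$, where the paper never exploits the identification $[\heta_Q]=[\zeta]$: it argues on the $P$-side that every irreducible summand of $\bkappa'\kappa_Q\pi$ must contain $\iota'$ after composing with $\iota$, while $[\bkappa'\kappa_Q\pi\iota]=[\iota']\oplus[\alpha\iota']$ for the order-two automorphism $\alpha$ of $P$, so $[\alpha\iota']=[\iota']$; hence $\alpha$ occurs in $\iota'\biota'$, i.e., among the $P$-$P$ sectors of the dual principal graph of $N\subset P$, which are exhausted by $[\id_P],[\xi],[\eta]$ and the two summands of $\bkappa'\kappa_Q\pi$ --- no slot for a nontrivial automorphism --- and this single contradiction disposes of Classes II and III simultaneously. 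Your replacement is more elementary: Class III with $\delta=1$ dies instantly because $[\theta\heta\theta]=[\zeta]\subset[\heta^2]$ violates Corollary \ref{4corollary etaiota=iota'},(3), and Class II dies because $1=\dim(\xi,\xi^2)=\dim(\kappa_Q\pi\bkappa,\heta_Q)+\dim(\kappa_Q\pi\bkappa,\heta\heta_Q)$ while both terms vanish on dimension grounds, $\kappa_Q\pi\bkappa$ being irreducible of dimension $2+\sqrt{2}$ and $[\heta_Q]=[\zeta]$ together with every summand of $[\heta\zeta]=[\heta]\oplus[\zeta]\oplus[\omega]$ having dimension $1+\sqrt{2}$ or $1$. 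What the paper's dual-graph argument buys is uniformity: one argument, both classes at once, no case split. What yours buys is transparency: everything reduces to dimension counting in the fusion ring plus direct quotation of Corollary \ref{4corollary etaiota=iota'}, and it makes visible that the real obstruction is that the Galois flip $\theta$ would have to carry $\heta$ onto the spin-two object $\zeta$, which the $A_7$ fusion rules cannot accommodate.
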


\begin{proof} Suppose there exists such a quadrilateral $\fQ$. 
Since $P\subset M$ is the $A_7$ subfactor, there exist outer automorphisms $\alpha$ 
of $P$ and $\beta$ of $M$ and such that 
$$[\heta^2]=[\id_M]\oplus [\heta]\oplus [\beta\heta],$$
$$[\bkappa'\kappa]=[\eta]\oplus [\alpha\eta].$$
Lemma \ref{5lemma dim} shows  $[\eta\xi]=[\xi]\oplus [\bkappa'\kappa_Q\pi].$ 

Suppose that $\bkappa'\kappa_Q\pi$ is irreducible. 
Then Lemma \ref{5lemma dim} shows that 
$\dim(\kappa\iota\bkappa\biota,\kappa\iota\bkappa\biota)=7$. 
Since $\kappa'\bkappa$ is reducible, 
Corollary \ref{4corollary etaiota=iota'},(3),(5) shows that the quadrilateral 
$\fQ$ should be in Class III and $\heta\theta\heta$ is decomposed into two mutually 
inequivalent irreducibles, which implies 
$$2=\dim(\heta\theta\heta,\heta\theta\heta)=\dim(\theta\heta^2\theta,\heta^2)
=(\id_M\oplus\theta\heta\theta\oplus \theta\beta\heta\theta,
\id_M\oplus \heta\oplus \beta\heta).$$
Thus either $[\heta]=[\theta\beta\heta\theta]$, $[\beta\heta]=[\theta\heta\theta]$, 
or $[\beta\heta]=[\theta\beta\heta\theta]$ should hold. 
However, if $[\heta]=[\theta\beta\heta\theta]$, we would get 
$[\heta^2]=[\overline{\heta}\heta]=[\theta\heta^2\theta],$
which is a contradiction. 
The same reasoning shows that the other two do not occur as well and we conclude that 
$\bkappa'\kappa_Q\pi$ is reducible. 

Suppose that $\bkappa'\kappa_Q\pi$ is reducible. 
Since any irreducible component $\zeta$ of $\bkappa'\kappa_Q\pi$ is contained in 
$\iota'\biota$, Frobenius reciprocity shows that $\zeta\iota$ contains $\iota'$. 
On the other hand, 
$$[\bkappa'\kappa_Q\pi\iota]=[\bkappa'\kappa_Q\iota_Q]=[\bkappa'\kappa\iota]
=[\eta\iota]\oplus[\alpha\eta\iota]=[\iota']\oplus[\alpha\iota'].$$
This implies $[\iota']=[\alpha\iota']$ and that $\bkappa'\kappa_Q\pi$ is decomposed 
into two irreducibles, say, $\zeta_1$ and $\zeta_2$ such that 
$[\zeta_1\iota]=[\zeta_2\iota]=[\iota']$. 
This shows $[\iota'\biota]=[\xi]\oplus [\eta]\oplus [\zeta_1]\oplus[\zeta_2]$ 
and the set of $P-P$ sectors appearing the dual principal graph of $N\subset P$ is 
$\{[\id_P],[\xi],[\eta],[\zeta_1],[\zeta_2]\}$. 
However, since $[\alpha\iota']=[\iota']$, the automorphism $\alpha$ appears in 
$\iota'\biota'$, which is a contradiction. 
Therefore the statement is shown.   
\end{proof}

\begin{lemma}
There exists no quadrilateral $\fQ=\quadri$ in Class II or Class III such that 
the principal graph of $P\subset M$ is $D_6$. 
\end{lemma}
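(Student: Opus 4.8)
The plan is to exploit the fact that the $D_6$ graph forces $3$-supertransitivity of $P\subset M$, so that $\fQ$ becomes $(3,3)$-supertransitive and both Lemma~\ref{5lemma dim} and the full strength of Corollary~\ref{4corollary etaiota=iota'} apply. The contradiction will then come from computing $\dim(\kappa\iota\biota\bkappa,\kappa\iota\biota\bkappa)$ explicitly from the $D_6$ fusion rules and comparing it with the lower bounds forced by the class. Concretely, writing $[\kappa\bkappa]=[\id_M]\oplus[\heta]$ and $[\heta^2]=[\id_M]\oplus[\heta]\oplus[\heta_1]\oplus[\heta_2]$ with $d(\heta)=(3+\sqrt5)/2$ and $d(\heta_1)=d(\heta_2)=(1+\sqrt5)/2$, the multiplicity of $\heta$ in $\heta^2$ is one, so $P\subset M$ is $3$-supertransitive, $[\heta\kappa]=[\kappa]\oplus[\kappa']$, and the only $M$-$P$ sectors are $\kappa$ and $\kappa'$.

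Using $[\heta_i\kappa]=[\kappa']$, $[\heta\kappa']=[\kappa]\oplus2[\kappa']$, $[\heta_i\kappa']=[\kappa]\oplus[\kappa']$ together with Frobenius reciprocity, I would compute the key identity $[\kappa'\bkappa']=[\id_M]\oplus2[\heta]\oplus[\heta_1]\oplus[\heta_2]$. By Lemma~\ref{5lemma dim}, $\dim(\kappa\iota\biota\bkappa,\kappa\iota\biota\bkappa)=6+\dim(\kappa'\bkappa',\kappa_Q\bkappa_Q)$, where $[\kappa_Q\bkappa_Q]=[\id_M]\oplus[\heta_Q]$ with $d(\heta_Q)=(3+\sqrt5)/2$. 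Since $[\heta_Q]\neq[\heta]$ (a distinct sector of equal dimension) while $d(\heta_Q)\neq d(\heta_1)=d(\heta_2)$, the sector $\heta_Q$ is none of the constituents $\id_M,\heta,\heta_1,\heta_2$ of $\kappa'\bkappa'$; hence $\dim(\kappa'\bkappa',\kappa_Q\bkappa_Q)=1$ and $\dim(\kappa\iota\biota\bkappa,\kappa\iota\biota\bkappa)=7$ in both classes. In Class~II this already settles the matter: $P\subset M$ is $3$-supertransitive and is not $A_4$, so Corollary~\ref{4corollary etaiota=iota'},(5) forces $\dim(\kappa\iota\biota\bkappa,\kappa\iota\biota\bkappa)\geq8$, a contradiction.

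The main obstacle is Class~III, where Corollary~\ref{4corollary etaiota=iota'},(3) only yields the bound $7$, attained here with equality, so the dimension count alone is consistent. I would feed the equality back into the decomposition $[\kappa\iota\biota\bkappa]=[\id_M]\oplus[\theta]\oplus[\heta]\oplus[\theta\heta]\oplus[\heta\theta]\oplus[\heta\theta\heta]$: since the first five sectors are distinct and $\heta\theta\heta$ shares no constituent with them, $\dim(\kappa\iota\biota\bkappa,\kappa\iota\biota\bkappa)=5+\dim(\heta\theta\heta,\heta\theta\heta)$, whence $\dim(\heta\theta\heta,\heta\theta\heta)=2$ and $[\heta\theta\heta]=[\theta\heta\theta]\oplus[\omega]$ for a single irreducible $\omega$ with $d(\omega)=2+\sqrt5$. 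The plan is then to compute $\dim(\heta\theta\heta,\heta\theta\heta)$ a second way, pulling one $\heta$ across by Frobenius reciprocity and expanding with $[\heta^2]=[\id_M]\oplus[\heta]\oplus[\heta_1]\oplus[\heta_2]$; this rewrites $\dim(\heta\theta\heta,\heta\theta\heta)-1$ as the number of matches between the fork sectors $\{\heta_1,\heta_2\}$ and their images $\{\theta\heta_1\theta,\theta\heta_2\theta\}$ under the order-two automorphism $\theta$. Because $\theta$ interchanges the two $D_6$ subfactors $P$ and $Q$, the match set is $\theta$-invariant, so a match count of $2$ would give $\dim(\heta\theta\heta,\heta\theta\heta)=3$ (contradicting the value $2$) and a count of $0$ would make $\heta\theta\heta$ irreducible (impossible, since it properly contains $\theta\heta\theta$ of strictly smaller dimension). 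Thus the equality $\dim=7$ forces $\theta$ to fix exactly one fork sector, say $\theta\heta_1\theta=\heta_1$, while moving the other.

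The hardest part, and the step I expect to require the most care, is to rule out this last configuration. My plan is to propagate the relation $\theta\heta_1\theta=\heta_1$ through the $D_6$ identity $[\heta_1\kappa]=[\kappa']$ and the Galois relation $[\theta\kappa]=[\kappa_Q\pi]$ of Theorem~\ref{4theorem (2,2)-supertransitive}, obtaining an identity of $M$-$P$ (resp.\ $M$-$Q$) sectors that cannot hold because $\theta\heta_1$ is tied to the $P$-side depth-$3$ sector $\kappa'$ whereas its $\theta$-transform lives on the $Q$-side; the asymmetry between the $\theta$-fixed fork $\heta_1$ and the $\theta$-moved fork $\heta_2$ is incompatible with the symmetric way in which $\theta$ must exchange $P$ and $Q$. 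Establishing this incompatibility precisely, thereby excluding Class~III as well, is where the real work of the proof lies.
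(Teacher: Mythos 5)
Your dimension count and the Class II exclusion are correct and agree with the paper's own proof: from the $D_6$ fusion rule $[\kappa'\bkappa']=[\id_M]\oplus 2[\heta]\oplus[\heta_1]\oplus[\heta_2]$, cocommutativity (which gives $[\heta]\neq[\heta_Q]$) and $d(\heta_Q)\neq d(\heta_1)=d(\heta_2)$, one gets $\dim(\bkappa'\kappa_Q\pi,\bkappa'\kappa_Q\pi)=1$, hence $\dim(\kappa\iota\biota\bkappa,\kappa\iota\biota\bkappa)=7$ by Lemma \ref{5lemma dim}; this kills Class II via Corollary \ref{4corollary etaiota=iota'},(5), and in Class III it forces $\dim(\heta\theta\heta,\heta\theta\heta)=2$, whence (exactly as in the paper) $\Ad\theta$ fixes precisely one fork sector, say $[\theta\heta_1\theta]=[\heta_1]$ and $[\theta\heta_2\theta]\neq[\heta_1],[\heta_2]$.

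The gap is the final step, which you yourself flag as "where the real work lies": what you offer there is a plan, not an argument, and the heuristic behind it fails as stated. Since $\theta(P)=Q$, the fork sectors of $Q\subset M$ are $\{[\theta\heta_1\theta],[\theta\heta_2\theta]\}=\{[\heta_1],[\theta\heta_2\theta]\}$, so the configuration to be excluded merely says that the $P$-forks and the $Q$-forks share exactly one sector; nothing in the ``symmetric exchange of $P$ and $Q$'' forbids this. Likewise, propagating $[\theta\heta_1\theta]=[\heta_1]$ through $[\heta_1\kappa]=[\kappa']$ and $[\theta\kappa]=[\kappa_Q\pi]$ only produces identities such as $[\heta_1\kappa_Q\pi]=[\theta\kappa']$, from which no contradiction follows. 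The paper's contradiction requires two further ideas absent from your outline: (i) identify the second irreducible constituent $\zeta$ of $\heta\theta\heta$ explicitly, by checking that $\heta\theta\heta_2$ is irreducible and that $\dim(\heta\theta\heta,\heta\theta\heta_2)=\dim(\heta^2,\theta\heta_2\heta\theta)=1$ (here $[\heta_2\heta]=[\heta]\oplus[\heta_1]$ and the fixed-fork relation $[\theta\heta_1\theta]=[\heta_1]$ are used), so that $[\zeta]=[\heta\theta\heta_2]$; and (ii) use $[\theta\kappa\iota]=[\kappa\iota]$, i.e.\ the $\theta$-invariance of the decomposition $[\kappa\iota\biota\bkappa]=[\id_M]\oplus[\theta]\oplus[\heta]\oplus[\theta\heta]\oplus[\heta\theta]\oplus[\theta\heta\theta]\oplus[\zeta]$, to force $[\theta\zeta]=[\zeta]$, which is then refuted by the direct computation $\dim(\heta\theta\heta_2,\theta\heta\theta\heta_2)=\dim(\theta\heta\theta\oplus\zeta,\id_M\oplus\theta\heta_2\theta)=0$. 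Without an argument of this kind, your Class III case remains open, so the proposal does not yet prove the lemma.
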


\begin{proof} 
Suppose that $\fQ$ is such a quadrilateral. 
We first recall the fusion rule for the $D_6$ subfactor: 
$$[\kappa'\bkappa]=[\heta]\oplus [\heta_1]\oplus [\heta_2],\quad 
[\kappa'\bkappa']=[\id_M]\oplus 2[\heta]\oplus [\heta_1]\oplus [\heta_2],$$
$$[\heta^2]=[\id_M]\oplus[\heta]\oplus [\heta_1]\oplus [\heta_2],$$
$$[\heta_1^2]=[\id_M]\oplus [\heta_1],\quad [\heta_2]^2=[\id_M]\oplus [\heta_2],\quad 
[\heta_1\heta_2]=[\heta_2\heta_1]=[\heta],$$
$$[\heta_1\heta]=[\heta\heta_1]=[\heta]\oplus [\heta_2], \quad 
[\heta_2\heta]=[\heta\heta_2]=[\heta]\oplus [\heta_1],$$
$d(\heta)=(3+\sqrt{5})/2$, $d(\heta_1)=d(\heta_2)=(1+\sqrt{5})/2$. 

Lemma \ref{5lemma dim} shows $[\eta\xi]=[\xi]\oplus [\bkappa'\kappa_Q\pi].$
We claim that $\bkappa'\kappa_Q\pi$ is irreducible. 
Indeed, 
$$\dim(\bkappa'\kappa_Q\pi,\bkappa'\kappa_Q\pi)=\dim(\kappa'\bkappa',\kappa_Q\bkappa_Q)
=\dim(\id_M\oplus 2\heta\oplus \heta_1\oplus \heta_2,\id_M\oplus\heta_Q).$$
Since $\fQ$ is cocommuting, we have $[\heta]\neq [\heta_Q]$ and we get the claim. 
Lemma \ref{5lemma dim} and the claim implies that 
$\dim(\kappa\iota\biota\bkappa,\kappa\iota\biota\bkappa)=7$.  
Corollary \ref{4corollary etaiota=iota'},(3),(5) shows that $\fQ$ is in Class III and 
$\dim(\heta\theta\heta,\heta\theta\heta)=2$, and so 
\begin{eqnarray*}
2&=&\dim(\theta\heta^2\theta,\heta^2)
=\dim(\id_M\oplus \theta\heta\theta\oplus \theta\heta_1\theta\oplus \theta\heta_2\theta, 
\id_M\oplus \heta\oplus \heta_1\oplus \heta_2)\\
&=&1+\dim(\theta\heta_1\theta\oplus \theta\heta_2\theta,\heta_1\oplus \heta_2).
\end{eqnarray*}
If $[\theta\heta_1\theta]=[\heta_2]$, we would have $[\theta\heta_2\theta]=[\heta_1]$ too 
as the period of $\theta$ is two, which is a contradiction. 
Thus we may assume $[\theta\heta_1\theta]=[\heta_1]$ and 
$[\theta\heta_2\theta]\neq [\heta_1],[\heta_2]$. 

Since $\heta\theta\heta$ contains $\theta\heta\theta$ with multiplicity one, 
there exists an irreducible $\zeta\in \End_0(M)$ inequivalent to $\theta\heta\theta$ 
such that $[\heta\theta\heta]=[\theta\heta\theta]\oplus [\zeta]$. 
We claim that $[\zeta]=[\heta\theta\heta_2]$ holds. 
Indeed, we have 
$$\dim(\heta\theta\heta_2,\heta\theta\heta_2)=\dim(\heta^2,\theta\heta_2^2\theta)
=\dim(\id_M\oplus \heta\oplus\heta_1\oplus\heta_2,\id_M\oplus \theta\heta_2\theta)=1,$$
which shows that $\heta\theta\heta_2$ is irreducible. 
Since $[\theta\heta\theta]\neq [\heta\theta\heta_2]$ and 
$$\dim (\heta\theta\heta,\heta\theta\heta_2)=\dim(\heta^2,\theta\heta_2\heta\theta)
=\dim(\id_M\oplus\heta\oplus\heta_1\oplus\heta_2,
\theta\heta_1\theta\oplus \theta\heta\theta)=1,$$
we get the claim. 
Since 
$$[\kappa\iota\biota\bkappa]=[\id_M]\oplus [\theta]\oplus[\heta]\oplus [\theta\heta]
\oplus [\heta\theta]\oplus [\theta\heta\theta]\oplus [\zeta],$$
and $[\theta\kappa\iota]=[\kappa\iota]$, we get $[\theta\zeta]=[\zeta]$. 
However, 
$$\dim(\heta\theta\heta_2,\theta\heta\theta\heta_2)
=\dim(\heta\theta\heta,\theta\heta_2^2\theta)
=\dim(\theta\heta\theta\oplus \zeta,\id_M\oplus \theta\heta_2\theta)=0,$$
which is a contradiction. 
Thus the statement is proven. 
\end{proof}

\begin{lemma}
There exists no quadrilateral $\fQ=\quadri$ in Class II or Class III such that the principal graph of 
$P\subset M$ is $E_8$. 
\end{lemma}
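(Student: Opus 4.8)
The plan is to mirror the two preceding lemmas (the $A_7$ and $D_6$ exclusions): record the $E_8$ fusion rules for the sectors of $P\subset M$, compute $\dim(\kappa\iota\biota\bkappa,\kappa\iota\biota\bkappa)$ by means of Lemma \ref{5lemma dim}, and then play the resulting number off against the structural constraints of Corollary \ref{4corollary etaiota=iota'} separately in Class II and Class III.

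First I would set up the $E_8$ fusion rules. Taking $*$ at the end of the long arm, the even $M$-$M$ sectors $\id_M,\heta,\sigma,\tau$ form a chain with
$$[\heta^2]=[\id_M]\oplus[\heta]\oplus[\sigma],\quad [\heta\sigma]=[\heta]\oplus 2[\sigma]\oplus[\tau],\quad [\heta\tau]=[\sigma],$$
so the multiplicity of $\heta$ in $\heta^2$ is one and $P\subset M$ is automatically $3$-supertransitive. With $[\heta\kappa]=[\kappa]\oplus[\kappa']$ a short computation then gives $[\kappa'\bkappa']=[\id_M]\oplus[\heta]\oplus 2[\sigma]\oplus[\tau]$. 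Since $[\kappa_Q\bkappa_Q]=[\id_M]\oplus[\heta_Q]$ with $\heta_Q\neq\heta$ by cocommutativity, and since $d(\sigma),d(\tau)$ both differ from $d(\heta_Q)=d(\heta)$, the sector $\heta_Q$ is not among the components of $\kappa'\bkappa'$; hence $\dim(\kappa'\bkappa',\kappa_Q\bkappa_Q)=1$, the sector $\bkappa'\kappa_Q\pi$ is irreducible, and the stronger form of Lemma \ref{5lemma dim} yields $\dim(\kappa\iota\biota\bkappa,\kappa\iota\biota\bkappa)=7$.

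With the dimension pinned at $7$, Class II is eliminated at once: since $P\subset M$ is $3$-supertransitive with principal graph $E_8\neq A_4$, Corollary \ref{4corollary etaiota=iota'},(5) forces $\dim(\kappa\iota\biota\bkappa,\kappa\iota\biota\bkappa)\geq 8$. So $\fQ$ must lie in Class III, and Corollary \ref{4corollary etaiota=iota'},(3) applies: the value $7$ forces $[\heta\theta\heta]=[\theta\heta\theta]\oplus[\zeta]$ with $\zeta$ irreducible and $[\theta\zeta]=[\zeta]$, the seven components $\id_M,\theta,\heta,\theta\heta,\heta\theta,\theta\heta\theta,\zeta$ being pairwise distinct. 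Using $\theta(P)=Q$, hence $[\heta_Q]=[\theta\heta\theta]$ and $[\heta_Q^2]=[\id_M]\oplus[\heta_Q]\oplus[\theta\sigma\theta]$, the identity $2=\dim(\heta\theta\heta,\heta\theta\heta)=\dim(\id_M\oplus\heta_Q\oplus\theta\sigma\theta,\id_M\oplus\heta\oplus\sigma)$ leaves only one possible match and forces $[\theta\sigma\theta]=[\sigma]$, i.e. $\theta$ commutes with $\sigma$.

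The final and hardest step is to derive a contradiction from this Class III configuration, in the spirit of the endgame of the $D_6$ lemma; there one realizes the extra piece $\zeta$ as $\heta\theta\heta_2$ and finds $\dim(\zeta,\theta\zeta)=0$, against $[\theta\zeta]=[\zeta]$. For $E_8$ I would likewise try to express $\zeta$ as a concrete product of the known sectors with $\theta$ — its dimension $d(\zeta)=d(\heta)^2-d(\heta)=d(\sigma)+1$ shows it is a genuinely new sector, distinct from $\id_M,\heta,\sigma,\tau$ and their $\theta$-conjugates — and then test whether the form forced by the fusion rules is really fixed by $\theta$. I expect this to be the main obstacle: unlike $D_6$, the even part of $E_8$ is a chain with no ``fork'' of sectors for $\theta$ to interchange (the role played there by $\heta_1,\heta_2$), and the complementary inclusion $N\subset P$ has index strictly larger than $4$, so its dual principal graph is not pinned down by the $ADE$ list. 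The decisive computation should track the multiplicity of the $\theta$-fixed sector $\sigma$ inside the triple products $\heta\theta\heta\theta=\heta\heta_Q$ and $\heta\theta\sigma$, exhibiting a component of $\heta\theta\heta$ whose $\theta$-conjugate differs from it and thereby contradicting $[\theta\zeta]=[\zeta]$.
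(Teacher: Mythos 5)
Your argument tracks the paper's own proof almost step for step up to, and including, the relation $[\theta\sigma\theta]=[\sigma]$: the $E_8$ fusion rules, the computation $[\kappa'\bkappa']=[\id_M]\oplus[\heta]\oplus 2[\sigma]\oplus[\tau]$, the irreducibility of $\bkappa'\kappa_Q\pi$ (hence $\dim(\kappa\iota\biota\bkappa,\kappa\iota\biota\bkappa)=7$), the elimination of Class II via Corollary \ref{4corollary etaiota=iota'},(5), and the forced identity $[\theta\sigma\theta]=[\sigma]$ are all correct and are exactly the paper's steps (the paper writes $\heta'$ for your $\sigma$ and $\heta''$ for your $\tau$). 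But at that point your proof stops: what you present as the ``final and hardest step'' is only a plan --- realize the extra component $\zeta$ of $\heta\theta\heta$ as an explicit product, as in the $D_6$ lemma, and contradict $[\theta\zeta]=[\zeta]$ --- and you do not carry it out. That is a genuine gap, and the plan itself is a detour: no identification of $\zeta$ is needed at all.

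What you are missing is that $[\theta\sigma\theta]=[\sigma]$ is already self-contradictory, by the very dimension-uniqueness you used earlier to rule out $\heta_Q\prec\kappa'\bkappa'$. Squaring gives $[\theta\sigma^2\theta]=[(\theta\sigma\theta)(\theta\sigma\theta)]=[\sigma^2]$. From your fusion rules and associativity (compute $[\heta(\heta\sigma)]$ and $[(\heta^2)\sigma]$ and compare) one gets $[\sigma^2]=[\id_M]\oplus 2[\heta]\oplus 3[\sigma]\oplus[\tau]$; in particular $\heta\prec\sigma^2$, and since $1$, $d(\sigma)$, $d(\tau)$ all differ from $d(\heta)$, the sector $\heta$ is the \emph{unique} irreducible component of $\sigma^2$ of dimension $d(\heta)$. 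Conjugating by $\theta$ now yields $\theta\heta\theta\prec\theta\sigma^2\theta=\sigma^2$ with $d(\theta\heta\theta)=d(\heta)$, hence $[\theta\heta\theta]=[\heta]$. This contradicts $[\theta\heta\theta]=[\heta_Q]\neq[\heta]$, which holds because $\fQ$ is cocommuting (Corollary \ref{4corollary etaiota=iota'},(3), equivalently Theorem \ref{3theorem angle},(1) applied to the dual quadrilateral $\hat{\fQ}$). This is precisely how the paper concludes; so, contrary to your expectation, the $E_8$ endgame is shorter than the $D_6$ one, not harder --- the absence of a ``fork'' in the even part is exactly what makes the conjugation argument close in one line.
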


\begin{proof} 
Suppose that $\fQ$ is such a quadrilateral. 
Since the $E_8$ factor is 4-supertransitive, there exists an irreducible 
$\heta'\in \End_0(M)$ such that $[\heta^2]=[\id_M]\oplus [\heta]\oplus [\heta']$. 
Note that we have $d(\heta)\neq d(\heta')$ and  
$\heta$ is the only irreducible sector $\sigma$ contained in 
$\heta'^2$ (also in $\kappa'\bkappa'$) such that $d(\heta)=d(\sigma)$ 
(see \cite[Section 3.3]{I1}). 

As before, we have $[\xi\eta]=[\xi]\oplus [\bkappa'\kappa_Q\pi]$ and 
$$\dim(\bkappa'\kappa_Q\pi,\bkappa'\kappa_Q\pi)=
\dim(\kappa'\bkappa',\kappa_Q\bkappa_Q)=1+\dim(\kappa'\bkappa',\heta_Q)=1.$$
Thus $\fQ$ is in Class III and 
$$2=\dim(\heta\theta\heta,\heta\theta\heta)=\dim(\theta\heta^2\theta,\heta^2)=\dim(\id_M\oplus 
\theta\heta\theta\oplus\theta\heta'\theta,\id_M\oplus \heta\oplus\heta'),$$
which shows $[\theta\heta'\theta]=[\heta']$. 
However, this implies $[\theta\heta'^2\theta]=[\heta'^2]$, 
which contradicts $[\theta\heta\theta]\neq [\heta]$ as $\heta'^2$ contains $\heta$. 
\end{proof}

We are ready to prove Theorem \ref{5theorem II1} now. 

\begin{proof}[Proof of Theorem \ref{5theorem II1}] Let $\fQ$ be in Class II and 
(5,3)-supertransitive such that the principal graph of $P\subset M$ is not $A_4$. 
Corollary \ref{4corollary etaiota=iota'},(5) shows 
$\dim(\kappa\iota\biota\bkappa,\kappa\iota\biota\bkappa)\geq 8$ and Lemma \ref{5lemma dim} 
shows $\dim(\bkappa'\kappa_Q\pi,\bkappa'\kappa_Q\pi)\geq 2$. 
Since the principal graph of  $P\subset M$ is not $A_5$, 
the depth of $P\subset M$ is at least 5. 
Thus we use the following parameterization of $M-M$, $M-P$, and $P-P$ sectors 
arising from $P\subset M$. 
$$\hpic{V5}{2.0in}$$
Corollary \ref{4corollary etaiota=iota'},(5) shows 
$$[\kappa\iota\biota\bkappa]=[\id_M]\oplus [\heta]\oplus [\heta_Q]\oplus [\heta']\oplus 
2[\kappa_Q\pi\bkappa].$$
Since $\fQ$ is cocommuting, $[\heta]\neq [\heta_Q]$. 
Since the principal graph of $P\subset M$ is neither $A_5$ nor $A_7$, we have 
$d(\heta_Q)\neq d(\heta')$, $d(\kappa_Q\pi)\neq d(\kappa')$ 
and $d(\kappa_Q\pi)\neq d(\kappa'')$. 
Since 
$$\dim(\kappa_Q\pi\bkappa,\heta')=\dim(\kappa_Q\pi,\heta'\kappa)
=\dim(\kappa_Q\pi,\kappa'\oplus\kappa'')=0,$$
we get $[\kappa_Q\pi\bkappa]\neq [\heta']$. 
Thus $\dim(\kappa\iota\biota\bkappa,\kappa\iota\biota\bkappa)=8$ and 
$\dim(\bkappa'\kappa_Q\pi,\bkappa'\kappa_Q\pi)=2$. 
Thanks to Corollary \ref{4corollary etaiota=iota'},(4),(5), the endomorphism 
$\bkappa'\kappa_Q\pi$ is decomposed into $\bpi\eta_Q\pi$ and an irreducible endomorphism 
of $P$, say $\xi'$. 
Since 
$$[\bkappa'\kappa_Q\pi\iota]=[\bkappa'\kappa\iota]=[\eta\iota]\oplus[\eta'\iota]
=[\iota']\oplus [\eta'\iota],$$
and $[\bpi\eta_Q\pi\iota]=[\iota']$, we get $[\xi'\iota]=[\eta'\iota]$, 
which contains $\iota'$ as $\xi'$ is contained in $\iota'\biota$. 
Thus Frobenius reciprocity implies 
$$1=\dim(\iota',\eta'\iota)=\dim(\iota'\biota,\eta')=\dim(\xi\oplus\eta\oplus 
\bpi\eta_Q\pi\oplus \xi',\eta').$$
If $[\xi]=[\eta']$, we would have $[\xi'\iota]=[\iota]\oplus [\iota']$ and 
by Frobenius reciprocity $[\xi']=[\xi]$, a contradiction. 
Thus $[\xi']=[\eta']$.  
$$\hpic{V6}{1.0in}$$

We claim $[\eta\bar{\pi}\eta_Q\pi]=[\xi]\oplus [\eta']$. 
Thanks to Corollary \ref{4corollary etaiota=iota'}(5), we know that 
$\kappa_Q\pi\bkappa$ is self-conjugate. 
Thus 
\begin{eqnarray*}[\xi^2]&=&
[\bkappa\kappa_Q\pi\bkappa\kappa_Q\pi]=[\bkappa\kappa\bpi\bkappa_Q\kappa_Q\pi]\\
&=&[(\id_P\oplus \eta)(\id_P\oplus \bar{\pi}\eta_Q\pi)]=[\id_P]\oplus [\eta]\oplus 
[\bar{\pi}\eta_Q\pi]\oplus [\eta\bar{\pi}\eta_Q\pi],
\end{eqnarray*}
which shows the claim. 

Let 
$[\eta'\iota]=[\iota']\oplus \bigoplus_{i=1}^km_i[\iota_i],$ and 
$[\iota_i\biota]=m_i[\eta']\oplus\bigoplus_{j=1}^{l} m_{ij}[\xi_j]$ 
be the irreducible decompositions. 
We compute $[\eta\xi]$ in two ways:
\begin{eqnarray*}
[\eta\xi][\xi]&=&[\xi^2]\oplus [\bar{\pi}\eta_Q\pi\xi]\oplus [\eta'\xi]\\
&=&[\id_P]\oplus [\xi]\oplus [\eta]\oplus [\bar{\pi}\eta_Q\pi]\oplus [\eta'] 
\oplus [\xi]\oplus [\eta]\oplus [\eta']\\
&\oplus& [\xi]\oplus [\eta]\oplus[\bar{\pi}\eta_Q\pi]\oplus \sum_{i=1}^km_i^2[\eta']
\oplus \bigoplus_{j=1}^l\sum_{i=1}^km_im_{ij}[\xi_j]\\
&=&[\id_P]\oplus 3[\xi]\oplus 3[\eta]\oplus 2[\bar{\pi}\eta_Q\pi] \oplus 
(2+ \sum_{i=1}^km_i^2)[\eta']\oplus \bigoplus_{j=1}^l\sum_{i=1}^km_im_{ij}[\xi_j],
\end{eqnarray*}
\begin{eqnarray*}
[\eta][\xi^2] &=&[\eta]\oplus [\eta\xi]\oplus [\eta^2]\oplus[\eta\bar{\pi}\eta_Q\pi]\oplus 
[\eta\eta'] \\
 &=& [\eta]\oplus [\xi]\oplus [\bar{\pi}\eta_Q\pi]\oplus [\eta']
 \oplus [\id_P]\oplus [\eta]\oplus [\eta']
 \oplus [\xi]\oplus [\eta']\oplus [\eta\eta']\\
&=&[\id_P]\oplus 2[\xi]\oplus 2[\eta]\oplus [\bar{\pi}\eta_Q\pi]\oplus 3[\eta']
\oplus [\eta\eta'].
\end{eqnarray*}
This shows 
$$[\eta\eta']=[\xi]\oplus [\eta]\oplus [\bar{\pi}\eta_Q\pi] \oplus 
(\sum_{i=1}^km_i^2-1)[\eta']\oplus \bigoplus_{j=1}^l\sum_{i=1}^km_im_{ij}[\xi_j].$$
Since $P\subset M$ is 5-supertransitive, the multiplicity of $\eta'$ in $\eta\eta'$ 
is 1, which implies $k=2$ and $m_1=m_2=1$. 
This shows 
$$[\bkappa\kappa'']=[\eta']\oplus [\bar{\pi}\eta_Q\pi]\oplus [\xi]\oplus 
\bigoplus_{j=1}^l(m_{1j}+m_{2j})[\xi_j]$$
and $P\subset M$ cannot be 6-supertransitive. 
Frobenius reciprocity implies that $\kappa\bar{\pi}\eta_Q\pi$ and $\kappa\xi$ 
contain $\kappa''$ with multiplicity one and  
$$[\kappa\xi]=[\kappa\bkappa\kappa_Q\pi]
=[\kappa_Q\pi]\oplus[\heta\kappa_Q\pi],$$ 
On the other hand, the endomorphisms $\kappa\bpi\eta_Q\pi$ and $\heta\kappa_Q\pi$ are irreducible as 
we have 
\begin{eqnarray*}\
\dim(\kappa\bpi\eta_Q\pi,\kappa\bpi\eta_Q\pi)&=&\dim(\bkappa\kappa\bpi\eta_Q\pi,\bpi\eta_Q\pi)
=\dim(\bpi\eta_Q\pi\oplus\eta\bpi\eta_Q\pi ,\bpi\eta_Q\pi)\\
&=&\dim(\bpi\eta_Q\pi\oplus \xi\oplus \eta',\bpi\eta_Q\pi)=1,
\end{eqnarray*}
$$\dim(\heta\kappa_Q\pi,\heta\kappa_Q\pi)=\dim(\heta^2,\kappa_Q\bkappa_Q)=
\dim (\id_M\oplus \heta\oplus\heta',\id_M\oplus\heta_Q)=1,$$
and so $[\kappa'']=[\kappa\bpi\heta_Q\pi]=[\heta\kappa_Q\pi]$. 
By induction, we can get $d(\eta)=d(\kappa)^2-1$, $d(\kappa')=d(\kappa)(d(\kappa)^2-2)$, 
$d(\eta')=d(\kappa)^4-3d(\kappa^2)+1$, $d(\kappa'')=d(\kappa)(d(\kappa)^4-4d(\kappa)^2+3)$. 
Therefore $d(\kappa'')=d(\kappa\bpi\heta_Q\pi)$ implies $d(\kappa)^4-5d(\kappa)^2+4=0$ and 
$[M:P]=d(\kappa^2)=4.$
This shows that the principal graph of $P\subset M$ is $E_8^{(1)}$. 
$$\hpic{V7}{1.0in}$$

The above computation shows that $\xi$ is determined solely by the inclusion 
$P\subset M$. 
Since $N\subset P$ is 3-supertransitive, a $Q$-system for $\id_P\oplus \xi$ is unique 
up to equivalence and so $M$ is uniquely determined by $P\subset M$ up to inner conjugacy 
in $P$. 
Therefore uniqueness of such a quadrilateral of the hyperfinite II$_1$ factors up to 
conjugacy follows from Theorem \ref{3lemma uniqueness}. 

We show existence now. 
Let $R$ be a factor and $\alpha$ be an outer action of the alternating group $\fA_5$. 
We regard $\fA_4$ as a subgroup of $\fA_5$ and fix it. 
We choose two mutually inequivalent 2-dimensional irreducible projective unitary 
representations $\sigma_1$ and $\sigma_2$. 
Since $\sigma_1$ and $\sigma_2$ carry a unique non-trivial element in $H^2(\fA_5,\T)$, 
we may assume that they have the same cocycle $\omega$. 
Since $\fA_4$ has a unique 2-dimensional irreducible projective unitary representation 
up to equivalence, we may assume that the restriction of $\sigma_1$ and that of $\sigma_2$ to 
$\fA_4$ coincide. 
Let $M_2(\C)$ be the 2 by 2 matrix algebra and 
let $\beta_g=\alpha_g\otimes \Ad \sigma_1(g)$ for $g\in \fA_5$, which is an action 
of $\fA_5$ on $R\otimes M_2(\C)$. 
We set 
$$N=(R\otimes \C)\rtimes_\beta\fA_4\subset P=(R\otimes \C)\rtimes_\beta \fA_5
\subset M=(R\otimes M_2(\C))\rtimes_\beta \fA_5.$$ 
Then the principal graph of $P\subset M$ is $E_8^{(1)}$ \cite[p.224]{GHJ}, 
the inclusion $N\subset P$ is 3-supertransitive \cite{KoY}, and it is routine work 
to show that $N\subset M$ is irreducible. 
Let $u_g=1\otimes \sigma_2(g)\sigma_1(g)^*$, which is a $\beta$-cocycle, and 
let $\{\lambda_g\}_{g\in \fA_5}$ be the implementing unitary in 
$M=(R\otimes M_2(\C))\rtimes_\beta \fA_5$. 
Since $u_g$ commutes with $R\otimes \C$, we can introduce a homomorphism 
$\pi:P\rightarrow M$ by setting $\pi(x)=x$ for $x\in R\otimes \C$ and 
$\pi(\lambda_g)=u_g\lambda_g$ for $g\in \fA_5$. 
We set $Q=\pi(P)$. 
The restriction of $\pi$ to $N$ is the identity map because $u_g=1$ for $g\in \fA_4$. 
Since $\sigma_1$ and $\sigma_2$ are not equivalent, there exists 
$g\in \fA_5\setminus \fA_4$ such that $u_g$ is not a scalar, and so $P\neq Q$. 
Since $\fA_4$ is maximal in $\fA_5$ there is no intermediate subfactor between $N\subset P$. 
Since the principal graph of $P\subset M$ is $E_8^{(1)}$, there is no intermediate subfactor between
$P\subset M$. 
Therefore we conclude that $\fQ=\quadri$ is a quadrilateral, which is noncommuting thanks 
to Theorem \ref{3theorem angle},(1). 
Theorem \ref{4theorem (2,2)-supertransitive} shows that $\fQ$ is cocommuting as we have 
$[M:P]=[P:N]-1$. 

To finish the proof, we show that the principal graph of $Q\subset M$ is also $E_8^{(1)}$. 
Since $(N\subset Q)\cong (R\rtimes_\alpha\fA_4\subset R\rtimes_\alpha \fA_5)$, 
the category of $Q-Q$ bimodules arising from $N\subset Q$ is equivalent to the unitary 
dual of $\fA_5$. 
Lemma \ref{4lemma xi-eta} shows that this category contains the category of 
$Q-Q$ bimodules arising from $Q\subset M$. 
Since $[M:Q]=4$, we conclude that the principal graph of $Q\subset M$ is $E_8^{(1)}$. 
\end{proof}

To prove Theorem \ref{5theorem II2}, 
we have to exclude two more cases. 

\begin{proposition} \label{5proposition E7(1)}
Let $\fQ=\quadri$ be a quadrilateral in Class II or III 
such that the principal graph of $P\subset M$ is $E_7^{(1)}$. 
Then $\fQ$ is in Class III. 
There exists a factor $R$ and an outer action of $G=\fS_5$ on $R$ with two maximal subgroups 
$H\neq K$ of $G$ such that 
$M=R^{H\cap K}$, $P=R^H$, $Q=R^K$, and $N=R^G$. 
Such a quadrilateral of the hyperfinite II$_1$ factors exists and is unique up to conjugacy. 
\end{proposition}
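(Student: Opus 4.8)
Since the principal graph of $P\subset M$ is $E_7^{(1)}$ we have $[M:P]=4$, and Theorem \ref{4theorem (2,2)-supertransitive} gives $[P:N]=[M:P]+1=5$; thus $N\subset P$ has index $5$. Placing the distinguished vertex $\ast$ at the end of one of the two long arms of $E_7^{(1)}$, the inclusion $P\subset M$ is $3$-supertransitive, so $H^2(\widehat{P\subset M})$ is trivial by Lemma \ref{3lemma vanishing} and Corollary \ref{4corollary etaiota=iota'} is available. First I would record the $E_7^{(1)}$ fusion rules for the $M$-$M$, $M$-$P$, and $P$-$P$ sectors: the even ($M$-$M$) system consists of $\id_M$, the sector $\heta$ with $d(\heta)=3$, two further self-conjugate sectors, and a nontrivial automorphism $\beta$ (necessarily of order two), together with $M$-$P$ sectors including $\kappa$ ($d(\kappa)=2$) and $\kappa'$ ($d(\kappa')=4$).

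Next I would rule out Class II. If $\Gal(M/N)$ were trivial, then since $P\subset M$ is $3$-supertransitive Corollary \ref{4corollary etaiota=iota'},(5) would force $Q\subset M$ to be $3$-supertransitive with $\kappa_Q\pi\bkappa$ irreducible and self-conjugate and $[\kappa']=[\kappa_Q'\pi]$, while Lemma \ref{5lemma dim} would give $[\eta\xi]=[\xi]\oplus[\bkappa'\kappa_Q\pi]$ and the exact value of $\dim(\kappa\iota\biota\bkappa,\kappa\iota\biota\bkappa)$. Substituting the explicit $E_7^{(1)}$ fusion of $\heta$, $\beta$, and $\kappa'\bkappa$ into these identities should produce an inconsistency, exactly as in the preceding $E_6$, $A_7$, $D_6$, and $E_8$ lemmas; the essential point is that the order-two automorphism $\beta$ of the even system cannot be avoided and must enter $\kappa\iota\biota\bkappa$, contradicting triviality of $\Gal(M/N)$. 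Since by hypothesis $\fQ$ is in Class II or Class III, this shows $\fQ$ is in Class III, with $\Gal(M/N)=\{\id_M,\theta\}\cong\Z/2\Z$ and $\theta(P)=Q$.

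For existence I would take an outer action of $G=\fS_5$ on the hyperfinite factor $R$ and let $H$, $K$ be the stabilizers of the points $1$ and $2$ in the natural action on five points, so that $H\cong K\cong\fS_4$ are maximal with $H\cap K\cong\fS_3$. Setting $M=R^{H\cap K}$, $P=R^H$, $Q=R^K$, $N=R^G$, the action on $G/H$ is $2$-transitive and $[G:H]=5\neq 4=[H:H\cap K]$, so Lemma \ref{2lemma group} (cf. Example \ref{2example group}) shows $\fQ=\quadri$ is an irreducible noncommuting and cocommuting quadrilateral with $N\subset P$, $N\subset Q$ $2$-supertransitive; the transposition $(1\,2)$ normalizes $H\cap K$ and interchanges $H$ and $K$, so $\Gal(M/N)\cong\Z/2\Z$. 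Here $N\subset P=R^{\fS_5}\subset R^{\fS_4}$ is the standard $\fS_5$-subfactor of index $5$, while $P\subset M=R^{\fS_4}\subset R^{\fS_3}$ has index $4$; I would check its principal graph is $E_7^{(1)}$ either by citing the Goodman--de la Harpe--Jones realization or from the Perron--Frobenius data of the $M$-$M$ sectors $\id_M$, $\heta$, $\beta$.

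For the group realization and uniqueness I would argue that the abstract fusion analysis forces $N\subset P$ to have the principal graph of the $\fS_5$-subfactor, whence in the hyperfinite case this amenable subfactor is unique and one obtains an isomorphism $\Phi\colon P\to\tP$ with $\Phi(N)=\tN$ between any two instances. Because $H^2(\widehat{P\subset M})$ is trivial, $\Gal(M/N)\cong\Z/2\Z$, and $\eta$ is the only sector of $\xi^2$ with $[\eta\iota]=[\iota']$ (in Class III one has $[\eta]=[\bpi\eta_Q\pi]$ by Corollary \ref{4corollary etaiota=iota'},(4)), Theorem \ref{4theorem uniqueness2},(1) upgrades $\Phi$ to a conjugacy of quadrilaterals; hence every $\fQ$ as in the statement is conjugate to the $\fS_5$-quadrilateral, giving both the realization and uniqueness. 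I expect the main obstacle to be the Class II exclusion---marshalling the $E_7^{(1)}$ fusion rules so that the constraints of Corollary \ref{4corollary etaiota=iota'},(5) and Lemma \ref{5lemma dim} are seen to be unsatisfiable---with a secondary difficulty in verifying that $R^{\fS_4}\subset R^{\fS_3}$ indeed has principal graph $E_7^{(1)}$ and that the abstract $N\subset P$ is forced to be the $\fS_5$-subfactor.
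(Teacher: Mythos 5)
Your existence construction is fine and is exactly Example \ref{2example group} with $G=\fS_{\{1,2,3,4,5\}}$, but the two genuinely hard parts of the proposition are not established. First, the Class II exclusion: the mechanism you propose --- that the $E_7^{(1)}$ fusion rules force the order-two automorphism $\beta$ into $\kappa\iota\biota\bkappa$, contradicting triviality of $\Gal(M/N)$ --- is wrong, not merely unproved. The automorphism $\beta$ never enters $\kappa\iota\biota\bkappa$: even in the actual Class III situation the Galois generator $\theta$ satisfies $[\theta]\neq[\beta]$ (if $[\beta]=[\theta]$ held, then since $[\beta\heta]=[\heta\beta]$ the endomorphism $\heta\theta\heta$ would contain an automorphism, contradicting Corollary \ref{4corollary etaiota=iota'},(3)), so no fusion computation can force $\beta$ in. The argument that does work is different: by Lemma \ref{5lemma dim} one has $[\eta\xi]=[\xi]\oplus[\bkappa'\kappa_Q\pi]$, hence $[\xi^2]=[\id_P]\oplus[\eta]\oplus[\xi]\oplus[\bkappa'\kappa_Q\pi]$; in Class II, Corollary \ref{4corollary etaiota=iota'},(4) places $\bpi\eta_Q\pi$ (dimension $3$) inside $\xi^2$ with $[\bpi\eta_Q\pi]\neq[\id_P],[\eta],[\xi]$, so $\bkappa'\kappa_Q\pi$ (dimension $8$) would have to be reducible. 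One then shows it is irreducible: $\dim(\bkappa'\kappa_Q\pi,\bkappa'\kappa_Q\pi)=\dim(\kappa'\bkappa',\kappa_Q\bkappa_Q)=1+2\dim(\beta\heta,\heta_Q)$, and the value $3$ is impossible because each of the three distinct irreducible summands $\xi_i$ would satisfy $\iota'\prec\xi_i\iota$, whereas $[\bkappa'\kappa_Q\pi\iota]=[\bkappa'\kappa\iota]=[\iota']\oplus[\eta'\iota]\oplus[\alpha\iota']$ with $d(\eta'\iota)=2\sqrt{5}<3\sqrt{5}=d(\iota')$ can contain $\iota'$ at most twice. This is the computation your sketch would need to supply.

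Second, and more seriously, the $\fS_5$-realization of an \emph{arbitrary} quadrilateral $\fQ$ is missing, and it is both a stated conclusion (for general factors, not only hyperfinite ones) and the source of the isomorphism $\Phi:(N\subset P)\to(\tN\subset\tP)$ needed to invoke Theorem \ref{4theorem uniqueness2}. Your substitute --- that abstract fusion analysis forces the principal graph of $N\subset P$, whence the amenable subfactor is unique --- is neither carried out nor sufficient: Popa's classification is by standard invariant, not by principal graph, and group-subgroup subfactors are in general not determined by their principal graphs (this is precisely the issue behind the second cohomology of \cite{IKo2} and \cite{I8}); moreover such an argument is confined to the hyperfinite case. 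The paper's route is to work on the $P\subset M$ side: by \cite[Corollary 4.8]{I5} there exist a factor $R$ and an outer $\fS_4$-action with $M=R^{\fS_3}$ and $P=R^{\fS_4}$, after the identifications $[\heta']=[\sigma]$ and $[\beta]=[\varphi]$ (with $[\bar{\nu}\nu]=[\id_M]\oplus 2[\sigma]\oplus[\varphi]$, $\nu=\iota_{R,M}$) are forced by a lengthy evaluation of $\dim(\nu\kappa\bkappa\bar{\nu},\nu\kappa\bkappa\bar{\nu})=24$; the same applied to $Q\subset M$ produces $\tilde{R}$, and one may take $\tilde{R}=R$ because $H^2(M\subset R)$ is identified with Wassermann's $H^2(\hat{\fS_3},\T)$, which is trivial (\cite[Theorem 3.3]{IKo1}); finally $N\subset R$ is shown to be irreducible (again via $[\beta]\neq[\theta]$), so $G=\langle H,K\rangle$ acts outerly with $N=R^G$, and $3$-transitivity of $G$ on $G/H$ together with $[G:H]=5$ and $\#H=24$ gives $G\cong\fS_5$. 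Without this chain (or an equivalent), neither the realization statement nor the uniqueness claim is proved.
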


\begin{proof} We use the following parameterization of $M-M$, $M-P$, and $P-P$ sectors 
arising from $P\subset M$: 
$$\hpic{V8}{2.0in}$$
The category of $M-M$ bimodules and $P-P$ bimodules arising from $P\subset M$ are isomorphic to 
the unitary dual of $\fS_4$, which gives the fusion rule of the above sectors.  
We have 
$$[\eta^2]=[\id_P]\oplus [\eta]\oplus[\eta']\oplus [\alpha\eta],\quad 
[\eta'^2]=[\id_P]\oplus [\alpha]\oplus [\eta'],$$
$$[\alpha\eta]=[\eta\alpha],\quad [\alpha\eta']=[\eta'\alpha]=[\eta'],\quad [\alpha^2]=[\id_P],$$
$$[\heta^2]=[\id_M]\oplus [\heta]\oplus[\heta']\oplus [\beta\heta],\quad 
[\heta'^2]=[\id_M]\oplus [\beta]\oplus [\heta'],$$
$$[\beta\heta]=[\heta\beta],\quad 
[\beta\heta']=[\heta\beta']=[\heta'],\quad [\beta^2]=[\id_M],$$
$d(\eta)=d(\heta)=3$, $d(\eta')=d(\heta')=2$, $d(\alpha)=d(\beta)=1$. 

First we claim that $\bkappa'\kappa_Q\pi$ is irreducible, which shows that $\fQ$ is in Class III thanks to 
Corollary \ref{4corollary etaiota=iota'},(4) and Lemma \ref{5lemma dim}. 
Indeed, we have 
\begin{eqnarray*}
\dim(\bkappa'\kappa_Q\pi,\bkappa'\kappa_Q\pi) &=&\dim(\kappa'\bkappa',\kappa_Q\bkappa_Q) \\
 &=&\dim(\id_M\oplus\beta\oplus 2\heta\oplus 2\beta\heta\oplus \heta',\id_M\oplus\heta_Q)\\
 &=&1+2\dim(\beta\heta,\heta_Q),
\end{eqnarray*}
which shows that either $[\beta\heta]\neq [\heta_Q]$ and $\bkappa'\kappa_Q\pi$ is irreducible or 
$[\beta\heta]=[\heta_Q]$ and $\bkappa'\kappa_Q\pi$ is a direct sum of three irreducible 
distinct sectors. 
Suppose that the latter holds and $[\bkappa'\kappa_Q\pi]=[\xi_1]\oplus [\xi_2]\oplus [\xi_3]$. 
Then $\xi_i\iota$ would contain $\iota'$ for each $i=1,2,3$. 
On the other hand, we have  
$$[\bkappa'\kappa_Q\pi\iota]=[\bkappa'\kappa\iota]=[\eta\iota]\oplus[\eta'\iota]\oplus [\alpha\eta\iota]
=[\iota']\oplus [\eta'\iota]\oplus [\alpha\iota'].$$
Since $d(\eta')=2$, $d(\iota)=\sqrt{5}$, and $d(\iota')=3\sqrt{5}$, this is impossible and we get the claim. 

Lemma \ref{5lemma dim} and Corollary \ref{4corollary etaiota=iota'},(3),(5) imply 
\begin{eqnarray*}
2&=&\dim(\heta\theta\heta,\heta\theta\heta)=\dim (\heta^2,\theta\heta^2\theta)\\
&=&\dim(\id_M\oplus \heta\oplus \heta' \oplus \beta\heta,
\id_M\oplus \theta\heta\theta\oplus \theta\heta'\theta \oplus \theta\beta\heta\theta)\\
&=&1+\dim(\heta,\theta\beta\heta\theta)+\dim(\beta\heta,\theta\heta\theta)
+\dim(\beta\heta,\theta\beta\heta\theta)+\dim(\heta',\theta\heta'\theta).
\end{eqnarray*}
If $[\heta]=[\theta\beta\heta\theta]$, we would have $[\heta^2]=[\overline{\heta}\heta]
=[\theta\heta^2\theta]$, which is impossible. 
For the same reason, we get $[\beta\heta]\neq[\theta\heta\theta]$, 
$[\beta\heta]\neq [\theta\beta\heta\theta]$ and so $[\heta']=[\theta\heta'\theta]$. 

In \cite[Corollary 4.8]{I5}, the second-named author showed that there exists a factor $R$ and 
an outer action $\mu$ of $\fS_4$ on $R$ such that $M=R^{\fS_3}$ and $P=R^{\fS4}$. 
Let $\nu=\iota_{R,M}$. 
Then we have $[\bar{\nu}\nu]=[\id_M]\oplus 2[\sigma]\oplus [\varphi]$ 
where $d(\sigma)=2$, $d(\varphi)=1$ and 
$$[\sigma^2]=[\id_M]\oplus [\varphi]\oplus[\sigma],\quad [\varphi\sigma]=[\sigma\varphi]=[\sigma],
\quad [\varphi^2]=[\id_M].$$ 
We claim that $[\heta']=[\sigma]$ and $[\beta]=[\varphi]$ hold. 
Indeed, since $d(\heta)=3$ and $\heta^2$ does not contain any non-trivial automorphism, 
$[\id_M],[\varphi],[\sigma],[\heta]$, and $[\varphi\heta]$ are all distinct sectors. 
Since $P\subset R$ is of depth 2 and $[R:P]=24$, we get 
\begin{eqnarray*}
24 &=&\dim(\nu\kappa\bkappa\bar{\nu},\nu\kappa\bkappa\bar{\nu}) 
=\dim(\bar{\nu}\nu\kappa\bkappa,\kappa\bkappa\bar{\nu}\nu)\\
&=&\dim((\id_M\oplus \varphi\oplus 2\sigma)(\id_M\oplus \heta),
(\id_M\oplus \heta) (\id_M\oplus \varphi\oplus 2\sigma))\\
&=&\dim(\id_M\oplus \varphi\oplus 2\sigma\oplus \heta\oplus \varphi\heta,
\id_M\oplus \varphi\oplus 2\sigma\oplus \heta\oplus \heta\varphi) \\
&+&2\dim(\id_M\oplus \varphi\oplus 2\sigma\oplus \heta\oplus \varphi\heta,\heta\sigma) +
2\dim(\sigma\heta,\id_M\oplus \varphi\oplus 2\sigma\oplus \heta\oplus \heta\varphi)\\
&+&4\dim(\sigma\heta,\heta\sigma)\\
&=&7+\dim(\varphi\heta,\heta\varphi)+2\dim(\sigma\oplus\varphi\sigma\oplus 2\sigma^2,\heta)
+2\dim(\heta^2,\sigma)+2\dim(\varphi\heta,\heta\sigma)\\
&+&2\dim(\heta,\sigma\oplus\sigma\varphi\oplus 2\sigma^2)+2\dim(\sigma,\heta^2)
+2\dim(\sigma\heta,\heta\varphi)+4\dim(\sigma\heta,\heta\sigma),
\end{eqnarray*}
which shows $[\varphi\heta]=[\heta\varphi]$ and
\begin{eqnarray*}
4 &=&2\dim(\sigma^2,\heta)+\dim(\heta^2,\sigma)+\dim(\heta^2\varphi,\sigma)+\dim(\sigma\heta,\heta\sigma) \\
 &=&2\dim(\heta^2,\sigma)+\dim(\sigma\heta,\heta\sigma)
 =2\dim(\id_M\oplus \heta\oplus \heta'\oplus \beta\heta,\sigma)+\dim(\sigma\heta,\heta\sigma)\\
 &=&2\dim (\heta',\sigma)+\dim(\sigma\heta,\heta\sigma).
\end{eqnarray*}
If $[\heta']\neq[\sigma]$, we would have, 
$$\dim(\sigma\heta,\sigma\heta)=\dim(\sigma^2,\heta^2)
=\dim(\id\oplus\varphi\oplus \sigma,\id\oplus\heta\oplus\heta'\oplus\beta\heta)=1,$$
which means that $\sigma\heta$ is irreducible. 
A similar argument shows that $\heta\sigma$ is irreducible too and $\dim(\sigma\heta,\heta\sigma)$ 
is at most 1, which is a contradiction. 
Thus we get $[\sigma]=[\heta']$ and in consequence, the equality $[\varphi]=[\beta]$ holds. 

Since $\fQ$ is in Class III, the principal graph of $Q\subset M$ is $E_7^{(1)}$ too 
and we can apply the same argument to $Q\subset M$. 
Then there exists a factor $\tilde{R}$ and an outer action of $\fS_4$ on $\tilde{R}$ such that 
$M=\tilde{R}^{\fS_3}$ and $Q=\tilde{R}^{\fS_4}$. 
Note that we have $[\theta\kappa]=[\kappa_Q\pi]$, which implies $[\heta_Q]=[\theta\heta\theta]$ and 
$$[\heta_Q^2]=[\id_M]\oplus[\theta\heta\theta]\oplus [\theta\heta'\theta]\oplus [\theta\beta\heta\theta]
=[\id_M]\oplus[\heta_Q]\oplus [\heta']\oplus [\theta\beta\heta\theta].$$
Thus the above argument shows $[\overline{\iota_{\tilde{R},M}}\iota_{\tilde{R},M}]
=[\overline{\iota_{R,M}}\iota_{R,M}]$. 
The second-named author and Kosaki \cite[Theorem 3.3]{IKo1} showed that  
the cohomology $H^2(M\subset R)$ is identified with $H^2(\hat{\fS_3},\T)$ introduced by 
Wassermann \cite{Was}. 
Since the order of $\fS_3$ is square free, there is only one ergodic action of $\fS_3$ 
on a 6-dimensional $C^*$-algebra and the latter is trivial. 
Therefor we may assume $R=\tilde{R}$. 
We denote by $\mu^Q$ the action of $\fS_4$ on $R$ whose fixed point algebra is $Q$. 
We set $H=\mu_{\fS_4}$ and $K=\mu^Q_{\fS_4}$. 

We claim that $N\subset R$ is irreducible. 
Indeed, we known that $\heta\theta\heta$ is a direct sum of $\theta\heta\theta$ and an irreducible, say $\zeta$, 
with $d(\zeta)=6$. 
Thus we have
\begin{eqnarray*}
\dim(\nu\kappa\iota,\nu\kappa\iota)&=&\dim(\bar{\nu}\nu,\kappa\iota\biota\bkappa)\\
&=&
\dim(\id_M\oplus\beta\oplus 2\heta',\id_M\oplus \theta\oplus \heta\oplus\theta\heta\oplus\heta\theta 
\oplus\theta\heta\theta\oplus \zeta)\\
&=&1+\dim (\beta,\theta).
\end{eqnarray*}
Since $[\beta\heta]=[\heta\beta]$, if $[\beta]=[\theta]$, the endomorphism $\heta\theta\heta$ 
would contain an automorphism, which contradicts Corollary \ref{4corollary etaiota=iota'},(3). 
Therefore the claim is shown. 
 
Let $G$ be the group generated by $H$ and $K$ in $\Aut(R)$. 
Then the fixed point algebra for $R^G$ is $P\cap Q=N$ and the action of $G$ on $R$ is outer   
for $N\subset R$ is irreducible. 
Since $N\subset P$ is 3-supertransitive, the action of $G$ on $G/H$ is 3-transitive.  
Therefore from $[G:H]=[P:N]=[M:P]+1=5$, we conclude that $G$ is isomorphic to $\fS_5$ 
because any 3-transitive transforation group of the five point set is either 
$\fA_5$ or $\fS_5$. 
Uniqueness in the case of the hyperfinite II$_1$ factors follows from Theorem \ref{4theorem uniqueness2}. 
\end{proof}

\begin{proposition} Let $\fQ=\quadri$ be a quadrilateral in Class II or III 
such that the principal graph of $P\subset M$ is $E_6^{(1)}$. 
Then $\fQ$ is in Class III. 
There exists a factor $R$ and an outer action of $G=\fA_5$ on $R$ with two maximal subgroups 
$H\neq K$ of $G$ such that 
$M=R^{H\cap K}$, $P=R^H$, $Q=R^K$, and $N=R^G$. 
Such a quadrilateral of the hyperfinite II$_1$ factors exists and is unique up to conjugacy. 
\end{proposition}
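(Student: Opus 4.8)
The plan is to run the argument of the preceding $E_7^{(1)}$ proposition essentially verbatim, with $\fA_4$ in place of $\fS_4$ and $\fA_5$ in place of $\fS_5$. First I would record the fusion rules coming from the identification of the $M$-$M$ and $P$-$P$ bimodule categories of the $E_6^{(1)}$ subfactor $P\subset M$ with the unitary dual of $\fA_4$ (cf.\ \cite{GHJ}): there is an automorphism $\beta\in\Aut(M)$ of order three with $[\beta\heta]=[\heta\beta]=[\heta]$, $d(\heta)=3$, and $[\heta^2]=[\id_M]\oplus[\beta]\oplus[\beta^2]\oplus 2[\heta]$, while the three $M$-$P$ sectors $\kappa,\beta\kappa,\beta^2\kappa$ all have dimension two. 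Because $\heta$ occurs in $\heta^2$ with multiplicity two, $P\subset M$ is only $2$-supertransitive; there is no sector $\kappa'$, so I would use only the first half of Lemma \ref{5lemma dim}, and the triviality of $H^2(\widehat{P\subset M})$ — automatic in the $3$-supertransitive $E_7^{(1)}$ case by Lemma \ref{3lemma vanishing} — must now be proved separately.

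I would next fix the class. Since $N\subset P$ is $3$-supertransitive, $H^2(\widehat{N\subset P})$ is trivial by Lemma \ref{3lemma vanishing}, so $c(\fQ)$ is trivial and Theorem \ref{4theorem (2,2)-supertransitive} applies; Corollary \ref{4corollary Z/3Z} excludes $\Gal(M/N)\cong\Z/3\Z$ under $(2,3)$-supertransitivity, and $\Gal(M/N)\cong\fS_3$ is impossible as it would force the index-three graph $A_5$ for $N\subset P$ rather than $[P:N]=5$, so $\fQ$ lies in Class II or III. Writing $\lambda=\kappa_Q\pi$ and using that $[\xi]=[\bkappa\lambda]$ is irreducible and self-conjugate, I would compute $[\eta\xi]=[\xi]\oplus[\bkappa\beta\lambda]\oplus[\bkappa\beta^2\lambda]$ as three distinct irreducibles, each of dimension four. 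Hence by Lemma \ref{5lemma dim} the module $[\iota'\biota]=[\eta]\oplus[\eta\xi]$ has $\eta$ as its unique summand of dimension three. Corollary \ref{4corollary etaiota=iota'},(4) gives $[\bpi\eta_Q\pi\iota]=[\iota']$, so by Frobenius reciprocity the dimension-three sector $\bpi\eta_Q\pi$ is contained in $\iota'\biota$ and therefore equals $\eta$; once $H^2(\widehat{P\subset M})$ is known to vanish, Corollary \ref{4corollary etaiota=iota'},(4) then forces $\Gal(M/N)$ to be nontrivial, i.e.\ $\fQ$ is in Class III. This is precisely where $E_6^{(1)}$ diverges from the non-affine $E_6$ case: there the analogue of $\eta\xi$ contains a summand of the same dimension as $\eta$, permitting $[\bpi\eta_Q\pi]\neq[\eta]$ and Class II, whereas the uniform dimension four here leaves $\eta$ as the only possibility.

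Finally I would produce the group realisation and settle existence and uniqueness. By the $\fA_4$-analogue of \cite[Corollary 4.8]{I5} I would write $M=R^{\Z/3\Z}$, $P=R^{\fA_4}$ for an outer $\fA_4$-action, and likewise $M=\tilde R^{\Z/3\Z}$, $Q=\tilde R^{\fA_4}$; a depth-two computation (the $[R:P]=12$ analogue of the $[R:P]=24$ calculation in the $E_7^{(1)}$ proof) identifies $M\subset R$ as the $\Z/3\Z$-crossed product with $[\overline{\iota_{R,M}}\iota_{R,M}]=[\id_M]\oplus[\beta]\oplus[\beta^2]$, and since $H^2(\widehat{\Z/3\Z},\T)$ is trivial ($\Z/3\Z$ has trivial Schur multiplier) the two realisations match so that $\tilde R=R$; the same $\Z/3\Z$-computation yields the triviality of $H^2(\widehat{P\subset M})$ needed above. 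Putting $H=\mu_{\fA_4}$, $K=\mu^Q_{\fA_4}$ and $G=\langle H,K\rangle\subset\Aut(R)$, irreducibility of $N\subset R$ is immediate here (in contrast to $E_7^{(1)}$) because the relevant automorphisms $\beta,\beta^2$ have order three while the Galois symmetry $\theta$ has order two, so they cannot occur in $\kappa\iota\biota\bkappa$; hence $G$ acts outerly with $N=R^G$. Then $[G:H]=[P:N]=5$ and the $3$-supertransitivity of $N\subset P$ make $G$ act $3$-transitively on the five cosets $G/H$, and since $|G|=5\cdot|H|=60$ the only possibility among the $3$-transitive groups $\fA_5,\fS_5$ is $G\cong\fA_5$, with $H,K$ two point stabilisers. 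Existence over the hyperfinite II$_1$ factor is then Example \ref{2example group} with $\fA_5$, and uniqueness up to conjugacy follows from Theorem \ref{4theorem uniqueness2},(1) after checking that $\eta$ is the only sector in $\xi^2=\id_P\oplus\eta\oplus\bkappa\heta_Q\kappa$ with $[\sigma\iota]=[\iota']$. The main obstacle I expect is exactly the cluster of facts forced on us by the loss of $3$-supertransitivity: proving $H^2(\widehat{P\subset M})$ trivial by hand and carrying out the $\fA_4$ depth-two identification together with the compatible gluing of the $P$- and $Q$-actions into a single outer $\fA_5$-action.
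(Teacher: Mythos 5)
Your set-up (the $\mathrm{Rep}(\fA_4)$ fusion rules, $\dim(\kappa\iota\biota\bkappa,\kappa\iota\biota\bkappa)=8$ via the first half of Lemma \ref{5lemma dim}, and the identification $[\bpi\eta_Q\pi]=[\eta]$ by the dimension count inside $[\iota'\biota]$) and your Class III endgame (Hong's theorem to write $M=R^{\Z/3\Z}$, $P=R^{\fA_4}$, gluing $R=\tilde{R}$ via the trivial cohomology attached to $\Z/3\Z$, then $[G:H]=5$ plus $3$-transitivity forcing $G\cong\fA_5$, and uniqueness from Theorem \ref{4theorem uniqueness2}) all match the paper. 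The gap is in the one step carrying the real content: ruling out Class II. You deduce Class III from $[\eta]=[\bpi\eta_Q\pi]$ via Corollary \ref{4corollary etaiota=iota'},(4), which requires $H^2(\widehat{P\subset M})$ to be trivial, and you claim this triviality follows from ``the same $\Z/3\Z$-computation''. That is false: the paper notes explicitly, in this very proof, that the cohomology of the $E_6^{(1)}$ subfactor is \emph{not} trivial, and that Corollary \ref{4corollary etaiota=iota'},(4) therefore cannot be applied. Concretely, $H^2(\widehat{P\subset M})$ classifies $Q$-systems for $[\id_P]\oplus[\eta]$ inside the even part $\mathrm{Rep}(\fA_4)$, and there are two inequivalent ones: the commutative algebra $C(\fA_4/(\Z/3\Z))$ (the one giving $\bkappa(M)$) and the noncommutative algebra $M_2(\C)$ with $\fA_4$ acting by $\Ad$ of its $2$-dimensional projective representation (recall $H^2(\fA_4,\T)\cong\Z/2\Z$). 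What a ``$\Z/3\Z$-computation'' gives you is triviality of $H^2(M\subset R)$, the Wassermann-type cohomology used for the gluing $R=\tilde{R}$; that is a different object and says nothing about $H^2(\widehat{P\subset M})$.

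This is not a repairable blemish, because a non-trivial class in exactly this cohomology is what makes Class II possible at all: the paper's Class II quadrilateral with principal graph $E_8^{(1)}$ (Theorem \ref{5theorem II1}) is built from two inequivalent $2$-dimensional projective representations of $\fA_5$, i.e.\ from two inequivalent $Q$-systems sharing one canonical endomorphism. So for $E_6^{(1)}$ genuine work is needed to exclude Class II, and $[\eta]=[\bpi\eta_Q\pi]$ alone cannot do it --- indeed that equality also holds in Class III here (from $[\theta\kappa]=[\kappa_Q\pi]$), so it carries no information about $\Gal(M/N)$. The paper's actual route is a proof by contradiction: assuming Class II, $\dim(\kappa_Q\pi\bkappa,\kappa_Q\pi\bkappa)=\dim(\id_P\oplus\bpi\eta_Q\pi,\id_P\oplus\eta)=2$, so $\kappa\bpi\bkappa_Q$ splits into two irreducibles $\rho,\sigma$, neither an automorphism; a Frobenius argument forces $[\rho]=[\beta\sigma]$ and $d(\sigma)=2$, whence $[\kappa\iota\biota\bkappa]=[\id_M]\oplus[\heta]\oplus[\sigma]\oplus[\beta\sigma]\oplus 2[\heta\sigma]$; since $\heta^2$ contains no dimension-$2$ sector, $\heta$ occurs with multiplicity one, so by the $P\leftrightarrow Q$ symmetry $\heta_Q$ must also occur with multiplicity one --- but $\heta_Q$ (dimension $3$, distinct from $\heta$ by cocommutativity) can only sit inside $2[\heta\sigma]$, hence with multiplicity at least $2$. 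This contradiction is what proves Class III; your proposal needs this argument (or, alternatively, a proof that $\fA_4$, unlike $\fA_5$, has a unique $2$-dimensional projective representation up to $\Ad$-equivalence) to be complete.
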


\begin{proof} We employ the same strategy as in the proof of the previous proposition 
using $\fA_4$ instead of $\fS_4$. 
Since the principal graph of $P\subset M$ is $E_6^{(1)}$, 
there exists outer automorphisms of period 3 $\alpha\in \Aut(P)$ and $\beta\in \Aut(M)$ 
such that 
$$[\eta^2]=[\id_P]\oplus [\alpha]\oplus [\alpha^2]\oplus 2[\eta], \quad 
[\alpha\eta]=[\eta\alpha]=[\eta],$$
$$[\heta^2]=[\id_M]\oplus [\beta]\oplus [\beta^2]\oplus 2[\heta], \quad 
[\beta\heta]=[\heta\beta]=[\heta].$$
$$\hpic{V9}{2.0in}$$
Since $[M:Q]=4$, Theorem \ref{5theorem II1} and Proposition \ref{5proposition E7(1)} show that 
the principal graph of $Q\subset M$ is also $E_6^{(1)}$. 
Therefore there exists outer automorphism $\beta_Q\in \Aut(M)$ of period 3 such that 
$$[\heta_Q^2]=[\id_M]\oplus [\beta_Q]\oplus [\beta_Q^2]\oplus 2[\heta_Q], \quad 
[\beta_Q\heta_Q]=[\heta_Q\beta_Q]=[\heta_Q].$$
 
Since $[\kappa\eta]=[\kappa]\oplus[\kappa\alpha]\oplus [\kappa\alpha^2]$, we have 
$[\eta\xi]=[\eta\bkappa\kappa_Q\pi]=[\xi]\oplus[\alpha\xi]\oplus[\alpha^2\xi],$ 
and Lemma \ref{5lemma dim} shows $\dim(\kappa\iota\biota\bkappa, \kappa\iota\biota\bkappa)=8$. 
If $\fQ$ is in Class III, Corollary \ref{4corollary etaiota=iota'},(3) implies 
\begin{eqnarray*}
3&=&\dim(\heta\theta\heta,\heta\theta\heta)=\dim(\heta^2,\theta\heta^2\theta)\\
&=&\dim(\id_M\oplus \beta\oplus\beta^2\oplus 2\heta,
\id_M\oplus \theta\beta\theta\oplus\theta\beta^2\theta\oplus 2\theta\heta\theta),
\end{eqnarray*}
which shows either $[\beta]=[\beta_Q]$ or $[\beta]=[\beta_Q^2]$. 
Since the second cohomology of the cyclic group $\{\id_M,\beta,\beta^2\}\cong \fA_3$ is trivial, 
a similar proof, using either \cite{Ho} or \cite{I5}, as in the case of $E_7^{(1)}$ works . 

Suppose now that $\fQ$ is in Class II. 
(Note that since the cohomology of the $E_6^{(1)}$ subfactor is not trivial, we cannot 
apply Corollary \ref{4corollary etaiota=iota'},(4) as before.) 
Corollary \ref{4corollary etaiota=iota'},(4) shows that $\bpi\eta_Q\pi$ is contained in $\xi^2$, which is 
$$[\xi^2]=[\id_P]\oplus [\xi]\oplus [\alpha\xi]\oplus [\alpha^2\xi]\oplus [\eta],$$ 
and so $[\eta]=[\bpi\eta_Q\pi]$. 
The proof of Theorem \ref{4theorem (2,2)-supertransitive} shows 
$$[\kappa\iota\biota\bkappa]=[\id_M]\oplus [\heta]\oplus [\kappa_Q\pi\bkappa]
\oplus [\heta\kappa_Q\pi\bkappa].$$
Since $\dim(\kappa_Q\pi\bkappa,\kappa_Q\pi\bkappa)=\dim(\bpi\bkappa_Q\kappa_Q\pi,\bkappa\kappa)=2$, 
the endomorphism $\kappa\bpi\bkappa_Q$ is decomposed two irreducible endomorphism, say 
$\rho$ and $\sigma$, neither of which is an automorphism as we assumed that $\fQ$ is in Class II. 
By Frobenius reciprocity, the morphisms $\rho\kappa_Q\pi$ and $\sigma\kappa_Q\pi$ contain $\kappa$ 
with multiplicity one. 
Since 
$$[\kappa\bpi\bkappa_Q\kappa_Q\pi]=[\kappa(\id\oplus \eta)]
=2[\kappa]\oplus [\beta\kappa]\oplus [\beta^2\kappa],$$
we may assume $[\rho\kappa_Q\pi]=[\kappa]\oplus[\beta\kappa]$ and 
$[\sigma\kappa_Q\pi]=[\kappa]\oplus [\beta^2\kappa]$. 
By Frobenius reciprocity again, the morphism $\beta\kappa\bpi\bkappa_Q$ contains $\rho$ and 
the morphism $\beta^2\kappa\bpi\bkappa_Q$ contains $\sigma$, and so $\kappa\bpi\bkappa_Q$ contains 
$\beta^2\rho$ and $\beta\sigma$. 
If $[\beta\sigma]=[\sigma]$, we also have $[\beta^2\rho]=[\rho]$ and so 
$[\beta\kappa\bpi\bkappa_Q]=[\kappa\bpi\bkappa_Q]$. 
However, this would imply 
\begin{eqnarray*}
2&=&\dim(\beta\kappa\bpi\bkappa_Q,\kappa\bpi\bkappa_Q)
=\dim(\kappa\alpha\bpi\bkappa_Q,\kappa\bpi\bkappa_Q)
=\dim(\bkappa\kappa\alpha,\bpi\bkappa_Q\kappa_Q\pi)\\
&=&\dim(\alpha\oplus \eta,\id_P\oplus \eta),
\end{eqnarray*}
which is a contradiction. 
Thus we get $[\rho]=[\beta\sigma]$ and $d(\sigma)=2$ and 
\begin{eqnarray*}[\kappa\iota\biota\bkappa]&=&[\id_M]\oplus [\heta]\oplus [\sigma]\oplus [\beta\sigma]
\oplus [\heta\sigma]\oplus [\heta\beta\sigma]\\
&=&[\id_M]\oplus [\heta]\oplus [\sigma]\oplus [\beta\sigma]
\oplus 2[\heta\sigma]. 
\end{eqnarray*}
Since $d(\sigma)=2$, the endomorphism $\heta^2$ does not contain $\sigma$ and by Frobenius reciprocity 
$\heta\sigma$ does not contain $\heta$. 
Therefore the multiplicity of $\heta$ in $\kappa\iota\biota\kappa$ is one, and so the multiplicity of 
$\heta_Q$ is also one by symmetry. 
However, since $d(\heta_Q)=3$ and $[\heta]\neq [\heta_Q]$, this is impossible. 
Therefore $\fQ$ is in Class III. 
\end{proof}

Now the proof of Theorem \ref{5theorem II2} follows from the above propositions, lemmas, and 
Theorem \ref{5theorem II1}. 

We don't know if there are infinitely many mutually non-conjugate 
quadrilaterals of the hyperfinite II$_1$ factors in Class II. 
\subsection{Class III}

\begin{theorem} \label{5theorem III1} 
Let $\fQ=\quadri$ be a quadrilateral of factors in Class III such that 
$\fQ$ is (5,3)-supertransitive. 
Then 
\begin{itemize}
\item [$(1)$] If $\fQ$ is (6,3)-supertransitive, the principal graphs of $P\subset M$ and 
$Q\subset M$ are $A_5$ and those of $N\subset P$ and $N\subset Q$ are $E_7^{(1)}$. 
\item [$(2)$] If $\fQ$ is not (6,3)-supertransitive, 
the dual principal graph of $P\subset M$ contains one of the principal graphs of the Asaeda-Haagerup subfactor 
\cite{AH}, and in consequence $[M:P]\geq (5+\sqrt{17})/2$. 
\end{itemize}
\end{theorem}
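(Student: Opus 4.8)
The plan is to run the Class~III analogue of the proof of Theorem~\ref{5theorem II1}, feeding in the $\Z/2\Z$-Galois structure (Corollary~\ref{4corollary etaiota=iota'},(3)) wherever that proof used the trivial-Galois case~(5). Write $\Gal(M/N)=\{\id_M,\theta\}$. As in the proof of Theorem~\ref{4theorem (2,2)-supertransitive} one has $[\theta\kappa]=[\kappa_Q\pi]$, whence $[\kappa_Q\bkappa_Q]=[\theta\kappa\bkappa\theta]$ gives $[\heta_Q]=[\theta\heta\theta]$, together with $[\xi]=[\bkappa\theta\kappa]$ and
$$[\kappa\iota\biota\bkappa]=[\id_M]\oplus[\theta]\oplus[\heta]\oplus[\theta\heta]\oplus[\heta\theta]\oplus[\heta\theta\heta],$$
where $\heta\theta\heta$ contains $\theta\heta\theta$ with multiplicity one and contains no automorphism nor any of $\heta,\theta\heta,\heta\theta$. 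Since the first five summands are mutually inequivalent irreducibles disjoint from $\heta\theta\heta$, I would first record the master identity
$$\dim(\kappa\iota\biota\bkappa,\kappa\iota\biota\bkappa)=5+\dim(\heta\theta\heta,\heta\theta\heta),$$
and, comparing with Lemma~\ref{5lemma dim} (applicable since $\fQ$ is cocommuting and $(2,3)$-supertransitive and $P\subset M$ is $3$-supertransitive), deduce
$$\dim(\heta\theta\heta,\heta\theta\heta)=1+\dim(\kappa'\bkappa',\kappa_Q\bkappa_Q)=2+\dim(\kappa'\bkappa',\theta\heta\theta).$$

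The whole theorem then reduces to a single computation driven by whether the sector $\heta_Q=\theta\heta\theta$, which is not of Temperley--Lieb type for $P\subset M$, occurs inside $\kappa'\bkappa'$. The main computation assumes that the principal graph of $P\subset M$ is not $A_5$ (equivalently $[M:P]\neq 3$) and proceeds as in Theorem~\ref{5theorem II1}: using $5$-supertransitivity I parameterise the $M$-$M$ and $M$-$P$ sectors $\id_M,\heta,\heta',\dots$ and $\kappa,\kappa',\kappa'',\dots$ of $P\subset M$ along an $A$-string up to depth~$5$, decompose $\bkappa'\kappa_Q\pi=\bkappa'\theta\kappa$ and the products $\kappa''\bkappa'$ and $\heta\theta\heta$ into irreducibles by Frobenius reciprocity, and exploit the Chebyshev dimensions $d(\heta)=d(\kappa)^2-1$, $d(\kappa')=d(\kappa)(d(\kappa)^2-2)$, $d(\kappa'')=d(\kappa)(d(\kappa)^4-4d(\kappa)^2+3)$ together with the twist $[\theta\kappa\iota]=[\kappa\iota]$. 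This forces a genuine branching of the dual principal graph of $P\subset M$ at depth~$6$ — in particular showing $P\subset M$ is \emph{not} $6$-supertransitive — and identifies the local configuration (a depth-$5$ vertex joined to the prescribed depth-$6$ vertices) with a subgraph of one of the two Asaeda--Haagerup principal graphs \cite{AH}, exactly as the Haagerup graph was recognised in Corollary~\ref{4corollary Z/3Z}. Since the squared norm of that subgraph is the Asaeda--Haagerup index, this yields $[M:P]\ge (5+\sqrt{17})/2$, which proves~(2).

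Part~(1) is then the contrapositive bookkeeping of the same computation. If $\fQ$ is $(6,3)$-supertransitive, then $P\subset M$ \emph{is} $6$-supertransitive, so the main computation — which produced a depth-$6$ branching, hence the failure of $6$-supertransitivity, whenever the graph was not $A_5$ — cannot apply; therefore the principal graph of $P\subset M$ is $A_5$ (the only subfactor of index~$3$ compatible with $5$-supertransitivity). The proposition already established for the case ``$P\subset M$ is $A_5$'' then shows $\fQ$ lies in Class~III with $N\subset P$ equal to $E_7^{(1)}$, and the automorphism $\theta$, which exchanges $P$ and $Q$, transports both conclusions to $Q\subset M$ and $N\subset Q$.

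The hard part is the identification in~(2): one must show that the branching forced by the failure of $6$-supertransitivity is \emph{precisely} the Asaeda--Haagerup configuration. Concretely, the delicate step is to control all irreducible constituents of $\heta\theta\heta$ and of $\kappa''\bkappa'$ simultaneously — respecting the symmetry $\theta(\,\cdot\,)\theta$ and the requirement that $N\subset P$ be $3$-supertransitive — and to check that the resulting depth-$4$-to-$6$ graph, together with its dimension vector, matches the Asaeda--Haagerup data and admits no competing exotic graph of smaller index. The $\Z/2\Z$-twist makes this multiplicity bookkeeping appreciably heavier than the Class~II computation that pinned down $E_8^{(1)}$, and verifying that nothing of smaller index can intervene is the crux.
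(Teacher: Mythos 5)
Your outline follows the same route as the paper (set up the $\Z/2\Z$ structure from Corollary \ref{4corollary etaiota=iota'},(3), assume the principal graph of $P\subset M$ is not $A_5$, run an $A$-string fusion analysis to depth $5$, force a branching at depth $6$, recognize the Asaeda--Haagerup configuration, and get part (1) as the contrapositive via the $A_5$ proposition), and your preliminary identities are correct: indeed $\dim(\kappa\iota\biota\bkappa,\kappa\iota\biota\bkappa)=5+\dim(\heta\theta\heta,\heta\theta\heta)$ and, via Lemma \ref{5lemma dim}, $\dim(\heta\theta\heta,\heta\theta\heta)=2+\dim(\kappa'\bkappa',\theta\heta\theta)$. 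But the proposal stops exactly where the proof has to start: everything after ``proceeds as in Theorem \ref{5theorem II1}'' is asserted, not proved, and you yourself label the identification in (2) as ``the crux'' without carrying it out. The missing chain is concrete: (i) from $\dim(\heta\theta\heta,\heta\theta\heta)=\dim(\theta\heta^2\theta,\heta^2)=1+\dim(\heta',\theta\heta'\theta)\geq 2$ one gets $[\heta']=[\theta\heta'\theta]$, so $\heta\theta\heta=\theta\heta\theta\oplus\hzeta$ with $\hzeta$ irreducible, $\theta$-fixed and self-conjugate (in particular $\dim(\heta\theta\heta,\heta\theta\heta)=2$, which already answers your ``single question'': $\theta\heta\theta$ does \emph{not} occur in $\kappa'\bkappa'$); (ii) $[\theta\heta']\neq[\heta']$, proved by excluding $[\theta\kappa']=[\kappa'']$ (the $A_9$ case) and $[\theta\kappa']=[\kappa']$; (iii) the two-way computation of $[\heta\theta\heta][\heta]$ versus $[\heta\theta][\heta^2]$, which yields $[\hzeta\heta\theta]=[\heta]\oplus[\theta\heta]\oplus[\heta']\oplus\bigoplus_i m_i[\heta_i]$ and identifies $\theta\heta'$ and $\hzeta$ among the depth-$6$ vertices of $\kappa''\bkappa$ --- this, and only this, is what proves the failure of $6$-supertransitivity; (iv) the multiplicity-one statement for $\hzeta$ in $\hzeta\heta$, giving $[\hzeta\kappa]=[\kappa'']\oplus[\kappa''']$, together with $[\kappa''']=[\heta\theta\kappa]=[\theta\heta\theta\kappa]$, where the alternative $[\theta\heta\theta\kappa]=[\kappa'']$ is killed because it would force $[M:P]=4$, hence $E_8^{(1)}$ by $5$-supertransitivity, contradicting noncommutativity of the $M$-$M$ system. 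None of steps (i)--(iv) appears in your proposal, and without them neither conclusion of (2) is established.

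A secondary point: your closing paragraph misstates what must be verified. The theorem does not claim that $P\subset M$ \emph{is} the Asaeda--Haagerup subfactor, only that its dual principal graph \emph{contains} one of the Asaeda--Haagerup graphs; once the local configuration from (iii)--(iv) is exhibited, the index bound $[M:P]\geq(5+\sqrt{17})/2$ follows immediately from monotonicity of graph norms under subgraph inclusion, with no need to ``check that no competing exotic graph of smaller index can intervene.'' So the difficulty you flag at the end is not the actual obstacle; the obstacle is the fusion bookkeeping (i)--(iv) that your plan defers.
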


\begin{proof} Assume that the principal graph of $P\subset M$ is not $A_5$. 
Since it is neither $A_3$ nor $A_4$, the depth of $P\subset M$ is at least 5. 
We use the same symbols $\heta',\kappa',\kappa''$, and $\eta'$ for $M-M$, $M-P$, and $P-P$ sectors 
arising from $P\subset M$ as in the proof of Theorem \ref{5theorem II1}. 
Corollary \ref{4corollary etaiota=iota'},(3) implies 
$$2\leq \dim (\heta\theta\heta,\heta\theta\heta)
=\dim(\id_M\oplus \heta\oplus \heta',\id_M\oplus \theta\heta\theta\oplus \theta\heta'\theta)
=1+\dim (\heta',\theta\heta'\theta),$$
which shows that $[\heta']=[\theta\heta'\theta]$ and the endomorphism $\heta\theta\heta$ is a direct sum of 
$\theta\heta\theta$ and an irreducible endomorphism, which we call $\hzeta$. 
Therefore we have 
$$[\kappa\iota\biota\bkappa]=[\id_M]\oplus [\theta]\oplus [\heta]\oplus [\theta\heta]\oplus 
[\heta\theta]\oplus [\theta\heta\theta]\oplus [\hzeta].$$
Since $\theta\kappa\iota=\kappa\iota$, we have $[\theta\hzeta]=[\hzeta\theta]=[\hzeta]$, 
which is also self-conjugate. 

We claim $[\theta\heta']\neq[\heta']$. 
Suppose, on the contrary, that $[\theta\heta']=[\heta']$ holds. 
Then
$$[\kappa']\oplus[\kappa'']=[\heta'\kappa]=[\theta \heta'\kappa]=[\theta\kappa']\oplus [\theta\kappa'']. $$  
If $[\theta\kappa']=[\kappa'']$, then the principal graph of $P\subset M$ would be $A_9$ and 
$\theta$ would commute with $\heta$, which is a contradiction. 
Thus $[\theta\kappa']=[\kappa']$ holds. 
However, this implies 
$$[\heta]\oplus [\heta']=[\kappa'\bkappa]=[\theta\kappa'\bkappa]=[\theta\heta]\oplus [\theta\heta']
=[\theta\heta]\oplus [\heta'],$$
and $[\theta\heta]=[\heta]$, which is a contradiction again. 
Therefore the claim is proven. 

Let $[\kappa''\bkappa]=[\heta']\oplus \bigoplus_{i=1}^nm_i[\heta_i]$ be the irreducible 
decomposition. 
We compute $\heta\theta\heta^2$ in two ways:
$$[\heta\theta\heta][\heta]=[\theta\heta\theta\heta]\oplus [\hzeta\heta]
=[\heta\theta]\oplus [\theta\hzeta]\oplus [\hzeta\heta]
=[\heta\theta]\oplus [\hzeta]\oplus [\hzeta\heta],$$
\begin{eqnarray*}[\heta\theta][\heta^2]
&=&[\heta\theta]\oplus [\heta\theta\heta]\oplus [\heta\theta\heta']
=[\heta\theta]\oplus [\theta\heta\theta]\oplus [\hzeta]\oplus [\heta\heta'\theta]\\
&=&2[\heta\theta]\oplus [\theta\heta\theta]\oplus [\heta'\theta]\oplus  [\hzeta] 
\oplus \bigoplus_{i=1}^nm_i[\heta_i\theta], 
\end{eqnarray*}
which implies 
$$[\hzeta\heta\theta]=[\heta]\oplus [\theta\heta]\oplus [\heta']\oplus 
\bigoplus_{i=1}^nm_i[\heta_i].$$
Since $[\theta][\hzeta\heta\theta]=[\hzeta\heta\theta]$ and $[\theta][\heta']\neq[\heta']$, 
we may assume that $m_1=1$, $[\theta\heta']=[\heta_1]$ and 
$$\bigoplus_{i=2}^nm_i[\theta\heta_i]=\bigoplus_{i=2}^nm_i[\heta_i].$$ 
By Frobenius reciprocity, the endomorphism $\heta'\theta\heta$ contains $\hzeta$ with 
multiplicity one. 
On the other hand, we have 
$$[\heta'\theta\heta]=[\theta\heta'\heta]=[\theta\heta]\oplus [\theta\heta']\oplus [\heta'] \oplus
\bigoplus_{i=2}^nm_i[\theta\heta_i]
=[\theta\heta]\oplus [\theta\heta']\oplus [\heta'] \oplus
\bigoplus_{i=2}^nm_i[\heta_i].$$
Corollary \ref{4corollary etaiota=iota'},(3) shows $[\hzeta]\neq [\theta\heta]$, 
and $[\theta\heta']\neq [\heta']$ implies $[\hzeta]\neq [\heta'], [\theta\heta']$ as 
$[\theta\hzeta]=[\hzeta]$ holds. 
Thus we may assume that $m_2=1$ and $[\hzeta]=[\heta_2]$. 
This in particular shows that $\fQ$ cannot be $(6,3)$-supertransitive. 

Now we have 
$$[\hzeta\heta]=[\heta\theta]\oplus [\theta\heta\theta]\oplus [\heta']\oplus [\theta\heta']
\oplus [\hzeta]\oplus \bigoplus_{i=3}^nm_i[\heta_i\theta],$$
and $[\hzeta]\neq [\heta_i\theta]$ for $i\geq 3$. 
Since the multiplicity of $\hzeta$ in $\hzeta\heta$ is one, there are exactly two vertices in the dual principal 
graph of $P\subset M$ connected to $\hzeta$, one of which is $\kappa''$. 
Therefore there exists an irreducible morphism $\kappa'''$ such that 
$[\hzeta\kappa]=[\kappa'']\oplus [\kappa''']$ with $[\kappa''']\neq [\kappa'']$. 
We claim $[\kappa''']=[\theta\heta\theta\kappa]=[\heta\theta\kappa]$. 
Indeed, computing $\dim(\theta\heta\theta\kappa,\theta\heta\theta\kappa)$, 
$\dim(\heta\theta\kappa,\heta\theta\kappa)$, and 
$\dim(\heta\theta\kappa,\theta\heta\theta\kappa)$, we can show that 
$\theta\heta\theta\kappa$ and $\heta\theta\kappa$ are irreducible and 
$[\theta\heta\theta\kappa]=[\heta\theta\kappa]$. 
Since $\theta\heta\theta\kappa\bkappa$ contains $\hzeta$ with multiplicity one, 
Frobenius reciprocity implies that $\hzeta\kappa$ contains $\theta\heta\theta\kappa$. 
If $[\theta\heta\theta\kappa]=[\kappa'']$ we would have $d(\heta)d(\kappa)=d(\kappa'')$, 
which implies $[M:P]=4$. 
Since $P\subset M$ is 5-supertransitive, the principal graph of $P\subset M$ should be 
$E_8^{(1)}$. 
However, since the $M-M$ sectors arising from $P\subset M$ is noncommutative, we get a contradiction. 
Therefore the claim is proven. 

The above computation shows that the dual principal graph is as follows. 
$$\hpic{V10}{1.4in}$$
which contains one of the principal graphs of the Asaeda-Haagerup subfactor \cite{AH}. 
\end{proof}

\begin{remark} 
We do not known whether there exists a (5,3)-supertransitive quadrilateral of 
factors in Class III with $[M:P]=(5+\sqrt{17})/2$. 
If such a quadrilateral of hyperfinite II$_1$ factors exists, it must be unique up to conjugacy. 
Indeed, the above argument shows that the inclusion $P\subset M$ must be Asaeda-Haagerup subfactor 
and $\xi=\bkappa\theta\kappa$ is determined by $P\subset M$. 
Since $N\subset P$ is 3-supsertransitive, a $Q$-system for $\id_P\oplus \xi$ is unique up to 
equivalence and $M$ is uniquely determined by $P\subset M$ up to inner conjugacy in $P$. 
Since $Q=\theta(P)$, where the representative $\theta$ of the sector $[\theta]$ is chosen so that 
the restriction $\theta|_N$ is trivial, the quadrilateral is uniquely determined by $P\subset M$. 
For existence, the same criterion using a $Q$-system as in the case of the Haagerup subfactor below 
(Theorem \ref{5theorem IV}) should work in principle, though it involves heavy computation in practice. 
\end{remark}

Theorem \ref{5theorem III1} and the propositions and lemmas in the previous subsection imply 
the following theorem:  

\begin{theorem} \label{5theorem III2} 
Let $\fQ=\quadri$ be a quadrilateral of factors in Class III such that 
$[M:P]\leq 4$. 
Then one of the following holds:
\begin{itemize}
\item [$(1)$] The principal graphs of $P\subset M$ and $Q\subset M$ are $A_5$ and those of 
$N\subset P$ and $N\subset Q$ are $E_7^{(1)}$. 
The quadrilateral is given by the construction in Example \ref{2example group} with 
$\fS_{\{1,2,3,4\}}$. 
\item [$(2)$] The principal graphs of $P\subset M$ and $Q\subset M$ are $E_6^{(1)}$. 
The quadrilateral is given by the construction in Example \ref{2example group} with $\fA_{\{1,2,3,4,5\}}$. 
\item [$(3)$] The principal graphs of $P\subset M$ and $Q\subset M$ are $E_7^{(1)}$. 
The quadrilateral is given by the construction in Example \ref{2example group} with $\fS_{\{1,2,3,4,5\}}$.  
\end{itemize}
In each case, such a quadrilateral of the hyperfinite II$_1$ factors exists and is unique 
up to conjugacy. 
\end{theorem}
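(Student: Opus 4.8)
The plan is to reduce the statement to a finite case analysis over the principal graph of the elementary inclusion $P\subset M$, and then to quote, case by case, the results already assembled in this section. By definition a Class III quadrilateral $\fQ$ is cocommuting and $(2,3)$-supertransitive with $\Gal(M/N)\cong \Z/2\Z$, so Theorem \ref{4theorem (2,2)-supertransitive},(2) gives $\Theta(P,Q)=\cos^{-1}1/([P:N]-1)$ and the index relation $[M:P]=[P:N]-1$; in particular $Q\subset M$ is conjugate to $P\subset M$ via the nontrivial element of $\Gal(M/N)$, so the two share a principal graph. Since $P\subset M$ is $2$-supertransitive and $[M:P]\le 4$, its principal graph lies in the finite list of $2$-supertransitive graphs of index at most $4$: the index-$<4$ subfactor graphs $A_n$, $D_{2n}$, $E_6$, $E_8$, together with the index-$4$ affine graphs $E_6^{(1)}$, $E_7^{(1)}$, $E_8^{(1)}$ and the remaining infinite or affine possibilities $A_\infty$, $D_\infty$, $A_{\infty,\infty}$, $D_n^{(1)}$.

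First I would split the analysis by the supertransitivity of $P\subset M$. Whenever $P\subset M$ is $(5,3)$-supertransitive, Theorem \ref{5theorem III1} applies directly: within $[M:P]\le 4$ the Asaeda--Haagerup alternative of index $(5+\sqrt{17})/2>4$ is impossible, so $\fQ$ is forced to be $(6,3)$-supertransitive and hence $P\subset M$ is $A_5$ with $N\subset P$ equal to $E_7^{(1)}$. This disposes at once of all the highly supertransitive candidates ($A_n$ for $n\ge 6$, $D_{2n}$ for $n\ge 4$, $E_8$) by reducing them to the $A_5$ case, and yields case (1).

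Next I would treat the finitely many graphs of low supertransitivity by quoting the individual propositions and lemmas of the Class II subsection, each of which is stated for quadrilaterals in Class II or Class III. The case $[M:P]=2$ (graph $A_3$) is excluded because Theorem \ref{4theorem (2,2)-supertransitive},(2) forces $\Gal(M/N)\cong\fS_3$ there, not $\Z/2\Z$. The propositions for $A_4$ and $E_6$, and the proof of Theorem \ref{5theorem II1} for $E_8^{(1)}$, show that those graphs produce Class II quadrilaterals, so none is Class III; the no-go lemmas exclude $A_7$, $D_6$, and $E_8$ outright. The two remaining index-$4$ graphs that genuinely give Class III are exactly $E_6^{(1)}$ and $E_7^{(1)}$: the corresponding propositions (and Proposition \ref{5proposition E7(1)}) confirm membership in Class III, identify the fixed-point models from Example \ref{2example group} with $\fA_{\{1,2,3,4,5\}}$ and $\fS_{\{1,2,3,4,5\}}$ respectively, and supply existence and uniqueness up to conjugacy via Theorem \ref{4theorem uniqueness2}. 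Collecting the survivors gives precisely cases (1)--(3).

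The step I expect to be the main obstacle is guaranteeing completeness at index exactly $4$, since the established propositions only name $E_6^{(1)}$, $E_7^{(1)}$, $E_8^{(1)}$. I must rule out every other $2$-supertransitive index-$4$ graph: the infinite-depth graphs $A_\infty$, $D_\infty$, $A_{\infty,\infty}$ cannot occur as an elementary inclusion of a finite-index (hence finite-depth) quadrilateral, while the $D_n^{(1)}$ family carries a commutative (dihedral) system of $M$-$M$ sectors, which is incompatible with the noncommutativity forced in Class III by Corollary \ref{4corollary etaiota=iota'},(3)---there $\heta\theta\heta$ must contain $\theta\heta\theta$ yet contain no automorphism. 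Matching these abstract fusion-rule obstructions to the concrete affine-diagram classification is the delicate bookkeeping; once it is airtight, the theorem follows by simply assembling the three surviving cases, existence and uniqueness for each having already been provided.
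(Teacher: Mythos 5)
Your overall strategy is the paper's own: the paper proves this theorem in a single line, as ``Theorem \ref{5theorem III1} and the propositions and lemmas in the previous subsection imply the following theorem,'' i.e.\ a case analysis on the principal graph of $P\subset M$ in which Theorem \ref{5theorem III1} sweeps away every $(5,3)$-supertransitive possibility and the individual propositions and no-go lemmas of the Class II subsection (all stated for ``Class II or Class III'') handle the low-supertransitivity graphs. Your identification of cases (1)--(3), and your citations for $A_3$, $A_4$, $E_6$, $A_7$, $D_6$, $E_6^{(1)}$, $E_7^{(1)}$, are the right ones.

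However, two of your justifications are wrong as stated, though both are repairable with tools you already invoke. First, you exclude $E_8^{(1)}$ by claiming that ``the proof of Theorem \ref{5theorem II1} for $E_8^{(1)}$'' shows such quadrilaterals lie in Class II. That proof \emph{begins} by assuming $\fQ$ is in Class II, so it proves (Class II, $(5,3)$-supertransitive, not $A_4$) $\Rightarrow E_8^{(1)}$; it cannot show that an $E_8^{(1)}$ quadrilateral fails to be in Class III. The correct exclusion is via your own first sweep: $E_8^{(1)}$ (with $*$ at the end of the long arm) is $5$- but not $6$-supertransitive, so a Class III quadrilateral with this graph would fall under Theorem \ref{5theorem III1},(2) and satisfy $[M:P]\geq(5+\sqrt{17})/2>4$, a contradiction. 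Second, your parenthetical ``finite-index (hence finite-depth)'' is false --- finite index does not imply finite depth (index-$4$ subfactors with principal graph $A_\infty$ exist) --- so it cannot be used against $A_\infty$, $D_\infty$, $A_{\infty,\infty}$. Instead: $A_\infty$ is supertransitive, hence $(6,3)$-supertransitive, and Theorem \ref{5theorem III1},(1) forces the graph to be $A_5$, a contradiction; $A_{\infty,\infty}$ has two edges at $*$, contradicting irreducibility of $P\subset M$ (which follows from $M\cap N'=\C$); and $D_\infty$, exactly like $D_n^{(1)}$, is not $2$-supertransitive (its dual canonical endomorphism contains a nontrivial automorphism), which already contradicts the $(2,3)$-supertransitivity built into the definition of Class III --- this is also the clean reason for your $D_n^{(1)}$ exclusion, making the fusion-commutativity argument you sketch unnecessary. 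A smaller slip: $E_8$ is only $4$-supertransitive (its branch point sits at depth $4$), so it is not covered by Theorem \ref{5theorem III1}; fortunately you also cite the no-go lemma for $E_8$, which is what actually excludes it.
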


\begin{corollary} \label{5corollary 6-3} 
Let $\fQ=\quadri$ be an irreducible noncommuting (6,3)-supertransitive 
quadrilateral of factors. 
Then one of the following holds: 
\begin{itemize}
\item [$(1)$] The principal graphs of $P\subset M$ and $Q\subset M$ are $A_3$ and those of 
$N\subset P$ and $N\subset Q$ are $A_5$. 
\item [$(2)$] The principal graphs of $P\subset M$ and $Q\subset M$ are $A_4$ and those of 
$N\subset P$ and $N\subset Q$ are  $D_6$. 
\item [$(3)$] The principal graphs of $P\subset M$ and $Q\subset M$ are $A_5$ and those of 
$N\subset P$ and $N\subset Q$ are $E_7^{(1)}$. 
\item [$(4)$] The principal graphs of $P\subset M$ and $Q\subset M$ are $A_7$ and those of 
$N\subset P$ and $N\subset Q$ are $A_7$. 
\end{itemize}
In each case, such a quadrilateral of the hyperfinite II$_1$ factors exists and is unique 
up to conjugacy. 
\end{corollary}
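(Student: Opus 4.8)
The plan is to reduce everything to the structural dichotomy already established in Section~4 and then read off each of the four configurations from the class-by-class analysis. First I would note that, since $N\subset P$ and $N\subset Q$ are $3$-supertransitive, Lemma \ref{3lemma vanishing} makes $H^2(\widehat{N\subset P})$ trivial, so the class $c(\fQ)$ vanishes automatically; likewise $6$-supertransitivity of $P\subset M$ forces $H^2(\widehat{P\subset M})$ to be trivial. Thus Theorem \ref{4theorem (2,2)-supertransitive} applies in full: it splits the argument into the cocommuting and the noncocommuting cases, and in the cocommuting case pins $\Gal(M/N)$ down to one of trivial, $\Z/2\Z$, $\Z/3\Z$, or $\fS_3$.

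In the cocommuting case I would first discard $\Gal(M/N)\cong\Z/3\Z$: Corollary \ref{4corollary Z/3Z} shows this would force $N\subset P$ to fail $3$-supertransitivity, contrary to hypothesis, so Class IV cannot arise. If $\Gal(M/N)\cong\fS_3$, then by Theorem \ref{4theorem (2,2)-supertransitive}(2) the algebra $N$ is the fixed-point algebra of an outer $\fS_3$-action and $P,Q$ are the fixed-point algebras of two order-two elements, whence $[P:N]=3$ and $[M:P]=2$; this gives the $A_5$ and $A_3$ graphs of configuration~(1). The remaining two cocommuting subcases are precisely Class~II ($\Gal(M/N)$ trivial) and Class~III ($\Gal(M/N)\cong\Z/2\Z$), and since $\fQ$ is $(6,3)$-supertransitive, Theorem \ref{5theorem II1}(1) yields the $A_4$/$D_6$ configuration~(2) and Theorem \ref{5theorem III1}(1) yields the $A_5$/$E_7^{(1)}$ configuration~(3).

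The step requiring genuine work is the noncocommuting case, which must produce configuration~(4); the obstacle is that the hypothesis $(6,3)$ does not directly match the $(3,4)$ needed for Corollary \ref{4corollary no-extra}(1), so I must upgrade the supertransitivity of $N\subset P$. Here Theorem \ref{4theorem (2,2)-supertransitive}(1) supplies an outer automorphism $\alpha$ of $P$ with $[\eta]=[\alpha\xi]=[\xi\alpha^{-1}]$ and $[M:P]=[P:N]$, so $d(\eta)=d(\xi)$. The key observation I would exploit is that composing the two descriptions of $[\eta]$ gives $[\eta^2]=[\xi\alpha^{-1}][\alpha\xi]=[\xi^2]$. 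Now $6$-supertransitivity of $P\subset M$ makes it in particular $4$-supertransitive, which (by the same reasoning as in Remark \ref{4remrak no-extra}, applied to $P\subset M$) forces $[\eta^2]=[\id_P]\oplus[\eta]\oplus[\zeta]$ with $\zeta$ irreducible; comparing with $[\xi^2]$, which by Lemma \ref{4lemma xi-eta} and $3$-supertransitivity already contains $[\id_P]\oplus[\xi]\oplus[\eta]$, I can identify $[\zeta]=[\xi]$ and conclude $[\xi^2]=[\id_P]\oplus[\xi]\oplus[\eta]$. This is exactly $4$-supertransitivity of $N\subset P$, so $\fQ$ is $(3,4)$-supertransitive and Corollary \ref{4corollary no-extra}(1) gives that all elementary subfactors are $A_7$, i.e.\ configuration~(4). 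I expect this transfer of supertransitivity through the identity $[\eta^2]=[\xi^2]$ to be the crux of the whole corollary.

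Finally I would collect existence and uniqueness in the hyperfinite $\mathrm{II}_1$ setting. For~(4) these come from Corollary \ref{4corollary no-extra}(1); for~(1) from the group construction of Example \ref{2example group} with $\fS_3$ together with the uniqueness of outer $\fS_3$-actions on the hyperfinite factor and the Galois correspondence; and for~(2) and~(3) they are already part of Theorem \ref{5theorem II1}(1) and Theorem \ref{5theorem III1}(1) respectively. Assembling these four mutually exclusive and exhaustive cases completes the classification.
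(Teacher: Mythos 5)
Your proof is correct, and its skeleton (triviality of the relevant cohomology from Lemma \ref{3lemma vanishing}, the dichotomy of Theorem \ref{4theorem (2,2)-supertransitive}, exclusion of $\Z/3\Z$ via Corollary \ref{4corollary Z/3Z}, the $\fS_3$ case read off directly, and Theorems \ref{5theorem II1},(1) and \ref{5theorem III1},(1) for Classes II and III) is exactly the assembly the paper intends, since the corollary is stated without proof immediately after those theorems. The one place where you do genuinely independent work is the noncocommuting case, and your argument there is valid: Corollary \ref{4corollary noncommuting-noncocommuting} gives $[\eta]=[\alpha\xi]=[\xi\alpha^{-1}]$, hence $[\eta^2]=[\xi\alpha^{-1}\alpha\xi]=[\xi^2]$; since $4$-supertransitivity of $P\subset M$ forces $\eta^2\ominus\id_P\ominus\eta$ to be zero or a single irreducible, while $\xi^2$ already contains the three distinct irreducibles $\id_P$, $\xi$, $\eta$, you get $[\xi^2]=[\id_P]\oplus[\xi]\oplus[\eta]$, i.e.\ $N\subset P$ is $4$-supertransitive, and Corollary \ref{4corollary no-extra},(1) finishes. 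It is worth noting that there is a shorter route to case (4), more in the paper's habitual style of duality arguments: the dual quadrilateral $\hat{\fQ}$ is $(3,6)$-supertransitive (duality swaps the two supertransitivity indices), it is noncommuting because $\fQ$ is noncocommuting, and noncocommuting because $\fQ$ is noncommuting, so Corollary \ref{4corollary no-extra},(1) applies to $\hat{\fQ}$ directly; since $A_7$ is self-dual, all elementary subfactors of $\fQ$ are then $A_7$ as well. Both routes prove the same thing: yours makes the transfer of supertransitivity explicit at the level of fusion rules, while the dual route avoids any fusion computation at the cost of invoking the behavior of the standard invariant under basic construction.
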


Thanks to Example \ref{2example group}, there are countably many mutually non-conjugate 
(3,3)-supertransitive quadrilaterals of the hyperfinite II$_1$ factors in Class III. 
\subsection{Class IV}

\begin{theorem} \label{5theorem IV} 
There exists a quadrilateral of factors $\fQ=\quadri$ in Class IV with 
$[M:P]=(3+\sqrt{13})/2$. 
Such a quadrilateral of the hyperfinite II$_1$ factors is unique up to flip conjugacy. 
\end{theorem}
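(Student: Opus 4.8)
The plan is to realize this as the extremal case of Corollary~\ref{4corollary Z/3Z}, in which the upper inclusion $P\subset M$ is forced to be the Haagerup subfactor, and then to build the lower inclusion $N\subset P$ from a $Q$-system on the Haagerup algebra. First I would record the fusion-theoretic content of Corollary~\ref{4corollary Z/3Z}: for a Class IV quadrilateral one has $\Gal(M/N)=\{\id_M,\theta,\theta^2\}$ with $[\theta\heta]=[\heta\theta^{-1}]$ and $[\heta^2]\supseteq[\id_M]\oplus[\heta]\oplus[\theta\heta]\oplus[\theta^2\heta]$, so that $d(\heta)^2\geq 1+3d(\heta)$ and $d(\heta)\geq(3+\sqrt{13})/2$. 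The extremal value $d(\heta)=(3+\sqrt{13})/2$, equivalently $[M:P]=1+d(\heta)=(5+\sqrt{13})/2$, is attained exactly when $[\heta^2]=[\id_M]\oplus[\heta]\oplus[\theta\heta]\oplus[\theta^2\heta]$; together with the $(3,2)$-supertransitivity this identifies the induction--reduction graph of $P\subset M$ with the Haagerup graph, so $P\subset M$ is the Haagerup subfactor \cite{AH} and $\theta$ is a representative of the order-three invertible $M$-$M$ sector. By Theorem~\ref{4theorem (2,2)-supertransitive},(2) we then must have $[\xi]=[\bkappa\theta\kappa]$.

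For existence I would run this backwards. Start from the Haagerup inclusion $\kappa\colon P\hookrightarrow M$, take $\theta$ to be its order-three automorphism, and set $\xi=\bkappa\theta\kappa$. Using $[\theta\heta]=[\heta\theta^{-1}]$ (whence $\theta\heta\theta^{-1}=\heta\theta\neq\heta$) and $[\bkappa\theta\kappa]=[\bkappa\theta^2\kappa]$, one checks that $\xi$ is irreducible and self-conjugate. I would then let $N\subset P$ be the subfactor attached to a $Q$-system for $\id_P\oplus\xi$; by Lemma~\ref{3lemma Q-system} such a $Q$-system corresponds to an isometry $s\in(\xi,\xi^2)$ solving (3.1)--(3.2), and the existence and uniqueness up to sign of such an $s$ are exactly what the Appendix supplies. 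Writing $\iota=\iota_{P,N}$, so that $[\iota\biota]=[\id_P]\oplus[\xi]$, the identity $\dim(\biota\iota,\biota\iota)=\dim(\iota\biota,\iota\biota)=2$ shows $N\subset P$ is automatically $2$-supertransitive, i.e. $[\biota\iota]=[\id_N]\oplus[\hxi]$ with $\hxi$ irreducible. A Frobenius computation gives $[\theta\kappa\iota]=[\kappa\iota]$, so after an inner perturbation $\theta$ fixes $N$ pointwise and is an order-three element of $\Gal(M/N)$; I set $Q=\theta(P)$, which contains $N$ and satisfies $[\overline{\iota_{Q,N}}\iota_{Q,N}]=[\biota\iota]$. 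Theorem~\ref{3theorem angle},(1) then shows $\fQ$ is noncommuting, Theorem~\ref{4theorem (2,2)-supertransitive} shows it is cocommuting with $[M:P]=[P:N]-1$, and $[M:N]\neq 6$ excludes $\fS_3$, leaving $\Gal(M/N)\cong\Z/3\Z$. Finally $P\cap Q=N$ and $P\vee Q=M$ follow from the absence of proper intermediate subfactors: for $N\subset P$ this is immediate from irreducibility of $\hxi$, and for $P\subset M$ it is a property of the Haagerup subfactor, exactly as in Theorem~\ref{5theorem I1}.

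For uniqueness up to flip conjugacy I would adapt the argument of Theorem~\ref{4theorem uniqueness2} to the cyclic group. Given two such quadrilaterals of the hyperfinite II$_1$ factor, both upper inclusions are the Haagerup subfactor, so by its uniqueness there is an isomorphism $\Psi\colon M\to\tM$ with $\Psi(P)=\tP$. Since $\tM$ carries exactly two order-three invertible sectors, $[\tilde\theta]$ and $[\tilde\theta^2]$, we may assume after an inner correction that $\Psi\theta\Psi^{-1}\in\{\tilde\theta,\tilde\theta^2\}$. As $P\subset M$ is $3$-supertransitive, $H^2(\widehat{P\subset M})$ is trivial by Lemma~\ref{3lemma vanishing}; this and the uniqueness of the $Q$-system for $\id_P\oplus\xi$ force $\Psi(N)=\tN$. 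The automorphism $\theta$ cyclically permutes three intermediate subfactors $P\to Q\to R\to P$ (no two can coincide, else $\theta$ would fix $P$), and $\Psi$ carries $(P,Q)$ to $(\tP,\tQ)$ when $\Psi\theta\Psi^{-1}=\tilde\theta$, giving conjugacy, and to $(\tP,\tR)$ when $\Psi\theta\Psi^{-1}=\tilde\theta^2$; in the latter case $\tilde\theta$ sends $(\tP,\tR)$ to $(\tQ,\tP)$, the flip of $(\tP,\tQ)$, so the two quadrilaterals are flip conjugate.

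The main obstacle is the $Q$-system step. Here $N\subset P$ is only $2$-supertransitive and in fact $\dim(\xi,\xi^2)\geq 2$, so Lemma~\ref{3lemma vanishing} does not apply and triviality of the relevant cohomology is not available for free; one must instead solve the intertwiner equations (3.1)--(3.2) directly in the multidimensional space $(\xi,\xi^2)$ governed by the Haagerup fusion rules, and prove both existence and uniqueness up to sign of the solution. This is precisely the computation performed in the Appendix, and granting it the remaining steps are routine manipulations with the Haagerup fusion rules and the $\Z/3\Z$ symmetry.
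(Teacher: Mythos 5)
Your proposal is correct and follows essentially the same route as the paper: take $P\subset M$ to be the Haagerup subfactor, construct $N$ from the unique $Q$-system for $\id_P\oplus\bkappa\theta\kappa$ supplied by the Appendix, normalize $\theta$ so that $\theta\kappa\iota=\kappa\iota$ (hence $\theta^3=\id_M$) and set $Q=\theta(P)$, then deduce uniqueness up to flip conjugacy from the uniqueness of that $Q$-system --- your extra steps (forcing $P\subset M$ to be Haagerup at the extremal index via Corollary \ref{4corollary Z/3Z}, and the orbit argument with the three intermediate subfactors for the flip) are precisely the details the paper leaves implicit. One remark: the printed value $(3+\sqrt{13})/2$ in the statement is $d(\heta)$ rather than $[M:P]$; your reading $[M:P]=1+d(\heta)=(5+\sqrt{13})/2$ is the correct one, consistent with the bound in Corollary \ref{4corollary Z/3Z} and with the paper's own proof.
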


\begin{proof} 
Let $P\subset M$ be the Haagerup subfactor whose principal graph and the dual principal 
graph are as follows.
$$\hpic{V11}{2.0in}$$
It is known that $\theta$ and $\heta$ satisfies the following relation \cite{AH}: 
$$[\theta^3]=[\id_M],\quad [\theta^{-1}\heta]=[\heta\theta],\quad 
[\heta^2]=[\id_M]\oplus [\heta]\oplus [\theta\heta]\oplus [\theta^2\heta].$$
In the Appendix, we will show that there exists a unique $Q$-system for 
$\id_P\oplus \bkappa\theta\kappa$ up to equivalence. 
Therefore there exists a unique subfactor $N$ of $P$ up to inner conjugacy such that 
$[\iota\biota]=[\id_P]\oplus [\bkappa\theta\kappa]$, where 
$\iota:N\hookrightarrow P$ is the inclusion map. 
Since 
$$\dim(\kappa\iota,\kappa\iota)=\dim(\bkappa\kappa,\iota\biota)=
\dim(\id_P\oplus \eta,\id_P\oplus \bkappa\theta\kappa)=1,$$
the inclusion $N\subset M$ is irreducible. 
Since 
\begin{eqnarray*}
[\kappa\iota\biota\bkappa]&=&[\kappa\bkappa]\oplus [\kappa\bkappa\theta\kappa\bkappa]
=[\id_M]\oplus [\heta]\oplus [(\id_M\oplus \heta)\theta(\id_M\oplus \heta)]\\
&=&[\id_M]\oplus [\heta]\oplus [\theta]\oplus [\theta\heta]\oplus [\theta^2\heta]\oplus 
[\theta^2\heta^2]\\
&=&[\id_M]\oplus [\theta]\oplus [\theta^2]\oplus 
2[\heta]\oplus 2[\theta\heta]\oplus 2[\theta^2\heta], 
\end{eqnarray*}
we can take representative $\theta$ of the sector $[\theta]$ such that $\theta\kappa\iota=\kappa\iota$. 
Then $\theta^3$ is an inner automorphism satisfying $\theta^3\kappa\iota=\kappa\iota$ 
and so $\theta^3=\id_M$ as $N\subset M$ is irreducible. 
We set $Q=\theta(P)$.  
Using a similar argument as in the proof of Theorem \ref{5theorem I1}, we can show that 
$\quadri$ is the desired quadrilateral. 
Uniqueness up to flip conjugacy follows from uniqueness of the $Q$-system for 
$\id\oplus \bkappa\theta\kappa$. 
\end{proof}

We do not know if there exists a quadrilateral of the hyperfinite II$_1$ factors in Class IV 
different from the above example.

\section{Classification II}

\begin{theorem} \label{6theorem} Let $\fQ=\quadri$ be an irreducible noncommuting quadrilateral of factors such that 
the indices of all the elementary subfactors are less than or equal to 4. 
Then one of the following occurs: 
\begin{itemize}
\item [$(1)$] The principal graphs of $P\subset M$ and $Q\subset M$ are $A_3$ and those of 
$N\subset P$ and $N\subset Q$ are $A_5$. 
In this case $N$ is the fixed point algebra of an outer action of the symmetric  group $\fS_3$ on $M$. 
\item [$(2)$] The principal graphs of $P\subset M$ and $Q\subset M$ are $A_4$ and those of $N\subset P$ 
and $N\subset Q$ are $D_6$. 
\item [$(3)$] The principal graphs of all the elementary subfactors are $A_7$. 
\item [$(4)$] The principal graphs of $P\subset M$ and $Q\subset M$ are $A_3$ and those of 
$N\subset P$ and $N\subset Q$ are $D_6^{(1)}$. 
In this case $N$ is the fixed point algebra of an outer action of the dihedral group  $\frak{D}_8$ 
of order 8 on $M$. 
\item [$(5)$] The principal graphs of $P\subset M$ and $Q\subset M$ are $D_4$ and those of $N\subset P$ 
and $N\subset Q$ are $E_6^{(1)}$. 
In this case $N$ is the fixed point algebra of an outer action of the alternating  group $\fA_4$ on $M$. 
\item [$(6)$] The principal graphs of $P\subset M$ and $Q\subset M$ are $A_5$ and those of $N\subset P$ 
and $N\subset Q$ are $E_7^{(1)}$. 
In this case the quadrilateral $\fQ$ is given by the constructions in Example \ref{2example group} with 
$\fS_{\{1,2,3,4\}}$. 
\item [$(7)$] The principal graphs of all the elementary subfactors are $E_7^{(1)}$. 
\end{itemize}
In each case, such a quadrilateral of the hyperfinite II$_1$ factors is unique up to 
conjugacy. 
\end{theorem}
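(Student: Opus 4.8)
The plan is to obtain the seven quadrilaterals as the intersection of the per-class classifications of Sections 4 and 5 with the constraint that all four elementary indices are at most $4$, supplemented by a direct treatment of the inclusions that fail to be $2$-supertransitive; the whole argument is organized around a single dichotomy. Since every elementary subfactor has index at most $4$, its principal graph is one of the finite or affine $ADE$ Dynkin diagrams, and I would first record this list together with the statistical dimensions it forces. Among these graphs the $2$-supertransitive ones are exactly those whose distinguished vertex begins a length-$2$ linear segment, so the non-$2$-supertransitive index-$\le4$ subfactors are precisely the group-type graphs ($D_4$, the affine $D_n^{(1)}$ and $A_n^{(1)}$, \dots). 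The dichotomy is then: either all four elementary subfactors are $2$-supertransitive, or at least one is not.

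Suppose first that $\fQ$ is $(2,2)$-supertransitive. Then by \cite{GJ} one has ${}_NL^2(P)_N\cong{}_NL^2(Q)_N$, whence $[P:N]=[Q:N]$ and $[M:P]=[M:Q]$, and the class $c(\fQ)\in H^2(\widehat{N\subset P})$ is defined. If $N\subset P$ is moreover $3$-supertransitive then $c(\fQ)$ is trivial by Lemma \ref{3lemma vanishing}, and I would invoke Theorem \ref{4theorem (2,2)-supertransitive}: this places $\fQ$ in the noncocommuting type with $[M:P]=[P:N]$, or in the cocommuting type with $\Gal(M/N)$ trivial, $\Z/2\Z$, $\Z/3\Z$, or $\fS_3$. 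Intersecting the index-$\le4$ lists already proved with the further constraint $[P:N]\le4$ leaves exactly five quadrilaterals: Theorem \ref{5theorem I2} gives the noncocommuting $(A_7,A_7)$ and $(E_7^{(1)},E_7^{(1)})$; Theorem \ref{5theorem II2} gives the cocommuting $(A_4,D_6)$, its $E_6$ and $E_8^{(1)}$ items being discarded since there $[P:N]=3+\sqrt3$ and $5$; Theorem \ref{5theorem III2} gives $(A_5,E_7^{(1)})$, its $E_6^{(1)}$ and $E_7^{(1)}$ items being discarded since there $[P:N]=5$; Corollary \ref{4corollary Z/3Z} shows Class IV cannot occur because it forces $[M:P]\ge(5+\sqrt{13})/2>4$; and Corollary \ref{4corollary no-extra},(2) supplies the $\fS_3$ quadrilateral $(A_3,A_5)$.

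The remaining, genuinely new, work is the case where some elementary subfactor fails to be $2$-supertransitive; this is where the bulk of the argument lies. Such a subfactor has index $3$ or $4$ and, being of group type, forces a finite group into the picture. I would argue, via the classification of index-$\le4$ subfactors and the McKay correspondence with finite subgroups of $SU(2)$, that the associated even fusion category is $\mathrm{Rep}(G)$ for a finite group $G$, that $\Gal(M/N)$ has order $[M:N]$, and hence that $N=M^G$ with $P,Q$ the fixed-point algebras of subgroups. Noncommutativity forces $HK\neq KH$ and so rules out abelian $G$, while the bounds $[M:P],[P:N]\le4$ together with the quadrilateral condition leave only two configurations: $G=\mathfrak{D}_8$ with $P,Q$ two adjacent reflection subgroups, giving $N\subset P$ with graph $D_6^{(1)}$ and $P\subset M$ with graph $A_3$ (case $(4)$); and $G=\fA_4$ with $P,Q$ two distinct $\Z/3\Z$-subgroups, where $P\subset M$ is the $\Z/3\Z$-crossed product with graph $D_4$ and $N\subset P$ has graph $E_6^{(1)}$ (case $(5)$). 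Existence of the $\fA_4$ example follows from Lemma \ref{2lemma group}, since $\fA_4$ acts $2$-transitively on its four points; the $\mathfrak{D}_8$ example, whose subgroups are \emph{not} $2$-transitive, requires a direct verification of noncommutativity. Uniqueness up to conjugacy in both cases comes from Jones' uniqueness of outer finite-group actions on the hyperfinite factor together with the Galois correspondence (the $P\leftrightarrow Q$ swap being realized by a group element, so flip conjugacy coincides with conjugacy).

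The hard part will be the exhaustiveness of the argument at the boundary between the two branches. One must rule out the a priori admissible configurations in which a $2$-but-not-$3$-supertransitive $N\subset P$ — for index $\le4$ necessarily $E_6^{(1)}$ — is paired with a $2$-supertransitive $P\subset M$: here Lemma \ref{3lemma vanishing} does not control $c(\fQ)$, so Theorem \ref{4theorem (2,2)-supertransitive} is unavailable, and a spurious candidate such as $(A_3,E_6^{(1)})$ (which would be cocommuting with $[M:P]=2<[P:N]-1$, hence nontrivial $c(\fQ)$) must be eliminated using the angle/index inequalities of Corollaries \ref{3corollary angle} and \ref{3corollary index} and a fusion-rule obstruction. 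One must likewise exclude the possibility that both $N\subset P$ and $P\subset M$ fail $2$-supertransitivity, which I expect to collapse either to a commuting quadrilateral or back into the group cases. Once exhaustiveness is settled, assembling the per-case uniqueness statements — Theorems \ref{4theorem uniqueness1} and \ref{4theorem uniqueness2} on the supertransitive side, Jones' theorem on the group side — into the single conclusion that each quadrilateral of hyperfinite $\mathrm{II}_1$ factors is unique up to conjugacy is routine.
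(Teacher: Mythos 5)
Your skeleton coincides with the paper's: the same dichotomy on 2-supertransitivity, the same use of Theorem \ref{4theorem (2,2)-supertransitive} and the Section 5 classifications on the supertransitive side, and a group-theoretic analysis on the other side, ending in the same seven cases. But your second branch has a genuine gap. You assert that a failure of 2-supertransitivity forces the even category to be $\mathrm{Rep}(G)$ for a finite group $G$, that $\Gal(M/N)$ has order $[M:N]$, and hence that $N=M^G$. That is precisely what cannot be read off from graphs and the McKay correspondence: an inclusion with principal graph $D_n^{(1)}$, $n\geq 5$, is not itself a crossed product; the graph $D_6^{(1)}$ is carried by four mutually non-conjugate subfactors \cite{IKa}; and a depth-two inclusion of index $8$ with representation dimensions $(1,1,1,1,2)$ may be the fixed-point algebra of the Kac-Paljutkin algebra rather than of $\frak{D}_8$ --- same principal graph, but Galois group of order only $4$, so $N\neq M^G$ for any group. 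The paper's Lemma \ref{6lemma},(3) is exactly the work needed to close this: it pins down $n=6$ by playing $N\subset P$ against $N\subset Q$, builds the intermediate crossed product $L=P\rtimes_{\alpha_1}\Z/2\Z$, quotes \cite{IKo1} to conclude that $N\subset L$ is a $\frak{D}_8$ or Kac-Paljutkin fixed-point algebra, proves $M=L$ using \cite{ILP} together with the multiplicity bound of Theorem \ref{2theorem decomposition},(1), and finally excludes the Kac-Paljutkin case by the quadrilateral condition itself: the two order-two automorphisms would generate a group $G$ of order $4$ with $P\cap Q=M^G\neq N$. Your uniqueness claim in case (4) silently presupposes all of this.

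The boundary you call ``the hard part'' is also not quite right as stated. The 2- but not 3-supertransitive graphs of index at most $4$ are $E_6$ and $E_6^{(1)}$, not just $E_6^{(1)}$; $E_6$ is eliminated by its trivial second cohomology together with Corollary \ref{3corollary index}, after cocommutativity is forced by Lemma \ref{6lemma},(1) applied to the dual quadrilateral. For $E_6^{(1)}$, the fusion-rule obstruction you defer is short but indispensable, and it is Lemma \ref{6lemma},(2): by the proof of Lemma \ref{4lemma xi-eta}, $\bkappa\kappa$ contains no copy of $\xi$ and meets $[\xi^2]=[\id_P]\oplus[\alpha]\oplus[\alpha^2]\oplus 2[\xi]$ in dimension at least $2$, so it contains the order-three automorphism $\alpha$; hence $P\subset M$ has graph $D_4$, which simultaneously destroys the candidate $(A_3,E_6^{(1)})$ and produces case (5) via \cite{Ho},\cite{I5}. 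As you yourself observe, Corollary \ref{3corollary index} alone cannot do this, since $[M:P]=2<[P:N]-1$ is perfectly consistent with a nontrivial class $c(\fQ)$.
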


We use the symbols $\iota=\iota_{P,N}$, $\iota_Q=\iota_{Q,N}$, 
$\kappa=\iota_{M,P}$, $\kappa_Q=\iota_{M,Q}$. 
When $N\subset P$ is 2-supertransitive, we also use the symbols $\xi$ and $\hxi$ 
as in Section 4. 

\begin{lemma} \label{6lemma} 
Let $\fQ=\quadri$ be an irreducible noncommuting quadrilateral of factors such that 
the indices of all the elementary subfactors are less than or equal to 4.
Then 
\begin{itemize}
\item [$(1)$] The principal graph of $N\subset P$ is neither $A_3$, $D_4$ nor $D_4^{(1)}$. 
\item [$(2)$] If the principal graph of $N\subset P$ is $E_6^{(1)}$, then that of 
$P\subset M$ is $D_4$. 
There exists an outer action action of the alternating group $\fA_4$ on $M$ such that $N$ 
is the fixed point algebra of the action.  
\item [$(3)$] If the principal graph of $N\subset P$ is $D_n^{(1)}$, then $n=6$ and the principal graph of 
$P\subset M$ is $A_3$. 
There exists an outer action action of the dihedral group $\frak{D}_8$ of order 8 on $M$ such that $N$ 
is the fixed point algebra of the action.  
\item [$(4)$] If $N\subset P$ and $N\subset Q$ are 2-supertransitive, 
then the principal graph of $P\subset M$ is not $D_n^{(1)}$. 
\end{itemize}
\end{lemma}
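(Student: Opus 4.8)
For part (1) I would argue straight from Remark \ref{3remark no-auto}. If the principal graph of $N\subset P$ is $A_3$, $D_4$, or $D_4^{(1)}$ it is a star, so every even vertex other than $*$ is a leaf of Perron--Frobenius weight one; equivalently, every irreducible summand of $\overline{\iota_{P,N}}\iota_{P,N}$ apart from $\id_N$ is an automorphism of $N$ (these are the fixed-point inclusions for $\Z/2$, $\Z/3$, and an abelian group of order four). On the other hand Remark \ref{3remark no-auto} provides an irreducible $\sigma\subset\overline{\iota_{P,N}}\iota_{P,N}$ occurring in $\overline{\iota_{M,N}}\iota_{M,N}$ with multiplicity at least two, whence $d(\sigma)\geq2$ by Theorem \ref{2theorem decomposition}(1) and $\sigma$ is not an automorphism---a contradiction.

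Part (2) rests on recognizing the $N$-$N$ side of $E_6^{(1)}$ as a group representation category. Here $N\subset P$ is $2$-supertransitive with $[P:N]=4$, $d(\hxi)=3$, and even part $\hat{\fA_4}$, which contains two order-three automorphisms $a,a^2$ besides $\id_N,\hxi$, with $\hxi^2=\id_N\oplus a\oplus a^2\oplus2\hxi$. By Remark \ref{3remark no-auto} the bimodule $\hxi$ is shared by $\overline{\iota_{Q,N}}\iota_{Q,N}$, and since $[Q:N]\leq4=1+d(\hxi)$ we get $[\overline{\iota_{Q,N}}\iota_{Q,N}]=[\id_N]\oplus[\hxi]$ as well; as $M$ is generated by $P$ and $Q$, every $N$-$N$ bimodule of $N\subset M$ then lies in $\hat{\fA_4}$. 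Writing $[\overline{\iota_{M,N}}\iota_{M,N}]=[\id_N]\oplus\epsilon([a]\oplus[a^2])\oplus m[\hxi]$ with $\epsilon\in\{0,1\}$ (self-conjugacy forces the multiplicities of $a,a^2$ to agree) and $2\leq m\leq3$ (Remark \ref{3remark no-auto} and Theorem \ref{2theorem decomposition}(1)), I get $[M:P]=(1+2\epsilon+3m)/4$; requiring this to be an admissible Jones value $\leq4$ leaves only $\epsilon=1$, $m=3$, so $[M:N]=12$ and $[M:P]=3$. Then $\overline{\iota_{M,N}}\iota_{M,N}$ is precisely the regular object $\bigoplus_\pi d(\pi)\pi$ of $\hat{\fA_4}$, which forces $N=M^{\fA_4}$ for an outer action; the Galois correspondence then gives $P=M^H$ with $[\fA_4:H]=4$, i.e. $H\cong\Z/3$, so $P\subset M$ is the order-three fixed-point inclusion with principal graph $D_4$.

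Part (3) follows the same template with extra bookkeeping. The even part of $D_n^{(1)}$ is the representation category $\hat G$ of a finite group $G$, and $\overline{\iota_{P,N}}\iota_{P,N}$ is $\id_N$ together with one nontrivial automorphism and one irreducible $\tau$ of dimension two (so $N\subset P$ is not $2$-supertransitive here). Because $\tau$ already generates $\hat G$ and is shared with $N\subset Q$ (here I must check, using that $[Q:N]\leq4$ and that the fusion rules of $\tau$ pin down the ambient category, that $N\subset Q$ contributes nothing outside $\hat G$), the $N$-$N$ bimodules of $N\subset M$ are confined to $\hat G$; the multiplicity of $\tau$ in $\overline{\iota_{M,N}}\iota_{M,N}$ is then exactly two, and admissibility of $[M:P]=|G|/4\leq4$ forces the regular object and $N=M^G$ with $|G|\in\{8,12,16\}$. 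A direct group count then isolates $G=\frak{D}_8$: a noncommuting quadrilateral needs two distinct index-four subgroups $H_1,H_2$ with $\langle H_1,H_2\rangle=G$ and $H_1\cap H_2=\{e\}$, which fails for $Q_8$ (a unique involution), for $\frak{D}_{12}$ (a unique order-three subgroup), and for $\frak{D}_{16}$ (a central involution common to all index-four subgroups), but holds for $\frak{D}_8$. Hence $n=6$, $N=M^{\frak{D}_8}$, $P=M^{\Z/2}$, and $P\subset M$ has principal graph $A_3$.

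For part (4) I assume $N\subset P$ and $N\subset Q$ are $2$-supertransitive, so $[\iota\biota]=[\id_P]\oplus[\xi]$ with $\xi$ irreducible and $L^2(P)\cong L^2(Q)$ as $N$-$N$ bimodules with $[\overline{\iota_{P,N}}\iota_{P,N}]=[\id_N]\oplus[\hxi]$. If $P\subset M$ were $D_n^{(1)}$ then $[\bkappa\kappa]=[\id_P]\oplus[\mu]\oplus[\zeta]$ with $\mu$ a nontrivial automorphism of $P$ and $d(\zeta)=2$. Rerunning the computation in the proof of Lemma \ref{4lemma xi-eta}, which uses only $2$-supertransitivity of $N\subset P$ and $N\subset Q$ and not of $P\subset M$, gives $2\leq\dim(\biota\bkappa\kappa\iota,\hxi)=\dim(\bkappa\kappa,\xi\oplus\xi^2)$, and since $N\subset M$ is irreducible we have $\dim(\mu\oplus\zeta,\xi)=0$, so $\xi^2$ must contain $\mu$ or $\zeta$. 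The plan is to run both cases to a contradiction: if the automorphism $\mu$ lies in $\xi^2$, then (as $\mu$ is invertible) tracking $[\mu\xi]$ and $\dim(\mu,\xi\bar\xi)$ through Frobenius reciprocity forces either $d(\xi)=1$, so that $N\subset P$ is the $A_3$ subfactor already excluded by part (1), or that $\mu$ descends to a nontrivial automorphism on the $N$-$N$ level that is incompatible with the $2$-supertransitivity of $N\subset P$; the case $\zeta\in\xi^2$ is handled by the same circle of ideas via the $D_n^{(1)}$ fusion relating $\zeta$ and $\mu$. The main obstacle, and the reason part (4) cannot simply be read off from the $(2,2)$-supertransitive results of Section 4, is that the affine-$D$ inclusion $P\subset M$ is not $2$-supertransitive: those arguments presuppose $[\bkappa\kappa]=[\id_P]\oplus[\eta]$ with $\eta$ irreducible, so here one must instead control the reducible $\bkappa\kappa$ and its embedded automorphism $\mu$ by hand.
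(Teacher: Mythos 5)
Your part (1) coincides with the paper's proof (Remark \ref{3remark no-auto} plus Theorem \ref{2theorem decomposition}(1)), and in part (2) your reduction, via Remark \ref{3remark no-auto} and the index count, to $[\overline{\iota_{M,N}}\iota_{M,N}]=[\id_N]\oplus[a]\oplus[a^2]\oplus 3[\hxi]$ is a legitimate alternative to the paper's route. But parts (2), (3) and (4) each contain a genuine gap.

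The gap in (2) and (3) is the inference ``dual canonical endomorphism $=$ regular object of $\mathrm{Rep}(G)$, hence $N=M^{G}$ for an outer action of $G$.'' Your multiplicity count does force depth $2$, so $N$ is the fixed-point algebra of an outer action of \emph{some} Kac algebra of dimension $|G|$; but the regular object of $\mathrm{Rep}(G)$ carries more than one $Q$-system whenever $G$ contains a subgroup with a nondegenerate $2$-cocycle, and $\fA_4$ as well as the order-$8$ groups contain a Klein four-subgroup. Concretely, twisting $\C[\fA_4]$ along that cocycle yields a nontrivial Kac algebra of dimension $12$ whose representation category is still tensor equivalent to $\mathrm{Rep}(\fA_4)$, so its fixed-point inclusion has the same dual canonical endomorphism as $M^{\fA_4}\subset M$ without being a group fixed point; this is exactly the second-cohomology ambiguity of Section 3 (for $\fA_4$ one has $H^2(\fA_4,\T)\neq 0$). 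In (3) the phantom is the Kac--Paljutkin algebra, which is not a group at all, so your census of groups ($Q_8$, $\frak{D}_{12}$, $\frak{D}_{16}$ versus $\frak{D}_8$) starts from an unproved premise. The paper's own proof of (3) spends most of its effort on precisely this point: using \cite{IKo1} it reduces to ``$\frak{D}_8$ or Kac--Paljutkin,'' and then excludes Kac--Paljutkin by the quadrilateral relation itself (the two period-two automorphisms fixing $P$ and $Q$ would generate a group $G$ of order $4$, forcing $P\cap Q=M^G\neq N$). Some argument of this kind is unavoidable in your approach too; in (2) the paper sidesteps it by first showing $\bkappa\kappa$ contains the order-three automorphism, so that $P\subset M$ has graph $D_4$, and then quoting \cite{Ho} and \cite{I5}.

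Part (4) is a plan rather than a proof, and its pivotal step would fail: an automorphism inside $\xi^2$ is in no way ``incompatible with the 2-supertransitivity of $N\subset P$'' --- the $A_5$, $E_6$, $E_6^{(1)}$ and $E_7^{(1)}$ subfactors are all 2-supertransitive and all have automorphisms in $\xi^2$ --- so neither branch of your case analysis terminates in a contradiction by fusion bookkeeping of this kind. Your stated ``main obstacle'' (non-2-supertransitivity of $P\subset M$) is also a red herring, because the paper's proof of (4) never analyzes $\bkappa\kappa$ at all. It splits on cocommutativity: if $\fQ$ is cocommuting, Corollary \ref{3corollary index} (which needs only 2-supertransitivity of $N\subset P$ and $N\subset Q$) gives $4=[M:P]\leq[P:N]-1\leq 3$, absurd; if $\fQ$ is not cocommuting, then $\hat{\fQ}$ is noncommuting and statement (3) applied to $\hat{\fQ}$ makes $N\subset M$ a crossed product $N\rtimes\frak{D}_8$, whence $P$ and $Q$ are crossed products by subgroups and $\fQ$ is commuting --- again a contradiction. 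Rebuilding your part (4) along these structural lines, rather than by hand on the reducible $\bkappa\kappa$, is what is actually needed.
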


\begin{proof} (1) follows from Remark \ref{3remark no-auto}. 

(2) Since the principal graph of $N\subset P$ is $E_6^{(1)}$, 
there exists an outer automorphism $\alpha\in \Aut(P)$ of order 3 such that 
$$[\xi^2]=[\id_P]\oplus [\alpha]\oplus [\alpha^2]\oplus 2[\xi].$$
The proof of Lemma \ref{4lemma xi-eta} shows that $\dim(\bkappa\kappa,\xi)=0$ and 
$$2\leq \dim(\bkappa\kappa,\xi^2)=\dim(\bkappa\kappa,\id_P\oplus \alpha\oplus\alpha^2).$$
Therefore $\bkappa\kappa$ contains $\alpha$ and so the principal graph of $P\subset M$ is 
$D_4$. 
The rest of the statement follows from \cite{Ho}, \cite{I5}. 

(3) Thanks to (1), the number $n$ is at least 5 and we have irreducible decomposition 
$$[\iota\biota]=[\id_P]\oplus [\alpha]\oplus [\xi],\quad [\biota\iota]\oplus [\tau]\oplus [\hxi],$$
where $d(\xi)=d(\hxi)=2$ and $\alpha\in \Aut(P)$ and $\tau\in \Aut(N)$ are outer automorphisms of 
order 2 satisfying $[\alpha\xi]=[\xi]$, $[\tau\hxi]=[\hxi]$. 
Since $[\kappa\iota\tau]=[\kappa\iota]$, the crossed product $N\rtimes_\tau \Z/2\Z$ is identified 
with the intermediate subfactor of $N\subset P$ generated by $N$ and a unitary $u\in M$ 
satisfying $uxu^*=\tau(x)$ for all $x\in N$. 
If $\biota_Q\iota_Q$ contained $\tau$ as well, the two inclusions $N\subset P$ and 
$N\subset Q$ would have a common intermediate subfactor $N\rtimes \tau \Z/2\Z$, 
which is a contradiction as we have $P\cap Q=N$. 
Since $\fQ$ is noncommuting $\biota_Q\iota_Q$ must contain $\hxi$ and the principal 
graph of $N\subset Q$ is either $A_5$ or $D_m^{(1)}$ with $m\geq 5$. 

Assume that the principal graph of $N\subset Q$ is $D_m^{(1)}$. 
Then we have the irreducible decomposition 
$[\biota_Q\iota_Q]=[\id_N]\oplus [\tau_Q]\oplus [\hxi]$ such that $[\tau_Q]\neq [\tau]$ and $\tau_Q$ 
is contained in $\hxi^2$. 
If $n$ were not equal to 6, the endomorphism $\hxi^2$ would contain exactly two automorphisms 
$\id_P$ and $\tau$, which would imply $[\tau]=[\tau_Q]$, a contradiction. 
Thus we get $n=m=6$ and $[\hxi^2]=[\id_N]\oplus [\tau]\oplus [\tau_Q]\oplus [\tau']$, 
where $\tau'$ is an automorphisms of $N$. 
It is known that there are 4 different subfactors with the principal graph $D_6^{(1)}$ \cite{IKa}. 
We show that the only one case among them is possible. 

Since the dual principal graph of $N\subset P$ is also $D_6^{(1)}$, 
we have $[\xi^2]=[\id]\oplus [\alpha]\oplus [\alpha_1]\oplus[\alpha_2]$ 
where $\alpha_1$ and $\alpha_2$ are automorphisms of $P$. 
The proof of Lemma \ref{4lemma xi-eta} shows 
$$2\leq \dim (\biota\bkappa\kappa\iota,\hxi)=1+\dim (\bkappa\kappa,\alpha_1\oplus \alpha_2).$$
If $\bkappa\kappa$ contained both $\alpha_1$ and $\alpha_2$, it would contain $\alpha$ too, 
which contradicts irreducibility of $N\subset M$. 
Thus we may assume that $\bkappa\kappa$ contains only $\alpha_1$, whose period must be two 
in consequence. 
Therefore either $[M:P]=2$ or the principal graph of $P\subset M$ is $D_k^{(1)}$. 
Let $L=P\rtimes_{\alpha_1}\Z/2\Z$, which is regarded as an intermediate subfactor of 
$P$ and $M$. 
Then it is known \cite[Chapter 8]{IKo1} that $N\subset L$ is given by the fixed point algebra of 
an outer action of either $\frak{D}_8$ or the Kac-Paljutkin algebra on $L$ and 
$$[\biota\bar{\sigma}\sigma\iota]=[\id_N]\oplus [\tau]\oplus [\tau_Q]\oplus 
[\tau']\oplus 2[\hxi],$$
where we set $\sigma=\iota_{L,P}$. 
We claim that $Q$ is an intermediate subfactor of $N\subset L$. 
Since $d(\hxi)=2$, Theorem \ref{2theorem decomposition},(1) shows that the multiplicity of 
$\hxi$ in $\biota\bkappa\kappa\iota$ is two and $\biota\bkappa\kappa\iota$ contains automorphisms with 
multiplicity at most one. 
On the other hand, the endomorphism $\biota_Q\iota_Q$ is contained in 
$\biota\bar{\sigma}\sigma\iota$, and so \cite[Theorem 3.9]{ILP} shows the claim. 
Since $\fQ$ is a quadrilateral we get $M=L$. 
Since $[M:P]=[M:Q]=2$, there exist period two automorphisms $\beta, \beta_1\in \Aut(M)$ such that 
$P=M^\beta$ and $Q=M^{\beta_1}$. 
If $N\subset M$ were given by the Kac-Paljutkin algebra, we would have 
$$[\kappa\iota\biota\bkappa]=[\id_M]\oplus [\beta_1]\oplus [\beta_2]\oplus [\beta_1\beta_2]\oplus 
2[\rho],$$
with $d(\rho)=2$, and the two automorphisms $\beta_1$ and $\beta_2$  
would generate a group $G$ of order 4 in $\Aut(M)$. 
However, this means $P\cap Q=M^G\neq N$, which is a contradiction. 
Therefore $N$ is the fixed point algebra of an outer action of $\frak{D}_8$. 

Now we assume that the principal graph of $N\subset P$ is $A_5$ and we show that $\fQ$ would be 
commuting, which contradicts the assumption. 
Since $[M:P][P:N]=[M:Q][Q:N]$ we have $[M:P]=3$ and $[M:Q]=4$. 
In this case, we have 
$$[\biota_Q\iota_Q]=[\id_N]\oplus [\hxi], \quad [\iota_Q\biota_Q]=[\id_Q]\oplus [\xi_Q],$$
$$[\hxi^2]=[\id_N]\oplus [\tau]\oplus [\hxi],\quad [\tau^2]=[\id_M],\quad [\tau\hxi]=[\hxi\tau]=[\hxi],$$
$$[\xi_Q^2]=[\id_Q]\oplus [\alpha_Q]\oplus [\xi_Q], \quad [\alpha_Q^2]=[\id_Q],\quad 
[\alpha_Q\xi_Q]=[\xi_Q\alpha_Q]=[\xi_Q].$$
As in \cite[Lemma 3.2]{I2}, we can choose representatives of $[\alpha_Q]$ and $[\tau]$ such that 
$\alpha_Q^2=\id_Q$, $\tau^2=\id_N$, and $\alpha_Q\iota_Q=\iota_Q\tau$ hold. 

We claim $[\kappa_Q\alpha_Q]=[\kappa_Q]$.  
Indeed, since $\kappa\iota=\kappa_Q\iota_Q$ and $[\iota\tau]=[\iota]$, we have 
$[\kappa_Q\alpha_Q\iota_Q]=[\kappa_Q\iota_Q\tau]=[\kappa_Q\iota_Q]$ and 
$$1=\dim(\kappa_Q\alpha_Q\iota_Q,\kappa_Q\iota_Q)=\dim (\kappa_Q\alpha_Q,\kappa_Q\iota_Q\biota_Q)
=\dim(\kappa_Q\alpha_Q,\kappa_Q\oplus \kappa_Q\xi_Q).$$
Since we have 
$$\dim(\kappa_Q\alpha_Q,\kappa_Q\xi_Q)=\dim(\kappa_Q,\kappa_Q\xi_Q\alpha_Q)=\dim 
(\kappa_Q,\kappa_Q\xi_Q)=\dim(\bkappa_Q\kappa_Q,\xi_Q)=0,$$
we get the claim. 

The claim implies that we can regard $L:=Q\rtimes_{\alpha_Q}\Z/2\Z$ as an intermediate subfactor 
of $Q\subset M$. 
On the other hand, we can regard $R:=N\rtimes_\tau \Z/2\Z$ as an intermediate subfactor of 
$N\subset P$. 
Since we have $\alpha_Q\iota_Q=\iota_Q\tau$, we have the inclusion $R\subset L$. 
Moreover \cite[Theorem 3.1]{I2} shows that $N$ is the fixed point algebra of an outer action of 
$\fS_3$ on $L$. 
In particular 
$\begin{array}{ccc}
R &\subset &L  \\
\cup & &\cup  \\
N &\subset  &Q
\end{array}$ 
is a commuting square as we have  $[L:R]=3$ and $[L:Q]=2$. 
By Galois correspondence, the subfactor $R$ is the fixed point subalgebra of $L$ under 
a subgroup of $\fS_3$ isomorphic to $\Z/3\Z$, and by duality, there exists an order three outer automorphism 
$\beta\in \Aut(R)$ such that $L=R\rtimes_\beta\Z/3\Z$. 

Since $[P:R]=2$, there exits an order two automorphism $\varphi\in \Aut(R)$ 
such that $P=R\rtimes_\varphi \Z/2\Z$. 
Since $\overline{\iota_{M,R}}\iota_{M,R}$ contains $\varphi$ and $\beta$ and $[M:R]=6$, 
we have $M=N\rtimes \Gamma$, where $\Gamma$ is the group generated by $[\varphi]$ and $[\beta]$, 
whose order must be 6. 
In particular 
$\begin{array}{ccc}
P &\subset &M  \\
\cup & &\cup  \\
R &\subset  &L
\end{array}$ 
is a commuting square. 
Therefore we have $E_QE_P=E_QE_LE_P=E_QE_R=E_N$, which shows that $\fQ$ is commuting, a contradiction. 
Thus the statement is proven. 

(4) Suppose that $N\subset P$ and $N\subset Q$ are 2-supertransitive and the principal graph of 
$P\subset M$ is $D_n^{(1)}$. 
If $\fQ$ were cocommuting, Corollary \ref{3corollary index} would imply $4=[M:P]\leq [P:N]-1$ which 
is a contradiction. 
If $\fQ$ were noncocommuting, the statement (3) applied to the dual quadrilateral $\hat{\fQ}$ would imply 
$M=N\rtimes \frak{D}_8$. 
However, this shows that $\fQ$ is commuting and we get a contradiction. 
Therefore the lemma is proven. 
\end{proof}

\begin{proof}[Proof of Theorem \ref{6theorem}] 
Assume first that $N\subset P$ and $N\subset Q$ are 
2-supertransitive. 
Then Lemma \ref{6lemma},(4) shows that $P\subset M$ is either 2-supertransitive 
or $M=P\rtimes G$ where $G$ is a finite abelian group of order 2,3, or 4. 
If the latter occurs, then the proof of Lemma \ref{4lemma xi-eta} implies that $\xi^2$ 
contains a non-trivial automorphism, and so the principal graph is 
either $A_5$, $E_6$, or $E_6^{(1)}$. 
Applying Lemma \ref{6lemma},(1) to the dual quadrilateral $\hat{\fQ}$, we see that 
$\fQ$ is cocommuting. 
Since the $E_6$ subfactor has trivial second cohomology, Corollary \ref{3corollary index} 
implies that $E_6$ never occurs. 
For the same reason, we have $[M:P]=[P:N]-1=2$ for $A_5$ and we get case (1) from 
Theorem \ref{2theorem no-extra}. 
For $E_6^{(1)}$, we get case (5) from Lemma \ref{6lemma},(2). 
If $P\subset M$ and $Q\subset M$ are 2-supertransitive, either Theorem 
\ref{4theorem (2,2)-supertransitive} or Lemma \ref{6lemma},(2) is applied and we get cases 
(1),(2),(3),(6), and (7) from Theorem \ref{5theorem I2}, \ref{5theorem II2}, and \ref{5theorem III2}. 

Assume now that $N\subset P$ is not 2-supertransitive. 
Then Lemma \ref{6lemma},(1) implies that the principal graph of $N\subset P$ is $D_n^{(1)}$ with 
$n\geq 5$, and so Lemma \ref{6lemma},(4) implies the case (4). 
\end{proof}

\begin{remark} Except for the case (4), we have $\Theta(P,Q)=\cos^{-1}1/([P:Q]-1)$ 
thanks to Lemma 2.5 and Corollary \ref{3corollary angle}. 
In the case (4), we have $\Theta(P,Q)=\pi/4$ thanks to \cite[Theorem 6.1]{SW}.  
\end{remark}

\section{$\alpha$-induction and GHJ pairs}

\subsection{$\alpha$-induction and angles}
Let $\cN$ be a properly infinite factor and $\{\lambda_i\}_{i\in I}$ 
be a finite system of irreducible endomorphisms in $\End_0(\cN)$.
We assume:\\
(1) there exists $0\in I$ such that $\lambda_0=\id_\cN$, \\
(2) for $i\neq j\in I$, we have $[\lambda_i]\neq[\lambda_j]$,\\
(3) for each $i\in I$, there exists $\bar{i}\in I$ such that $[\overline{\lambda_i}]=[\lambda_{\bar{i}}]$,\\
(4) there exist non-negative integers $N_{ij}^k$ such that 
$$[\lambda_i\lambda_j]=\bigoplus_{k\in I}N_{ij}^k[\lambda_k],$$ \\
(5) the system $\{\lambda_i\}_{i\in I}$ has a braiding $\{\varepsilon(\lambda_i,\lambda_j)\}_{i,j\in I}$ 
(see \cite[Definition 2.2]{BEK2} for the definition), \\
(6) $\cN\subset \cM$ is an irreducible inclusion of factors of finite index such that 
$$[\overline{\iota_{\cM,\cN}}\iota_{\cM,\cN}]=\bigoplus_{j\in J}n_j[\lambda_j],$$
where $J$ is a subset of $I$ and $n_j$ is a positive integer.

We naturally extend the braiding to the category generated by $\{\lambda_i\}_{i\in I}$ and use the same 
symbol $\varepsilon(\rho,\sigma)$ for the extension. 
We set $\varepsilon^{+}(\rho,\sigma)=\varepsilon(\rho,\sigma)$ and $\varepsilon^{-}(\rho,\sigma)=\varepsilon(\sigma,\rho)^*$. 
For simplicity, we use the following notation: $\nu=\iota_{\cM,\cN}$, 
$\gamma=\nu\bar{\nu}\in \End_0(\cM)$, $\hgamma=\bar{\nu}\nu\in \End_0(\cN)$. 
Then $\alpha$-induction $\alpha^\pm_{\lambda_i}\in \End_0(\cM)$ is defined by 
$$\alpha^\pm_{\lambda_i}=\bar{\nu}^{-1}\cdot\Ad \varepsilon^\pm(\lambda_i,\hgamma)\cdot 
\lambda_i\cdot\bar{\nu}.$$ (See \cite{LRe}, \cite{X}.)
Note that for $x\in \cN$ we have $\bar{\nu}(\nu(x))=\hgamma(x)$ and 
$\alpha^\pm_{\lambda_i}(x)=\lambda_i(x)$, 
which means $\alpha^\pm_{\lambda_i}\nu=\nu\lambda_i$. 

Let $\cH_j=(\nu,\nu\lambda_j)$. 
Then thanks to Theorem \ref{2theorem decomposition}, we have 
$$\cM=\bigoplus_{j\in J}\cN\cH_j$$
as a linear space. 
We choose an orthonormal basis $\{t(j)_k\}_{k=1}^{n_j}$ of $\cH_j$. 

\begin{lemma} \label{7lemma alpha}Let the notation be as above. 
Then for $t\in \cH_j$ we have 
$$\alpha^\pm_{\lambda_i}(t)=\varepsilon^\pm(\lambda_i,\lambda_j)^*t.$$
\end{lemma}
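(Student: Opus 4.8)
The plan is to turn this identity in $\cM$ into a computation in $\cN$ by applying $\bnu$, and then to close that computation using the two structural properties shared by any braiding: naturality and the braiding fusion equation. Two preliminary observations set this up. First, $\bnu$ is a unital $*$-homomorphism between factors, hence injective; so to prove an equality of two elements of $\cM$ it is enough to prove equality of their images under $\bnu$. Second, unwinding the definition $\alpha^\pm_{\lambda_i}=\bnu^{-1}\cdot\Ad\varepsilon^\pm(\lambda_i,\hgamma)\cdot\lambda_i\cdot\bnu$ (a well-defined endomorphism of $\cM$ in the framework cited in the excerpt) gives the defining relation $\bnu(\alpha^\pm_{\lambda_i}(y))=\varepsilon^\pm(\lambda_i,\hgamma)\,\lambda_i(\bnu(y))\,\varepsilon^\pm(\lambda_i,\hgamma)^*$ for all $y\in\cM$. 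Taking $y=t$, it therefore suffices to verify
\begin{equation*}
\bnu\bigl(\varepsilon^\pm(\lambda_i,\lambda_j)^*\,t\bigr)
=\varepsilon^\pm(\lambda_i,\hgamma)\,\lambda_i(\bnu(t))\,\varepsilon^\pm(\lambda_i,\hgamma)^*.
\tag{$\dagger$}
\end{equation*}

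Next I would simplify each side of $(\dagger)$. For the left-hand side, since $\varepsilon^\pm(\lambda_i,\lambda_j)\in\cN$ coincides with $\nu(\varepsilon^\pm(\lambda_i,\lambda_j))$ as an element of $\cM$ and $\bnu\nu=\hgamma$, it equals $\hgamma(\varepsilon^\pm(\lambda_i,\lambda_j)^*)\,\bnu(t)$. For the right-hand side, applying $\bnu$ to the intertwiner relation $t\nu(x)=\nu\lambda_j(x)t$ shows $\bnu(t)\in(\hgamma,\hgamma\lambda_j)$. Naturality of the (extended) braiding with respect to this intertwiner then gives $\varepsilon^\pm(\lambda_i,\hgamma\lambda_j)\,\lambda_i(\bnu(t))=\bnu(t)\,\varepsilon^\pm(\lambda_i,\hgamma)$, while the braiding fusion equation gives $\varepsilon^\pm(\lambda_i,\hgamma\lambda_j)=\hgamma(\varepsilon^\pm(\lambda_i,\lambda_j))\,\varepsilon^\pm(\lambda_i,\hgamma)$. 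Substituting these two identities into the right-hand side of $(\dagger)$ and using unitarity of $\varepsilon^\pm(\lambda_i,\hgamma)$ collapses it to $\hgamma(\varepsilon^\pm(\lambda_i,\lambda_j)^*)\,\bnu(t)$, which matches the left-hand side. Together with injectivity of $\bnu$ this proves the lemma.

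The hard part, and the only step requiring genuine care, is the bookkeeping of the braiding on the composite (reducible) object $\hgamma=\bnu\nu$: both naturality and the fusion equation are invoked for the extension of $\{\varepsilon(\lambda_i,\lambda_j)\}$ to the tensor category generated by the $\lambda_k$, rather than on the generators themselves, so one must be sure that extension is used consistently. One must also check that the conventions for $\varepsilon^+(\rho,\sigma)=\varepsilon(\rho,\sigma)$ and $\varepsilon^-(\rho,\sigma)=\varepsilon(\sigma,\rho)^*$ obey the same form of the two identities, so that the single computation above handles both signs at once. Everything else — the injectivity of $\bnu$, the passage through $\bnu\nu=\hgamma$, and the final cancellation — is routine once these two braiding relations are set down.
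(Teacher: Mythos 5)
Your proposal is correct and follows essentially the same route as the paper's proof: the paper likewise observes $\bnu(t)\in(\hgamma,\hgamma\lambda_j)$ and reduces the lemma to the identity $\varepsilon^\pm(\lambda_i,\hgamma)\,\lambda_i(\bnu(t))\,\varepsilon^\pm(\lambda_i,\hgamma)^*=\hgamma(\varepsilon^\pm(\lambda_i,\lambda_j)^*)\,\bnu(t)$, with injectivity of $\bnu$ and $\bnu\nu=\hgamma$ left implicit. The only difference is presentational: the paper carries out your naturality-plus-braiding-fusion step as a two-picture diagrammatic manipulation, whereas you write it out algebraically (and correctly note that the opposite braiding $\varepsilon^-$ satisfies the same two relations, so one computation covers both signs).
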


\begin{proof} Since $\bar{\nu}(t)\in (\hgamma,\hgamma\lambda_j)$, the statement follows from 
the equation 
$$\varepsilon(\lambda_i,\hgamma)\lambda_i(\bar{\nu}(t))\varepsilon(\lambda_i,\hgamma)^*
=\vpic{VI1}{0.8in}=\vpic{VI2}{0.8in}
=\hat{\gamma}(\varepsilon(\lambda_i,\lambda_j)^*)\bar{\nu}(t).$$
\end{proof}

From the above lemma, it is easy to show $(\rho,\sigma)\subset (\alpha^\pm_{\rho},\alpha^\pm_\sigma)$. 

We fix $i_0\in I$ and set $\lambda=\lambda_{i_0}$, 
$M=\cM$, $P=\alpha^+_{\lambda}(\cM)$, $Q=\alpha^-_{\lambda}(\cM)$, 
$N=P\cap Q$, and $R=\lambda(\cN)$. 
Since $\alpha^\pm_{\lambda}\nu=\nu\lambda$, we have $R\subset N$. 
In general these two factors do not coincide and we give a description of $N$ now. 

We set 
$$J_{i_0}=\{j\in J;\; \varepsilon(\lambda_j,\lambda)\varepsilon(\lambda,\lambda_j) 
\textrm{ is a scalar}\},$$
and set $m_j=\varepsilon(\lambda_j,\lambda)\varepsilon(\lambda,\lambda_j)$ for $j\in J_{i_0}$.
Recall that $\phi_{\rho}=r_{\rho}^*\bar{\rho}(\cdot)r_\rho$ is the standard left inverse of $\rho$. 
Since $\varepsilon(\lambda_j,\lambda)\varepsilon(\lambda,\lambda_j)\in 
(\lambda\lambda_j,\lambda\lambda_j)$, we have 
$\phi_{\lambda}(\varepsilon(\lambda_j,\lambda)\varepsilon(\lambda,\lambda_j))\in 
(\lambda_j,\lambda_j)$, which is a scalar. 
It is easy to show that $j\in J_{i_0}$ if and only if 
$$|\phi_{\lambda}(\varepsilon(\lambda_j,\lambda)\varepsilon(\lambda,\lambda_j))|=1.$$ 

\begin{lemma} \label{7lemma N} 
Let the notation be as above. 
Then 
$$N=\alpha^+_{\lambda}(\sum_{j\in J_{i_0}}\cN\cH_j)=\alpha^-_{\lambda}(\sum_{j\in J_{i_0}}\cN\cH_j).$$
In particular, the von Neumann algebra $N$ is a factor. 
\end{lemma}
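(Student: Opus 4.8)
The plan is to use the crossed-product-type decomposition $\cM=\bigoplus_{j\in J}\cN\cH_j$ together with Lemma \ref{7lemma alpha}, and to show that both $\alpha^+_\lambda$ and $\alpha^-_\lambda$ preserve this $J$-grading, so that $N=P\cap Q$ can be computed component by component. First I would fix an orthonormal basis $\{t(j)_k\}_k$ of each $\cH_j$ and recall, via Theorem \ref{2theorem decomposition},(4), that every $x\in\cM$ has a unique expansion $x=\sum_{j,k}t(j)_k^{*}a_{jk}$ with $a_{jk}\in\cN$; write $\cM_j=\cN\cH_j$. Using $\alpha^\pm_\lambda|_\cN=\lambda$ and Lemma \ref{7lemma alpha}, a direct computation gives, for $w=\sum_k t(j)_k^{*}a_k\in\cM_j$,
\[
\alpha^+_\lambda(w)=\sum_k t(j)_k^{*}\,\varepsilon(\lambda,\lambda_j)\lambda(a_k),\qquad
\alpha^-_\lambda(w)=\sum_k t(j)_k^{*}\,\varepsilon(\lambda_j,\lambda)^{*}\lambda(a_k).
\]
Since $\varepsilon(\lambda,\lambda_j)\lambda(a_k)$ and $\varepsilon(\lambda_j,\lambda)^{*}\lambda(a_k)$ lie in $\cN$, both images stay inside $\cM_j$. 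Hence $P=\bigoplus_j\alpha^+_\lambda(\cM_j)$ and $Q=\bigoplus_j\alpha^-_\lambda(\cM_j)$ are graded, with $P\cap\cM_j=\alpha^+_\lambda(\cM_j)$ and $Q\cap\cM_j=\alpha^-_\lambda(\cM_j)$, so that $N=\bigoplus_{j\in J}\bigl(\alpha^+_\lambda(\cM_j)\cap\alpha^-_\lambda(\cM_j)\bigr)$.

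By uniqueness of the expansion, this reduces everything to comparing, inside $\cN$, the fibre subspaces $\varepsilon(\lambda,\lambda_j)R$ and $\varepsilon(\lambda_j,\lambda)^{*}R$, where $R=\lambda(\cN)$. For $j\in J_{i_0}$ the scalar $m_j=\varepsilon(\lambda_j,\lambda)\varepsilon(\lambda,\lambda_j)$ has modulus one, so $\varepsilon(\lambda,\lambda_j)=m_j\varepsilon(\lambda_j,\lambda)^{*}$ and the two fibres coincide; thus $\alpha^+_\lambda(\cM_j)=\alpha^-_\lambda(\cM_j)$, and these are exactly the summands that survive, their sum being $\alpha^+_\lambda(\sum_{j\in J_{i_0}}\cN\cH_j)=\alpha^-_\lambda(\sum_{j\in J_{i_0}}\cN\cH_j)$. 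The crux is therefore to prove $\alpha^+_\lambda(\cM_j)\cap\alpha^-_\lambda(\cM_j)=0$ for $j\notin J_{i_0}$, i.e.\ that $\varepsilon(\lambda,\lambda_j)\lambda(c)=\varepsilon(\lambda_j,\lambda)^{*}\lambda(b)$ with $b,c\in\cN$ forces $b=c=0$. Rewriting this as $\lambda(b)=\varepsilon(\lambda_j,\lambda)\varepsilon(\lambda,\lambda_j)\lambda(c)$ and applying the standard left inverse $\phi_\lambda$ (an $\cN$-bimodule map for $\lambda$ carrying $(\lambda\lambda_j,\lambda\lambda_j)$ into the scalars $(\lambda_j,\lambda_j)=\C$) yields $b=\delta_j c$, where $\delta_j=\phi_\lambda(\varepsilon(\lambda_j,\lambda)\varepsilon(\lambda,\lambda_j))$; substituting back gives $\bigl(u_j-\delta_j\bigr)\lambda(c)=0$ with $u_j=\varepsilon(\lambda_j,\lambda)\varepsilon(\lambda,\lambda_j)$.

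The main obstacle is this final step, which I expect to handle by positivity of $\phi_\lambda$. Since $u_j$ is unitary, $\phi_\lambda(u_j)=\delta_j$ and $\phi_\lambda(1)=1$ give $\phi_\lambda\bigl((u_j-\delta_j)^{*}(u_j-\delta_j)\bigr)=1-|\delta_j|^{2}$, and by the characterization of $J_{i_0}$ recalled in the text one has $|\delta_j|<1$ exactly when $j\notin J_{i_0}$. Applying $\phi_\lambda$ to $\lambda(c)^{*}(u_j-\delta_j)^{*}(u_j-\delta_j)\lambda(c)=0$ and using the bimodule property gives $(1-|\delta_j|^{2})\,c^{*}c=0$, whence $c=0$ and then $b=0$, as required. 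This establishes $N=\alpha^+_\lambda(\sum_{j\in J_{i_0}}\cN\cH_j)=\alpha^-_\lambda(\sum_{j\in J_{i_0}}\cN\cH_j)$.

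Finally, for factoriality I would note that $0\in J_{i_0}$ (the identity sector has trivial monodromy) and $\cH_0=(\nu,\nu)=\C$ by irreducibility of $\cN\subset\cM$, so $\cN\subseteq B:=\sum_{j\in J_{i_0}}\cN\cH_j$. Then $Z(B)=B\cap B'\subseteq B\cap\cN'\subseteq\cM\cap\cN'=\C$, so $B$ is a factor, and $N=\alpha^+_\lambda(B)$, being the image of a factor under the injective normal $*$-homomorphism $\alpha^+_\lambda$, is a factor.
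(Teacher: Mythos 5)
Your strategy is the paper's own --- expand $\cM$ via Theorem \ref{2theorem decomposition}, act by $\alpha^{\pm}_\lambda$ using Lemma \ref{7lemma alpha}, compare coefficients by uniqueness of the expansion, and kill the components with $j\notin J_{i_0}$ by applying $\phi_\lambda$ to the monodromy --- and those steps, including your positivity variant $(1-|\delta_j|^2)c^*c=0$ (the paper instead applies $\phi_\lambda$ to two rearrangements of the same identity to get $a=|\delta_j|^2a$) and the factoriality argument, are sound. But there is one genuine bookkeeping gap: the elements $\sum_k t(j)_k^*a_k$ with $a_k\in\cN$ do \emph{not} exhaust $\cM_j=\cN\cH_j$. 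For $t\in\cH_j=(\nu,\nu\lambda_j)$ one has $ta=\lambda_j(a)t$, hence $at^*=t^*\lambda_j(a)$; so $\cN\cH_j$ is spanned by the products $a\,t(j)_k$ with coefficients on the \emph{left}, whereas the span of the $t(j)_k^*a_k$ is $\cH_j^*\cN$, whose left $\cN$-action is twisted by $\lambda_j$ and which is therefore the isotypic component of the \emph{conjugate} sector: $\cH_j^*\cN=\cN\cH_{\bar{j}}$. (Concretely, in $\cM=\cN\rtimes_\alpha\Z/3\Z$ the component corresponding to $\alpha$ is $\C u$ for the implementing unitary $u$, and $\cN u\neq \cN u^{*}$.)

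Consequently what your argument literally produces is $N=\alpha^{\pm}_\lambda\bigl(\sum_{j\in J_{i_0}}\cH_j^*\cN\bigr)=\alpha^{\pm}_\lambda\bigl(\sum_{j\in J_{i_0}}\cN\cH_{\bar{j}}\bigr)$, and matching this with the asserted $\alpha^{\pm}_\lambda\bigl(\sum_{j\in J_{i_0}}\cN\cH_j\bigr)$ silently uses that $J_{i_0}$ is closed under conjugation $j\mapsto\bar{j}$ --- a true fact (one has $|\phi_\lambda(\varepsilon(\lambda_{\bar{j}},\lambda)\varepsilon(\lambda,\lambda_{\bar{j}}))|=|\phi_\lambda(\varepsilon(\lambda_j,\lambda)\varepsilon(\lambda,\lambda_j))|$ by the standard symmetry of Rehren's $Y$-matrix), but one which your write-up neither states nor proves. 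The cleanest repair is to expand with coefficients on the left, $w=\sum_k a_kt(j)_k$, exactly as the paper does: then $\alpha^+_\lambda(w)=\sum_k\lambda(a_k)\varepsilon(\lambda,\lambda_j)^*t(j)_k$ and $\alpha^-_\lambda(w)=\sum_k\lambda(a_k)\varepsilon(\lambda_j,\lambda)t(j)_k$, your grading, fibre-comparison, and $\phi_\lambda$ arguments go through verbatim, and the surviving sum is literally $\sum_{j\in J_{i_0}}\cN\cH_j$.
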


\begin{proof} Let $x,y\in \cM$ satisfy $\alpha^+_{\lambda}(x)=\alpha^-_\lambda(y)$. 
Then Theorem \ref{2theorem decomposition} shows that we have expansion of $x$ and $y$  as
$$x=\sum_{j\in J}\sum_{k=1}^{n_j}a(j)_kt(j)_k,$$
$$y=\sum_{j\in J}\sum_{k=1}^{n_j}b(j)_kt(j)_k,$$
where $a(j)_k$ and $b(j)_k$ are elements in $\cN$ uniquely determined by $x$ and $y$ respectively. 
Thus we get 
$$\sum_{j\in J}\sum_{k=1}^{n_j}\lambda(a(j)_k)\varepsilon(\lambda,\lambda_j)^*t(j)_k=
\alpha^+_\lambda(x)=\alpha^-_\lambda(y)
=\sum_{j\in J}\sum_{k=1}^{n_j}\lambda(b(j)_k)\varepsilon(\lambda_j,\lambda)t(j)_k,$$
which implies 
$\lambda(a(j)_k)\varepsilon(\lambda,\lambda_j)^*=\lambda(b(j)_k)\varepsilon(\lambda_j,\lambda)$ 
for all $j\in J$ and $1\leq k\leq n_j$. 
Applying $\phi_{\lambda}$ to both 
$\lambda(a(j)_k)=\lambda(b(j)_k)\varepsilon(\lambda_j,\lambda)\varepsilon(\lambda,\lambda_j)$ 
and $\lambda(a(j)_k)(\varepsilon(\lambda_j,\lambda)\varepsilon(\lambda,\lambda_j))^*=b(j)_k$, 
we get 
$$a(j)_k=b(j)_k\phi_{\lambda}(\varepsilon(\lambda_j,\lambda)\varepsilon(\lambda,\lambda_j)),$$
$$a(j)_k\overline{\phi_{\lambda}(\varepsilon(\lambda_j,\lambda)\varepsilon(\lambda,\lambda_j))}=b(j)_k,$$
which shows that $a(k)_j\neq 0$ only if 
$|\phi_{\lambda_j}(\varepsilon(\lambda,\lambda_j)\varepsilon(\lambda_j,\lambda))|=1$, which is equivalent 
to $j\in J_{i_0}$. 
On the other hand, when $j\in J_{i_0}$ we have 
$\varepsilon(\lambda_j,\lambda)=m_j\varepsilon(\lambda,\lambda_j)^*$ and 
$$\alpha^+_\lambda(\sum_{j\in J_{i_0}}\sum_{k=1}^{n_j}a(j)_kt(j)_k)
=\alpha^-_\lambda(\sum_{j\in J_{i_0}}\sum_{k=1}^{n_j}m_j^{-1}a(j)_kt(j)_k),\quad \forall a(j)_k\in \cN,$$
which shows the first statement. 
Since $\sum_{j\in J_{i_{0}}}\cN\cH_j$ is an intermediate von Neumann algebra between 
$\cN$ and $\cM$, it must be a factor, and so is $N$. 
\end{proof}

Assume that $P\subset M$ has no non-trivial intermediate subfactor. 
Then $\fQ=\quadri$ is a quadrilateral though it is not irreducible in general. 

\begin{theorem} \label{7theorem angle}
Let the notation be as above. Then 
$$\Ang(P,Q)=\{\cos^{-1}|\phi_\lambda(\varepsilon(\lambda_j,\lambda)\varepsilon(\lambda,\lambda_j))|;\; 
j\in J\}\setminus \{0,\frac{\pi}{2}\}.$$
In particular, when the braiding is non-degenerate, we have $$\Ang(P,Q)=\{\cos^{-1}
\frac{|S_{00}S_{i_0j}|}{|S_{0i_0}S_{0j}|};\; j\in J\}\setminus \{0,\frac{\pi}{2}\},$$
where $S_{ij}$ is as in \cite[Proposition 2.4]{BEK2}.
\end{theorem}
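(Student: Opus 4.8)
The plan is to compute $\Ang(P,Q)$ sector by sector using Remark \ref{2remark angle2}, but with the common subfactor taken to be $R=\lambda(\cN)$ rather than $N=P\cap Q$ (which need not be irreducible in $M$). Since $\cN\subset\cM$ is irreducible and $\alpha^+_\lambda$ (resp.\ $\alpha^-_\lambda$) restricts on $\cN$ to $\lambda$ with image $R$, the isomorphisms $\alpha^\pm_\lambda\colon\cM\to P,Q$ identify $\cN\subset\cM$ with $R\subset P$ and with $R\subset Q$; in particular $R\subset P$ and $R\subset Q$ are irreducible, so Remark \ref{2remark angle2} applies even though $R\subset M$ may be reducible. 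For $j\in J$ let $\rho_j\in\End_0(R)$ correspond to $\lambda_j$ under $\lambda\colon\cN\to R$. First I would verify, using $\alpha^\pm_\lambda\nu=\nu\lambda$ and a multiplicity count against $[\bar\nu\nu]=\bigoplus_j n_j[\lambda_j]$, that $\alpha^+_\lambda$ maps $\cH_j=(\nu,\nu\lambda_j)$ onto $(\iota_{P,R},\iota_{P,R}\rho_j)$ and $\alpha^-_\lambda$ maps $\cH_j$ onto $(\iota_{Q,R},\iota_{Q,R}\rho_j)$; thus the two relevant intertwiner spaces sit inside $\cM$ as $\alpha^+_\lambda(\cH_j)$ and $\alpha^-_\lambda(\cH_j)$.

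The key geometric input is Lemma \ref{7lemma alpha}: for $t\in\cH_j$ one has $\alpha^+_\lambda(t)=\varepsilon(\lambda,\lambda_j)^*t$ and $\alpha^-_\lambda(t)=\varepsilon(\lambda_j,\lambda)t$. Since the braiding operators are unitary these combine into $\alpha^-_\lambda(t)=m_j\,\alpha^+_\lambda(t)$, where $m_j=\varepsilon(\lambda_j,\lambda)\varepsilon(\lambda,\lambda_j)\in(\lambda\lambda_j,\lambda\lambda_j)\subset\cN$ is the monodromy and $c_j:=\phi_\lambda(m_j)\in\C$ is exactly the scalar appearing in the statement. Then I would compute $E_PE_QE_P$ on the sector $\alpha^+_\lambda(\cH_j)$. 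Writing $E_P=\alpha^+_\lambda\phi_{\alpha^+_\lambda}$ and $E_Q=\alpha^-_\lambda\phi_{\alpha^-_\lambda}$ (from $E_\rho=\rho\phi_\rho$), and using the right $P$- and $Q$-module property together with $\alpha^-_\lambda(t)=m_j\alpha^+_\lambda(t)$ and $\alpha^+_\lambda(t)=m_j^*\alpha^-_\lambda(t)$, the computation reduces to
\[
E_P(\alpha^-_\lambda(t))=E_P(m_j)\,\alpha^+_\lambda(t),\qquad
E_Q(\alpha^+_\lambda(t))=E_Q(m_j^*)\,\alpha^-_\lambda(t).
\]
The whole computation then hinges on the scalar identities $E_P(m_j)=c_j$ and $E_Q(m_j^*)=\overline{c_j}$; granting them, $E_PE_Q$ acts on $\alpha^+_\lambda(\cH_j)$ as the scalar $|c_j|^2$, so the angle in this sector is $\cos^{-1}|c_j|$.

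To establish $E_P(m_j)=c_j$ I would use $E_P(m_j)=\alpha^+_\lambda(\phi_{\alpha^+_\lambda}(m_j))$ together with the compatibility of $\alpha$-induction with conjugation, $\overline{\alpha^+_\lambda}=\alpha^+_{\overline\lambda}$ with $\alpha^+_{\overline\lambda}|_\cN=\overline\lambda$, which lets one choose the standard solutions for $\alpha^+_\lambda$ so that its left inverse agrees with $\phi_\lambda$ on intertwiners in $\cN$ of the form $m_j$; hence $\phi_{\alpha^+_\lambda}(m_j)=\phi_\lambda(m_j)=c_j$ is a scalar and $\alpha^+_\lambda(c_j)=c_j$, and similarly for $E_Q$. \emph{This scalar identity is the main obstacle}: one must check carefully that the minimal conditional expectation onto $P=\alpha^+_\lambda(\cM)$, applied to the monodromy operator, ``closes it up'' into the left inverse $\phi_\lambda$. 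I expect this to be cleanest in the graphical calculus for $\alpha$-induction of \cite{LRe} and \cite{BEK2}, where the half-braiding is capped off by the solutions $r_\lambda,\br_\lambda$.

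Finally I would assemble the sectors. Ranging over $j\in J$ produces the candidate angles $\cos^{-1}|c_j|$. For $j\in J_{i_0}$ the monodromy $m_j$ is a scalar of modulus one, so $|c_j|=1$ and $\alpha^+_\lambda(\cH_j)=\alpha^-_\lambda(\cH_j)\subset N$ (consistent with Lemma \ref{7lemma N}); these vectors contribute to $e_P\wedge e_Q$, i.e.\ to the angle $0$ that is subtracted off in the definition. A value $c_j=0$ gives $e_Pe_Qe_P=0$ on that sector, which lies outside the support of $e_Pe_Qe_P-e_P\wedge e_Q$ and yields no angle, removing $\pi/2$. Hence $\Ang(P,Q)=\{\cos^{-1}|\phi_\lambda(\varepsilon(\lambda_j,\lambda)\varepsilon(\lambda,\lambda_j))|:j\in J\}\setminus\{0,\pi/2\}$. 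For the non-degenerate case I would substitute Rehren's evaluation of the normalized monodromy, $\phi_\lambda(\varepsilon(\lambda_j,\lambda)\varepsilon(\lambda,\lambda_j))=S_{00}S_{i_0j}/(S_{0i_0}S_{0j})$ from \cite[Proposition 2.4]{BEK2}, and take absolute values to obtain the stated $S$-matrix formula.
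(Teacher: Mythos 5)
Your proof is correct and follows the paper's own argument essentially step for step: the same decomposition of $P$ and $Q$ over $R=\lambda(\cN)$, the same appeal to Remark \ref{2remark angle2}, and the same key claim that $E_Q$ and $E_P$ act on $\alpha^+_\lambda(\cH_j)$ and $\alpha^-_\lambda(\cH_j)$ by the scalars $\phi_\lambda(m_j^*)$ and $\phi_\lambda(m_j)$, giving the eigenvalue $|\phi_\lambda(m_j)|^2$ for $E_PE_QE_P$. The only difference is in how that claim is verified: the paper computes $\phi_{\alpha^-_\lambda}(\alpha^+_\lambda(t))$ head-on and closes up the resulting intertwiner with a short graphical (Frobenius reciprocity) identity, whereas you first factor out the monodromy via Lemma \ref{7lemma alpha} and then use the module property of the conditional expectations together with $\phi_{\alpha^\pm_\lambda}|_{\cN}=\phi_\lambda$ --- a slightly more streamlined route to the same scalar, resting on the same standard $\alpha$-induction facts ($\overline{\alpha^\pm_\lambda}=\alpha^\pm_{\bar\lambda}$ with the conjugate solution $r_\lambda$, hence $d(\alpha^\pm_\lambda)=d(\lambda)$) that the paper also invokes without proof.
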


\begin{proof} Since $\cM=\sum_{j\in J}\cN\cH_j$, we have 
$$P=\alpha^+_\lambda(\cM)=\sum_{j\in J}\lambda(\cN)\alpha^+_\lambda(\cH_j),$$
$$Q=\alpha^-_\lambda(\cM)=\sum_{j\in J}\lambda(\cN)\alpha^-_\lambda(\cH_j).$$
Thanks to Remark \ref{2remark angle2}, to compute the eigenvalues of $E_PE_QE_P$ it suffices to compute 
$E_PE_Q$ on $\alpha^+_\lambda(\cH_j)$. 
We claim that for every $t\in \cH_j$ the equalities 
$$E_Q(\alpha^+_\lambda(t))=\phi_\lambda(\varepsilon(\lambda,\lambda_j)^*\varepsilon(\lambda_j,\lambda)^*)
\alpha^-_\lambda(t),$$
$$E_P(\alpha^-_\lambda(t))=\phi_\lambda(\varepsilon(\lambda_j,\lambda)\varepsilon(\lambda,\lambda_j))
\alpha^+_\lambda(t),$$
hold. 
Note that we have 
$E_Q=\alpha^-_\lambda\phi_{\alpha^-_\lambda}$ and 
$\phi_{\alpha^-_\lambda}=r_\lambda^*\alpha^-_{\bar{\lambda}}(\cdot)r_\lambda.$ 
Thus 
\begin{eqnarray*}
\phi_{\alpha^-_\lambda}(\alpha^+_\lambda(t))&=&
r_\lambda^*\alpha^-_{\bar{\lambda}}(\alpha^+_\lambda(t))r_\lambda
=r_\lambda^*\alpha^-_{\bar{\lambda}}(\varepsilon(\lambda,\lambda_j)^*t)r_\lambda
=r_\lambda^* \bar{\lambda}(\varepsilon(\lambda,\lambda_j)^*)\varepsilon(\lambda_j,\lambda)tr_\lambda\\
&=&r_\lambda^* \bar{\lambda}(\varepsilon(\lambda,\lambda_j)^*)\varepsilon(\lambda_j,\lambda)
\lambda_j(r_\lambda)t.
\end{eqnarray*}
Since 
\begin{eqnarray*}r_\lambda^* \bar{\lambda}(\varepsilon(\lambda,\lambda_j)^*)
\varepsilon(\lambda_j,\lambda)\lambda_j(r_\lambda)
&=&\frac{1}{d(\lambda_j)}\hpic{VI3}{1.0in}
=\frac{1}{d(\lambda_j)}\hpic{VI4}{1.0in}\\
&=&\phi_\lambda(\varepsilon(\lambda,\lambda_j)^*\varepsilon(\lambda_j,\lambda)^*),
\end{eqnarray*}
we get the first equation. 
The second one follows from a similar computation. 
Using the claim, we get 
$$E_PE_Q(\alpha^+_\lambda(t))=
|\phi_\lambda(\varepsilon(\lambda_j,\lambda)\varepsilon(\lambda,\lambda_j))|^2\alpha^+_\lambda(t),$$
which proves the statement. 
\end{proof}

\subsection{GHJ pairs}
We first recall the construction of the GHJ subfactors, which first appeared in the book of 
Goodman, de la Harpe and Jones \cite{GHJ}. 
Our presentation is based on \cite{EK}. 

Let $G$ be one of the Dynkin diagrams of type $A$, $D$, or $E$, with a distinguished vertex $*$ and  
let $\tA_n$ be the string algebra $\mathrm{String}^{(n)}G$ (see \cite[p.554]{EK} for the definition). 
For $i\geq 1$, we denote by $e_i$ the Jones projection in $\tA_{i+1}\cap \tA_{i-1}'$ 
defined by \cite[Definition 11.5]{EK}. 
Then the Jones projections $\{e_k\}_{k=1}^\infty$ satisfy the Temperley-Lieb relations:\\
(1) $e_i e_{i\pm1} e_i=\tau e_i$\\
(2) $e_i e_j=e_j e_i$ if $|i-j| \geq 2$\\
(3) $tr(e_i w)=\tau tr(w) $ if $w$ is a word on $e_1,...,e_{i-1}$.\\
Here $\tau=1/(4\cos^2 \pi/m)$, where $m$ is an integer associated to 
$G$ called \textit{the Coxeter number}. 
For each $n$, the Coxeter number of $A_n$ is $n+1$, the Coxeter number of $D_n$ is $2n-2$, and 
the Coxeter numbers of $E_6$, $E_7$, and $E_8$ are $12$, $18$, and $30$ respectively.

Let $\tB_i$ be the subalgebra of $\tA_i$ generated by 1 and $e_1,...,e_{i-1}$, for each $i=2,3,...$. 
Then the towers $\tB_i \subset \tA_i$ form commuting squares, and the von Neumann algebra $\tB$ 
generated by $\cup_{i=1}^{\infty}\tB_i $ is a subfactor of the factor $\tA$ 
generated by $\cup_{i=1}^{\infty}\tA_i $ in the GNS representation of the unique trace on 
$\cup_{i=1}^{\infty}\tA_i$. 
This is an irreducible subfactor with finite index, called the {\it GHJ subfactor} for $(G, *)$. 
The principal graphs of the GHJ subfactors were computed by Okamoto in \cite{Ok}.

Because of the Temperley-Lieb relations, there is a unitary representation of the braid group inside 
the algebras $\tB_i$, sending the usual braid group generators $\sigma_i$ to $g_i=(t+1)e_i-1$, 
where $t=e^{2\pi i/m}$, $m$ again being the Coxeter number of $G$.
Let $v_i=g_1g_2 \cdots g_{i-1}$  and $w_i=g_1^{-1}g_2^{-1}\cdots g_{i-1}^{-1}$.  
We set $\tC_i=v_i\tA_{i-1}v_i^*$  and $\tD_i=w_i\tA_{i-1}w_i^*$. 
Then the towers $\tC_i \subset \tA_i$  and $\tD_i\subset \tA_i$ 
form commuting squares, and the resulting subfactor 
$\tP \subset \tM$ and $\tQ\subset \tM$ are irreducible and $[\tM:\tP]=[\tM:\tQ]=4\cos^2\pi/k$. 
For $G=A_n$ the principal graphs of $\tP\subset \tM$ and $\tQ\subset \tM$ are $A_n$, 
for $G=D_{2n+1}$ they are $A_{4n-1}$, for $G=D_{2n}$ they are $D_{2n}$, 
for $G=E_7$ they are $A_{17}$ , and for $G=E_n$ with $n=6,8$ they are $E_n$. 
The pair $\tP$ and $\tQ$ is called the {\it GHJ pair} \cite{GJ}. 
(Though $*$ is assumed to be an endpoint in \cite[Definition 6.2.5]{GJ}, we don't pose this 
condition here.) 
We set $\tN=\tP\cap \tQ$ and set $\tR$ to be the subfactor generated by $\{e_i\}_{i=2}^\infty$. 
By construction, we have $\tR\subset \tN$ though these two factors do not necessarily 
coincide in general. 
For a special case, the angles between $\tP$ and $\tQ$ are computed in \cite[Theorem 6.3.2]{GJ}.

Now we show that $\tquadri$ is essentially the same object as $\quadri$ discussed in the previous 
subsection for an appropriate $\{\lambda_i\}_{i\in I}$ and $\cN\subset \cM$. 
Consequently, the angles between $\tP$ and $\tQ$ can be easily computed by Theorem \ref{7theorem angle}. 
Our argument below is inspired by \cite[Appendix]{BEK1}. 

Let $\cN$ be the AFD type III$_1$ factor and let $\{\lambda_i\}_{i=0}^k$ be a system of 
irreducible endomorphisms isomorphic to the irreducible sectors for the $SU(2)_k$ WZW model 
(see \cite{BEK1} for details). 
Such a system can be obtained either by the loop group construction \cite{Was} or by 
combination of \cite{Wen} and \cite{HY} using the quantum $SU(2)$ at a root of unity. 
Then the system $\{\lambda_i\}_{i=0}^k$ satisfies the assumptions (1)-(5) of the previous 
subsection. 
The principal graph of $\lambda_1(\cN)\subset \cN$ is $A_{k+1}$ and 
$$[\lambda_i\lambda_{j}]=\bigoplus_{0\leq 2l\leq \min\{|i+j|,k\}-|i-j|}[\lambda_{|i-j|+2l}],$$
$$d(\lambda_i)=\frac{\sin\frac{(i+1)\pi}{k+2}}{\sin\frac{\pi}{k+2}},$$
$$S_{ij}=\sqrt{\frac{2}{k+2}}\sin\Big(\frac{(i+1)(j+1)\pi}{k+2}\Big).$$

Assume that an inclusion of factors $\cN\subset \cM$ satisfies the assumption (6). 
We set $i_0=1$ and we use the same notation as in the previous subsection. 
It is known that such an inclusion is completely classified by a pair $(G,*)$ 
(up to graph automorphism) where $G$ is one of the Coxeter graphs of type $A$, $D$, or $E$ 
\cite[Theorem 2.1]{KLPR}, so that we can identify $\tA_n$ with $(\nu\lambda^n,\nu\lambda^n)$ 
and $\tB_n$ with $\nu((\lambda^n,\lambda^n))$. 
Since $\nu\lambda=\alpha^\pm_\lambda\nu$, we have the inclusion relation 
$\alpha^\pm_\lambda(\tA_n)\subset 
(\alpha^\pm_\lambda\nu\lambda^n,\alpha^\pm_\lambda\nu\lambda^n)
=(\nu\lambda^{n+1},\nu\lambda^{n+1})=\tA_{n+1}$. 
Recall that the trace $\tr$ on $\tA_n$ is given by $\phi_{\nu\lambda^n}$. 

\begin{lemma} \label{7lemma CD} 
Let the notation be as above. 
Then
\begin{itemize}
\item [$(1)$] For $x\in \tA_n$, we have  
$\alpha^\pm_\lambda(x)=\varepsilon^\pm(\lambda,\lambda^n)^*x
\varepsilon^\pm(\lambda,\lambda^n)$. 
In consequence, the restriction of $\alpha^\pm_\lambda$ to $\tA_n$, as a map from 
$\tA_n$ to $\tA_{n+1}$, is trace preserving. 
\item [$(2)$] For $x\in \tA_n$ we have $E_P(x)\subset \alpha^+_\lambda(\tA_n)$ and 
$E_Q(x)\subset \alpha^-_\lambda(\tA_n)$. 
The restrictions of $E_P$ and $E_Q$ to $\tA_n$ is trace preserving.  
\end{itemize}
\end{lemma}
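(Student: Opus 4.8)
The plan is to reduce everything to three structural facts: that $\alpha^\pm_\lambda$ is a homomorphism with $\alpha^\pm_\lambda\nu=\nu\lambda$, that standard left inverses compose as $\phi_{\sigma\tau}=\phi_\tau\phi_\sigma$, and that the braiding is unitary and natural. Throughout I keep in mind the identifications that $\nu=\iota_{\cM,\cN}$ is the inclusion, $\hgamma=\bar\nu\nu$, and that $\varepsilon^\pm(\lambda,\lambda^n)\in(\lambda^{n+1},\lambda^{n+1})\subset\cN$ is viewed in $\cM$ as a unitary in $\tA_{n+1}$.

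For the formula in (1) I would apply $\bar\nu$ to $x\in\tA_n=(\nu\lambda^n,\nu\lambda^n)$, so that $\bar\nu(x)\in(\hgamma\lambda^n,\hgamma\lambda^n)$, and unwind the definition of $\alpha^\pm_\lambda$ to get $\bar\nu(\alpha^\pm_\lambda(x))=\varepsilon^\pm(\lambda,\hgamma)\lambda(\bar\nu(x))\varepsilon^\pm(\lambda,\hgamma)^*$. Naturality of the braiding applied to the intertwiner $\bar\nu(x)$ gives $\varepsilon^\pm(\lambda,\hgamma\lambda^n)\lambda(\bar\nu(x))=\bar\nu(x)\varepsilon^\pm(\lambda,\hgamma\lambda^n)$, while the braiding fusion equation $\varepsilon^\pm(\lambda,\hgamma\lambda^n)=\hgamma(\varepsilon^\pm(\lambda,\lambda^n))\varepsilon^\pm(\lambda,\hgamma)$ lets me peel off the $\hgamma$ factor. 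Combining the two and using unitarity of the braiding yields $\bar\nu(\alpha^\pm_\lambda(x))=\hgamma(\varepsilon^\pm(\lambda,\lambda^n))^*\bar\nu(x)\hgamma(\varepsilon^\pm(\lambda,\lambda^n))=\bar\nu\big(\varepsilon^\pm(\lambda,\lambda^n)^*x\varepsilon^\pm(\lambda,\lambda^n)\big)$, since $\hgamma=\bar\nu\nu$ and $\nu$ is the inclusion; applying $\bar\nu^{-1}$ gives the asserted formula. This is exactly the computation in the proof of Lemma \ref{7lemma alpha}, carried out for a two-sided intertwiner in place of $(\nu,\nu\lambda_j)$. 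The trace statement then follows without the braiding: since $\nu\lambda^{n+1}=\alpha^\pm_\lambda\circ\nu\lambda^n$, the composition rule for standard left inverses gives $\phi_{\nu\lambda^{n+1}}=\phi_{\nu\lambda^n}\circ\phi_{\alpha^\pm_\lambda}$, so for $x\in\tA_n$, using $\phi_{\alpha^\pm_\lambda}\alpha^\pm_\lambda=\id$ and $\tr=\phi_{\nu\lambda^n}$, one gets $\tr(\alpha^\pm_\lambda(x))=\phi_{\nu\lambda^{n+1}}(\alpha^\pm_\lambda(x))=\phi_{\nu\lambda^n}(x)=\tr(x)$.

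For (2) I would use $E_P=\alpha^+_\lambda\phi_{\alpha^+_\lambda}$ and $E_Q=\alpha^-_\lambda\phi_{\alpha^-_\lambda}$. The key observation is that $\phi_{\alpha^\pm_\lambda}$ is an $\alpha^\pm_\lambda(\cM)$-bimodule map, namely $\phi_{\alpha^\pm_\lambda}(\alpha^\pm_\lambda(m_1)y\alpha^\pm_\lambda(m_2))=m_1\phi_{\alpha^\pm_\lambda}(y)m_2$, and that $\nu\lambda^n(\cN)=\alpha^\pm_\lambda(\nu\lambda^{n-1}(\cN))\subset\alpha^\pm_\lambda(\cM)$. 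Hence for $x\in\tA_n$ and $a\in\cN$, applying $\phi_{\alpha^+_\lambda}$ to the identity $x\,\alpha^+_\lambda(\nu\lambda^{n-1}(a))=\alpha^+_\lambda(\nu\lambda^{n-1}(a))\,x$ shows $\phi_{\alpha^+_\lambda}(x)\in(\nu\lambda^{n-1},\nu\lambda^{n-1})=\tA_{n-1}\subset\tA_n$, so that $E_P(x)=\alpha^+_\lambda(\phi_{\alpha^+_\lambda}(x))\in\alpha^+_\lambda(\tA_n)$, and likewise for $E_Q$. Finally $E_P$ preserves the trace since $\tr(E_P(x))=\phi_{\nu\lambda^{n+1}}(\alpha^+_\lambda\phi_{\alpha^+_\lambda}(x))=\phi_{\nu\lambda^n}(\phi_{\alpha^+_\lambda}(x))=\phi_{\nu\lambda^{n+1}}(x)=\tr(x)$, using the same composition rule together with the compatibility $\tr_{\tA_{n+1}}|_{\tA_n}=\tr_{\tA_n}$ coming from the unique trace on $\cup_i\tA_i$.

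The main obstacle is the first formula in (1): making the braiding manipulation rigorous requires the correct form of the braiding fusion equation and of naturality for the present conventions (distinguishing $\varepsilon^+$ from $\varepsilon^-$, and the side on which $\hgamma$ appears), together with careful tracking of the inclusion $\cN\subset\cM$ so that $\varepsilon^\pm(\lambda,\lambda^n)$ is simultaneously an intertwiner in $\cN$ and a unitary in $\tA_{n+1}$. Once that identity is in hand, all remaining assertions are purely formal consequences of the left-inverse calculus and need no further input from the braiding.
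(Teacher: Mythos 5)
Your proof is correct. For part (1) it is essentially the paper's own argument: the paper applies $\bar{\nu}$, unwinds the definition of $\alpha^\pm_\lambda$, and then performs exactly your naturality/braiding--fusion manipulation, only written diagrammatically (it is the same computation as in Lemma \ref{7lemma alpha}, as you note); your algebraic rendering, including the identification $\hgamma(\varepsilon^\pm(\lambda,\lambda^n))=\bnu(\varepsilon^\pm(\lambda,\lambda^n))$ and injectivity of $\bnu$, is a faithful translation of those diagrams.

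Part (2) is where you genuinely diverge, in a way worth recording. The paper writes $\phi_{\alpha^+_\lambda}(x)=r_\lambda^*\alpha^+_{\bar{\lambda}}(x)r_\lambda$ and checks membership in $\tA_n$ by invoking the conjugation formula of part (1); you instead use only the $\alpha^+_\lambda(\cM)$-bimodule property of the left inverse together with the relation $\nu\lambda^n=\alpha^+_\lambda\nu\lambda^{n-1}$, concluding $\phi_{\alpha^+_\lambda}(x)\in\tA_{n-1}$. This is more elementary --- no braiding enters part (2) at all --- and it yields the marginally sharper containment $E_P(\tA_n)\subset\alpha^+_\lambda(\tA_{n-1})=P\cap\tA_n$, which implies the stated $E_P(\tA_n)\subset\alpha^+_\lambda(\tA_n)$; the same mechanism would in fact work directly with the $P$-bimodule property of $E_P$ itself. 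Likewise, your derivation of trace preservation in (1) from $\phi_{\nu\lambda^{n+1}}=\phi_{\nu\lambda^n}\phi_{\alpha^\pm_\lambda}$ bypasses the conjugation formula, whereas the paper reads it off from unitary invariance of the trace on $\tA_{n+1}$; the trace chain in (2) is identical in both proofs. The only point to make explicit is the degenerate case $n=0$, where $\tA_{n-1}$ is undefined; but there $\tA_0=(\nu,\nu)=\C$ by irreducibility of $\cN\subset\cM$, so all assertions are trivial.
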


\begin{proof}
(1) Let $x\in \tA_n$. 
Then $\bar{\nu}(x)\in (\hgamma\lambda^n,\hgamma\lambda^n)$ and 
\begin{eqnarray*}
\varepsilon(\lambda,\hgamma)\lambda(\bar{\nu}(x))\varepsilon(\lambda,\hgamma)^*
&=&\hpic{VI5}{1.2in}=\hpic{VI6}{1.2in}=\hpic{VI7}{1.2in}\\
&=&\bar{\nu}(\varepsilon(\lambda,\lambda^n)^*x\varepsilon(\lambda,\lambda^n)),
\end{eqnarray*}
which shows $\alpha^+_\lambda(x)=\varepsilon(\lambda,\lambda^n)^*x
\varepsilon(\lambda,\lambda^n)$. 
The second equality can be shown in the same way.

(2) Since $E_P(x)=\alpha^+_\lambda\phi_{\alpha^+_\lambda}(x)
=\alpha^+_\lambda(r_\lambda^*\alpha^+_\lambda(x)r_\lambda)$, it suffices to show 
$r_\lambda^*\alpha^+_\lambda(x)r_\lambda\in \tA_n$ for $x\in \tA_n$, 
which can be easily shown by using (1). 
This also shows 
\begin{eqnarray*}
\tr(E_P(x))&=&\tr(\alpha^+_\lambda\phi_{\alpha^+_\lambda}(x))=\tr(\phi_{\alpha^+_\lambda}(x))
=\phi_{\nu\lambda^n}\phi_{\alpha^+_\lambda}(x)=\phi_{\alpha^+_\lambda\nu\lambda^n}(x)\\
&=&\phi_{\nu\lambda^{n+1}}(x)=\tr(x).
\end{eqnarray*}
\end{proof}

Since $\varepsilon(\lambda,\lambda^n)^*
=\varepsilon(\lambda,\lambda)^*\lambda(\varepsilon(\lambda,\lambda)^*)\cdots 
\lambda^{n-1}(\varepsilon(\lambda,\lambda)^*)$, we can identify 
$\alpha^+_\lambda(\tA_{n-1})$ with $\tC_n$ and 
$\alpha^-_\lambda(\tA_{n-1})$ with $\tD_n$ (for an appropriate 
choice of the braiding). 

We now show that $\quadri$ and $\tquadri$ are conjugate after tensor product with 
the AFD type III$_1$ factor. 

\begin{lemma} \label{7lemma rel-commu}
Let $\cL$ be a factor and $\sigma\in \Mor_0(\cN,\cL)$. 
We set $\cA_n=(\sigma\lambda^n,\sigma\lambda^n)$ and 
$\cB_n=\sigma((\lambda^n,\lambda^n))$.  
Let $\cA$ and $\cB$ be the II$_1$ factors generated by 
$\cup_n\cA_n$ and $\cup_n\cB_n$ respectively on the GNS Hilbert space 
of the unique trace on $\cup_n\cA_n$. 
Then $\cA\cap \cB'=(\sigma,\sigma)$.  
\end{lemma}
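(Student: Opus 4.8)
The plan is to prove the two inclusions separately; the inclusion $(\sigma,\sigma)\subseteq\cA\cap\cB'$ is routine and the reverse inclusion carries all the content. For the easy direction, note that $(\sigma,\sigma)=(\sigma\lambda^0,\sigma\lambda^0)=\cA_0\subseteq\cA$, and that any $c\in(\sigma,\sigma)=\cL\cap\sigma(\cN)'$ commutes with $\sigma(b)$ for every $b\in(\lambda^n,\lambda^n)\subseteq\cN$; hence $c$ commutes with $\bigcup_n\cB_n$ and therefore with $\cB$, giving $(\sigma,\sigma)\subseteq\cA\cap\cB'$.

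For the reverse inclusion the first step is to put the two towers into a common tunnel picture. Writing $\cL_n=\sigma\lambda^n(\cN)$ we have the decreasing chain $\cL\supseteq\cL_0\supseteq\cL_1\supseteq\cdots$, and $\cA_n=\cL\cap\cL_n'$, so that $\cA=\overline{\bigcup_n\cA_n}$ is the weak closure with respect to the compatible traces $\phi_{\sigma\lambda^n}$. The key observation is that $\cB_n=\sigma((\lambda^n,\lambda^n))=\cL_0\cap\cL_n'=\cA_n\cap\sigma(\cN)$. Since $\lambda=\lambda_1$ is the fundamental $SU(2)_k$ sector, the inclusion $\cL_1\subseteq\cL_0$ is a copy of the $A_{k+1}$ subfactor and $\{\cL_n\}$ is a tunnel for it. Consequently the squares with corners $\cB_n,\cA_n,\cB_{n+1},\cA_{n+1}$ are commuting squares (relative commutants of $\cL_0$ and of $\cL$ taken along the common tunnel $\cL_n$), which will let me pass from the infinite relative commutant to a computation at finite levels.

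The second step is the finite-level computation. For $x\in\cA\cap\cB'$ put $x_n=E_{\cA_n}(x)$; since $E_{\cA_n}$ is $\cA_n$-bimodular, hence $\cB_n$-bimodular, we get $x_n\in\cA_n\cap\cB_n'$, and $x_n\to x$ in $L^2(\cA,\phi)$. Decomposing $[\sigma\lambda^n]=\bigoplus_a N_a[\mu_a]$ into irreducibles of $\Mor(\cN,\cL)$ and using Theorem \ref{2theorem decomposition} together with the path decomposition of $[\lambda^n]$, a matrix-unit computation identifies $\cB_n\subseteq\cA_n$ with the multimatrix inclusion of inclusion matrix $\dim(\mu_a,\sigma\lambda_i)$ and yields
\[
\cA_n\cap\cB_n'\;\cong\;\bigoplus_{i:\ \lambda_i\prec\lambda^n}(\sigma\lambda_i,\sigma\lambda_i),
\]
where $\cA_0=(\sigma,\sigma)$ is contained in each $\cA_n\cap\cB_n'$ (it is the $i=0$ summand when $n$ is even).

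The final step is to show that imposing commutation with all of $\cB$, not merely with $\cB_n$, collapses the limit onto $\cA_0$. Because $\cL_1\subseteq\cL_0$ is the finite-depth, hence strongly amenable, $A_{k+1}$ subfactor, its tunnel $\{\cL_n\}$ is generating, so $\overline{\bigcup_n(\cL_0\cap\cL_n')}=\cL_0$; feeding this into the commuting-square relative-commutant formula for the ladder of the first step gives $\cA\cap\cB'=\cL\cap\cL_0'=(\sigma,\sigma)$. Concretely, the extra relations carried by the Jones projections $\sigma(e_j)\in\cB_{j+1}$ force $x_n$ to converge into $\cA_0$, so that $x\in\cA_0$. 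I expect this last step to be the main obstacle: the finite-level relative commutants $\cA_n\cap\cB_n'$ are genuinely larger than $(\sigma,\sigma)$ and there is no stabilization at finite level, so one must control the decreasing intersection $\bigcap_m(\cA\cap\cB_m')$ in the $L^2$-limit. This is exactly where the amenability/generating property of the $A_{k+1}$ tower is indispensable, and where one must also accommodate the possibly type III$\,$ ambient factor $\cL$ by carrying out all estimates inside the finite-dimensional algebras $\cA_n$ with the canonical traces $\phi_{\sigma\lambda^n}$, rather than in a (nonexistent) global trace on $\cL$.
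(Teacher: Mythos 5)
Your setup is sound as far as it goes: the ladder $\cB_n=\cL_0\cap\cL_n'\subseteq\cA_n=\cL\cap\cL_n'$ with $\cL_n=\sigma\lambda^n(\cN)$, the easy inclusion $(\sigma,\sigma)\subseteq\cA\cap\cB'$, and the finite-level computation $\cA_n\cap\cB_n'\cong\bigoplus_{\lambda_i\prec\lambda^n}(\sigma\lambda_i,\sigma\lambda_i)$ are all correct. The proof breaks at exactly the step you flag as the main obstacle, and it breaks irreparably as written. Your key claim is that strong amenability of the $A_{k+1}$ subfactor makes the tunnel generating, so that $\overline{\bigcup_n(\cL_0\cap\cL_n')}=\cL_0$, whence $\cB'\subseteq\cL_0'$ and $\cA\cap\cB'\subseteq\cL\cap\cL_0'=(\sigma,\sigma)$. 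But in this section $\cN$ is a \emph{properly infinite} factor (the AFD type III$_1$ factor in the application), so $\cL_0=\sigma(\cN)$ is properly infinite, while $\cB=\overline{\bigcup_n\cB_n}$ carries a trace: it is the hyperfinite II$_1$ Temperley--Lieb factor. Hence $\cB\neq\cL_0$, Popa's generating-tunnel theorem (a II$_1$ statement) is simply not available, and the inference $\cB'\subseteq\cL_0'$ collapses. Nor does retreating to "estimates inside the finite-dimensional $\cA_n$" repair this: as your own computation shows, $\cA_n\cap\cB_n'$ is strictly larger than $(\sigma,\sigma)$ at every finite level, and for $x\in\cA\cap\cB'$ the truncation $E_{\cA_n}(x)$ only commutes with the Jones projections already contained in $\cA_n$, so no pushdown can be iterated down to $\cA_0$. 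Controlling the intersection $\bigcap_m(\cA\cap\cB_m')$ in the $L^2$-limit is a genuine theorem, not a formal consequence of the commuting-square ladder.

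The missing ingredient --- and the entire content of the paper's proof --- is Ocneanu's flatness of the Jones projections. Since $\lambda$ is the fundamental $SU(2)_k$ sector, $(\lambda^n,\lambda^n)$ is precisely the Temperley--Lieb algebra generated by $e_1,\dots,e_{n-1}$, so $\cB$ is the TL algebra sitting inside the string-algebra limit $\cA$ (the path algebra of the fusion graph of right multiplication by $\lambda$, with initial vertex data given by the decomposition of $\sigma$); flatness then identifies $\cB'\cap\cA$ with the level-zero corner $\cA_0=(\sigma,\sigma)$, which is the lemma. Note that the more elementary Ocneanu compactness argument (which yields $B_\infty'\cap A_\infty=B_0'\cap A_1$ for towers obtained by iterated basic constructions) also does not apply here, because $\cA_{n+1}$ is in general strictly larger than $\mathrm{span}\,\cA_n f_{n-1}\cA_n$ --- the fusion graph branches --- and this is exactly why the stronger string-algebra theorem \cite[Chapters 11--12]{EK} must be invoked. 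To complete your argument you would have to either cite that theorem or reprove it, at which point the tunnel/amenability scaffolding becomes superfluous.
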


\begin{proof} Since $(\lambda^n,\lambda^n)$ is generated by Jones projections, the statement 
follows from the flatness of the Jones projections \cite[Chapter 12]{EK}. 
\end{proof}

\begin{theorem} \label{7theorem GHJ=alpha} 
Let the notation be as above. 
Then $
\begin{array}{ccc}
\tP\otimes R_\infty &\subset &\tM\otimes R_\infty\\
\cup & &\cup  \\
\tN\otimes R_\infty &\subset &\tQ\otimes R_\infty  
\end{array}$ 
and $\quadri$ are conjugate where $R_\infty$ is the AFD type III$_1$ factor.  
\end{theorem}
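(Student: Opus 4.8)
The plan is to recast both quadrilaterals in the single uniform shape ``$\alpha^\pm_{\lambda}$ applied to an ambient factor'' and then to match their standard invariants. First I would exploit the identifications already in place: $\tA_n \cong (\nu\lambda^n,\nu\lambda^n)$, $\tB_n \cong \nu((\lambda^n,\lambda^n))$, together with the consequence of Lemma \ref{7lemma CD} that $\tC_n \cong \alpha^+_{\lambda}(\tA_{n-1})$ and $\tD_n \cong \alpha^-_{\lambda}(\tA_{n-1})$. Since $\alpha^\pm_{\lambda}$ are normal endomorphisms, passing to the generated factors gives $\tP = \alpha^+_{\lambda}(\tM)$ and $\tQ = \alpha^-_{\lambda}(\tM)$, exactly mirroring $P=\alpha^+_{\lambda}(\cM)$ and $Q=\alpha^-_{\lambda}(\cM)$ on the $\alpha$-induction side. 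Thus the only structural difference between the two quadrilaterals is that on the GHJ side the ambient factor $\tM$ is the trace completion of $\bigcup_n \tA_n$ (hyperfinite II$_1$), whereas on the $\alpha$-induction side it is $\cM$ (AFD type III$_1$).

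The heart of the matter is that the two quadrilaterals share the same standard invariant, because every piece of the finite-stage data is computed solely from the braided system $\{\lambda_i\}_{i\in I}$ and the intertwiner spaces $(\nu\lambda^n,\nu\lambda^n)$, independently of the surrounding factor. Concretely, Lemma \ref{7lemma CD} shows that on each $\tA_n$ the trace is $\phi_{\nu\lambda^n}$, that the maps $\alpha^\pm_{\lambda}\colon \tA_n \to \tA_{n+1}$ are trace preserving, and that $E_P=\alpha^+_{\lambda}\phi_{\alpha^+_{\lambda}}$ and $E_Q=\alpha^-_{\lambda}\phi_{\alpha^-_{\lambda}}$ restrict to the trace-preserving conditional expectations of $\tA_n$ onto $\tC_{n+1}$ and $\tD_{n+1}$; these are the identical formulae governing the GHJ construction. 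Hence the commuting squares formed by $\tB_n,\tC_n,\tD_n \subset \tA_n$, and therefore the positions of the Jones projections $e_P,e_Q$ (the biprojections attached to $P$ and $Q$), coincide in the two pictures. Lemma \ref{7lemma rel-commu}, applied with $\sigma=\nu$, supplies the relative commutant $(\nu,\nu)=\C$ and confirms factoriality and irreducibility of $\tN\subset\tM$, so the full standard invariant of the quadrilateral, namely the planar algebra of $\tN\subset\tM$ with its two biprojections, is literally the same object as that of $\quadri$.

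It then remains to pass from coincidence of standard invariants to conjugacy of quadrilaterals and to reconcile the factor types. Since $\{\lambda_i\}_{i\in I}$ is a finite system, both subfactors have finite depth and are strongly amenable, so by Popa's classification \cite{P1} a hyperfinite quadrilateral is determined up to conjugacy by its standard invariant once the factor type is fixed. Tensoring the GHJ quadrilateral with the AFD type III$_1$ factor $R_\infty$ leaves the standard invariant and the biprojection structure unchanged while turning the hyperfinite II$_1$ factors into AFD type III$_1$ factors. The result is a type III$_1$ quadrilateral with the same standard invariant as $\quadri$, and strong amenability yields the asserted conjugacy.

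I expect the main difficulty to be precisely this last transfer: one must guarantee that the matching is equivariant for the \emph{entire} quadrilateral, not merely an abstract isomorphism $\tM\otimes R_\infty\cong\cM$ (which is automatic from uniqueness of the AFD type III$_1$ factor), i.e.\ that the conjugacy carries $\tP\otimes R_\infty$ and $\tQ\otimes R_\infty$ simultaneously onto $P$ and $Q$. This forces one to keep track of the full standard invariant with both biprojections, to verify extremality and amenability in the possibly reducible case, and to treat with care the analytic point that the trace completion $\tM$ and the weak closure of $\bigcup_n\tA_n$ inside $\cM$ are compatible, so that the finite-stage identifications of Lemmas \ref{7lemma CD} and \ref{7lemma rel-commu} genuinely assemble into an isomorphism of the global objects.
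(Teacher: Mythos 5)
Your overall strategy---match standard invariants, invoke Popa's classification after tensoring with $R_\infty$, and pin down the intermediate subfactors through Lemma \ref{7lemma CD},(2)---is the same as the paper's, but the central step is asserted rather than proved, and it is attached to the wrong inclusion. You claim that coincidence of the finite-stage commuting squares $\tB_n,\tC_n,\tD_n\subset\tA_n$ makes ``the planar algebra of $\tN\subset\tM$ with its two biprojections literally the same object'' as that of $\quadri$. The standard invariant of a limit inclusion is not the finite-stage data: one must compute the higher relative commutants along the Jones tower of the \emph{limit} subfactor, and this passage is exactly the analytic content you postpone in your last paragraph as ``the main difficulty.'' Moreover, the relative commutants of $\tN\subset\tM$ are not accessible this way at all, since $\tN=\tP\cap\tQ$ is defined only as an intersection. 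The paper's proof instead compares $\tR\subset\tM$ with $R\subset M$ (with $\rho=\nu\lambda$): it builds the double-indexed algebras $\cA_{j,m}$, verifies that $\cA_{j,m}\subset\cA_{j+1,m}\subset\cA_{j+2,m}$ is a basic construction for large $m$, and uses the flatness statement, Lemma \ref{7lemma rel-commu}, to compute $\cA_{j,\infty}\cap\cA_{0,\infty}'$ and $\cA_{j,\infty}\cap\cA_{1,\infty}'$; this is what proves equality of the two standard invariants. Popa's theorem (\cite{P2} with \cite{I3}, see also \cite{M2}) is then applied to this finite-depth, hence strongly amenable, inclusion; only afterwards are $P$ and $Q$ recovered inside the identified standard invariant from $e_P=e_{\tP}$ and $e_Q=e_{\tQ}$ (your Lemma \ref{7lemma CD},(2) step), and $N=P\cap Q$, $\tN=\tP\cap\tQ$ are matched automatically. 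Note also that no direct ``assembly'' of the finite-stage inclusions $\tA_n\subset\cM$ into a global isomorphism is possible: before tensoring with $R_\infty$ the ambient factors are not even of the same type, which is precisely why the argument must pass through the standard invariant and Popa's theorem rather than through closures inside $\cM$.

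There is also a concrete error: Lemma \ref{7lemma rel-commu} with $\sigma=\nu$ gives $\tM\cap\tB'=(\nu,\nu)=\C$, i.e.\ irreducibility of the GHJ subfactor $\tB\subset\tM$, not of $\tN\subset\tM$. The latter is false in general---the paper states explicitly that $\quadri$ need not be irreducible, and even $\tR\subset\tM$ is irreducible only when $*$ is an endpoint (\cite[Proposition 6.2.6]{GJ}); one only has $\tM\cap\tN'\subset\tM\cap\tR'=(\nu\lambda,\nu\lambda)$, which can be nontrivial. This matters for your route, because applying Popa to the quadrilateral through ``the planar algebra of $\tN\subset\tM$ with two biprojections'' would require extremality and strong amenability of $\tN\subset\tM$ itself, none of which you address; working with $\tR\subset\tM$, as the paper does, avoids the problem entirely.
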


\begin{proof} First we show that $R\subset M$ and $\tR\subset \tM$ have the same standard 
invariant. 
We fix a positive integer $n$ larger than half of the depth of $R\subset M$, 
which is finite now. 
Let $\rho=\nu\lambda$. 
Then $R=\rho(\cN)$ and 
$$
\begin{array}{ccccccccc}
(\brho\rho)^{n-1}\brho((\rho,\rho)) &\subset &(\brho\rho)^{n-1}((\brho\rho,\brho\rho)) 
&\subset \cdots \subset &((\brho\rho)^n,(\brho\rho)^n)\\
\cup & &\cup &  &\cup \\
\C &\subset &(\brho\rho)^{n-1}((\brho,\brho)) & \subset \cdots \subset  
&((\brho\rho)^{n-1}\brho, (\brho\rho)^{n-1}\brho) 
\end{array}
$$
is the standard invariant of $R\subset M$ (or rather 
$(\brho\rho)^n(\cN)\subset (\brho\rho)^{n-1}\brho(\cM)$). 
We set 
$$\cA_{2j,m}=(\bar{\rho}\rho)^{n-j}(((\brho\rho)^j\lambda^m,(\brho\rho)^j\lambda^m)),$$ 
$$\cA_{2j+1,m}=
(\bar{\rho}\rho)^{n-j-1}\bar{\rho}((\rho(\brho\rho)^j\lambda^m,\rho(\brho\rho)^j\lambda^m)).$$ 
Note that $\brho\rho$ is a direct sum of irreducibles in $\{\lambda_i\}_{i=0}^k$. 
When $m$ is larger than the depth of $\lambda(\cN)\subset \cN$, 
the inclusion graph for $\cA_{j,m}\subset \cA_{j+1,m}$ is the reflection of that for 
$\cA_{j+1,m}\subset \cA_{j+2,m}$ thanks to Frobenius reciprocity. 
Therefore 
$$\cA_{j,m}\subset \cA_{j+1,m}\subset \cA_{j+2,m}$$ 
is the basic construction. 

Let $\cA_{j,\infty}$ be the factor generated by $\cup_{m}\cA_{j,m}$ in the 
GNS representation of the unique trace of $\cup_{m}\cA_{2n,m}$. 
Then we may identify $\tR\subset \tM$ with $\cA_{0,\infty}\subset \cA_{1,\infty}$ and
$$\cA_{0,\infty}\subset \cA_{1,\infty}\subset \cA_{2,\infty}\subset \cdots\subset 
\cA_{2n,\infty}$$
is the Jones tower. 
Lemma \ref{7lemma rel-commu} implies 
$$\cA_{2j,\infty}\cap \cA_{0,\infty}'=(\brho\rho)^{n-j}(((\bar{\rho}\rho)^j,(\bar{\rho}\rho)^j)),$$
$$\cA_{2j+1,\infty}\cap \cA_{0,\infty}'=(\brho\rho)^{n-j-1}\brho
((\rho(\bar{\rho}\rho)^j,\rho(\bar{\rho}\rho)^j)),$$ 
which shows that $R\subset M$ and $\tR\subset \tM$ have the same principal graph. 
On the other hand, we have 
$$\cA_{2j,\infty}\cap \cA_{1,\infty}'\supset 
(\brho\rho)^{n-j}(((\bar{\rho}\rho)^{j-1}\brho,(\bar{\rho}\rho)^{j-1}\brho)),$$
$$\cA_{2j+1,\infty}\cap \cA_{1,\infty}'\supset 
(\brho\rho)^{n-j-1}\brho((\rho\brho)^j,(\rho\brho)^j)).$$
Since $[M:R]=[\tM:\tR]$, the equality holds in the above inclusions 
and the two subfactors $R\subset M$ and $\tR\subset \tM$ have the same standard invariant. 
Therefore we conclude that the two inclusions $R\subset M$ and 
$\tR\otimes R_\infty\subset \tM\otimes R_\infty$ are conjugate thanks to Popa's result \cite{P2} 
with \cite{I3} (see \cite{M2} too). 

To finish the proof, it suffices to show that $e_P=e_{\tP}$ and $e_Q=e_{\tQ}$ hold in the above 
identification of the standard invariants, which can be done by using Lemma \ref{7lemma CD},(2). 
\end{proof}

\begin{remark} \label{7remark crossed-product}
Since 
$$|\phi_\lambda(\varepsilon(\lambda_j,\lambda)\varepsilon(\lambda,\lambda_j))|
=\frac{|S_{1j}|}{d(\lambda_1)d(\lambda_j)|S_{00}|}
=\frac{|\sin\frac{\pi}{k+2}\sin\frac{2(j+1)\pi}{k+2}|}{|\sin\frac{2\pi}{k+2}\sin\frac{(j+1)\pi}{k+2}|}
=\frac{|\cos\frac{(j+1)\pi}{k+2}|}{|\cos\frac{\pi}{k+2}|},$$
the operator $\varepsilon(\lambda_j,\lambda)\varepsilon(\lambda,\lambda_j)$ is a scalar if and only if 
$j=0,k$. 
Thus we have $J_{i_0}=J\cap\{0,k\}$. 
It is known that $k\in J$ occurs only if $k$ is even. 
In this case, we may and do assume $\lambda_k^2=\id_\cN$ by choosing a representative $\lambda_k$ 
satisfying $\lambda_k\lambda_{k/2}=\lambda_{k/2}$, which 
implies that there exists a unitary $u_k\in \cH_k$ satisfying $u_k^2=1$. 
Therefore we may regard $\cN+\cN\cH_k$ as the crossed product 
$\cN\rtimes_{\lambda_k}\Z/2\Z$. 
We make this assumption and identification in what follows. 
\end{remark}

The following corollary answers the question raised in 
\cite[comment after Proposition 6.2.4]{GJ} in the negative. 
(The subfactor $\tR$ is denoted by $TL2$ in \cite{GJ}.)

\begin{corollary} \label{7corollary tNtR}
Let the notation be as above. 
Then $\tR=\tP\cap \tQ$ if and only if $k\notin J$. 
If $k\in J$, the index of $\tR \subset \tP\cap \tQ$ is 2. 
\end{corollary}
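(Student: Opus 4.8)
The plan is to transport the statement from the GHJ picture to the $\alpha$-induction picture via Theorem \ref{7theorem GHJ=alpha} and then read off everything from Lemma \ref{7lemma N} and Remark \ref{7remark crossed-product}. First I would invoke the conjugacy of Theorem \ref{7theorem GHJ=alpha}: the isomorphism realizing it carries $\tR\otimes R_\infty$ to $R=\lambda(\cN)$ and carries $\tP\otimes R_\infty$, $\tQ\otimes R_\infty$ to $P$, $Q$. Since $\tN=\tP\cap\tQ$, the same isomorphism sends $\tN\otimes R_\infty$ to $P\cap Q=N$, so that $[\tP\cap\tQ:\tR]=[N:R]$ and $\tR=\tP\cap\tQ$ if and only if $R=N$. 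This reduces the whole corollary to analyzing the single inclusion $R\subset N$ inside $\cM$.

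Next I would describe $R$ and $N$ uniformly as images under $\alpha^+_\lambda$. Because $\cN\subset\cM$ is irreducible, $\cH_0=(\nu,\nu)=\C$, so $\cN\cH_0=\cN$ and $R=\alpha^+_\lambda(\cN)=\alpha^+_\lambda(\cN\cH_0)$; on the other hand Lemma \ref{7lemma N} gives $N=\alpha^+_\lambda(\sum_{j\in J_{i_0}}\cN\cH_j)$. As $\alpha^+_\lambda$ is injective and $\cM=\bigoplus_{j\in J}\cN\cH_j$ is a direct sum of the distinct summands $\cN\cH_j$, the equality $R=N$ holds exactly when $J_{i_0}=\{0\}$.

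I would then feed in the explicit computation of Remark \ref{7remark crossed-product}, which shows $\varepsilon(\lambda_j,\lambda)\varepsilon(\lambda,\lambda_j)$ is a scalar precisely for $j=0$ and $j=k$, i.e. $J_{i_0}=J\cap\{0,k\}$. Since $0\in J$ always (the trivial sector appears in $\bar\nu\nu$ for an irreducible inclusion), $J_{i_0}=\{0\}$ is equivalent to $k\notin J$, which gives the first assertion. For the index statement, when $k\in J$ one has $J_{i_0}=\{0,k\}$ and $N=\alpha^+_\lambda(\cN+\cN\cH_k)$; Remark \ref{7remark crossed-product} identifies $\cN+\cN\cH_k$ with the crossed product $\cN\rtimes_{\lambda_k}\Z/2\Z$ of index $2$, and since $\alpha^+_\lambda$ is an isomorphism onto its image this yields $[\tP\cap\tQ:\tR]=[N:R]=2$.

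The only genuinely delicate point is the first reduction step: one must check that the conjugacy of Theorem \ref{7theorem GHJ=alpha} really matches $\tR$ with $R$ and, through the intersection $\tP\cap\tQ$, matches $\tN$ with $N$, and that the index is unaffected by the auxiliary tensor factor $R_\infty$. Once this bookkeeping is in place, the remaining steps follow immediately from the already-established structural results, so I expect no further obstacle.
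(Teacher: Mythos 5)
Your proposal is correct and follows essentially the same route as the paper: reduce via the conjugacy of Theorem \ref{7theorem GHJ=alpha} (whose proof indeed matches $\tR\otimes R_\infty$ with $R$) to the inclusion $R\subset N$, then apply Lemma \ref{7lemma N} together with Remark \ref{7remark crossed-product} to get $J_{i_0}=J\cap\{0,k\}$ and the index-2 crossed product identification when $k\in J$. The extra bookkeeping you supply (injectivity of $\alpha^+_\lambda$, the direct-sum decomposition of $\cM$, and $0\in J$) is exactly what the paper leaves implicit.
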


\begin{proof} Thanks to Theorem \ref{7theorem GHJ=alpha}, we may work on $R$ and $N$ 
instead of $\tR$ and $\tN$, which together with Lemma \ref{7lemma N} show the first statement. 
When $k\in J$, Lemma \ref{7lemma N} shows $N=\alpha^\pm_\lambda(\cN+\cN\cH_k)$ and 
the second statement follows from Remark \ref{7remark crossed-product}. 
\end{proof}

The following corollary is a generalization of \cite[Theorem 6.3.2]{GJ}. 

\begin{corollary} \label{7corollary angle}
Let the notation be as above. 
Then 
$$\Ang(\tP,\tQ)=\{\cos^{-1}\frac{|\cos\frac{(j+1)\pi}{k+2}|}{|\cos\frac{\pi}{k+2}|};\;j\in J\}\setminus 
\{0,\frac{\pi}{2}\}.$$
\end{corollary}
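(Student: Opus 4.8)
The plan is to obtain the corollary as an immediate consequence of the three preceding results, Theorem \ref{7theorem angle}, Theorem \ref{7theorem GHJ=alpha}, and Remark \ref{7remark crossed-product}, so that no essentially new computation is needed. The idea is that Theorem \ref{7theorem GHJ=alpha} identifies the GHJ quadrilateral (after tensoring with $R_\infty$) with the $\alpha$-induction quadrilateral, that the set of angles is invariant under this identification, and that the scalar appearing in the general angle formula of Theorem \ref{7theorem angle} has already been evaluated in Remark \ref{7remark crossed-product}.

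First I would argue that $\Ang(\tP,\tQ)$ is unchanged when one tensors by the AFD type III$_1$ factor $R_\infty$. The Jones projection for $\tP\otimes R_\infty\subset \tM\otimes R_\infty$ is $e_{\tP}\otimes 1$, and similarly for $\tQ$, so that $(e_{\tP}\otimes 1)\wedge(e_{\tQ}\otimes 1)=(e_{\tP}\wedge e_{\tQ})\otimes 1$ and the angle operator becomes $(e_{\tP}e_{\tQ}e_{\tP}-e_{\tP}\wedge e_{\tQ})\otimes 1$; its spectrum on the support is therefore the same as before. Next, Theorem \ref{7theorem GHJ=alpha} provides an isomorphism carrying the GHJ quadrilateral tensored with $R_\infty$ onto the quadrilateral $\quadri$ with $P=\alpha^+_\lambda(\cM)$, $Q=\alpha^-_\lambda(\cM)$, and $\lambda=\lambda_1$. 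Since an isomorphism of quadrilaterals sends Jones projections to Jones projections, it preserves the spectrum of the angle operator, and hence $\Ang(\tP,\tQ)=\Ang(P,Q)$.

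With this identification in hand, Theorem \ref{7theorem angle}, applied with $i_0=1$ so that $\lambda=\lambda_1$, yields
$$\Ang(P,Q)=\{\cos^{-1}|\phi_\lambda(\varepsilon(\lambda_j,\lambda)\varepsilon(\lambda,\lambda_j))|;\; j\in J\}\setminus \{0,\tfrac{\pi}{2}\}.$$
It then remains only to evaluate the scalar $\phi_\lambda(\varepsilon(\lambda_j,\lambda)\varepsilon(\lambda,\lambda_j))$, which is exactly the computation already performed in Remark \ref{7remark crossed-product} using the $S$-matrix and fusion data of $SU(2)_k$:
$$|\phi_\lambda(\varepsilon(\lambda_j,\lambda)\varepsilon(\lambda,\lambda_j))|
=\frac{|S_{1j}|}{d(\lambda_1)d(\lambda_j)|S_{00}|}
=\frac{|\cos\frac{(j+1)\pi}{k+2}|}{|\cos\frac{\pi}{k+2}|}.$$
Substituting this into the displayed formula gives precisely the assertion.

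The only genuinely delicate points, which I would take care to justify rather than assume, are the two invariance claims of the second step. One must verify that tensoring by $R_\infty$ and conjugating by the isomorphism of Theorem \ref{7theorem GHJ=alpha} really do preserve $\Ang$, including the removal of the trivial values $\{0,\tfrac{\pi}{2}\}$ coming from $P\wedge Q$ and from mutually commuting directions. This is where the main (though still routine) bookkeeping lies, since the $\alpha$-induction quadrilateral $\quadri$ need not be irreducible; here I would invoke Remark \ref{2remark angle2} to interpret the angle through the restriction of $E_PE_Q$ to each intertwiner space $(\iota_{P,N},\iota_{P,N}\rho_i)$, which is well defined as long as $N\subset P$ and $N\subset Q$ are irreducible.
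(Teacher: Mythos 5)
Your proposal is correct and matches the paper's own (implicit) argument: the paper states this corollary without a separate proof precisely because it follows by combining Theorem \ref{7theorem GHJ=alpha} (conjugacy of the GHJ quadrilateral, after tensoring with $R_\infty$, with the $\alpha$-induction quadrilateral), Theorem \ref{7theorem angle}, and the $S$-matrix evaluation of $|\phi_\lambda(\varepsilon(\lambda_j,\lambda)\varepsilon(\lambda,\lambda_j))|$ already carried out in Remark \ref{7remark crossed-product}. Your extra care in justifying that $\Ang$ is unchanged by tensoring with $R_\infty$ and by conjugacy (via Remark \ref{2remark angle2}, since the quadrilateral need not be irreducible) is exactly the bookkeeping the paper leaves to the reader.
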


\begin{example} \label{7example E6} 
Let $\cN\subset \cM$ be the inclusion of the AFD type III$_1$ factors associated with 
the conformal embedding $SU(2)_{10}\subset SO(5)_1$ (see \cite[p.381]{X}). 
Then the corresponding GHJ subfactor is given by $E_6$ with $*$ equal to the vertex 
of the minimum value of the Perron-Frobenius eigenvector, and so its principal graph 
is as in Theorem \ref{5theorem II2},(2) (see \cite[p.726]{EK},\cite{Ok}). 
Since $[\hgamma]=[\lambda_0]\oplus [\lambda_6]$, we have $\tR=\tN$, and so $\tN\subset \tP$ is 
conjugate to the GHJ subfactor, while $\tP\subset \tM$ is the $E_6$ subfactor. 
Since 
$$\dim(\nu\lambda,\nu\lambda)=\dim(\hgamma,\lambda^2)
=\dim(\lambda_0\oplus \lambda_6,\lambda_0\oplus \lambda_2)=1,$$
the inclusion $\tN\subset \tM$ is irreducible. 
The angle between $\tP$ and $\tQ$ is computed as $\Theta(\tP,\tQ)=\cos^{-1}(2-\sqrt{3})$ 
by using either of \cite[Theorem 6.3.2]{GJ}, Theorem \ref{4theorem (2,2)-supertransitive}, or 
Corollary \ref{7corollary angle}. 
\end{example}

It is observed in \cite[Proposition 6.2.6]{GJ} that $\tR\subset \tM$ is irreducible if and only if 
$*$ is an endpoint. 
However, even when $*$ is not an endpoint, the inclusion $\tN\subset \tM$ may be irreducible 
(though it is very subtle in general to decide whether $\tN\subset \tM$ is irreducible or not 
as we will see below). 
We use this observation to compute the opposite angles of quadrilaterals 
constructed in \cite{G}. 
The following theorem is shown in \cite{G}. 

\begin{theorem} \label{7theorem fTL}
Let 
$\begin{array}{ccc}
B &\subset&A  \\
\cup& &\cup  \\
D &\subset &C 
\end{array}$
be an irreducible noncommuting quadrilateral of the hyperfinite II$_1$ factors such that 
$D\subset B$ and $D\subset C$ are supertransitive and 
$[B:D]$ and $[C:D]$ are less than 4. 
Then the principal graphs of $D\subset B$ and $D\subset C$ are $A_{2n+1}$ with an integer 
$n\geq 2$. 
For each integer $n\geq 2$, such a quadrilateral exists and is unique up to conjugacy. 
\end{theorem}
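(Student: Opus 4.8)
The plan is to separate the graph classification from the existence and uniqueness statements, running the classification through the bimodule calculus of Section~4. Write $\fQ=\quadri$ with $N=D$, $P=B$, $M=A$, $Q=C$. Since $N\subset P$ and $N\subset Q$ are supertransitive of index $<4$, their relative commutants coincide with the Temperley--Lieb algebras at every level, so the principal graphs are of type $A$, and being of finite index they are $A_m$ for some finite $m\ge 2$. Noncommutativity together with $2$-supertransitivity gives ${}_NL^2(P)_N\cong{}_NL^2(Q)_N$ (the observation of \cite[Lemma~4.2.1]{GJ} recalled at the start of Section~3), whence $[P:N]=[Q:N]$ and the two graphs share the same $m$. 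In the notation of Section~4, $[\iota\biota]=[\id_P]\oplus[\xi]$ and $[\bkappa\kappa]=[\id_P]\oplus[\eta]$. As $N\subset P$ is $4$-supertransitive, Lemma~\ref{4lemma xi-eta} gives $[\xi]\neq[\eta]$, $\eta\subset\xi^2$ and depth $\ge 4$, already excluding $A_2,A_3,A_4$ and leaving $m\ge 5$; and since $[\xi^2]=[\id_P]\oplus[\xi]\oplus[\xi_2]$ for the chain of $P$-$P$ sectors arising from $N\subset P$, the remaining summand forces $[\eta]=[\xi_2]$ and $[M:P]=1+d(\xi_2)$.

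The crux is to prove that $m$ is odd. Here the family is genuinely non-uniform: $A_5$ is cocommuting with $\Gal(M/N)\cong\fS_3$ and $[M:P]=[P:N]-1$; $A_7$ is the Class~I example with $[M:P]=[P:N]$; and for $m\ge 9$ the quadrilateral is noncommuting, noncocommuting with nontrivial $c(\hat{\fQ})$ and $[M:P]=d(\xi)(d(\xi)-1)$ equal to neither $[P:N]$ nor $[P:N]-1$. Consequently no single case of Theorem~\ref{4theorem (2,2)-supertransitive} applies and the argument has to be run directly inside the Temperley--Lieb fusion rules, as in \cite{G}. The feature I would exploit is that the chain $\id_P,\xi,\xi_2,\dots$ admits a nontrivial invertible sector (the order-two reflection of the chain) precisely when $m$ is odd. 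Decomposing ${}_NL^2(M)_N=\biota\iota\oplus\biota\xi_2\iota$, using that $\hxi$ must occur with multiplicity at least two (forced by noncommutativity), and imposing the self-consistency of the dual quadrilateral, I would show that the second intermediate subfactor $Q$ with $P\cap Q=N$ can exist only in the odd case. I regard this parity obstruction---equivalently, flatness of the relevant connection---as the main difficulty.

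For existence I would invoke the $\alpha$-induction construction of Section~7, these quadrilaterals being the forked Temperley--Lieb examples identified there with GHJ pairs. Starting from the $SU(2)_{2n}$ system $\{\lambda_i\}$ with $\lambda=\lambda_1$ and an inclusion $\cN\subset\cM$ chosen so that $N\subset P=\alpha^+_\lambda(\cM)\cap\alpha^-_\lambda(\cM)\subset\alpha^+_\lambda(\cM)$ is the $A_{2n+1}$ subfactor, Theorem~\ref{7theorem GHJ=alpha} identifies the quadrilateral with a GHJ pair and Corollary~\ref{7corollary tNtR} controls the relation between $\tN$ and $\tR$, ensuring $N\subset P$ is supertransitive of the required type; noncommutativity is then immediate from Theorem~\ref{3theorem angle},(1). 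This produces, for every $n\ge 2$, an irreducible noncommuting quadrilateral with $N\subset P$ and $N\subset Q$ isomorphic to the $A_{2n+1}$ subfactor.

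For uniqueness, supertransitivity makes $N\subset P$ $3$-supertransitive, so $H^2(\widehat{N\subset P})$ is trivial by Lemma~\ref{3lemma vanishing} and $c(\fQ)$ is trivial; since $\dim(\xi_2,\xi_2^2)=1$ in the $A_{2n+1}$ fusion rules, Lemma~\ref{3lemma Q-system} shows the $Q$-system for $\id_P\oplus\xi_2$ is unique up to equivalence, pinning down $M$ up to inner conjugacy in $P$. A direct count gives $\dim(\iota_{M,N},\iota_{M,N}\hxi)=2$ for $m\ge 7$, so Lemma~\ref{3lemma uniqueness} shows $P$ and $Q$ are the only intermediate subfactors whose $L^2$ is isomorphic to ${}_NL^2(P)_N$; a rigidity argument in the spirit of Theorems~\ref{4theorem uniqueness1} and \ref{4theorem uniqueness2} then carries one quadrilateral onto another, and the case $m=5$ is handled by the uniqueness of the $\fS_3$ quadrilateral. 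This gives uniqueness up to conjugacy for each $n\ge 2$.
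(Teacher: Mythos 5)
The paper itself offers no proof of this statement: it is imported verbatim from \cite{G}, and the introduction stresses that the hypotheses there impose \emph{no} condition on the upper inclusions $B\subset A$ and $C\subset A$ --- that is exactly what separates it from the results of Section 4. Your argument quietly reinstates such a condition. Writing $[\bkappa\kappa]=[\id_P]\oplus[\eta]$, invoking Lemma \ref{4lemma xi-eta}, speaking of $c(\hat{\fQ})$, and running uniqueness through a $Q$-system for $\id_P\oplus\xi_2$ via Lemma \ref{3lemma Q-system} all presuppose that $B\subset A$ and $C\subset A$ are $2$-supertransitive; this is neither assumed nor true for the quadrilaterals being classified once $n\geq 4$. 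Concretely, by Theorem \ref{7theorem GHJ=alpha} and the angle computation following the present theorem, the quadrilateral in question is dual to the $\alpha$-induction quadrilateral built from $\cN=\lambda_n(\cM)$ in the $SU(2)_{2n}$ system, so $B\subset A$ is dual to $N\subset P\cong \cN\rtimes_{\lambda_{2n}}\Z/2\Z\subset\cM$. The canonical endomorphism of the latter is a subsum of $[\lambda_n^2]=\bigoplus_{i=0}^{n}[\lambda_{2i}]$ of total dimension $[A:B]=\tfrac{1}{2}d(\lambda_n)^2=1/(2\sin^2\tfrac{\pi}{2n+2})$ containing $[\id]$, and for $n\geq 4$ no single even sector has dimension $[A:B]-1$ (one checks $2s^2+2s-1<0$ for $s=\sin\tfrac{\pi}{2n+2}$), so the complement of $[\id]$ is reducible; for $n=4$ the decomposition is $[\id]\oplus[\lambda_4]\oplus[\lambda_8]$, three summands, one an automorphism. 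Thus $\eta$, $\xi_2$ and $c(\hat{\fQ})$ simply do not exist for precisely those $n$ where the theorem has new content, your classification covers at best $n\leq 3$, the parity obstruction (which you rightly identify as the crux) is announced rather than proved, and Lemma \ref{3lemma Q-system}, which treats only canonical endomorphisms with a single nontrivial summand, cannot pin down $A$ over $B$. (Your index formula $[M:P]=d(\xi)(d(\xi)-1)$ is also incorrect: for $n=4$ it gives $2+\sqrt5$, whereas $[A:B]=3+\sqrt5$.)

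The existence argument is inverted as well. In the $\alpha$-induction/GHJ picture it is the \emph{upper} inclusions $\alpha^{\pm}_\lambda(\cM)\subset\cM$ that have principal graph $A_{2n+1}$, with $*$ the midpoint of the graph, and the quadrilateral of this theorem is the \emph{dual} of that one. If instead you force $N\subset P\cong\cN\subset\cM$ to be the $A_{2n+1}$ subfactor, Lemma \ref{7lemma N} forces $k\notin J$ and $\cN=\lambda_1(\cM)$, i.e.\ $*$ the vertex adjacent to an endpoint; then by \cite[Proposition 6.2.6]{GJ}, quoted just before Theorem \ref{7theorem fTL}, the inclusion $\tN\subset\tM$ is not irreducible, so your construction does not produce a quadrilateral satisfying the hypotheses at all. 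Moreover, irreducibility in the correct (midpoint) case is the genuinely delicate point --- in the paper it is settled only by the quantum $6j$-symbol computation $c=-1$ --- and cannot be waved through Corollary \ref{7corollary tNtR}. In short, all three assertions (classification, existence, uniqueness) remain unproved by your proposal; the actual proof in \cite{G} works with the planar algebra generated by Temperley--Lieb together with the two biprojections $e_B,e_C$ (the forked Temperley--Lieb algebra), precisely because the sector calculus of Section 4 is unavailable in the absence of supertransitivity above.
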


We now compute the ``opposite angles'' of the above quadrilateral. 

\begin{theorem} Let 
$\begin{array}{ccc}
\hat{C} &\subset&\hat{D}  \\
\cup& &\cup  \\
A &\subset &\hat{B} 
\end{array}$
be the dual quadrilateral of the quadrilateral in the previous theorem. 
Then 
$$\Ang(\hat{B},\hat{C})=\{\cos^{-1}\frac{\cos\frac{(2j+1)\pi}{2n+2}}{\cos\frac{\pi}{2n+2}};\;j=1,2,\cdots, 
[\frac{n-1}{2}]\},$$
where $[(n-1)/2]$ is the integer part of $(n-1)/2$. 
\end{theorem}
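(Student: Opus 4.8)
The plan is to realize the dual quadrilateral $\hat{\fQ}=\begin{array}{ccc}\hat{C}&\subset&\hat{D}\\ \cup&&\cup\\ A&\subset&\hat{B}\end{array}$ itself as a quadrilateral coming from $\alpha$-induction, and then read the angles off Corollary \ref{7corollary angle}. First I would use Theorem \ref{7theorem fTL} together with the identification of GHJ pairs with $\alpha$-induction quadrilaterals in Theorem \ref{7theorem GHJ=alpha} to put $\fQ$ in GHJ form. Since $D\subset B$ and $D\subset C$ have principal graph $A_{2n+1}$, the relevant system is $SU(2)_{2n}$: the Coxeter number of $A_{2n+1}$ is $2n+2$, so $d(\lambda_1)^2=4\cos^2\frac{\pi}{2n+2}$ is exactly the $A_{2n+1}$ index, and the distinguished vertex $*$ is an interior vertex, which is precisely the situation (observed just before Theorem \ref{7theorem fTL}) in which $\tN\subset\tM$ is irreducible and the quadrilateral is irreducible.

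Next I would identify the $\alpha$-induction data for the dual. Taking $\hat{D}$ as the ambient factor, the elementary inclusion $\hat{B}\subset\hat{D}$ is an $A_{2n+1}$ subfactor, so I set $\lambda=\lambda_1$ in $SU(2)_{2n}$ and write $\hat{B}=\alpha^+_{\lambda_1}(\hat{D})$, $\hat{C}=\alpha^-_{\lambda_1}(\hat{D})$ with $A=\hat{B}\cap\hat{C}$ as in Lemma \ref{7lemma N}. The substance of this step is to pin down the inclusion $\cN_0\subset\hat{D}$ underlying this $\alpha$-induction, equivalently its dual canonical endomorphism $[\overline{\iota_{\hat{D},\cN_0}}\iota_{\hat{D},\cN_0}]=\bigoplus_{j\in J}n_j[\lambda_j]$, i.e.\ the index set $J$. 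The claim is that $J=\{0,2,4,\dots,2n\}$ is the even-label (integer-spin) subsystem. I would establish this by computing the standard invariant of the elementary subfactor $A\subset\hat{B}$, which is the dual $\widehat{B\subset A}$ of the GHJ pair inclusion, and matching the resulting sectors of $\hat{D}$ to the even labels of $SU(2)_{2n}$; the even labels are singled out because they are exactly the sectors occurring in even powers of $\lambda_1$, and no odd label appears.

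Having identified $J$ as the even labels, I would apply Corollary \ref{7corollary angle} with $k=2n$ to obtain
$$\Ang(\hat{B},\hat{C})=\left\{\cos^{-1}\frac{\left|\cos\frac{(j+1)\pi}{2n+2}\right|}{\left|\cos\frac{\pi}{2n+2}\right|}\;;\;j\in J\right\}\setminus\left\{0,\frac{\pi}{2}\right\}.$$
Then I would do the trigonometric bookkeeping: $j=0$ and $j=2n=k$ give angle $0$ and are discarded (they are the scalar-braiding indices of Remark \ref{7remark crossed-product}); $j=n$, present when $n$ is even, gives $\cos\frac{(n+1)\pi}{2n+2}=0$ and hence $\pi/2$, also discarded; and the charge-conjugation symmetry $j\mapsto 2n-j$, which preserves $\left|\cos\frac{(j+1)\pi}{2n+2}\right|$, identifies each even $j>n$ with an even $j<n$. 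The surviving even indices are $j=2,4,\dots,2\left[\frac{n-1}{2}\right]$; writing $j=2j'$ with $1\le j'\le\left[\frac{n-1}{2}\right]$ and noting $\frac{(2j'+1)\pi}{2n+2}<\frac{\pi}{2}$ so the absolute values drop, one recovers exactly the stated set.

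The main obstacle will be the second step: proving that the $\alpha$-induction inclusion underlying the dual quadrilateral has dual canonical endomorphism equal to the full even-label sum, i.e.\ that $J$ is exactly $\{0,2,\dots,2n\}$. This amounts to computing the standard invariant of $A\subset\hat{D}$ (equivalently, tracking how the GHJ data for $(A_{2n+1},*)$ dualizes) and verifying that precisely the integer-spin sectors of $SU(2)_{2n}$ occur, with the correct multiplicities; once this is in hand, everything else is a formal consequence of Corollary \ref{7corollary angle} and elementary trigonometry.
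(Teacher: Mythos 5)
Your overall target is the right one (realize the relevant quadrilateral as the $\alpha$-induction quadrilateral for $SU(2)_{2n}$ with $J$ the even labels, then read the angles from Corollary \ref{7corollary angle}), and your final trigonometric bookkeeping is correct. But the identification step, which you yourself flag as the main obstacle, has no workable mechanism as you describe it, and the direction of your argument is the reverse of the only one that works. You start from the dual quadrilateral and propose to ``pin down the inclusion $\cN_0\subset\hat{D}$ underlying this $\alpha$-induction'' by computing standard invariants; but nothing guarantees a priori that $\hat{B}$ and $\hat{C}$ arise as $\alpha^{\pm}$-images of any braided system at all, so there is nothing to compute. The paper's proof goes the other way: it \emph{constructs} a candidate quadrilateral by $\alpha$-induction, embedding $\cN$ into a copy $\cM$ of itself via $\nu=\lambda_n$ (the \emph{midpoint} vertex --- this specific choice is what gives $\hat{\gamma}=\lambda_n^2$ and hence, by the fusion rule $[\lambda_n^2]=\bigoplus_{i=0}^n[\lambda_{2i}]$, the full even-label set $J=\{0,2,\dots,2n\}$; your justification that ``even labels are the sectors occurring in even powers of $\lambda_1$'' neither singles out the midpoint nor yields the full even system, since other vertices $\lambda_m$ give only $\bigoplus_{i\le m}[\lambda_{2i}]$), then shows this candidate is irreducible and not cocommuting, so that its dual is an irreducible noncommuting quadrilateral whose lower elementary inclusions are supertransitive $A_{2n+1}$'s, and finally invokes the \emph{uniqueness} clause of Theorem \ref{7theorem fTL} to conclude that this candidate is conjugate to the dual quadrilateral in the statement. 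Uniqueness is the bridge between the constructed object and the given one; your outline never uses it this way.

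The second gap is that you assume away exactly the hard point. You cite the observation preceding Theorem \ref{7theorem fTL} as saying that an interior choice of $*$ is ``precisely the situation in which $\tN\subset\tM$ is irreducible''; the paper says the opposite --- that when $*$ is not an endpoint it is ``very subtle in general to decide'' irreducibility --- and establishing $M\cap N'=\C$ for the midpoint quadrilateral is the core computation of the proof: one must show that the nontrivial element of $(\lambda_n\lambda,\lambda_n\lambda)$ \emph{anti}commutes with the extra generator $\lambda_n(\varepsilon(\lambda,\lambda_k)^*)u_k$ of $N$, which the paper does by evaluating a Kirillov--Reshetikhin quantum $6j$-symbol and finding the relevant scalar to be $c=-1$. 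Relatedly, noncocommutativity (needed so the dual is noncommuting and Theorem \ref{7theorem fTL} applies) must be proved; the paper obtains it from the identity $\alpha^{\pm}_\lambda=\Ad\,\varepsilon^{\pm}(\lambda,\lambda_n)\tlambda$, which makes $P$ and $Q$ inner conjugate in $M$, combined with Theorem \ref{3theorem angle} applied to the dual quadrilateral. Without these two ingredients the identification collapses; with them, your outline essentially becomes the paper's proof.
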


\begin{proof} We show that the above quadrilateral is given by the GHJ pair of $A_{2n+1}$ with $*$ equal to the 
midpoint. 
Thanks to Theorem \ref{7theorem GHJ=alpha}, we can use $\alpha$-induction instead of 
the GHJ construction for this purpose. 
Let $\cN$ and $\{\lambda_i\}_{i=0}^k$ be as before with $k=2n$. 
Let $(\cM,\{\tlambda_i\}_{i=0}^k)$ be a copy of $(\cN,\{\lambda_i\}_{i=0}^k)$. 
We embed $\cN$ into $\cM$ by 
$$\nu: \cN\ni x\mapsto \lambda_n(x)\in \cM.$$ 
Then we have $\bar{\nu}=\tlambda_n$ regarded as a map from $\cM$ into $\cN$, and so
$\hgamma=\lambda_n^2$ and $\gamma=\tlambda_n^2$. 
Since 
$$[\lambda_n^2]=\bigoplus_{i=0}^n[\lambda_{2i}],$$
we have $J=\{0,2,4,\cdots, k\}$ and $J_{i_0}=\{0,k\}$. 
Therefore thanks to Corollary \ref{7corollary tNtR}, 
$$N=\alpha^\pm_\lambda(\cN\rtimes_{\lambda_k}\Z/2\Z)
=\alpha^\pm_\lambda(\nu(\cN)+\nu(\cN)u_k),$$
where $u_k\in (\nu,\nu\lambda_k)=(\tlambda_n,\tlambda_n\tlambda_k)$. 
As before, we set $M=\cM$, $P=\alpha^+_\lambda(\cM)$, $Q=\alpha^-_\lambda(\cM)$, 
$N=P\cap Q$, and $\fQ=\quadri$. 
Then the principal graphs of $P\subset M$ and $Q\subset M$ are $A_{2n+1}$. 
To prove the theorem, it suffices to show that $N\subset M$ is irreducible and 
$\fQ$ is not cocommuting thanks to Corollary \ref{7corollary angle} and 
Theorem \ref{7theorem fTL}. 

We claim $\alpha^\pm_\lambda=\Ad\varepsilon^\pm(\lambda,\lambda_n)\tlambda$. 
Indeed, since 
$\varepsilon^\pm(\lambda,\hgamma)=\lambda_n(\varepsilon^\pm(\lambda,\lambda_n))\varepsilon^\pm(\lambda,\lambda_n)$ and 
$\varepsilon^\pm(\lambda,\lambda_n)\in (\lambda\lambda_n,\lambda_n\lambda)=(\lambda\bar{\nu},\bar{\nu}\tlambda)$, 
we have 
$$\Ad(\varepsilon^\pm(\lambda,\hgamma))\lambda\bar{\nu}(x)
=\Ad \lambda_n(\varepsilon^\pm(\lambda,\lambda_n))\Ad\varepsilon^\pm(\lambda,\lambda_n)\lambda\bar{\nu}(x)
=\Ad \bar{\nu}(\varepsilon^\pm(\lambda,\lambda_n))\bar{\nu}(\tlambda(x))$$
for all $x\in \cM$, which shows the claim. 
In particular, the two subfactors $P$ and $Q$ are inner conjugate in $M$. 
Therefore if $N\subset M$ is irreducible, Theorem \ref{3theorem angle} applied to the dual 
quadrilateral $\hat{\fQ}$ shows that $\fQ$ is not cocommuting. 

Now our only task is to show $M\cap N'=\C$. 
Note that $N$ is generated by $\nu(\lambda(\cN))$ and 
$\nu(\varepsilon(\lambda,\lambda_k)^*)u_k$ with $u_k\in (\nu,\nu\lambda_k)$, and so 
$$M\cap N'=(\nu\lambda,\nu\lambda)\cap\{\nu(\varepsilon(\lambda,\lambda_k)^*)u_k\}'.$$ 
Therefore it suffices to show 
$$(\lambda_n\lambda,\lambda_n\lambda)\cap\{\lambda_n(\varepsilon(\lambda,\lambda_k)^*)u\}'=\C.$$
where $u$ is a unitary in $(\lambda_n,\lambda_n\lambda_k)$. 
By Frobenius reciprocity, we have 
$$\dim(\lambda_n\lambda,\lambda_n\lambda)=\dim (\lambda_n^2,\lambda^2)
=\dim (\lambda_n^2,\lambda_0\oplus \lambda_2)=2,$$
and we can choose a basis $\{1,s\}$ of $(\lambda_n\lambda,\lambda_n\lambda)$ with 
$$s=\hpic{VI8}{1.2in},$$
where $v\in (\lambda_n,\lambda_n\lambda_2)$ and $w\in (\lambda_2\lambda,\lambda)$. 
On one hand, we have  
$$s\lambda_n(\varepsilon(\lambda,\lambda_k)^*)u=\hpic{VI9}{1.5in}=\hpic{VI10}{1.5in},$$
where $\tilde{v}=\varepsilon (\lambda_2,\lambda_n)^*v$. 
On the other hand, 
$$\lambda_n(\varepsilon(\lambda,\lambda_k)^*)us=\hpic{VI11}{1.5in}=\hpic{VI12}{1.5in}.$$
Since $\dim (\lambda_n,\lambda_2\lambda_n\lambda_k)=1$, 
there exists a scalar $c$ such that 
$$\hpic{VI13}{1.2in}=c\hpic{VI14}{1.2in},$$
which implies  
$s\lambda_n(\varepsilon(\lambda,\lambda_k)^*)u=c\lambda_n(\varepsilon(\lambda,\lambda_k)^*)us$. 
The number $c$ is given by Kirillov-Reshetikhin's quantum $6j$-symbol with $q=e^{\pi i/(n+1)}$ 
(see \cite[(5.4)]{KR}).  
Using the notation in \cite{KR} (irreducible representations are parameterized by half integers there), 
we get 
$$c=\left\{
\begin{array}{ccc}
n & \frac{n}{2}&\frac{n}{2}  \\
1 &\frac{n}{2} &\frac{n}{2} 
\end{array}
\right\}_q.$$
Note that although an explicit formula of the $6j$-symbols depends on the choice of intertwiners in general, 
the above one does not because the same intertwiners $u$ and $\tilde{v}$ 
appear in the both sides. 
This can be computed by \cite[Theorem 5.1]{KR} and it turns out that we have $c=-1$. 
Therefore $N\subset M$ is irreducible. 
\end{proof}

\subsection{Asymptotic inclusions and angles}
Another rich source of noncommuting quadrilaterals is asymptotic inclusions. 
We show that a technique similar to the one we used in the previous subsection works 
in this case too. 

Let $\cN\subset \cM$ be an irreducible inclusion of the hyperfinite II$_1$ factors with finite index and 
of finite depth and let 
$$\cN\subset \cM\subset \cM_1\subset \cdots \subset \cM_n\subset \cdots$$
be the Jones tower. 
The algebra $\cup_{n}\cM_n$ has a unique trace and we denote by $\cM_\infty$ the factor generated by 
$\cup_{n}\cM_n$ in the GNS representation of the unique trace of $\cup_{n}\cM_n$. 
Then the inclusion $\cM\vee (\cM_\infty\cap \cM')\subset \cM_\infty$ is called the asymptotic inclusion of 
$\cN\subset \cM$ (see \cite[Chapter 12.6]{EK} for details). 
It is easy to show that 
$\begin{array}{ccc}
\cN\vee (\cM_\infty\cap \cN') &\subset &  \cM_\infty\\
\cup &&\cup\\
\cN\vee(\cM_\infty\cap \cM') & \subset & \cM\vee (\cM_\infty\cap \cM')
\end{array}$ 
is an irreducible commuting quadrilateral of factors. 
We show how to compute the ``opposite angles" of this quadrilateral.  

\begin{example}
When the principal graph of $\cN\subset \cM$ is $A_n$, 
there exists a sequence of Jones projection $\{e_i\}_{i=1}^\infty$ such that 
$\cN={\{e_i\}_{i=2}^\infty}''\subset \cM={\{e_i\}_{i=1}^\infty}''$. 
We set $e_0=e_N$, $e_{-1}=e_{\cM}$ and $e_{-(i+1)}=e_{\cM_{i}}$ for $i\geq 1$. 
Then $\cM_\infty={\{e_i\}_{i=-\infty}^\infty}''$, $\cM\vee (\cM_\infty\cap \cM')=\{e_i\}_{i\neq 0}''$, 
$\cN\vee (\cM_\infty\cap \cN')=\{e_i\}_{i\neq 1}''$, and 
$\cN\vee(\cM_\infty\cap \cM')=\{e_i\}_{i\neq 0,1}''$. 
The index of the asymptotic inclusion in this case is computed in \cite{Ch} and the 
principal graphs are obtained in \cite{EK1}. 
\end{example}

It is well-known that $\cM_\infty\cap \cM'$ is naturally identified with the opposite algebra 
$\cM^\opp$ of $\cM$ and 
$\cM\vee(\cM_\infty \cap \cM')$ is naturally identified with $\cM\otimes \cM^\opp$, 
and so we identify $\cN\vee(\cM_\infty\cap \cM')$ with $\cN\otimes \cM^\opp$ too. 
However, we never identify $\cN\vee(\cM_\infty\cap \cN')$ with $\cN\otimes \cN^\opp$ as 
we cannot take common factorizations $\cM\otimes \cM^\opp$ and $\cN\otimes \cN^\opp$ for 
$\cM\vee(\cM_\infty\cap\cM')$ and $\cN\vee(\cM_\infty\cap\cN')$. 
By cutting the Hilbert space $L^2(\cM_\infty)$ by an appropriate projection in $\cM_\infty'$, 
we may assume that $\cM_\infty$ acts on $L^2(\cM\otimes \cM^\opp)=L^2(\cM)\otimes L^2(\cM^\opp)$. 
Since $e_N$ may be confused with $e_N\otimes 1$ in this representation, we use the symbol $e$ 
for $e_N$ in $\cM_\infty$. 
Let $J=J_\cM\otimes J_{\cM^\opp}$ be the canonical conjugation of $\cM\otimes \cM^\opp$. 
We set 
$$M:=J\big(\cN\vee(\cM_\infty\cap \cM')\big)'J=J(\cN\otimes \cM^\opp)'J=\cM_1\otimes \cM^\opp,$$ 
$$P:=J\big(\cM\vee(\cM_\infty\cap \cM')\big)'J=J(\cM\otimes \cM^\opp)'J=\cM\otimes \cM^\opp,$$ 
$$Q:=J\big(\cN\vee(\cM_\infty\cap \cN')\big)'J, \quad  N:=J\cM_\infty'J.$$ 

Note that $Q$ is not equal to $\cM_1\otimes \cN^\opp$ as we formally (just formally) broke the symmetry 
between $\cM\vee(\cM_\infty\cap \cM')$ and $\cN\vee(\cM_\infty\cap \cN')$ by representing 
$\cM_\infty$ on $L^2(\cM\otimes \cM^\opp)$. 
However, we still have the following: 

\begin{lemma} \label{7lemma Q} 
Let the notation be as above. 
Then $Q$ and $\cM_1\otimes \cN^\opp$ are inner conjugate in $M$. 
In particular, we have $M\otimes_QM\cong \cM_1\otimes \cM_1^\opp$ as 
$M-M$ bimodules. 
\end{lemma}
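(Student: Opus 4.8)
The plan is to deduce the bimodule statement from the inner conjugacy, and to prove the inner conjugacy by recognizing $\cN\vee(\cM_\infty\cap\cN')$ as a basic construction and matching it, up to the choice of a Jones projection, with the reference inclusion $\cN^{\opp}\subset\cM^{\opp}$.

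First I would dispose of the ``in particular'' clause. The $M$--$M$ bimodule $M\otimes_Q M$ is the $L^2$-space of the Jones extension of $Q\subset M$, so as an $M$--$M$ bimodule it depends only on the inner conjugacy class of $Q$ in $M$. Granting $Q\sim \cM_1\otimes\cN^{\opp}$, it then suffices to compute the Jones extension of $\cM_1\otimes\cN^{\opp}\subset\cM_1\otimes\cM^{\opp}$; since taking the basic construction commutes with tensoring by the fixed factor $\cM_1$, this extension is $\cM_1\otimes\cM_1^{\opp}$, whence $M\otimes_Q M\cong \cM_1\otimes\cM_1^{\opp}$.

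For the inner conjugacy, write $\cM^{\opp}=\cM_\infty\cap\cM'$ and let $e\in\cM_\infty$ be the Jones projection of the symmetric enveloping (asymptotic) inclusion; recall that $e$ commutes with $\cN$ and implements the minimal expectation $E_{\cN^{\opp}}$ on $\cM^{\opp}$, so $\langle\cM^{\opp},e\rangle$ is the basic extension of the irreducible inclusion $\cN^{\opp}\subset\cM^{\opp}$. The first key step is to establish
$$\cM_\infty\cap\cN'=\langle\cM^{\opp},e\rangle,$$
i.e. that the $\cN$-relative commutant is exhausted by $\cM^{\opp}$ together with $e$. Granting this, and using that $\cN$ is a factor commuting with $\langle\cM^{\opp},e\rangle$ with trivial intersection, I would obtain
$$\cN\vee(\cM_\infty\cap\cN')=\langle\cB,e\rangle,\qquad \cB:=\cN\otimes\cM^{\opp},$$
the basic extension of $\cA:=\cN\otimes\cN^{\opp}\subset\cB$ for the Jones projection $e$ (which implements $\id_\cN\otimes E_{\cN^{\opp}}=E_{\cA}$ since it commutes with $\cN$). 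Next I would realize $\cM_1^{\opp}=J_{\cM^{\opp}}(\cN^{\opp})'J_{\cM^{\opp}}$ on $L^2(\cM^{\opp})$ with standard Jones projection $f$, so that $\cM_1\otimes\cN^{\opp}=J(\cN\otimes\cM_1^{\opp})'J=J\langle\cB,1\otimes f\rangle'J$. Thus $Q=J\langle\cB,e\rangle'J$ and $\cM_1\otimes\cN^{\opp}=J\langle\cB,1\otimes f\rangle'J$ differ only in the choice of Jones projection, $e$ versus $1\otimes f$, for one and the same irreducible inclusion $\cA\subset\cB$ acting on $\cH=L^2(\cM)\otimes L^2(\cM^{\opp})$. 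Since $E_{\cA}$ is unique, I would invoke uniqueness of the basic construction to produce a unitary $u\in\cB'=(\cN\otimes\cM^{\opp})'$ with $ueu^{*}=1\otimes f$, hence $u\langle\cB,e\rangle u^{*}=\langle\cB,1\otimes f\rangle$. Taking commutants and conjugating by $J$, the unitary $JuJ$ lies in $J\cB'J=M$ and satisfies $(JuJ)\,Q\,(JuJ)^{*}=J\langle\cB,1\otimes f\rangle'J=\cM_1\otimes\cN^{\opp}$, which is the desired inner conjugacy.

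The two steps needing care — and which together constitute the real content — are the identification $\cM_\infty\cap\cN'=\langle\cM^{\opp},e\rangle$ (that the $\cN$-relative commutant is no larger than the basic extension on the $\cM^{\opp}$ side, which I expect to be the main obstacle and where the structure of the asymptotic inclusion genuinely enters), and the fact that the unitary conjugating the two Jones projections can be chosen inside $\cB'$ (equivalently, that $e$ and $1\otimes f$ have the same central support there). This last point is exactly the ``formally broken symmetry'' produced by representing $\cM_\infty$ on $L^2(\cM\otimes\cM^{\opp})$ rather than symmetrically, and explains why $Q$ is only inner conjugate to, and not literally equal to, $\cM_1\otimes\cN^{\opp}$.
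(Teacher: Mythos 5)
Your proposal is correct and takes essentially the same route as the paper: the heart of both arguments is recognizing $\cN\vee(\cM_\infty\cap \cN')$ as the basic construction $(\cN\otimes\cM^\opp)\vee\{e\}$ of $\cN\otimes\cN^\opp\subset\cN\otimes\cM^\opp$ and then passing to commutants via $J$. The paper simply phrases the final step downstairs — both $Q$ and $\cM_1\otimes\cN^\opp$ are downward basic constructions of $M\subset\cM_1\otimes\cM_1^\opp$, hence inner conjugate in $M$ — which is the same uniqueness fact you implement upstairs by conjugating the two Jones projections $e$ and $1\otimes f$ by a unitary in $(\cN\otimes\cM^\opp)'$.
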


\begin{proof} Note that 
$$\cN\otimes \cN^\opp\subset \cN\otimes \cM^\opp\subset \cN\vee(\cM_\infty\cap\cN')
=(\cN\otimes \cM^\opp)\vee\{e\}$$
is the basic construction with the Jones projection $e$. 
Thus 
$$Q\subset \cM_1\otimes \cM^\opp \subset J(\cN\otimes \cN^\opp)'J=\cM_1\otimes \cM_1^\opp$$
is the basic construction too, which shows that $Q$ and $\cM_1\otimes \cN^\opp$ are inner conjugate 
in $M$. 
\end{proof}

Our purpose of this subsection is to show the following theorem: 

\begin{theorem} \label{7theorem asymptotic}
Let the notation be as above. 
If the principal graph of $\cN\subset \cM$ is $A_n$, then 
$$\Ang(P,Q)=\{\cos^{-1}\frac{\cos\frac{(j+1)\pi}{n+1}}{\cos\frac{\pi}{n+1}};\;j=1,2,\cdots [\frac{n-2}{2}]\}.$$
\end{theorem}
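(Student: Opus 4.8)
The plan is to run exactly the machine of the GHJ subsection: realize the quadrilateral $\fQ=\quadri$ as an $\alpha$-induction quadrilateral and then read off the angles from Theorem \ref{7theorem angle} together with the scalar computed in Remark \ref{7remark crossed-product}. Since $\cN\subset\cM$ has principal graph $A_n$, the system of $\cM-\cM$ bimodules it generates is the $SU(2)_k$ system $\{\lambda_i\}_{i=0}^{k}$ with $k=n-1$, for which the braiding, the fusion rules, the dimensions $d(\lambda_i)=\sin\frac{(i+1)\pi}{k+2}/\sin\frac{\pi}{k+2}$ and the $S$-matrix recalled above are all available; note $k+2=n+1$. First I would use that the asymptotic inclusion is a Longo--Rehren inclusion, so that $P=\cM\otimes\cM^{\opp}$ and (by Lemma \ref{7lemma Q}) $Q$, inner conjugate to $\cM_1\otimes\cN^{\opp}$, sit inside $M=\cM_1\otimes\cM^{\opp}$ as the two chiral subalgebras. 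The aim of this first step is to exhibit the braided system $\{\lambda_i\}$ together with an embedding $\nu$ of a base factor into $M$, with $\lambda=\lambda_1$, so that $P$ and $Q$ are the images $\alpha^+_\lambda(M)$ and $\alpha^-_\lambda(M)$ up to inner conjugacy in $M$ (here $M$ plays the role of the ambient factor of Section 7.1); the two opposite braidings $\varepsilon^{\pm}$ are precisely what distinguishes the left chiral copy from the right one.

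The technical heart, and the step I expect to be hardest, is to make this identification rigorous and, in particular, to determine the index set $J$ defined by $[\bnu\nu]=\bigoplus_{j\in J}n_j\lambda_j$. I would do this by matching standard invariants, exactly as in the proof of Theorem \ref{7theorem GHJ=alpha}: build the grid of relative commutants on both sides, use Lemma \ref{7lemma rel-commu} and flatness of the Jones projections to identify the principal and dual principal graphs, and then invoke Popa's strong amenability together with \cite{I3} (and \cite{P2}) to upgrade the coincidence of standard invariants to a conjugacy after tensoring with $R_\infty$. The outcome I am aiming for is that every $\lambda_j$ occurs once in the dual canonical endomorphism, i.e. $J=\{0,1,\dots,n-1\}$ with all $n_j=1$; this reflects the fact that in the asymptotic (Longo--Rehren) picture the whole charge system appears. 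Lemma \ref{7lemma N} and Remark \ref{7remark crossed-product} would then also pin down $N=P\cap Q$ and confirm that $\fQ$ is a genuine quadrilateral.

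With $J=\{0,1,\dots,n-1\}$ in hand, the computation is immediate. By Theorem \ref{7theorem angle},
\[
\Ang(P,Q)=\{\cos^{-1}|\phi_\lambda(\varepsilon(\lambda_j,\lambda)\varepsilon(\lambda,\lambda_j))|;\;j\in J\}\setminus\{0,\tfrac{\pi}{2}\},
\]
and Remark \ref{7remark crossed-product} (with $k=n-1$) evaluates $|\phi_\lambda(\varepsilon(\lambda_j,\lambda)\varepsilon(\lambda,\lambda_j))|=|\cos\frac{(j+1)\pi}{n+1}|/|\cos\frac{\pi}{n+1}|$. The remainder is bookkeeping: the values at $j$ and at $n-1-j$ agree because $\cos\frac{(n-j)\pi}{n+1}=-\cos\frac{(j+1)\pi}{n+1}$; the indices $j=0$ and $j=n-1$ give ratio $1$, hence angle $0$; and when $n$ is odd the index $j=(n-1)/2$ gives $\cos\frac{\pi}{2}=0$, hence angle $\frac{\pi}{2}$. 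After deleting $0$ and $\frac{\pi}{2}$ and folding $j\leftrightarrow n-1-j$, a set of representatives is $j=1,2,\dots,[\frac{n-2}{2}]$, on which $\frac{(j+1)\pi}{n+1}<\frac{\pi}{2}$, so the cosine is positive and the absolute value may be dropped. This yields exactly the claimed set.
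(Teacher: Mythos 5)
You have a genuine gap, and it sits exactly at the step you yourself flagged as the technical heart: the asymptotic quadrilateral \emph{cannot} be realized as a Section 7.1 quadrilateral for the $SU(2)_{n-1}$ system with $\lambda=\lambda_1$ and $[\bnu\nu]=\bigoplus_{j=0}^{n-1}[\lambda_j]$, $n_j\equiv 1$, so Theorem \ref{7theorem angle} is not applicable and the standard-invariant matching has no target to converge to. First, such a base inclusion does not exist: a dual canonical endomorphism must carry a $Q$-system, and $\bigoplus_{j=0}^{n-1}[\lambda_j]$ with every multiplicity one is not a $Q$-system in the $SU(2)_{n-1}$ category (for instance, $\lambda_1$ and $\lambda_{n-1}$ are pseudo-real, and the summand pattern is not among the $A$--$D$--$E$ algebras; compare the corollary following Lemma \ref{3lemma algebra}). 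Second, and independently of existence, the invariants refuse to match: in the setting of Section 7.1, Lemma \ref{7lemma N} together with Remark \ref{7remark crossed-product} gives $N=\alpha^{\pm}_{\lambda}\bigl(\sum_{j\in J\cap\{0,n-1\}}\cN\cH_j\bigr)$, hence
$$[P:N]=\frac{\sum_{j\in J}n_jd(\lambda_j)}{\sum_{j\in J\cap\{0,n-1\}}n_jd(\lambda_j)},$$
which with your data equals $\tfrac12\sum_{j=0}^{n-1}d(\lambda_j)$. But for the asymptotic quadrilateral $N\subset P$ is the Longo--Rehren inclusion of the \emph{even} system $\Delta=\{\lambda_{2j}\}$, so $[P:N]=\sum_{\xi\in\Delta}d(\xi)^2$. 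Already for $n=4$ these are $(3+\sqrt{5})/2$ and $(5+\sqrt{5})/2$ respectively, and no choice of $J$ and integer multiplicities $n_j\le d(\lambda_j)$ reconciles them. Relatedly, the guiding intuition is off: what "appears" in the Longo--Rehren picture is the diagonal object $\bigoplus_{\xi\in\Delta}\rho_\xi\otimes\rho_\xi^{\opp}$ in $\cM\otimes\cM^{\opp}$, over the even system only, not $\bigoplus_j\lambda_j$ in $\cM$.

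That your final set of values agrees with the theorem is an artifact of folding, not evidence for the identification: the numbers $|\cos\frac{(j+1)\pi}{n+1}|/\cos\frac{\pi}{n+1}$ taken over \emph{all} $j$ and over \emph{even} $j$ only produce the same set, because $|\cos\frac{(n-j)\pi}{n+1}|=|\cos\frac{(j+1)\pi}{n+1}|$. The paper's actual proof never leaves the Longo--Rehren picture: it realizes the $N$--$N$ sectors by half-braidings $\widetilde{\sigma}^{\alpha}$, decomposes $[\biota\iota]$ into the $[\widetilde{\sigma_{2i,2i}}]$ (with an extra split summand $\mu_0$ when $n$ is odd, where the even system has degenerate braiding), and computes the eigenvalues of $E_PE_Q$ on the one-dimensional spaces $(\iota,\iota\widetilde{\sigma}^{\alpha})$ via Lemmas \ref{7lemma EPEQ} and \ref{7lemma EPEQII}; the eigenvalues are the squares of the monodromy scalars of $\lambda_1$ with the even-index sectors. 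If you insist on an $\alpha$-induction formulation, the braided system would have to be the quantum double of $\Delta$ acting on $N$, with $\lambda$ a double sector of dimension $d(\lambda_1)$, and you would then still have to prove $\alpha^{\pm}_{\lambda}(M)=P,Q$ and evaluate the relevant monodromies --- which is essentially the half-braiding computation the paper performs, so nothing is saved by the detour.
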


We go back to the general (not necessarily $A_n$) case. 
Let $\fQ=\quadri$ be as above. 
To compute $\Ang(P,Q)$ we may replace $M$ with 
$$M\otimes \B(\ell^2)\otimes \B(\ell^2)\cong \cM\otimes\B(\ell^2)\otimes (\cM\otimes\B(\ell^2))^\opp,$$
and so we may and do assume that $\cN\subset \cM$ is an inclusion of properly infinite factors from 
now on. 
In this case, the inclusion $N\subset P$ is described as the Longo-Rehren inclusion \cite{M1} 
whose structure is well-studied \cite{I6}. 
Therefore the structure of $N\subset P\subset M$ is completely understood. 
It is a little tricky to decide the position of $Q$, but we can handle it by using 
Lemma \ref{7lemma Q}. 
We use the symbols $\iota$, $\iota_Q$, $\kappa$, and $\kappa_Q$ as in Section 4. 

We recall the notation in \cite{I6} to describe the Longo-Rehren inclusion $N\subset P$. 
Let $\nu:\cM\hookrightarrow \cM_1$ be the inclusion map and let $\Delta=\{\rho_\xi\}_{\xi\in \Delta_0}$ 
be the set of irreducibles contained in $\cup_n(\bnu\nu)^n$. 
We may assume that $\rho_e=\id_\cM$ with $e\in \Delta_0$ and $[\overline{\rho_{\xi}}]=[\rho_{\bar{\xi}}]$. 
We identify $\cM^\opp$ with $\cM'=J_{\cM}\cM J_{\cM}$ and define $j:\cM\rightarrow \cM^\opp$ by 
$j(x)=J_\cM x J_\cM$ for $x\in \cM$. 
For $x\in \cM$, the corresponding element $x^\opp\in \cM^\opp$ is identified with $j(x^*)$. 
We denote by $\rho_\xi^\opp$ the endomorphism of $\cM^\opp$ corresponding to $\rho_\xi$. 
In our situation, we have $\rho_\xi^\opp=j\rho_\xi j^{-1}$. 
It is known that we have 
$[\iota\biota]=\bigoplus_{\xi\in \Delta_0}[\hrho_\xi],$
where $\hrho_\xi=\rho_\xi\otimes \rho_\xi^\opp$, and so $d(\iota)^2=\sum_{\xi\in\Delta_0}d(\xi)^2$. 
We choose an isometry $V_\xi\in (\rho_\xi\otimes \rho_\xi^\opp,\iota\biota)$ 
such that $r_{\biota}=V_e$, which will be denoted by $V$ for simplicity. 
We denote $r_\iota \in N$ by $W$, which is described as follows. 
Let $\{T(_{\xi,\eta}^\zeta)_i\}_{i=1}^{N_{\xi,\eta}^\zeta}$ be an 
orthonormal basis of $(\rho_\zeta,\rho_\xi\rho_\eta)$, and set 
$$T_{\xi,\eta}^\zeta=\sum_{i=1}^{N_{\xi,\eta}^\zeta}T(_{\xi,\eta}^\zeta)_i
\otimes j(T(_{\xi,\eta}^\zeta)_i),$$ 
which does not depend on the choice of the basis. 
Then we have 
$$W=\sum_{\xi,\eta,\zeta}\frac{\sqrt{d(\xi)d(\eta)}}{d(\iota)\sqrt{ d(\zeta)}}
V_\xi\hrho_\xi(V_\eta)T_{\xi,\eta}^\zeta V_\zeta^*.$$

To describe the conditional expectation $E_Q$, we use the following lemma:

\begin{lemma} \label{7lemma expectation}
Let $\cB\subset \cA$ be an irreducible inclusion of property infinite factors with 
finite index and let $\mu: \cB\hookrightarrow \cA$ be the inclusion map. 
Let $[\mu\bar{\mu}]=\bigoplus_{i=0}^mn_i[\sigma_i]$ be the irreducible decomposition and 
let $\{s(i)_l\}_{l=1}^{n_i}$ be an orthonormal basis of $(\mu,\sigma_i\mu)$. 
Then 
$$E_\cB(x)=\frac{1}{[\cA:\cB]}\sum_{i=0}^m\sum_{l=1}^{n_i}d(\sigma_i)s(i)_l^*\sigma_i(x)s(i)_l, 
\quad \forall x\in \cA.$$
\end{lemma}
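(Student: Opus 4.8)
The plan is to begin from the $Q$-system description of the minimal conditional expectation recalled in Section 3. Since $\cB\subset\cA$ is irreducible, the inclusion morphism $\mu$ is irreducible, so every intertwiner space between morphisms with source $\cB$ carries the scalar-valued inner product $\inpr{\cdot}{\cdot}$. Writing $\gamma=\mu\bar\mu$ for the canonical endomorphism and $w=\mu(r_\mu)$, where $r_\mu\in(\id_\cB,\bar\mu\mu)$ is the standard solution, one checks $w\in(\mu,\gamma\mu)$ is an isometry and, by Longo's formula, $E_\cB(x)=w^*\gamma(x)w=\mu(\phi_\mu(x))$ for all $x\in\cA$.

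Next I would insert the irreducible decomposition of $\gamma$. Fix isometries $w_{i,l}\in(\sigma_i,\gamma)$, $1\le l\le n_i$, whose ranges are mutually orthogonal and sum to $1$, so that $\gamma(x)=\sum_{i,l}w_{i,l}\sigma_i(x)w_{i,l}^*$. Substituting into $E_\cB(x)=w^*\gamma(x)w$ gives
\[
E_\cB(x)=\sum_{i=0}^m\sum_{l=1}^{n_i}u(i)_l^*\,\sigma_i(x)\,u(i)_l,\qquad u(i)_l:=w_{i,l}^*w .
\]
A short computation shows $u(i)_l\in(\mu,\sigma_i\mu)$, and by Frobenius reciprocity $\dim(\mu,\sigma_i\mu)=\dim(\sigma_i,\mu\bar\mu)=n_i$; thus $\{u(i)_l\}_l$ is a basis of the very space $(\mu,\sigma_i\mu)$ in which $\{s(i)_l\}_l$ is a prescribed orthonormal basis.

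The remaining task is to convert the basis $\{u(i)_l\}_l$ into the orthonormal basis $\{s(i)_l\}_l$. Writing $u(i)_l=\sum_m c_{ml}\,s(i)_m$, one finds $\sum_l u(i)_l^*\sigma_i(x)u(i)_l=\sum_{m,m'}(cc^*)_{m'm}\,s(i)_m^*\sigma_i(x)s(i)_{m'}$, while the Gram matrix of $\{u(i)_l\}$ is $\inpr{u(i)_l}{u(i)_k}=(c^*c)_{kl}$. Hence the whole statement reduces to the single identity $\inpr{u(i)_l}{u(i)_k}=\tfrac{d(\sigma_i)}{[\cA:\cB]}\delta_{kl}$: this forces $c^*c=\tfrac{d(\sigma_i)}{[\cA:\cB]}I$, whence $cc^*=\tfrac{d(\sigma_i)}{[\cA:\cB]}I$ (the same scalar, as $c$ is square), and summing over $i$ yields the asserted formula.

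I expect this Gram identity to be the main obstacle. Since $\inpr{u(i)_l}{u(i)_k}=w^*w_{i,k}w_{i,l}^*w$, it is equivalent to the claim that the state $a\mapsto w^*aw$ on the finite-dimensional algebra $(\gamma,\gamma)\cong\bigoplus_i M_{n_i}(\C)$ is the dimension-weighted trace $\bigoplus_i\tfrac{d(\sigma_i)}{d(\gamma)}\mathrm{Tr}_{n_i}$, where $d(\gamma)=[\cA:\cB]$. This is precisely where the standardness (minimality) of the solution $(r_\mu,\bar r_\mu)$ enters, and I would verify it by a direct diagrammatic computation from the conjugate equations $\bar r_\mu^*\mu(r_\mu)=r_\mu^*\bar\mu(\bar r_\mu)=1/d(\mu)$; the delicate part is the traciality on each block, i.e.\ the vanishing of the off-diagonal entries, with the normalization $\sum_i\tfrac{d(\sigma_i)}{d(\gamma)}n_i=1$ and the value $E_\cB(1)=1$ serving as consistency checks.
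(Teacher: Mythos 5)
Your reduction is correct as far as it goes, but it stops exactly at the point where the lemma actually lives. The steps you carry out --- Longo's formula $E_\cB(x)=w^*\gamma(x)w$ with $w=\mu(r_\mu)$, the expansion of $\gamma$ through isometries $w_{i,l}\in(\sigma_i,\gamma)$, the observation $u(i)_l:=w_{i,l}^*w\in(\mu,\sigma_i\mu)$, and the change-of-basis bookkeeping --- are valid for an \emph{arbitrary} choice of the $w_{i,l}$, and they reduce the lemma to the Gram identity $\inpr{u(i)_l}{u(i)_k}=\tfrac{d(\sigma_i)}{[\cA:\cB]}\delta_{kl}$. That identity (equivalently, that $a\mapsto w^*aw$ is the dimension-weighted trace on $(\gamma,\gamma)$) is true, but it is precisely where the standardness of $(r_\mu,\bar{r}_\mu)$ enters and where the constant $d(\sigma_i)$ and the vanishing of off-diagonal terms must be established; your proposal only asserts it and promises ``a direct diagrammatic computation.'' Since everything else in the argument is generic linear algebra, the unproven Gram identity is not a loose end but the entire content of the lemma, so as written the proof is incomplete at its core.

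The paper's proof never meets this obstacle because it chooses the decomposing isometries adapted to the given basis: it sets $\tilde{s}(i)_l:=\sqrt{d(\sigma_i)}\,s(i)_l^*\sigma_i(r_{\bar{\mu}})\in(\sigma_i,\mu\bar{\mu})$ and invokes the standard normalization of Frobenius reciprocity (\cite{I4}; the same fact is recorded as Lemma \ref{Alemma normalization}) to conclude that $\{\tilde{s}(i)_l\}_l$ is again orthonormal, hence $\mu\bar{\mu}(x)=\sum_{i,l}\tilde{s}(i)_l\sigma_i(x)\tilde{s}(i)_l^*$. With this choice the Gram-matrix problem evaporates: the only computation left is
$$\mu(r_\mu^*)\tilde{s}(i)_l=\sqrt{d(\sigma_i)}\,s(i)_l^*\,\sigma_i\bigl(\mu(r_\mu^*)\,r_{\bar{\mu}}\bigr)=\frac{\sqrt{d(\sigma_i)}}{d(\mu)}\,s(i)_l^*,$$
which follows in one line from $s(i)_l^*\,\sigma_i\mu(b)=\mu(b)\,s(i)_l^*$ for $b\in\cB$ together with the conjugate equation $\bar{r}_\mu^*\mu(r_\mu)=1/d(\mu)$ (recall $r_{\bar{\mu}}=\bar{r}_\mu$), and $d(\mu)^2=[\cA:\cB]$ gives the stated constant. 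If you want to finish your own route, note that $u(i)_l=w_{i,l}^*\mu(r_\mu)$ is, up to the factor $\sqrt{d(\sigma_i)}/d(\mu)$, exactly the Frobenius transform of $w_{i,l}$, so your trace claim is this same normalization statement in disguise: there is no shortcut around it, and once you cite or reprove it, your argument closes.
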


\begin{proof} Let $\tilde{s}(i)_l=\sqrt{d(\sigma_i)}s(i)_l^*\sigma_i(r_{\bar{\mu}})$. 
Then $\{\tilde{s}(i)_l\}_{l=1}^{n_i}$ is an orthonormal basis of $(\sigma_i,\mu\bar{\mu})$ 
thanks to Frobenius reciprocity. 
Thus 
$$E_\cB(x) =\mu(r_\mu^*\bar{\mu}(x)r_\mu)=\mu(r_\mu^*)\mu\bar{\mu}(x)\mu(r_\mu) 
=\sum_{i=0}^m\sum_{l=1}^{n_i}\mu(r_\mu^*)\tilde{s}(i)_l\sigma_i(x)\tilde{s}(i)_l^*\mu(r_\mu).$$
On the other hand, 
$$\mu(r_\mu^*)\tilde{s}(i)_l=\sqrt{d(\sigma_i)}\mu(r_\mu^*)s(i)_l^*\sigma_i(r_{\bar{\mu}})
=\sqrt{d(\sigma_i)}s(i)_l^*\sigma_i(\mu(r_\mu^*)r_{\bar{\mu}})=\frac{\sqrt{d(\sigma_i)}}{d(\mu)}s(i)_l^*,$$
which shows the statement. 
\end{proof}

Recall that to compute $\Ang(P,Q)$, it suffices to determine the eigenvalues of $E_PE_Q$ restricted to 
$P$. 

\begin{lemma} \label{7lemma EPEQ}
Let $[\bnu\nu]=\bigoplus_{\xi \in \Delta_1}n_\xi[\rho_\xi]$ be the irreducible decomposition. 
Then for $x\in P$, we have 
$$E_PE_Q(x)=\frac{d(\iota)^2}{[\cM:\cN]}\sum_{\xi\in \Delta_1}\frac{n_\xi}{d(\xi)}W^*V_\xi\hrho_\xi(x)
V_\xi^*W.$$
\end{lemma}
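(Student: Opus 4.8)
The plan is to compute $E_Q$ by the expectation formula of Lemma~\ref{7lemma expectation}, apply $E_P$, and then rewrite the resulting $M$-$M$ intertwiners in terms of the Longo-Rehren data $V_\xi$, $W=r_\iota$ and $\hrho_\xi$ of the inclusion $N\subset P$. The first point is to identify the endomorphism that controls $E_Q$. By the very definition of the Jones tower, $\cM_1=\langle \cM,e_\cN\rangle$ is the basic extension of $\cN\subset\cM$, so the canonical endomorphism of $\cN\subset\cM$ coincides with the dual canonical endomorphism $\bnu\nu\in\End(\cM)$ of $\cM\subset\cM_1$; thus $[\bnu\nu]=\bigoplus_{\xi\in\Delta_1}n_\xi[\rho_\xi]$ as in the statement. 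Passing to the opposite algebra, the canonical endomorphism of $\cN^\opp\subset\cM^\opp$ decomposes as $\bigoplus_{\xi\in\Delta_1}n_\xi[\rho_\xi^\opp]$.

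By Lemma~\ref{7lemma Q}, $Q$ is inner conjugate in $M=\cM_1\otimes\cM^\opp$ to $\cM_1\otimes\cN^\opp$, whose canonical endomorphism is $\id_{\cM_1}\otimes\bigl(\bigoplus_{\xi\in\Delta_1}n_\xi\rho_\xi^\opp\bigr)$. Hence $[\kappa_Q\bkappa_Q]=\bigoplus_{\xi\in\Delta_1}n_\xi[\id_{\cM_1}\otimes\rho_\xi^\opp]$, and I can choose, for each $\xi$, an orthonormal basis of the intertwiner space $(\kappa_Q,(\id_{\cM_1}\otimes\rho_\xi^\opp)\kappa_Q)$ with $n_\xi$ elements, built from intertwiners living on the $\cM^\opp$-factor. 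Feeding this into Lemma~\ref{7lemma expectation} gives $E_Q(x)$ as a sum over $\xi\in\Delta_1$ and over the $n_\xi$ basis vectors, weighted by $d(\rho_\xi^\opp)=d(\xi)$ and the global scalar $1/[M:Q]$. Since the quadrilateral is cocommuting with $[P:N]=[Q:N]$ we have $[M:Q]=[M:P]=[\cM_1:\cM]=[\cM:\cN]$, so the prefactor $1/[\cM:\cN]$ is already in place.

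I would then apply $E_P=E_\cM\otimes\id_{\cM^\opp}$ to each term. Since $\Delta_1\subseteq\Delta_0$ the isometries $V_\xi$ are available for $\xi\in\Delta_1$, and for $x\in P=\cM\otimes\cM^\opp$ the key is to transport the $Q$-side intertwiners to the $P$-side Longo-Rehren isometries: the identification $N=P\cap Q$ together with the half-braiding structure of the Longo-Rehren inclusion from \cite{I6} matches each $\rho_\xi^\opp$ occurring in $\kappa_Q\bkappa_Q$ with the isometry $V_\xi\in(\hrho_\xi,\iota\biota)$ via the defining relation $V_\xi\hrho_\xi(y)=\iota\biota(y)V_\xi$, and the relation $Wy=\biota\iota(y)W$ for $W=r_\iota$. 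Using the explicit expression for $W$ recorded above, Frobenius reciprocity, and the conjugate-equation identities $d(\rho)\br_\rho^*\rho(r_\rho)=1$ to collapse the dimension factors, each term is rewritten, after $E_P$, as a scalar multiple of $W^*V_\xi\hrho_\xi(x)V_\xi^*W$. Summing the $n_\xi$ identical contributions and combining the normalizations of the bases with the $V_\xi$ coefficients produces exactly the weight $d(\iota)^2\,n_\xi/d(\xi)$.

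The main obstacle is this last step: reconciling the $M$-$M$ intertwiner calculus for $Q\subset M$, which lives on the $\cM_1\otimes\cM^\opp$ side and is only specified up to the inner conjugacy of Lemma~\ref{7lemma Q}, with the $P$-$P$ Longo-Rehren isometries $V_\xi$ on the $\cM\otimes\cM^\opp$ side. One must verify that the inner unitary implementing $Q\cong\cM_1\otimes\cN^\opp$ disappears once $E_P$ is applied, and one must track the various orthonormal-basis normalizations carefully so that the accumulated constant is exactly $d(\iota)^2/[\cM:\cN]\cdot n_\xi/d(\xi)$ rather than off by dimension factors. This bookkeeping is cleanest carried out in the diagrammatic notation of Section~2.
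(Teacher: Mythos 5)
Your overall architecture is the same as the paper's --- Lemma \ref{7lemma Q} to identify $[\kappa_Q\bkappa_Q]=\bigoplus_{\xi\in\Delta_1}n_\xi[\id_{\cM_1}\otimes\rho_\xi^\opp]$, then Lemma \ref{7lemma expectation} for $E_Q$, then apply $E_P$ --- but there is a genuine gap at the step you yourself flag as the ``main obstacle,'' and the resolution you suggest cannot work. You propose to build the orthonormal basis of $(\kappa_Q,(\id_{\cM_1}\otimes\rho_\xi^\opp)\kappa_Q)$ from ``intertwiners living on the $\cM^\opp$-factor.'' Such intertwiners (elements of $\C\otimes\cM^\opp$) intertwine the inclusion map of $\cM_1\otimes\cN^\opp$, not of $Q$: the paper is explicit that $Q\neq\cM_1\otimes\cN^\opp$; by Lemma \ref{7lemma Q} one only has $Q=u(\cM_1\otimes\cN^\opp)u^*$ for some unknown unitary $u\in M$. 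Feeding those intertwiners into Lemma \ref{7lemma expectation} therefore computes $E_{\cM_1\otimes\cN^\opp}$, not $E_Q$, and since $E_Q=\Ad(u)\circ E_{\cM_1\otimes\cN^\opp}\circ\Ad(u^*)$ the unitary sits \emph{between} $E_P$ and the argument:
$$E_PE_Q(x)=E_P\bigl(u\,E_{\cM_1\otimes\cN^\opp}(u^*xu)\,u^*\bigr),$$
so it cannot ``disappear once $E_P$ is applied.'' In fact it carries all the content of the lemma: since $E_P=E_\cM\otimes\id_{\cM^\opp}$ and $E_{\cM_1\otimes\cN^\opp}=\id_{\cM_1}\otimes E_{\cN^\opp}$, these two expectations commute, i.e.\ $P$ and $\cM_1\otimes\cN^\opp$ form a commuting square; if $u$ could be ignored, $E_PE_Q$ restricted to $P$ would be the conditional expectation $E_\cM\otimes E_{\cN^\opp}$, all angles would be $\pi/2$, and the nontrivial spectrum of Theorem \ref{7theorem asymptotic} could never emerge.

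The paper's proof supplies exactly the idea needed to bypass the unknown unitary. Since $(\kappa_Q,(\id_{\cM_1}\otimes\rho_\xi^\opp)\kappa_Q)$ cannot be captured directly, one embeds it into $(\kappa_Q\iota_Q,(\id_{\cM_1}\otimes\rho_\xi^\opp)\kappa_Q\iota_Q)$ and exploits the one thing about $Q$ that is known exactly: $\kappa_Q\iota_Q=\kappa\iota=(\nu\otimes\id_{\cM^\opp})\iota$, because both composites are the fixed inclusion $N\subset M$. Frobenius reciprocity identifies the larger space with $(\iota\biota,\bnu\nu\otimes\rho_\xi^\opp)$, and a dimension count shows the embedding is onto, so a basis of the larger space is automatically a basis of the intertwiner space one actually needs. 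The paper then writes down such a basis purely in Longo--Rehren terms, namely $\bigl\{\tfrac{d(\iota)}{\sqrt{d(\xi)}}(t(\xi)_i^*\otimes1)V_\xi^*W\bigr\}_{i=1}^{n_\xi}$ with $\{t(\xi)_i\}$ an orthonormal basis of $(\nu,\nu\rho_\xi)$, and checks orthonormality by observing that the relevant inner products are scalars and evaluating them with $E_NE_P$ (via Theorem \ref{2theorem decomposition}). After that, Lemma \ref{7lemma expectation} and the single computation $E_P(t(\xi)_it(\xi)_i^*\otimes1)=1/d(\xi)$ yield the constant $d(\iota)^2n_\xi/([\cM:\cN]\,d(\xi))$. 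Your proposal has the right skeleton but is missing this construction, and without it the ``bookkeeping'' you defer cannot even be set up.
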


\begin{proof}
Thanks to Lemma \ref{7lemma Q}, we have 
$[\kappa_Q\bkappa_Q]=\bigoplus_{\xi\in  \Delta_1}n_\xi[\id_{\cM_1}\otimes \rho_\xi^\opp]$.  
Thus to compute $E_Q$ using Lemma \ref{7lemma expectation}, it suffices to obtain 
$(\kappa_Q,(\id_{\cM_1\otimes \rho_\xi^\opp})\kappa_Q)$ for $\xi\in \Delta_1$.
Although it is not so easy to capture this space directly, we have  
\begin{eqnarray*}
(\kappa_Q,(\id_{\cM_1}\otimes \rho_\xi^\opp)\kappa_Q)&\subset&
(\kappa_Q\iota_Q,(\id_{\cM_1}\otimes \rho_\xi^\opp)\kappa_Q\iota_Q)\\
&=&((\nu\otimes \id_{\cM^\opp})\iota,(\nu\otimes \rho_\xi^\opp)\iota)\\
&\cong&(\iota\biota,\bnu\nu\otimes \rho_\xi^\opp).
\end{eqnarray*}
Comparing the dimensions of the both sides above, we get equality for the above inclusion.  

Let $\{t(\xi)_i\}_{i=1}^{n_\xi}$ be an orthonormal basis of $(\nu,\nu\rho_\xi)$.
We claim that 
$$\{\frac{d(\iota)}{\sqrt{d(\xi)}}(t(\xi)_i^*\otimes 1)V_\xi^* W\}_{i=1}^{n_\xi}$$ 
is an orthonormal basis of $((\nu\otimes \id_{\cM^\opp})\iota,(\nu\otimes \rho_\xi^\opp)\iota)$. 
Indeed, it is easy to show that $(t(\xi)_i^*\otimes 1)V_\xi^* W$ belongs to the intertwiner space  
$((\nu\otimes \id_{\cM^\opp})\iota,(\nu\otimes \rho_\xi^\opp)\iota)$, and so 
the operator $W^*V_\xi (t(\xi)_lt(\xi)_i^*\otimes 1)V_\xi^* W$ is already a scalar, 
which is equal to 
\begin{eqnarray*}
\lefteqn{
E_NE_P(W^*V_\xi (t(\xi)_lt(\xi)_i^*\otimes 1)V_\xi^* W)=
E_N(W^*V_\xi E_P(t(\xi)_lt(\xi)_i^*\otimes 1)V_\xi^* W)}\\
&=&\frac{\inpr{t(\xi)_l}{t(\xi)_i}}{d(\xi)}E_N(W^*V_\xi V_\xi^*W)
=\frac{\delta_{i,l}}{d(\xi)}W^*E_N(V_\xi V_\xi^*)W
=\frac{d(\xi)\delta_{i,l}}{d(\iota)^2}W^*W\\
&=&\frac{d(\xi)\delta_{i,l}}{d(\iota)^2}.
\end{eqnarray*}
This shows the claim. 
Thus Lemma \ref{7lemma expectation} shows that for $x\in P$ we have 
\begin{eqnarray*}
E_Q(x) &=& \frac{1}{[\cM:\cN]}\sum_{\xi\in \Delta_1}\sum_{i=1}^{n_\xi}d(\iota)^2 
W^*V_\xi(t(\xi)_i\otimes 1)(\id_\cM\otimes \rho_\xi^\opp)(x)(t(\xi)_i^*\otimes 1)V_\xi^*W\\
&=&\frac{d(\iota)^2}{[\cM:\cN]}\sum_{\xi\in \Delta_1}\sum_{i=1}^{n_\xi} 
W^*V_\xi\hrho_\xi(x)(t(\xi)_it(\xi)_i^*\otimes 1)V_\xi^*W.
\end{eqnarray*}
Therefore we get \begin{eqnarray*}
E_PE_Q(x) &=&\frac{d(\iota)^2}{[\cM:\cN]}\sum_{\xi\in \Delta_1}\sum_{i=1}^{n_\xi} 
W^*V_\xi\hrho_\xi(x)E_P(t(\xi)_it(\xi)_i^*\otimes 1)V_\xi^*W \\
 &=&\frac{d(\iota)^2}{[\cM:\cN]}\sum_{\xi\in \Delta_1}\frac{n_\xi}{d(\xi)} 
W^*V_\xi\hrho_\xi(x)V_\xi^*W.
\end{eqnarray*}
\end{proof}

To compute the eigenvalues of $E_PE_Q|_P$, we need the crossed product-like decomposition as in 
Theorem \ref{2theorem decomposition} based on the irreducible decomposition of $\biota\iota$. 
It is known that every sector contained in a power of $\biota\iota$ is of the form 
$\widetilde{\sigma}^\alpha$, where $\sigma$ is a finite direct sum of endomorphisms in $\Delta$ 
and $\alpha$ is a parameter of the corresponding half-braiding 
$\varepsilon_\sigma^\alpha(\xi)\in (\sigma\rho_\xi,\rho_\xi\sigma)$ 
(see \cite[Section 4]{I6} for details). 
Let 
$$U^\alpha(\sigma)=\sum_{\xi\in \Delta_0}V_\xi(\varepsilon_\sigma^\alpha(\xi)\otimes 1)
(\sigma\otimes \id_{\cM^\opp})(V_\xi^*).$$
Then $\widetilde{\sigma}^\alpha$ is the restriction of 
$\Ad U^\alpha(\sigma)(\sigma\otimes \id_{\cM^\opp})$ to $N$. 
Irreducible $\widetilde{\sigma}^\alpha$ is contained in $\biota\iota$ 
if and only if $\sigma$ contains $\id_\cM$. 
In this case, we have 
$$(\iota,\iota\widetilde{\sigma}^\alpha)=U^\alpha(\sigma)\big((\id_\cM,\sigma)\otimes \C\big).$$
Therefore to compute $\Ang(P,Q)$, it suffices to compute $E_PE_Q$ on this space thanks to 
Remark \ref{2remark angle}. 

\begin{lemma} \label{7lemma EPEQII}
Let $\widetilde{\sigma}^\alpha$ be an irreducible endomorphism of $N$ contained in 
$\biota\iota$. 
Then for $v\in (\id_\cM,\sigma)$, 
$$E_PE_Q(U^\alpha(\sigma)(v\otimes 1))=U^\alpha(\sigma)
(\phi_{\bnu\nu}(\varepsilon_\sigma^\alpha(\bnu\nu)v)\otimes 1).$$
The map $(\id_\cM,\sigma)\ni v\mapsto \phi_{\bnu\nu}(\varepsilon_\sigma^\alpha(\bnu\nu)v)$ depends 
only on the sector of $\bnu\nu$ and 
$$ \phi_{\bnu\nu}(\varepsilon_\sigma^\alpha(\bnu\nu)v)=\sum_{\xi\in \Delta_1}
\frac{n_\xi d(\xi)}{[\cM:\cN]} \phi_{\rho_\xi}(\varepsilon_\sigma^\alpha(\xi)v).$$
\end{lemma}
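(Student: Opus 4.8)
The plan is to start from the formula of Lemma~\ref{7lemma EPEQ} and substitute $x=U^\alpha(\sigma)(v\otimes 1)$, which lies in $P=\cM\otimes\cM^\opp$ and in the intertwiner space $(\iota,\iota\widetilde{\sigma}^\alpha)=U^\alpha(\sigma)\big((\id_\cM,\sigma)\otimes\C\big)$. First I would record the soft structural fact that $E_PE_Q$ preserves this space: by Remark~\ref{2remark angle} (and Remark~\ref{2remark angle2}) the operator $E_PE_QE_P$ preserves $(\iota,\iota\widetilde{\sigma}^\alpha)$, and since every element of this space already lies in $P$ we have $E_P|_{(\iota,\iota\widetilde{\sigma}^\alpha)}=\id$, so $E_PE_Q=E_PE_QE_P$ maps the space into itself. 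Hence $E_PE_Q(U^\alpha(\sigma)(v\otimes 1))=U^\alpha(\sigma)(\Psi(v)\otimes 1)$ for a linear endomorphism $\Psi$ of $(\id_\cM,\sigma)$, and the whole problem is to identify $\Psi(v)$ with $\phi_{\bnu\nu}(\varepsilon_\sigma^\alpha(\bnu\nu)v)$.

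The second displayed formula I would prove first, since it is purely categorical. Choosing isometries $W(\xi)_i\in(\rho_\xi,\bnu\nu)$ ($1\le i\le n_\xi$, $\xi\in\Delta_1$) realizing $[\bnu\nu]=\bigoplus_\xi n_\xi[\rho_\xi]$, the additivity of the standard left inverse gives
$$\phi_{\bnu\nu}(y)=\frac{1}{[\cM:\cN]}\sum_{\xi\in\Delta_1}\sum_{i=1}^{n_\xi}d(\xi)\,\phi_{\rho_\xi}\big(W(\xi)_i^*\,y\,W(\xi)_i\big),\qquad y\in\cM,$$
because $d(\bnu\nu)=[\cM:\cN]$. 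The half-braiding extends additively as $\varepsilon_\sigma^\alpha(\bnu\nu)=\sum_{\xi,i}W(\xi)_i\,\varepsilon_\sigma^\alpha(\xi)\,\sigma(W(\xi)_i^*)$ (naturality, \cite{I6}), whence $W(\xi)_i^*\varepsilon_\sigma^\alpha(\bnu\nu)=\varepsilon_\sigma^\alpha(\xi)\sigma(W(\xi)_i^*)$; combined with $v\,W(\xi)_i=\sigma(W(\xi)_i)\,v$ (since $v\in(\id_\cM,\sigma)$) this collapses $W(\xi)_i^*\varepsilon_\sigma^\alpha(\bnu\nu)v\,W(\xi)_i=\varepsilon_\sigma^\alpha(\xi)v$ independently of $i$. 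Feeding this into the left-inverse formula yields $\phi_{\bnu\nu}(\varepsilon_\sigma^\alpha(\bnu\nu)v)=\sum_{\xi\in\Delta_1}\frac{n_\xi d(\xi)}{[\cM:\cN]}\phi_{\rho_\xi}(\varepsilon_\sigma^\alpha(\xi)v)$, an expression that manifestly depends only on $[\bnu\nu]$, giving the sector-independence claim.

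It then remains to match $\Psi(v)$ with this sum. I would evaluate the right-hand side of Lemma~\ref{7lemma EPEQ} term by term, aiming at
$$W^*V_\xi\,\hrho_\xi\big(U^\alpha(\sigma)(v\otimes 1)\big)\,V_\xi^*W=\frac{d(\xi)^2}{d(\iota)^2}\,U^\alpha(\sigma)\big(\phi_{\rho_\xi}(\varepsilon_\sigma^\alpha(\xi)v)\otimes 1\big)$$
for each $\xi\in\Delta_1$; summing against the weight $\frac{d(\iota)^2}{[\cM:\cN]}\frac{n_\xi}{d(\xi)}$ and invoking the formula of the previous paragraph then gives exactly $U^\alpha(\sigma)(\phi_{\bnu\nu}(\varepsilon_\sigma^\alpha(\bnu\nu)v)\otimes 1)$. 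To establish the term-by-term identity I would expand $\hrho_\xi(U^\alpha(\sigma))=\sum_\eta\hrho_\xi(V_\eta)\,(\rho_\xi(\varepsilon_\sigma^\alpha(\eta))\otimes 1)\,\hrho_\xi((\sigma\otimes\id)(V_\eta^*))$ and substitute the explicit expression $W=\sum_{\mu,\eta,\zeta}\frac{\sqrt{d(\mu)d(\eta)}}{d(\iota)\sqrt{d(\zeta)}}V_\mu\hrho_\mu(V_\eta)T_{\mu,\eta}^\zeta V_\zeta^*$, collapsing the sums by the orthonormality $V_\mu^*V_\eta=\delta_{\mu\eta}$ and by the hexagon relation $\rho_\xi(\varepsilon_\sigma^\alpha(\eta))\,\varepsilon_\sigma^\alpha(\xi)=\varepsilon_\sigma^\alpha(\rho_\xi\rho_\eta)$ of the half-braiding relative to the fusion intertwiners. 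The reason $\phi_{\rho_\xi}$ appears is that $W=r_\iota$ is the standard isometry implementing the dual canonical endomorphism, so a contraction $W^*V_\xi(\,\cdot\,)V_\xi^*W$ is precisely a dimension-weighted standard left inverse.

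The hard part will be the diagrammatic bookkeeping of this last step: commuting $\hrho_\xi$ past $U^\alpha(\sigma)$ and $W$, invoking the precise naturality of the half-braiding relative to the doubled intertwiners $T_{\mu,\eta}^\zeta=\sum_i T(_{\mu,\eta}^\zeta)_i\otimes j(T(_{\mu,\eta}^\zeta)_i)$, and checking that the cross terms from distinct $\zeta$ in the expansion of $W$ cancel by orthogonality, leaving only the $\phi_{\rho_\xi}$ contraction with the correct normalization $d(\xi)^2/d(\iota)^2$. Tracking the scalars $d(\iota)$, $d(\xi)$, $[\cM:\cN]$ and the half-braiding phases through this collapse is where the genuine work lies; the categorical inputs (additivity of the standard left inverse and half-braiding naturality from \cite{I6}) are otherwise routine.
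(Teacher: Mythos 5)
Your proposal is correct and follows essentially the same route as the paper's proof: the paper likewise treats the second display as routine (additivity of the left inverse plus naturality of the half-braiding), expands $U^\alpha(\sigma)(v\otimes 1)=\sum_{\eta\in\Delta_0}V_\eta(\varepsilon_\sigma^\alpha(\eta)v\otimes 1)V_\eta^*$, substitutes the explicit formula for $W$ into Lemma \ref{7lemma EPEQ}, and collapses the sums by orthogonality of the fusion intertwiners, the half-braiding relation, and Frobenius reciprocity into a dimension-weighted standard left inverse. One cosmetic point: the computation actually yields the term-by-term identity with conjugates, namely $W^*V_\xi\hrho_\xi(U^\alpha(\sigma)(v\otimes 1))V_\xi^*W=\frac{d(\xi)^2}{d(\iota)^2}U^\alpha(\sigma)\big(\phi_{\rho_{\bar{\xi}}}(\varepsilon_\sigma^\alpha(\bar{\xi})v)\otimes 1\big)$, but since $[\bnu\nu]$ is self-conjugate, $\Delta_1$ is closed under $\xi\mapsto\bar{\xi}$ with $n_{\bar{\xi}}=n_\xi$ and $d(\bar{\xi})=d(\xi)$, so the summed formula you want is unaffected.
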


\begin{proof} It is routine work to show the second statement. 
Since we have 
$$U^\alpha(\sigma)(v\otimes 1)
=\sum_{\eta\in \Delta_0}V_\eta(\varepsilon_\sigma^\alpha(\eta)v\otimes 1)V_\eta^*,$$
Lemma \ref{7lemma EPEQ} implies 
\begin{eqnarray*}\lefteqn{
E_PE_Q(U^\alpha(\sigma)(v\otimes 1))}\\ 
&=&\frac{d(\iota)^2}{[\cM:\cN]}
\sum_{\xi\in \Delta_1}\sum_{\eta\in \Delta_0}\frac{n_\xi}{d(\xi)}W^*V_\xi
\hrho_\xi(V_\eta(\varepsilon_\sigma^\alpha(\eta)v\otimes 1)V_\eta^*)V_\xi^*W \\
&=&\frac{1}{[\cM:\cN]}\sum_{\eta,\zeta\in \Delta_0}\sum_{\xi\in \Delta_1}\sum_{i=1}^{N_{\xi,\eta}^\zeta}
\frac{n_\xi d(\eta)}{d(\zeta)}V_\zeta\big(
T(_{\xi,\eta}^\zeta)_i^* \rho_\xi(\varepsilon_\sigma^\alpha(\eta)v)T(_{\xi,\eta}^\zeta)_i
\otimes 1\big)V_\zeta^*.
\end{eqnarray*}
Thanks to the half-braiding relation, we have 
$$T(_{\xi,\eta}^\zeta)_i^* \rho_\xi(\varepsilon_\sigma^\alpha(\eta)v)
=\varepsilon_\sigma^\alpha(\zeta)\sigma(T(_{\xi,\eta}^\zeta))_i^*)\varepsilon_\sigma^\alpha(\xi)^*
\rho_\xi(v).$$
Thus Frobenius reciprocity implies
\begin{eqnarray*}\lefteqn{
\sum_{\eta\in \Delta_0}\sum_{i=1}^{N_{\xi,\eta}^\zeta}d(\eta)
T(_{\xi,\eta}^\zeta)_i^* \rho_\xi(\varepsilon_\sigma^\alpha(\eta)v)T(_{\xi,\eta}^\zeta)_i}\\ 
&=&\varepsilon_\sigma^\alpha(\zeta)\sum_{\eta\in \Delta_0}\sum_{i=1}^{N_{\xi,\eta}^\zeta}d(\eta)
\sigma(T(_{\xi,\eta}^\zeta))_i^*)\varepsilon_\sigma^\alpha(\xi)^*
\rho_\xi(v)T(_{\xi,\eta}^\zeta)_i\\
&=&d(\xi)d(\zeta)\varepsilon_\sigma^\alpha(\zeta)\sum_{\eta\in \Delta_0}\sum_{i=1}^{N_{\xi,\eta}^\zeta}
\sigma(r_{\rho_{\bar{\xi}}}^*\rho_\xi(T(_{\bar{\xi},\zeta}^\eta))_i))\varepsilon_\sigma^\alpha(\xi)^*
\rho_\xi(vT(_{\bar{\xi},\zeta}^\eta)_i^*)r_{\rho_{\bar{\xi}}}\\
&=&d(\xi)d(\zeta)\varepsilon_\sigma^\alpha(\zeta)\sum_{\eta\in \Delta_0}\sum_{i=1}^{N_{\xi,\eta}^\zeta}
\sigma(r_{\rho_{\bar{\xi}}}^*\rho_\xi(T(_{\bar{\xi},\zeta}^\eta))_i))\varepsilon_\sigma^\alpha(\xi)^*
\rho_\xi(\sigma(T(_{\bar{\xi},\zeta}^\eta)_i^*)v)r_{\rho_{\bar{\xi}}}\\
&=&d(\xi)d(\zeta)\varepsilon_\sigma^\alpha(\zeta)\sum_{\eta\in \Delta_0}\sum_{i=1}^{N_{\xi,\eta}^\zeta}
\sigma(r_{\rho_{\bar{\xi}}}^*\rho_\xi(T(_{\bar{\xi},\zeta}^\eta)_iT(_{\bar{\xi},\zeta}^\eta)_i^*))
\varepsilon_\sigma^\alpha(\xi)^*\rho_\xi(v)r_{\rho_{\bar{\xi}}}\\
&=&d(\xi)d(\zeta)\varepsilon_\sigma^\alpha(\zeta)\sigma(r_{\rho_{\bar{\xi}}}^*)
\varepsilon_\sigma^\alpha(\xi)^*\rho_\xi(v)r_{\rho_{\bar{\xi}}}.
\end{eqnarray*}
Using the half-braiding relation again, we see that this is equal to 
$$d(\xi)d(\zeta)\varepsilon_\sigma^\alpha(\zeta)
r_{\rho_{\bar{\xi}}}^*\rho_\xi(\varepsilon_\sigma^\alpha(\bar{\xi})v)r_{\rho_{\bar{\xi}}}
=d(\xi)d(\zeta)\varepsilon_\sigma^\alpha(\zeta)
\phi_{\rho_{\bar{\xi}}}(\varepsilon_\sigma^\alpha(\bar{\xi})v),$$
which finishes the proof.
\end{proof}

\begin{proof}[Proof of Theorem \ref{7theorem asymptotic}] 
Assume that the principal graph of $\cN \subset \cM$ is $A_n$. 
The structure of $N-N$ sectors associated with the inclusion $N\subset P$ is described in 
\cite[Section 7]{I6}. 
We may assume that there exists a system of endomorphism $\{\lambda_i\}_{i=0}^{n-1}$ 
of $\cM$ isomorphic to the irreducible sectors of the $SU(2)_{n-1}$ WZW model such that 
$\cN=\lambda_1(\cM)$. 
Then we have $[\bnu\nu]=[\lambda_1^2]$ and  $\Delta=\{\lambda_{2j}\}_{j=0}^{[(n-1)/2]}$. 
When $i-j$ is even, we set  $\sigma_{i,j}=\lambda_{i}\lambda_j$ and 
$\varepsilon_{\sigma_{i,j}}(\lambda_l)=\varepsilon^+(\lambda_i,\lambda_l)
\lambda_i(\varepsilon^-(\lambda_j,\lambda_l))$. 
Then $\{\varepsilon_{\sigma_{i,j}}(\lambda_l)\}_l$ is a half-braiding for $\sigma_{i,j}$ and 
we denote by $\widetilde{\sigma_{i,j}}$ the corresponding endomorphism of $N$. 

Assume first that $n$ is even. 
In this case, the endomorphism $\widetilde{\sigma_{2i,2i}}$ is irreducible and 
$$[\biota\iota]=\bigoplus_{i=0}^{n/2-1} [\widetilde{\sigma_{2i,2i}}].$$
Since $(\id_\cM,\sigma_{2i,2i})=\C r_{\lambda_{2i}}$, we get 
$$\Ang(P,Q)=\{
\cos^{-1}\sqrt{r_{\lambda_{2i}}^*\phi_{\lambda_1^2}(\varepsilon_{\sigma_{2i,2i}}(\lambda_1^2)
r_{\lambda_{2i}})}
;\; i=0,1,\cdots, \frac{n-2}{2}\}
\setminus \{0,\frac{\pi}{2}\}.$$
It is easy to show $r_{\lambda_{2i}}^*\phi_{\lambda_1^2}(\varepsilon_{\sigma_{2i,2i}}(\lambda_1^2)
r_{\lambda_{2i}})=\phi_{\lambda_1}(\varepsilon(\lambda_{2i},\lambda_1)\varepsilon(\lambda_1,\lambda_{2i}))^2$ 
and we get the statement. 

Assume now that $n=2s+1$ is odd. 
Then $\widetilde{\sigma_{i,i}}$ is irreducible for $i=0,1,2,\cdots s-1$ while 
$\widetilde{\sigma_{s,s}}$ is decomposed into two sectors, say $\mu_0$ and $\mu_1$ 
such that 
$$[\biota\iota]=\bigoplus_{i=0}^{s-1}[\widetilde{\sigma_{i,i}}]\oplus [\mu_0].$$
Then a similar computation as above shows 
$$\Ang(P,Q)=\{
\cos^{-1}|\phi_{\lambda_1}(\varepsilon(\lambda_{i},\lambda_1)\varepsilon(\lambda_1,\lambda_{i}))|
;\; i=1,2,\cdots, s\},$$
(since $E_PE_Q((\iota,\iota\widetilde{\sigma_{s,s}}))=0$ we have $E_PE_Q((\iota,\iota\mu_0))=0$ too) 
which shows the statement. 
\end{proof}
\section{Appendix}
In the proof of Theorem \ref{5theorem IV} we use the fact that there exists a unique 
$Q$-system for $\id_P\oplus \bkappa\theta\kappa$ up to equivalence. 
We give a proof of this statement here. 

The next lemma follows from Frobenius reciprocity \cite[Section 2.3]{I4}, which says that 
the right normalization of the norm of an element $v\in (\tau,\rho\sigma)$ is 
$\big(\frac{d(\rho)d(\sigma)}{d(\tau)}\big)^{1/4}$. 
 
\begin{lemma} \label{Alemma normalization}
Let $\cL$, $\cM$, and $\cN$ be properly infinite factors and 
$\rho\in \Mor_0(\cN,\cM)$, $\sigma\in \Mor_0(\cL,\cN)$, and $\tau\in (\cL,\cM)$ be 
irreducible morphisms. 
If $v\in (\tau,\rho\sigma)$ satisfies $||v||=\big(\frac{d(\rho)d(\sigma)}{d(\tau)}\big)^{1/4}$, then 
$$||\hpic{A1}{1.0in}||=\big(\frac{d(\tau)d(\sigma)}{d(\rho)}\big)^{1/4},$$
$$||\hpic{A2}{1.0in}||=\big(\frac{d(\tau)d(\rho)}{d(\sigma)}\big)^{1/4}.$$
\end{lemma}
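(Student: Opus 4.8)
The plan is to read the two pictures as the two Frobenius rotations of $v$ and then reduce both norm computations to a single scalar. By the diagrammatic conventions of the paper (a cup is $\sqrt{d(\rho)}\,r_\rho$ and a cap is $\sqrt{d(\rho)}\,\br_\rho^*$, cf. the conventions fixed with the diagrams II4--II6), bending the $\rho$-leg of $v$ down with a cup produces
$$w_1=\sqrt{d(\rho)}\,r_\rho^*\brho(v)\in(\brho\tau,\sigma),$$
while bending the $\sigma$-leg with $\br_\sigma$ produces
$$w_2=\sqrt{d(\sigma)}\,\rho(\br_\sigma)^*v\in(\tau\bar{\sigma},\rho),$$
and these are the elements drawn in A1 and A2 (in one order or the other). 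The first bookkeeping task, and one place to be careful, is to match these explicit intertwiners, with their prefactors $\sqrt{d(\rho)}$, $\sqrt{d(\sigma)}$, against the orientations in the figures. Here $\|\cdot\|$ is the operator norm; since $\sigma$ and $\rho$ are irreducible we have $(\sigma,\sigma)=(\rho,\rho)=\C$, so $w_1w_1^*$ and $w_2w_2^*$ are nonnegative scalars equal to $\|w_1\|^2$ and $\|w_2\|^2$.

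The heart of the argument is the scalar $\phi_\rho(vv^*)$. First I would verify that it lies in $(\sigma,\sigma)=\C$: because $vv^*\in(\rho\sigma,\rho\sigma)$ and $\phi_\rho$ is an $\cN$-bimodule left inverse of $\rho$, pushing the relation $vv^*\,\rho\sigma(x)=\rho\sigma(x)\,vv^*$ through $\phi_\rho$ shows $\phi_\rho(vv^*)$ commutes with $\sigma(\cL)$, hence is a scalar. To evaluate it I would use the standard-left-inverse factorization $\phi_{\rho\sigma}=\phi_\sigma\phi_\rho$ together with $\phi_\sigma(1)=1$, so that the scalar equals $\phi_{\rho\sigma}(vv^*)$. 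Now $vv^*/\|v\|^2$ is a minimal projection of type $\tau$ in $(\rho\sigma,\rho\sigma)$, and the standard left inverse restricts to the dimension-weighted (Markov) trace on this relative commutant; this is exactly the content of Theorem \ref{2theorem decomposition},(2) (equivalently \cite{I4}), and it yields
$$\phi_\rho(vv^*)=\phi_{\rho\sigma}(vv^*)=\frac{d(\tau)}{d(\rho)d(\sigma)}\,\|v\|^2\cdot 1 .$$
This identity is the crux and is where all the dimensions enter.

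Granting it, the two norms fall out by short manipulations with the conjugate equations. For the first, $w_1w_1^*=d(\rho)\,r_\rho^*\brho(vv^*)r_\rho=d(\rho)\,\phi_\rho(vv^*)=\tfrac{d(\tau)}{d(\sigma)}\|v\|^2$, and substituting $\|v\|^2=(d(\rho)d(\sigma)/d(\tau))^{1/2}$ gives $\|w_1\|^2=(d(\tau)d(\rho)/d(\sigma))^{1/2}$. For the second, I would apply $\phi_\rho$ to the scalar $w_2w_2^*/d(\sigma)=\rho(\br_\sigma)^*\,vv^*\,\rho(\br_\sigma)$: since $\br_\sigma\in\cN$ is an isometry and $\phi_\rho(\rho(a)y\rho(b))=a\,\phi_\rho(y)\,b$ for $a,b\in\cN$, this collapses to $\br_\sigma^*\,\phi_\rho(vv^*)\,\br_\sigma=\tfrac{d(\tau)}{d(\rho)d(\sigma)}\|v\|^2$, whence $\|w_2\|^2=\tfrac{d(\tau)}{d(\rho)}\|v\|^2=(d(\tau)d(\sigma)/d(\rho))^{1/2}$. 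Thus the two rotated vertices have norms $(d(\tau)d(\rho)/d(\sigma))^{1/4}$ and $(d(\tau)d(\sigma)/d(\rho))^{1/4}$, which are the two displayed values.

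I expect the genuine subtlety to be two-fold: pinning down precisely which operators the two pictures denote, including the normalizing prefactors forced by the cup/cap convention, and justifying the weighted-trace evaluation of $\phi_{\rho\sigma}(vv^*)$. Once that single scalar is in hand, everything else is a one-line computation with $r_\rho$, $\br_\sigma$ and the $\cN$-bimodularity of $\phi_\rho$, and notably the second norm follows from the first merely by conjugating by $\br_\sigma$, so no independent dimension estimate is needed there.
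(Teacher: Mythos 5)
Your proof is correct, and it is essentially the argument the paper itself relies on: the paper gives no computation of its own, merely citing Frobenius reciprocity from \cite[Section 2.3]{I4}, and your reduction to the weighted-trace identity $\phi_{\rho\sigma}(vv^*)=\tfrac{d(\tau)}{d(\rho)d(\sigma)}\|v\|^2$, followed by the two one-line computations with $r_\rho$ and $\br_\sigma$, is the standard derivation of exactly that normalization statement. One small caveat: Theorem \ref{2theorem decomposition},(2) is not literally the fact you invoke (it concerns intertwiners in $(\iota_{M,N},\iota_{M,N}\rho_i)$ and the expectation $E_N$, rather than the trace property of the standard left inverse on $(\rho\sigma,\rho\sigma)$), but that trace property is indeed contained in \cite{I4} and \cite{LRo}, which is precisely what the paper's citation covers.
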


We recall the construction of the Haagerup subfactor in \cite[Section 7]{I7}. 
Let $\cO_4$ be the Cuntz algebra generated by isometries $\{S_0,T_0,T_1,T_2\}$ and 
let $d=(3+\sqrt{13})/2$ (which will be $d(\rho)$).  
We introduce an endomorphism $\rho$ and a period 3 automorphism $\alpha$ of $\cO_4$ by setting  
\begin{equation}
\alpha(S_0)=S_0,\quad \alpha(T_i)=T_{i+2},\end{equation}
\begin{equation}
\rho(S_0)=\frac{1}{d}S_0+\frac{1}{\sqrt{d}}\sum_{i\in \Z/3\Z}T_iT_i,\end{equation}
\begin{equation}
\rho(T_i) =\frac{1}{\sqrt{d}}S_0T_{-i}^*+T_{-i}S_0S_0^* 
 +\sum_{j,k\in \Z/3\Z}A(i+j,i+k)T_jT_{i+j+k}T_k^*,
 \end{equation}
where $i$ is understood as an element of $\Z/3\Z$ and 
$$A(0,0)=1-\frac{1}{d-1},$$
$$A(0,1)=A(0,2)=A(1,0)=A(2,0)=A(1,1)=A(2,2)=\frac{-1}{d-1},$$
$$A(1,2)=\overline{A(2,1)}=\frac{1+\sqrt{4d-1}\sqrt{-1}}{2(d-1)}.$$
Then $\rho$ and $\alpha$ extend to an irreducible 
endomorphism and an outer automorphism respectively 
of the weak closure $M$ of $\cO_4$ in the GNS representation of some KMS state, 
which will be denoted by the same symbols. 
Moreover we have $S_0\in (\id_M,\rho^2)$, $T_0\in (\rho,\rho^2)$, $T_1\in (\alpha\rho, \rho^2)$, and 
$T_2\in (\alpha^2\rho,\rho^2)$ and $\alpha\rho=\rho\alpha^2$. 
It is easy to show that $\rho$ satisfies the condition of Lemma \ref{3lemma Q-system}, namely 
\begin{equation}\rho(T_0)S_0=T_0S_0,\end{equation}
\begin{equation}\sqrt{d}S_0+(d-1)T_0^2=\sqrt{d}\rho(S_0)+(d-1)\rho(T_0)T_0.
\end{equation}
Therefore there exist a $Q$-system for $\id_M\oplus \rho$ and  
a subfactor $P\subset M$ such that 
$[\kappa\bkappa]=[\id_M]\oplus [\rho]$, where $\kappa:P\hookrightarrow M$ is the inclusion map. 
Note that $\rho=\heta$ and $\alpha=\theta$ in the notation of Theorem \ref{5theorem IV}.  
Lemma \ref{3lemma Q-system} shows that we can choose isometries $v=r_{\bkappa}$, $w=\kappa(r_\kappa)$, and 
$v_1\in (\rho,\kappa\bkappa)$ such that (3.3) holds: 
\begin{equation}
w=\frac{v}{\sqrt{d+1}}+\frac{v_1\rho(v)v_1^*}{\sqrt{d+1}}
+\sqrt{\frac{d}{d+1}}v_1\rho(v_1)S_0 v^*
+\sqrt{\frac{d-1}{d+1}}v_1\rho(v_1)T_0 v_1^*.
\end{equation}

To finish the proof of Theorem \ref{5theorem IV}, it suffices to show that there exists a unique 
$Q$-system for $\id_P\oplus \bkappa\alpha\kappa$ up to equivalence. 
For this purpose we solve the equations (3.1) and (3.2) with $\sigma:=\bkappa\alpha\kappa$. 
For this, we use the following notation: 
$$\hpic{A3}{0.8in}:=\big(\frac{d(\kappa)^2}{d}\big)^{1/4}v_1^*
=\big(\frac{d+1}{d}\big)^{1/4}v_1^*,\quad \hpic{A4}{0.8in}:=\big(\frac{d+1}{d}\big)^{1/4}v_1,$$
$$\hpic{A5}{0.8in},\quad \hpic{A6}{0.8in},$$
$$\hpic{A7}{0.8in},\quad \hpic{A8}{0.8in}.$$
Every vertex expressing an element in $(\alpha\rho,\rho\alpha^2)$, $(\rho,\alpha\rho\alpha)$ etc. 
such as 
$$\hpic{A9}{0.8in},\quad \hpic{A10}{0.8in}$$
will always mean 1. 
We choose $S_0$ as $r_\rho=\bar{r_\rho}$ and  set 
$$\hpic{A11}{0.8in}:=d^{1/4}T_0,\quad \hpic{A12}{0.8in}=d^{1/4}T_0^*.$$
Then 
$$\hpic{A13}{0.8in}=d^{1/2}S_0^*\rho(d^{1/4}T_0)=d^{1/4}T_0^*=\hpic{A12}{0.8in},$$
$$\hpic{A14}{0.8in}=\rho(d^{1/2}S_0^*)d^{1/4}T_0=d^{1/4}T_0^*=\hpic{A12}{0.8in}.$$
In the same way, we have
$$\hpic{A15}{0.8in}=\hpic{A16}{0.8in}=\hpic{A11}{0.8in}.$$
Equation (8.4) means 
$$\hpic{A41}{0.5in}.$$
We use a similar expression for $d^{1/4}T_1\in (\alpha\rho,\rho^2)=(\rho\alpha^2,\rho^2)$ and 
$d^{1/4}T_2\in (\alpha^2\rho,\rho^2)=(\rho\alpha,\rho^2)$. 

Since we have 
$$\hpic{A17}{0.5in},$$
we simply express this intertwiner by
\hpic{A18}{0.5in}. 
In a similar way, we have 
$$\hpic{A19}{0.8in}=\hpic{A20}{0.8in},$$
and we simply express it by 
\hpic{A21}{0.7in}. 
The diagram \hpic{A23}{0.7in} is also interpreted in the same way. 

We set 
$$R=\hpic{A22}{0.9in}\in (\id_P,\sigma^2).$$
Then thanks to Lemma \ref{Alemma normalization}, we have 
$$||R||=\big(d(\kappa)^2\frac{d(\kappa)d(\rho)}{d(\kappa)}\frac{d(\kappa)^2}{d(\rho)}\big)^{1/4}
=d(\kappa)=\sqrt{d+1}.$$ 
All we have to show is the following: 

\begin{theorem} \label{Atheorem} 
Let the notation be as above. 
Then there exist exactly two elements $S\in (\sigma,\sigma^2)$ satisfying 
$||S||=d(\sigma)^{1/4}=(d+1)^{1/4}$ and
\begin{equation}\sigma(S)R=SR,
\end{equation}
\begin{equation}\frac{\sqrt{d+1}}{d}(R-\sigma(R))=\sigma(S)S-S^2.
\end{equation}
(Note that if $S$ satisfies the above condition, so does $-S$.)
\end{theorem}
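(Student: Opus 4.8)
The plan is to observe that the relations (8.7) and (8.8) are precisely the $Q$-system equations (3.1) and (3.2) of Lemma \ref{3lemma Q-system}, merely rescaled. Writing $R=\sqrt{d(\sigma)}\,r_\sigma$ and $S=d(\sigma)^{1/4}s$ with $s\in(\sigma,\sigma^2)$, and noting $d(\sigma)=d(\bar\kappa\alpha\kappa)=d(\kappa)^2=1+d$, the condition $\|S\|=(d+1)^{1/4}$ becomes $s^*s=1$, relation (8.7) becomes $\sigma(s)r_\sigma=sr_\sigma$, and relation (8.8) becomes the second equation of Lemma \ref{3lemma Q-system}. Hence the theorem is exactly the assertion that $\id_P\oplus\sigma$ admits, up to the sign ambiguity $s\mapsto-s$, a unique isometric solution of (3.1)--(3.2), that is, a unique $Q$-system up to equivalence. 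First I would delimit the ambient space: using the Haagerup fusion rules recalled in the proof of Theorem \ref{5theorem IV} together with $[\kappa\bar\kappa]=[\id_M]\oplus[\rho]$ and the appendix relations $\alpha\rho=\rho\alpha^2$, $\alpha^3=\id_M$ (so that $[\alpha\rho\alpha]=[\rho]$), one finds $[\sigma^2]=[\bar\kappa\alpha^2\kappa]\oplus[\bar\kappa\rho\kappa]$, each summand containing $\sigma$ with multiplicity one, whence $\dim(\sigma,\sigma^2)=2$. The same type of computation gives $\dim(\sigma,\sigma)=1$, so $\sigma$ is irreducible and $(\sigma,\sigma^2)$ is a genuine $2$-dimensional Hilbert space on which $S^*S$ is scalar.

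Next I would fix an explicit basis $\{S_1,S_2\}$ of $(\sigma,\sigma^2)$, one vector coming from the channel $\bar\kappa\alpha^2\kappa$ and one from $\bar\kappa\rho\kappa$, expressed through the diagrammatic calculus set up above from the Cuntz intertwiners $S_0,T_0$, the automorphism $\alpha$, and the $Q$-system isometries $v=r_{\bar\kappa}$ and $v_1\in(\rho,\kappa\bar\kappa)$ of the Haagerup inclusion $P\subset M$. Writing $S=xS_1+yS_2$, I would then evaluate the linear relation (8.7): expanding $\sigma(S)R$ and $SR$ by the intertwiner moves and the explicit constants $A(i,j)$ collapses it to scalar equations in $(x,y)$, which I expect to cut the $2$-dimensional space down to a single line $\{c\,S_\ast:c\in\C\}$.

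Finally I would impose (8.8) together with the norm condition on this line. Since $S\mapsto\sigma(S)S-S^2$ is homogeneous of degree two, putting $S=c\,S_\ast$ turns the left side of (8.8) into $c^2$ times a fixed element of $(\sigma,\sigma^3)$, while the right side $\tfrac{\sqrt{d+1}}{d}(R-\sigma(R))$ is fixed; matching coefficients against an explicit basis of the target space determines $c^2$, and $\|S\|=(d+1)^{1/4}$ fixes $|c|$ compatibly, leaving exactly $c=\pm c_0$. This produces precisely the two solutions $S=\pm c_0S_\ast$, and the nonemptiness of this solution set is the existence statement needed in Theorem \ref{5theorem IV}.

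The main obstacle will be the explicit evaluation of the diagrams, above all the degree-two term $\sigma(S)S$ and the ``flips'' $\sigma(R)$ and $\sigma(S)$. Computing these means carrying the $M$-level data $S_0,T_i,A(i,j)$ through the pair $\kappa,\bar\kappa$ and invoking the Kirillov--Reshetikhin $6j$-symbols (as in Section 7) along with the Frobenius normalizations of Lemma \ref{Alemma normalization}; keeping the bookkeeping between $M$-intertwiners and $P$-intertwiners consistent, and verifying that the modulus of the $c^2$ forced by (8.8) agrees with the one forced by the norm, is where the genuine effort lies.
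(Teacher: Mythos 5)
Your setup is sound and coincides with the paper's: identifying (8.7)--(8.8) as the rescaled $Q$-system equations (3.1)--(3.2) of Lemma \ref{3lemma Q-system}, computing $\dim(\sigma,\sigma^2)=2$ from the fusion rules, and choosing an orthogonal basis $S_1,S_2$ adapted to the two channels $\bkappa\alpha^2\kappa$ and $\bkappa\rho\kappa$ are all exactly what the paper does. However, your plan breaks at the step where you expect the linear relation (8.7) to cut the two-dimensional space down to a single line. It does not: the paper proves, as a separate lemma, that $S_1R=\sigma(S_1)R$ and $S_2R=\sigma(S_2)R$ hold identically, so by linearity (8.7) is satisfied by \emph{every} $S\in(\sigma,\sigma^2)$ and imposes no constraint at all. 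Consequently, after imposing (8.7) you are still facing the full sphere $\{aS_1+bS_2:\;|a|^2+|b|^2=1\}$, and your homogeneity argument for (8.8) --- writing $S=cS_*$ and matching the coefficient $c^2$ --- has no line to run on.

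All of the work therefore sits inside the quadratic equation (8.8). The paper expands it in the basis $S_1,S_2$ (computing $G(R)$, $G(\sigma(R))$, $G(S_iS_j)$ and $G(\sigma(S_i)S_j)$ diagrammatically) and obtains four scalar equations (8.9)--(8.12) in $(a,b)$. The line you hoped (8.7) would produce does appear, but it is carved out by the quadratic system itself: (8.11) forces $b\neq 0$ (otherwise $a=0$ as well, contradicting the normalization), and then (8.12) yields the linear relation $a=-\frac{B+1}{(d-1)^{3/2}}\,b$ with $B=(d-1)A(1,2)$; substituting into (8.9) determines $b^2=-\frac{(d-1)^2}{(B+d)\sqrt{d}}$, and the final nontrivial step is checking that these values are consistent with $|a|^2+|b|^2=1$, using the algebraic relation $B^2-B+d=0$. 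So the logical skeleton you actually need is: (8.7) is vacuous; (8.8) gives an overdetermined system of quadratic equations whose internal consistency (not a scaling argument) is what produces exactly the two solutions $\pm S$. As written, your proposal omits this and would stall as soon as the expansion of (8.7) returns $0=0$.
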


To prove the theorem, we choose a basis of $(\sigma,\sigma^2)$. 
Note that we have 
\begin{eqnarray*}
\dim(\sigma,\sigma^2)&=&\dim(\bkappa\alpha\kappa,\bkappa\alpha\kappa\bkappa\alpha\bkappa)
=\dim(\kappa\bkappa\alpha\kappa\bkappa,\alpha\kappa\bkappa\alpha)\\
&=&\dim(\alpha\oplus \alpha\rho\oplus \alpha^2\rho\oplus \alpha^2\rho^2,\alpha^2\oplus \rho)=2.
\end{eqnarray*}
We set 
$$S_1=\hpic{A24}{1.0in}\in (\sigma,\sigma^2),$$
$$S_2=\hpic{A25}{1.0in}\in (\sigma,\sigma^2).$$
Then since $r_{\bkappa}$ and $v_1$ are orthogonal, so are $S_1$ and $S_2$. 

\begin{lemma} Let the notation be as above. 
Then $||S_1||=||S_2||=d(\sigma)^{1/4}=(d+1)^{1/4}$. 
\end{lemma}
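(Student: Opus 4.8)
The plan is to compute the two norms $\|S_1\|$ and $\|S_2\|$ directly by the standard graphical calculus, using the normalization conventions fixed just before the lemma together with Lemma \ref{Alemma normalization}. Recall that $\|S_i\|^2 = S_i^* S_i$ is a scalar (since $\sigma$ is irreducible), so it suffices to close up each diagram $S_i^* S_i$ and evaluate it as a closed scalar diagram. Because $S_1$ is built from $r_{\bkappa}$ (the $v$-leg) and $S_2$ from $v_1$ (the $\rho$-leg), and these two isometries are orthogonal, the cross terms vanish and each norm is computed separately.

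First I would stack $S_1^*$ on top of $S_1$ and read off the resulting closed diagram. The building blocks all carry the Frobenius-normalized values dictated by Lemma \ref{Alemma normalization}: each trivalent vertex expressing an element of an intertwiner space $(\tau,\rho\sigma)$ has norm $(d(\rho)d(\sigma)/d(\tau))^{1/4}$, and the bending moves in Lemma \ref{Alemma normalization} convert one such vertex into another with the appropriately permuted dimension factors. The goal is to reduce $S_1^* S_1$ to a product of loop values (each closed strand labeled $\tau$ contributing $d(\tau)$) times the squared vertex weights, and to check that the dimension factors telescope to give exactly $\sqrt{d+1} = d(\sigma)^{1/2}$, i.e. $\|S_1\| = (d+1)^{1/4}$. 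The same computation for $S_2^* S_2$ should produce the identical scalar, the only difference being that the $v$-cap is replaced by the $v_1$-leg together with a $T_0$-vertex, whose normalization $d^{1/4}T_0$ was fixed precisely so that the extra $d$-factors cancel against the $d(\rho)=d$ appearing in the relevant loop.

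The key steps, in order, are: (1) write out $S_i^* S_i$ as a closed diagram; (2) apply the bending identities of Lemma \ref{Alemma normalization} to straighten the caps and cups so that every remaining vertex is in standard normalized form; (3) collapse any resulting bubbles using irreducibility (a bubble on an irreducible strand $\tau$ evaluates to $d(\tau)$ times the identity, up to the vertex normalization) and the relations $S_0^*\rho(T_0)$, $\rho(S_0^*)T_0$ recorded in the displayed equations before the lemma; and (4) collect the scalar and verify it equals $\sqrt{d+1}$. The orthogonality of $r_{\bkappa}$ and $v_1$ guarantees we never have to worry about interference between the two computations.

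The main obstacle will be step (3): faithfully tracking the numerical factors through the bending and bubble-collapse moves, since each application of Lemma \ref{Alemma normalization} rearranges a ratio of the dimensions $d(\kappa)$, $d(\rho)=d$, and $d(\sigma)=d+1$, and an arithmetic slip there would give the wrong norm. The substantive input is that the vertex weights were chosen (via $(d+1)/d)^{1/4}v_1^*$ for the $\kappa$–$\rho$ vertex and $d^{1/4}T_0$ for the $\rho$–$\rho^2$ vertex) exactly so that these ratios conspire to the clean value $\sqrt{d+1}$; so the real content of the lemma is verifying that these normalizations are mutually consistent. I expect no conceptual difficulty beyond careful bookkeeping, and the two norms should come out equal by the symmetry between the two legs once the $T_0$ normalization is accounted for.
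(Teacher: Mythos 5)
Your plan is essentially the paper's own proof: the paper evaluates each $\|S_i\|^2=S_i^*S_i$ as a closed scalar diagram (scalar since it lies in $(\sigma,\sigma)=\C$), straightens the caps via Lemma \ref{Alemma normalization}, collapses the resulting bubbles using the fixed normalizations of $v_1$ and $d^{1/4}T_0$ together with the relations recorded before the lemma, and finds both scalars equal to $d(\kappa)=\sqrt{d+1}$, i.e. $\|S_i\|=(d+1)^{1/4}$. The only superfluous step in your outline is the appeal to orthogonality of $r_{\bkappa}$ and $v_1$: no cross terms arise inside either $S_i^*S_i$, as that orthogonality is only needed for the separate observation (made just before the lemma) that $S_1\perp S_2$.
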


\begin{proof} For $S_1$, we have 
$$||S_1||^2=d(\kappa)\hpic{A26}{1.0in}.$$
Since this is already a scalar, it is equal to
$$\hpic{A27}{1.0in}=\hpic{A28}{1.0in}=\frac{d(\kappa)}{\sqrt{d}}\hpic{A29}{1.0in}=d(\kappa).$$
For $S_2$, 
$$||S_2||^2=\frac{d(\kappa)}{\sqrt{d}}\hpic{A30}{1.0in}=\frac{1}{\sqrt{d}}\hpic{A31}{1.0in}
=\frac{d(\kappa)}{d}\hpic{A32}{1.0in}=d(\kappa).$$
\end{proof}

The above lemma shows that every element $S\in (\sigma,\sigma^2)$ satisfying $||S||=d(\sigma)^{1/4}$ is uniquely 
expressed as $S=aS_1+bS_2$, where $a$ and $b$ are complex numbers satisfying $|a|^2+|b|^2=1$. 
Therefore (8.7) and (8.8) are equivalent to the following equations respectively: 
\begin{equation*}aS_1R+bS_2R=a\sigma(S_1)R+b\sigma(S_2)R,
\end{equation*}
\begin{eqnarray*}\frac{\sqrt{d+1}}{d}(R-\sigma(R))&=&a^2(\sigma(S_1)S_1-S_1^2)
+ab(\sigma(S_1)S_2-S_1S_2+\sigma(S_2)S_1-S_2S_1)\\
&+&b^2(\sigma(S_2)S_2-S_2^2).
\end{eqnarray*}

The following lemma will be frequently used in what follows. 

\begin{lemma} \label{Alemma decomp}Let the notation be as above. Then 
$$\hpic{A33}{0.6in}=\hpic{A34}{0.6in}=\frac{1}{\sqrt{d}}\hpic{A35}{0.6in}
+\sqrt{\frac{d-1}{d}}\hpic{A36}{0.6in}.$$
\end{lemma}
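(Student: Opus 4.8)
The plan is to prove the identity coordinate-wise in the relevant intertwiner space, reducing it to the explicit Cuntz-algebra data defining $\rho$. First I would observe that both composite diagrams on the left-hand side are intertwiners between the same pair of morphisms built from $\rho$ and $\alpha$, so they live in one common vector space; a Frobenius-reciprocity dimension count, of the type used repeatedly above (as in $\dim(\sigma,\sigma^2)=2$), shows this space is two-dimensional. After the quarter-power normalizations of Lemma \ref{Alemma normalization}, I would check that the two diagrams on the right-hand side are orthogonal and hence form a basis. The asserted formula is then the single claim that each left-hand diagram has coordinates $(1/\sqrt d,\ \sqrt{(d-1)/d})$ in this basis, and it suffices to verify this for one of them after separately establishing that the two left-hand diagrams coincide.

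The equality of the two left-hand diagrams I would obtain first, as an isotopy consequence of the fusion relation $\alpha\rho=\rho\alpha^2$ together with the relation $\rho(T_0)S_0=T_0S_0$ and its normalized contractions (the identities displayed just above the lemma, e.g. $\sqrt d\,S_0^*\rho(T_0)=T_0^*$). For the coordinate computation the geometric picture is to resolve the internal $\rho^2$-strand by the completeness relation

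\begin{equation*}
1=S_0S_0^*+T_0T_0^*+T_1T_1^*+T_2T_2^*,
\end{equation*}

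which breaks the diagram into four terms indexed by the summands $\id_M,\rho,\alpha\rho,\alpha^2\rho$ of $\rho^2$. I expect only the $S_0$- and $T_0$-terms to survive, the $T_1$- and $T_2$-contributions vanishing because, after contracting against the external legs, they land in intertwiner spaces that are zero-dimensional by the fusion rules; the two surviving terms then reproduce exactly the two right-hand diagrams.

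It remains to evaluate the two surviving scalars. For the $\id_M$-channel I would use that $S_0=r_\rho=\bar r_\rho$ is the real standard intertwiner, so the associated closed loop evaluates through $\bar r_\rho^*\rho(r_\rho)=1/d(\rho)$ and the normalizations of Lemma \ref{Alemma normalization} (applied at the $\kappa$-vertices of the diagram) to the coefficient $1/\sqrt d$. For the $\rho$-channel I would feed in the quadratic $Q$-system relation $\sqrt d\,S_0+(d-1)T_0^2=\sqrt d\,\rho(S_0)+(d-1)\rho(T_0)T_0$ together with the explicit value $A(0,0)=1-1/(d-1)$, which is precisely what converts the $T_0$-bubble into the coefficient $\sqrt{(d-1)/d}$.

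The main obstacle I anticipate is exactly this $T_0$-coefficient: unlike the $\id_M$-coefficient, which is a bare dimensional normalization, the value $\sqrt{(d-1)/d}$ is not topological and must be extracted from the defining scalars $A(i,j)$ and the quadratic relation, so the bookkeeping of phases and of which normalized intertwiner sits at each trivalent vertex has to be carried out carefully to avoid a spurious sign or phase. A secondary point requiring care is confirming rigorously, rather than by inspection, that the $T_1$- and $T_2$-terms contract to zero against the external boundary.
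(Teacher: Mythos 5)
Your overall skeleton (identify a two\--dimensional intertwiner space, show the two left\--hand diagrams agree, then compute coordinates against the $S_0$\-- and $T_0$\--diagrams) is reasonable, but the mechanism you propose for the decisive coefficient is wrong, and you flagged it yourself as the anticipated obstacle. The internal strand being cut in the paper's proof is a $\kappa$\--strand, not a $\rho^2$\--strand: one inserts the two\--term resolution $1_{\kappa\bkappa}=vv^*+v_1v_1^*$ (with normalized coefficients $\frac{1}{d(\kappa)}$ and $\frac{\sqrt{d}}{d(\kappa)}$), so the channels $\alpha\rho$ and $\alpha^2\rho$ never appear in the first place. The surviving $\rho$\--channel term is then a bubble containing the $w$\--vertex $w=\kappa(r_\kappa)$, and its value is read off directly from the explicit normalized $Q$\--system formula (8.6): $v_1^*\rho(v_1^*)wv_1=\sqrt{\tfrac{d-1}{d+1}}\,T_0$ (and likewise $v_1^*\rho(v_1^*)wv=\sqrt{\tfrac{d}{d+1}}\,S_0$ for the identity channel), which after the quarter\--power normalizations of Lemma \ref{Alemma normalization} gives exactly $\sqrt{\tfrac{d-1}{d}}$ and $\tfrac{1}{\sqrt{d}}$. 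These are structure constants of \emph{any} normalized $Q$\--system of the form (3.3) --- the lemma is the diagrammatic counterpart of Lemma \ref{3lemma algebra}, $t^2=\frac{1}{\sqrt{d(\sigma)}}r_\sigma+\sqrt{\frac{d(\sigma)-1}{d(\sigma)}}st$ --- so neither the Haagerup\--specific constant $A(0,0)$ nor the quadratic relation (8.5) can be what ``converts the $T_0$\--bubble into $\sqrt{(d-1)/d}$.'' Indeed (8.5) and the $A(i,j)$ are the data one must \emph{verify} for the new $Q$\--system later (they enter quantities like $T_0^*\rho(T_0)T_0=A(0,0)T_0$, relevant to equations of type (8.8)); they play no role in decomposing the $w$\--diagram, and a proof of this lemma that genuinely needed them would contradict its universality.

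Your four\--term Cuntz resolution $1=S_0S_0^*+T_0T_0^*+T_1T_1^*+T_2T_2^*$ can be salvaged if inserted at the $\rho^2$ boundary rather than ``internally'': the $T_1,T_2$ terms do die, as you guessed, since $(\rho,\alpha^i\rho)=0$ for $i=1,2$. But the surviving coordinates $S_0^*X$ and $T_0^*X$ still involve the $\kappa(r_\kappa)$\--vertex and can only be evaluated from (8.6), so you are funneled back to the paper's one\--line computation by a longer route. The remaining ingredients of your plan --- the dimension count $\dim(\sigma,\sigma^2)$\--style argument, orthogonality of the $S_0$\-- and $T_0$\--diagrams, and the equality of the two left\--hand pictures (which the paper dismisses as easy) --- are fine, but without the correct source of the coefficient the proof as proposed does not close.
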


\begin{proof} The first equality is easy. 
For the second, we have 
$$\hpic{A33}{0.6in}=
\frac{1}{d(\kappa)}\hpic{A37}{0.8in}+\frac{\sqrt{d}}{d(\kappa)}\hpic{A38}{0.8in}
=\frac{1}{d(\kappa)}\hpic{A39}{1.0in}+\frac{\sqrt{d}}{d(\kappa)}\hpic{A40}{1.0in}.$$
Thanks to Equation (8.6), we have  
$$\hpic{A42}{0.8in}=\big(\frac{d(\kappa)^2}{d}\big)^{3/4}d(\kappa)^{1/2}v_1^*\rho(v_1^*)wv_1
=\frac{\sqrt{d^2-1}}{d}d^{1/4}T_0,$$
which shows the statement. 
\end{proof}

\begin{lemma} Let the notation be as above. 
Then we have 
$$S_1R=\sigma(S_1)R,\quad S_2R=\sigma(S_2)R.$$  
In particular, (8.7) always holds. 
\end{lemma}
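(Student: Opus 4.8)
The plan is to prove the two identities $S_1R=\sigma(S_1)R$ and $S_2R=\sigma(S_2)R$ by a purely diagrammatic computation, using the previously recorded graphical relations together with the identity $\rho(T_0)S_0=T_0S_0$ from $(8.4)$. Recall that $R$ is the distinguished isometry in $(\id_P,\sigma^2)$ built from the cup/cap of $\kappa$ and the trivalent vertex of $\rho$, that $\sigma=\bkappa\alpha\kappa$, and that $S_1,S_2$ are the two basis vectors of $(\sigma,\sigma^2)$ described above. Since $(8.4)$ is precisely the equation $(3.1)$ that the isometry $T_0$ (playing the role of $r_\sigma$ at the level of $\rho$) satisfies, the geometric content is that the two ways of attaching the $\rho$-trivalent vertex to $R$ coincide.

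First I would unfold each side. Composing $S_i$ with $R$ simply stacks the $S_i$-diagram on top of the $R$-diagram; composing $\sigma(S_i)$ with $R$ applies the endomorphism $\sigma=\bkappa\alpha\kappa$ to the $S_i$-diagram and then stacks. The key reduction is Lemma \ref{Alemma decomp}, which rewrites the ``doubled'' $\rho$-line (the configuration $\hpic{A33}{0.6in}$) as a linear combination $\tfrac{1}{\sqrt d}\,(\text{through-strand})+\sqrt{\tfrac{d-1}{d}}\,(\text{cap-cup with a }T_0)$. Applying this lemma to the internal $\rho\rho$ pair that appears after composing with $R$ should turn both $S_iR$ and $\sigma(S_i)R$ into the same sum of two elementary diagrams, differing only by whether a $T_0$-vertex sits on the left factor or is pushed through by $\sigma$. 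At that point the relation $\rho(T_0)S_0=T_0S_0$ is exactly what is needed to identify the two, because $S_0=r_\rho$ is the cap that closes off the remaining $\rho$-line in $R$.

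The main obstacle I anticipate is bookkeeping of the automorphism $\alpha$ and the normalization constants. Because $\sigma$ contains $\alpha$, applying $\sigma$ to the $S_i$-diagram introduces $\alpha$- and $\alpha^2$-labelled strands, and one must use $\alpha\rho=\rho\alpha^2$ (and the convention that the vertices in $(\alpha\rho,\rho\alpha^2)$ etc.\ are normalized to $1$) to slide these $\alpha$'s past the $\rho$-vertices so that the two diagrams land in the same intertwiner space. Keeping the factors $(d+1)^{1/4}$, $(d+1)/d$, $d^{1/4}$ consistent through Lemma \ref{Alemma normalization} is where a sign or scalar error is most likely to creep in. I would handle $S_1$ and $S_2$ separately, since $S_2$ carries the extra $v_1$-vertices and hence an extra application of Lemma \ref{Alemma decomp} on the inner line.

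Once both equalities are established diagrammatically, the final assertion that $(8.7)$ always holds is immediate by linearity: for $S=aS_1+bS_2$ one has $SR=aS_1R+bS_2R=a\sigma(S_1)R+b\sigma(S_2)R=\sigma(S)R$. Thus equation $(8.7)$ imposes no constraint on $(a,b)$, and the entire burden of determining $S$ (and hence the uniqueness up to sign claimed in Theorem \ref{Atheorem}) is shifted onto the quadratic relation $(8.8)$, which I would address in the subsequent lemmas by expanding the products $\sigma(S_i)S_j-S_iS_j$ in the same graphical calculus and matching coefficients against $\tfrac{\sqrt{d+1}}{d}(R-\sigma(R))$.
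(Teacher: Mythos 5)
Your overall diagrammatic strategy (unfold both sides, apply Lemma \ref{Alemma decomp}, then close up the residual diagrams) is the same as the paper's, and your final observation that (8.7) then holds for every $S=aS_1+bS_2$ by linearity is exactly how the paper concludes. But there is a genuine gap at the heart of your argument, namely the claim that after the decomposition ``the relation $\rho(T_0)S_0=T_0S_0$ is exactly what is needed to identify the two.'' Equation (8.4) involves only $S_0\in(\id_M,\rho^2)$ and $T_0\in(\rho,\rho^2)$, and it is not what closes either computation. For $S_1$, the two sides reduce (after one term of the decomposition is killed by a sector count) to diagrams whose evaluations are the scalars $\alpha^2(S_0^*)S_0$ and $\alpha(S_0^*)S_0$; their equality to $1$ uses $\alpha(S_0)=S_0$, i.e.\ (8.1), not (8.4). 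For $S_2$ the situation is worse: the surviving terms are, up to a common constant, $\rho(T_0^*)T_2T_0$ and $\alpha(T_0^*)\alpha\rho(T_1)T_0=T_2^*\rho(T_2)T_0$, which involve the intertwiners $T_1\in(\alpha\rho,\rho^2)$ and $T_2\in(\alpha^2\rho,\rho^2)$ that never appear in (8.4). The paper verifies their equality by explicit computation from the defining formulas (8.1)--(8.3): both equal $d^{3/4}A(1,2)T_1$, the first computation producing the coefficient $\overline{A(2,1)}$ and the second $A(1,2)$, so the proof invokes the concrete structure constants of the Haagerup endomorphism, in particular $A(1,2)=\overline{A(2,1)}$. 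This is model-specific data recording how $\alpha$ interacts with the $Q$-system of $\rho$; no amount of sliding $\alpha$'s via $\alpha\rho=\rho\alpha^2$ combined with (8.4) will produce it.

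Note also that you cannot evade the computation by a multiplicity argument: both $S_iR$ and $\sigma(S_i)R$ lie in $(\id_P,\sigma^3)$, but by Frobenius reciprocity and self-conjugacy of $[\sigma]$ one has $\dim(\id_P,\sigma^3)=\dim(\sigma,\sigma^2)=2$, so the two vectors are not automatically proportional, let alone equal. Finally, the paper's organizing device is worth adopting in any write-up: it transports everything through the linear isomorphism $F:(\id_P,\sigma^3)\rightarrow(\kappa\bkappa,\alpha\kappa\bkappa\alpha\kappa\bkappa\alpha)$, after which Lemma \ref{Alemma decomp} applies strand by strand and the leftover closed diagrams can be evaluated directly inside the Cuntz algebra $\cO_4$.
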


\begin{proof} We introduce a linear isomorphism $F:(\id_P,\sigma^3)\rightarrow 
(\kappa\bkappa,\alpha\kappa\bkappa\alpha\kappa\bkappa\alpha)$ by
$$F(x)=\hpic{A43}{0.6in}.$$
Since every intertwiner above belongs to the space $(\id_P,\sigma^3)$, it suffices to show the equalities 
after applying $F$ to the intertwiners. 

For $F(S_1R)$, we have
$$F(S_1R)=\hpic{A44}{1.2in}=\hpic{A45}{1.2in}.$$
Using Lemma \ref{Alemma decomp} with consideration of $[\alpha^2(\id_M\oplus \rho)\alpha]=[\id_M]\oplus [\alpha\rho]$, 
we get 
$$F(S_1R)=\frac{1}{d}\hpic{A46}{1.2in}.$$
In a similar way, we have 
$$F(\sigma(S_1)R)=\frac{1}{d}\hpic{A47}{1.2in}.$$
Since we have 
$$\hpic{A48}{1.0in}=d\alpha^2(S_0^*)S_0=d,$$
$$\hpic{A49}{1.0in}=d\alpha(S_0^*)S_0=d,$$
we get $F(S_1R)=F(\sigma(S_1)R)$. 

For $F(S_2R)$, we have
$$F(S_2R)=\hpic{A50}{1.2in}=\hpic{A51}{1.2in}.$$
Using Lemma \ref{Alemma decomp} with consideration of $[\rho\;\id_M\alpha]=[\alpha\rho]$ and 
$[\rho\rho\alpha]=[\alpha]\oplus [\rho]\oplus [\alpha\rho]\oplus [\alpha^2\rho]$, we get 
$$F(S_2R)=\frac{d-1}{d}\hpic{A52}{1.2in}.$$
In a similar way, 
$$F(\sigma(S_2)R)=\frac{d-1}{d}\hpic{A53}{1.2in}.$$
Since we have 
$$\hpic{A54}{1.2in}=d^{3/4}\rho(T_0^*)T_2T_0=d^{3/4}\overline{A(2,1)}T_1=d^{3/4}A(1,2)T_1,$$
$$\hpic{A55}{1.2in}=d^{3/4}\alpha(T_0^*)\alpha\rho(T_1)T_0=d^{3/4}T_2^*\rho(T_2)T_0
=d^{3/4}A(1,2)T_1,$$
we get $F(S_2R)=F(\sigma(S_2)R)$. 
\end{proof}

\begin{lemma} Let $S=aS_1+bS_2$ with $a,b\in \C$. 
Then Equation (8.8) is equivalent to 
\begin{equation}
\frac{1}{\sqrt{d}}=a^2+\sqrt{d-1}ab,
\end{equation}
\begin{equation}
\sqrt{\frac{d-1}{d}}=ab-\frac{b^2}{\sqrt{d-1}}
\end{equation}
\begin{equation}0=a^2-\frac{ab}{\sqrt{d-1}}-A(1,2)b^2,
\end{equation}
\begin{equation}0=ab+\frac{1+(d-1)A(1,2)}{(d-1)^{3/2}}b^2. 
\end{equation}
\end{lemma}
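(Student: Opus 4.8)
The plan is to expand the operator identity $(8.8)$ after substituting $S=aS_1+bS_2$, reduce every term to the four ``channels'' given by the decomposition $[\rho^2]=[\id_M]\oplus[\rho]\oplus[\alpha\rho]\oplus[\alpha^2\rho]$, and read off $(8.9)$--$(8.12)$ by matching the coefficient in each channel.

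First I would record the expansion of the right-hand side: with $S=aS_1+bS_2$,
\[
\sigma(S)S-S^2=a^2(\sigma(S_1)S_1-S_1^2)+ab(\sigma(S_1)S_2-S_1S_2+\sigma(S_2)S_1-S_2S_1)+b^2(\sigma(S_2)S_2-S_2^2),
\]
exactly the display recorded before the lemma. All six products $\sigma(S_i)S_j$, $S_iS_j$ together with the left-hand combination $\frac{\sqrt{d+1}}{d}(R-\sigma(R))$ lie in $(\sigma,\sigma^3)$, and I would evaluate each by composing the defining graphs of $S_1,S_2,R$ and reducing them. The reductions rest on Lemma~\ref{Alemma decomp}, the Cuntz relations $(8.2)$, $(8.3)$ for $\rho$, the period-three relation $\alpha^3=\id$ with $\alpha\rho=\rho\alpha^2$, and the normalization bookkeeping of Lemma~\ref{Alemma normalization}; the diagrammatic manipulations are of exactly the type already carried out in the preceding lemma for $S_iR=\sigma(S_i)R$, and the linear map $F$ used there can again be employed to land everything in a space where the elementary relations apply.

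The key observation to establish is that every reduced diagram is supported on the four summands of $\rho^2$, so it suffices to extract, for each term, its component along $\id_M$, $\rho$, $\alpha\rho$ and $\alpha^2\rho$. The $\id_M$- and $\rho$-channels are the ones in which $\frac{\sqrt{d+1}}{d}(R-\sigma(R))$ is nonzero, and collecting coefficients there yields the two nonhomogeneous equations $(8.9)$ and $(8.10)$, with right-hand sides $\frac{1}{\sqrt d}$ and $\sqrt{\frac{d-1}{d}}$. In the $\alpha\rho$- and $\alpha^2\rho$-channels $R-\sigma(R)$ contributes nothing, so the coefficients must vanish, giving the two homogeneous equations $(8.11)$ and $(8.12)$. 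Conversely, since these four channels correspond to distinct irreducibles and are therefore linearly independent in $(\sigma,\sigma^3)$, the system $(8.9)$--$(8.12)$ implies $(8.8)$, so the equivalence is exact.

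The main obstacle I expect is the evaluation of the $\alpha\rho$- and $\alpha^2\rho$-components of $\sigma(S_2)S_2$, $S_2^2$ and the mixed $ab$-terms. These pass through the fusion $[\rho\rho\alpha]=[\alpha]\oplus[\rho]\oplus[\alpha\rho]\oplus[\alpha^2\rho]$, where three of the isometries $T_0,T_1,T_2$ meet and the complex structure constant $A(1,2)$ enters (together with the relation $A(1,2)=\overline{A(2,1)}$ and the explicit values of the $A(i,j)$). Producing the precise coefficients $-A(1,2)$ in $(8.11)$ and $\frac{1+(d-1)A(1,2)}{(d-1)^{3/2}}$ in $(8.12)$ requires one delicate $6j$-type reduction and a consistent normalization of the intertwiners across all terms, so that the $A(1,2)$-contributions from the different products combine correctly; everything else is bookkeeping along the lines already established.
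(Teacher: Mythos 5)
Your proposal takes essentially the same route as the paper's proof: the paper likewise expands $(8.8)$ bilinearly in $S=aS_1+bS_2$, transports everything by the linear isomorphism $G$ (the analogue of the map $F$ from the preceding lemma) into a concrete intertwiner space, reduces the diagrams via Lemma \ref{Alemma decomp} and the Cuntz-algebra relations, and matches coefficients of linearly independent diagrams, with the inhomogeneous equations $(8.9)$, $(8.10)$ arising exactly from the channels where $R-\sigma(R)$ survives and the homogeneous ones $(8.11)$, $(8.12)$ from the channels where it vanishes. The only bookkeeping refinement in the paper is that the matching produces seven diagrammatic identities rather than four (mirror-image pairs each collapsing to one scalar equation, and the last identity requiring the explicit computation that both diagram differences are proportional to $T_2T_0^*-T_0T_1^*$ via the formula $(8.3)$ for $\rho(T_i)$, which is where $A(1,2)$ enters) --- precisely the delicate reduction you flagged.
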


\begin{proof} Let $G:(\sigma,\sigma^3)\rightarrow 
(\kappa\bkappa\alpha\kappa\bkappa,\alpha\kappa\bkappa\alpha\kappa\bkappa\alpha)$ be
the linear isomorphism defined in a similar way as in the proof of the previous lemma. 
We first compute $G(R),$ $G(\sigma(R))$, $G(S_1^2)$ etc. 

For $G(R)$, we have 
\begin{eqnarray*}G(R)&=&\hpic{A56}{0.8in}\\
&=&\frac{1}{d(\kappa)}\hpic{A57}{0.8in}+\frac{\sqrt{d}}{d(\kappa)}\hpic{A58}{0.8in}.
\end{eqnarray*}
Applying Lemma \ref{Alemma decomp} to the second term, we get
\begin{eqnarray*}
G(R)&=&\frac{1}{\sqrt{d+1}}\hpic{A59}{0.8in} \\
 &+&\frac{1}{\sqrt{d+1}}\hpic{A60}{0.8in}+\sqrt{\frac{d-1}{d+1}}\hpic{A61}{0.8in}. 
\end{eqnarray*}
The intertwiner $G(\sigma(R))$ is the mirror image of $G(R)$. 
Since 
$$\hpic{A62}{0.4in}=\sqrt{d}S_0=\hpic{A63}{0.4in},$$
we get 
\begin{eqnarray*}\lefteqn{\frac{\sqrt{d+1}}{d}G(R-\sigma(R))}\\
 &=& \frac{1}{d}\hpic{A59}{0.8in} -\frac{1}{d}\hpic{A64}{0.8in} \\
 &+&\frac{\sqrt{d-1}}{d}\hpic{A61}{0.8in}-\frac{\sqrt{d-1}}{d}\hpic{A65}{0.8in}.
\end{eqnarray*}

For $G(S_1^2)$, Lemma \ref{Alemma decomp} implies  
\begin{eqnarray*}
G(S_1^2)&=&\hpic{A66}{1.2in}=\hpic{A67}{1.2in}\\ 
&=&\frac{1}{\sqrt{d}}\hpic{A68}{1.4in}+\sqrt{\frac{d-1}{d}}\hpic{A69}{1.4in}\\
&=&\frac{1}{\sqrt{d}}\hpic{A70}{1.4in}+\sqrt{\frac{d-1}{d}}\hpic{A69}{1.4in},\\
\end{eqnarray*}
where we use $\alpha(S_0)=S_0$. 
The intertwiner $G(\sigma(S_1)S_1)$ is the mirror image of $G(S_1^2)$. 
A similar argument shows 
$$G(S_2S_1)=\frac{1}{\sqrt{d}}\hpic{A71}{1.4in}+\sqrt{\frac{d-1}{d}}\hpic{A72}{1.4in},$$
and $G(\sigma(S_2)S_1)$ is its mirror image. 

For $G(S_1S_2)$, we have 
\begin{eqnarray*}
G(S_1S_2) &=& \hpic{A73}{1.4in}=\hpic{A74}{1.4in}\\
&=&\frac{1}{d}\hpic{A75}{1.4in}+\frac{\sqrt{d-1}}{d}\hpic{A76}{1.4in}\\
&+&\frac{\sqrt{d-1}}{d}\hpic{A77}{1.4in}+\frac{d-1}{d}\hpic{A78}{1.4in}.
\end{eqnarray*}
Since $(\alpha^2\alpha,\alpha\rho)=(\alpha^2\alpha,\rho\alpha\rho)=0$, we get 
$$G(S_1S_2)=\frac{\sqrt{d-1}}{d}\hpic{A79}{1.4in}+\frac{d-1}{d}\hpic{A78}{1.4in}.$$
The intertwiner $G(\sigma(S_1)S_2)$ is the mirror image of $G(S_1S_2)$. 
A similar argument shows 
\begin{eqnarray*}
G(S_2^2) &=&\frac{\sqrt{d-1}}{d}\hpic{A80}{1.4in}+\frac{\sqrt{d-1}}{d}\hpic{A81}{1.4in}\\
&+&\frac{d-1}{d}\hpic{A82}{1.4in}
\end{eqnarray*}
and $G(\sigma(S_2)S_2)$ is the mirror image of $G(S_2^2)$. 

Now (8.8) is equivalent to 
$$\frac{1}{d}\hpic{A83}{0.5in}=\frac{a^2}{\sqrt{d}}\hpic{A84}{0.6in}
+\frac{\sqrt{d-1}}{d}ab\hpic{A85}{0.7in},$$
$$\frac{1}{d}\hpic{A86}{0.5in}=\frac{a^2}{\sqrt{d}}\hpic{A87}{0.6in}
+\frac{\sqrt{d-1}}{d}ab\hpic{A88}{0.7in},$$
$$\frac{\sqrt{d-1}}{d}\hpic{A89}{0.7in}=\frac{ab}{\sqrt{d}}\hpic{A90}{0.7in}
+\frac{\sqrt{d-1}}{d}b^2\hpic{A91}{0.7in},$$
$$\frac{\sqrt{d-1}}{d}\hpic{A92}{0.7in}=\frac{ab}{\sqrt{d}}\hpic{A93}{0.7in}
+\frac{\sqrt{d-1}}{d}b^2\hpic{A94}{0.7in},$$
\begin{eqnarray*}
0 &=&\sqrt{\frac{d-1}{d}}a^2\hpic{A95}{1.0in} \\
 &+&\frac{d-1}{d}ab\hpic{A96}{1.0in}-\frac{\sqrt{d-1}}{d}b^2\hpic{A97}{1.0in},
\end{eqnarray*}
\begin{eqnarray*}
0 &=&\sqrt{\frac{d-1}{d}}a^2\hpic{A98}{1.0in} \\
 &+&\frac{d-1}{d}ab\hpic{A99}{1.0in}-\frac{\sqrt{d-1}}{d}b^2\hpic{A100}{1.0in},
\end{eqnarray*}
\begin{eqnarray*}
0&=&\sqrt{\frac{d-1}{d}}ab \hpic{A101}{1.2in}
-\sqrt{\frac{d-1}{d}}ab \hpic{A102}{1.2in} \\
&+&\frac{d-1}{d}b^2\hpic{A103}{1.2in} -\frac{d-1}{d}b^2\hpic{A104}{1.2in}. 
\end{eqnarray*}

It is easy to show that the first and the second equations are equivalent to (8.9),  
the third and fourth are equivalent to (8.10), and the fifth and sixth are equivalent to 
(8.11). 
For the last equation, we have \begin{eqnarray*}
\hpic{A101}{1.2in}-\hpic{A102}{1.2in}
&=&dT_2\rho\alpha^2(S_0^*)T_0-d\alpha(T_1)S_0^*\rho\alpha(T_0)\\
&=&dT_2\rho(S_0^*)T_0-dT_0S_0\rho(T_2)=\sqrt{d}(T_2T_0^*-T_0T_1^*),
\end{eqnarray*}
\begin{eqnarray*}\lefteqn{
\hpic{A103}{1.2in} -\hpic{A104}{1.2in}}\\
&=&d\rho(T_0^*)T_2\rho(T_2^*)T_0 -d\alpha(T_0^*)\alpha\rho(T_1)T_2^*\rho\alpha(T_0)\\
&=&d\rho(T_0^*)T_2\rho(T_2^*)T_0 -dT_2^*\rho(T_2)T_2^*\rho(T_2)\\
&=&d\big(\sum_{l\in \Z/3\Z}A(2,l)T_{2+l}T_l^*\big)^*
 \big(\sum_{k\in \Z/3\Z}A(2,2+k)T_{2+k}T_k^*\big)^*\\
&-&d\big(\sum_{l\in \Z/3\Z}A(1,l+2)T_{1+l}T_l^*\big)\big(\sum_{k\in \Z/3\Z}A(1,k+2)T_{1+k}T_k^*\big)\\
&=&d\sum_{l\in \Z/3\Z}\overline{A(2,l)A(2,l+1)}T_lT_{l+1}^*
-d\sum_{k\in \Z/3\Z}A(1,k)A(1,k+2)T_{k-1}T_k^*\\
&=&d\big(\frac{1}{(d-1)^2}+\frac{A(1,2)}{d-1}\big)(T_2T_0^*-T_0T_1^*).
\end{eqnarray*}
Therefore the last equation is equivalent to (8.12).
\end{proof}

\begin{proof}[Proof of Theorem \ref{Atheorem}] Equation (8.11) shows $b\neq 0$ and so 
(8.12) implies 
\begin{equation}a=-\frac{B+1}{(d-1)^{3/2}}b.\end{equation}
where $B=(d-1)A(1,2)$. 
Using the fact that $B$ satisfies $B^2-B+d=0$, we can see 
that this is compatible with (8.11). 
Iterating (8.13) into (8.9), we get 
\begin{eqnarray*}
\frac{1}{\sqrt{d}} &=&\frac{(B+1)^2}{(d-1)^3}b^2-\frac{B+1}{d-1}b^2
=\frac{1}{(d-1)^3}\big(B^2+2B+1-(d-1)^2(B+1)\big)b^2 \\
 &=&\frac{1}{(d-1)^3}\big(3B-d+1-(d+2)(B+1)\big)b^2\\ 
 &=&\frac{1}{(d-1)^3}\big((1-d)B+d-d^2\big)b^2\\ 
 &=&-\frac{1}{(d-1)^2}\big(B+d)b^2,\\ 
\end{eqnarray*}
which shows 
\begin{equation}b^2=-\frac{(d-1)^2}{(B+d)\sqrt{d}}.
\end{equation}
We get the same equation from (8.10). 

Now all we have to show is that (8.13) and (8.14) are compatible with $|a|^2+|b|^2=1$.  
The equation (8.13) and $|a|^2+|b|^2=1$ imply 
$$\frac{1}{|b|^2}=1+\frac{1}{(d-1)^3}|B+1|^2=1+\frac{d+2}{(d-1)^3}=1+\frac{1}{d-1}
=\frac{d}{d-1}.$$
Since $|B+d|^2=d^2+2d=d(d-1)^2$, this is compatible with (8.14). 
\end{proof}
\bibliographystyle{amsplain}
\thebibliography{999}

\bibitem{AH} 
Asaeda, M. and Haagerup, U., 
Exotic subfactors of finite depth with Jones indices $(5+\sqrt{13})/2$ 
and $(5+\sqrt{17})/2$. 
{\em Comm. Math. Phys.} {\bf 202} (1999), 1--63.


\bibitem{BEK1}
B\"ockenhauer, J., Evans, D. E., and  Kawahigashi, Y. 
Chiral structure of modular invariants for subfactors. 
{\em Comm. Math. Phys.} {\bf 210} (2000), 733--784. 

\bibitem{BEK2} 
B\"ockenhauer, J., Evans, D. E., and Kawahigashi, Y. 
On $\alpha$-induction, chiral generators and modular invariants for subfactors. 
{\em Comm. Math. Phys.} {\bf 208} (1999), no. 429--487.

\bibitem{Bs1}
Bisch, D. 
A note on intermediate subfactors.
{\em Pacific Journal of Mathematics}, {\bf 163} (1994), 201--216.

\bibitem{Bs2}
Bisch, D., 
Bimodules, higher relative commutants and the fusion algebra
associated to  a subfactor.
In {\em Operator algebras and their applications}.
Fields Institute Communications,
Vol. 13, American Math. Soc. (1997), 13--63.

\bibitem{BH}
Bisch, D. and Haagerup, U.,
Composition of subfactors: new examples of infinite depth subfactors. 
{\em Ann. Sci. \'Ecole Norm. Sup.} (4) {\bf 29} (1996), 329--383. 

\bibitem{BJ1}
Bisch, D. and Jones, V. F. R., 
Algebras associated to intermediate subfactors.
{\em Inventiones Mathematicae},
{\bf 128} (1997), 89--157.





\bibitem{Ch}
Choda, M. 
Index for factors generated by Jones' two sided sequence of projections. 
{\em Pacific J. Math.} {\bf 139} (1989), 1--16. 


\bibitem{EK}
Evans, D. E. and Kawahigashi, Y., 
Quantum symmetries on operator algebras.
{\em Oxford University Press} (1998).

\bibitem{EK1} 
Evans, D. E. and Kawahigashi, Y., 
Orbifold subfactors from Hecke algebras. II. Quantum doubles and braiding. 
{\em Comm. Math. Phys.} {\bf 196} (1998), 331--361.


\bibitem{GHJ}
Goodman, F., de la Harpe, P. and Jones, V. F. R., 
Coxeter graphs and towers of algebras.
{\em MSRI Publications (Springer)} (1989), {\bf 14}.

\bibitem{G} Grossman, P. (in press),
Forked Temperley-Lieb algebras and intermediate subfactors.
{\em  J. Funct. Anal.} (2007), doi:10.1016/j.jfa.2007.03.014

\bibitem{GJ}
Grossman, P. and Jones, V. F. R.,  
Intermediate subfactors with no extra structure.
{\em J. Amer. Math. Soc.} {\bf 20} (2007), 219--265. 

\bibitem{HY}
Hayashi, T. and Yamagami, S., 
Amenable tensor categories and their realizations as AFD bimodules. 
{\em J. Funct. Anal.} {\bf 172} (2000), 19--75.

\bibitem{Hi}
Hiai, F.,
Minimizing indices of conditional expectations onto a subfactor. 
{\em Publ. Res. Inst. Math. Sci.} {\bf 24} (1988), 673--678.

\bibitem{Ho}
Hong, J. H., 
Subfactors with principal graph $E^{(1)}_6$. 
{\em Acta Appl. Math.} {\bf 40} (1995), 255--264.

\bibitem{I1}
Izumi, M.,
Application of fusion rules to
classification of subfactors.
{\em Publ. Res. Inst. Math. Sci.}, 
{\bf 27} (1991), 953--994.

\bibitem{I2} 
Izumi, M., 
Goldman's type theorem for index $3$. 
{\em Publ. Res. Inst. Math. Sci.} {\bf 28} (1992), 833--843.

\bibitem{I3} 
Izumi, M., 
On type II and type III principal graphs of subfactors. 
{\em Math. Scand.} {\bf 73} (1993), 307--319.

\bibitem{I4} 
Izumi, M., 
Subalgebras of infinite C$^*$-algebras with finite Watatani indices II. 
Cuntz-Krieger algebras. 
{\em Duke J. Math.} {\bf 91} (1998), 409-461.  

\bibitem{I5} 
Izumi, M.,  
Goldman's type theorems in index theory. 
{\em Operator algebras and quantum field theory (Rome, 1996)}, 249--269, 
Int. Press, Cambridge, MA, 1997. 

\bibitem{I6}
Izumi, M.,  
The structure of sectors associated with Longo-Rehren inclusions. I. 
General theory. 
{\em Comm. Math. Phys.} {\bf 213} (2000), 127--179.

\bibitem{I7} 
Izumi, M.,  
The structure of sectors associated with Longo-Rehren inclusions. II. 
Examples. 
{\em Rev. Math. Phys.} {\bf 13} (2001), 603--674. 

\bibitem{I8}
Izumi, M., 
Characterization of isomorphic group-subgroup subfactors. 
{\em Int. Math. Res. Not.} 2002, no. 34, 1791--1803.

\bibitem{IKa} Izumi, M. and Kawahigashi, Y., 
Classification of subfactors with the principal graph $D^{(1)}_n$. 
{\em J. Funct. Anal.} {\bf 112} (1993), 257--286. 

\bibitem{IKo1}
Izumi, M. and Kosaki, H., 
Kac algebras arising from composition of subfactors: general theory and classification. 
{\em Mem. Amer. Math. Soc.} {\bf 158} (2002), no. 750. 

\bibitem{IKo2}
Izumi, M. and Kosaki, H., 
On a subfactor analogue of the second cohomology.
{\em Reviews in Mathematical Physics}, {\bf 14} (2002), 733--757.

\bibitem{ILP} 
Izumi, M., Longo, R., and Popa, S. 
A Galois correspondence for compact groups of automorphisms of von Neumann 
algebras with a generalization to Kac algebras. 
{\em J. Funct. Anal.} {\bf 155} (1998), 25--63.

\bibitem{J1}
Jones, V. F. R. (1980).
Actions of finite groups
on the hyperfinite type II$_1$ factor.
{\em Memoirs of the American Mathematical Society},
{\bf 237}.

\bibitem{J2}
Jones, V. F. R., 
Index for subfactors.
{\em Inventiones Mathematicae}, {\bf 72} (1983), 1--25.



\bibitem{J18}
Jones, V. F. R. (in press).
Planar algebras I.
{\em New Zealand Journal of Mathematics}.
QA/9909027




\bibitem{J31}
Jones, V. F. R., 
Quadratic tangles in planar algebras.
In preparation, (2003), http://math.berkeley.edu/~vfr

\bibitem{JX}
Jones, V. F. R. and Xu, F., 
Intersections of finite families of finite index subfactors.
{\em International Journal of Mathematics}, {\bf 15} (2004), 717--733.


\bibitem{Ka} 
Kawahigashi, Y., 
On flatness of Ocneanu's connections on the Dynkin diagrams and classification of subfactors. 
{\em J. Funct. Anal.} {\bf 127} (1995), 63--107.

\bibitem{KL} Kawahigashi, Y. and Longo, R., 
Classification of two-dimensional local conformal nets with $c<1$ and 2-cohomology vanishing 
for tensor categories. 
{\em Comm. Math. Phys.} {\bf 244} (2004), 63--97.

\bibitem{KLPR} 
Kawahigashi, Y. and Longo, R., Pennig, U., and Rehren, K.-H., 
The classification of non-local chiral CFT with $c<1$. 
math.OA/0505130, to appear in Comm. Math. Phys. 

\bibitem{KR} 
Kirillov, A. N. and Reshetikhin, N. Yu., 
Representations of the algebra ${U}_q({\rm sl}(2)),\;q$-orthogonal polynomials and 
invariants of links. 
Infinite-dimensional Lie algebras and groups (Luminy-Marseille, 1988), 285--339, 
Adv. Ser. Math. Phys., 7, World Sci. Publ., Teaneck, NJ, 1989.

\bibitem{Ko} 
Kosaki, H. 
Extension of Jones' theory on index to arbitrary factors. 
{\em J. Funct. Anal.} {\bf 66} (1986), 123--140. 

\bibitem{KoY} 
Kosaki, H. and Yamagami, S., 
Irreducible bimodules associated with crossed product algebras. 
{\em Internat. J. Math.} {\bf 3} (1992), 661--676.


\bibitem{La}
Landau, Z., 
Fuss-Catalan algebras and chains of intermediate subfactors.
{\em Pacific J. Math.},
{\bf 197} (2001), 325--367.

\bibitem{La2}
Landau, Z.,
Exchange relation planar algebras.
{\em J. of Funct. Anal.}, {\bf 195}  (2002), 71--88.

\bibitem{L1} Longo, R., 
Index of subfactors and statistics of quantum fields. II. 
Correspondences, braid group statistics and Jones polynomial. 
{\em Comm. Math. Phys.} {\bf 130} (1990), 285--309. 

\bibitem{L2}  Longo, R., 
A duality for Hopf algebras and for subfactors. I. 
{\em Comm. Math. Phys.} {\bf 159} (1994), 133--150.

\bibitem{LRe}
Longo, R. and Rehren, K.-H. (1995).
Nets of subfactors.
{\em Reviews in Mathematical Physics},
{\bf 7}, 567--597.

\bibitem{LRo}
Longo, R. and Roberts, J. E. (1997).
A theory of dimension.
{\em $K$-theory}, {\bf 11}, 103--159.

\bibitem{M1} 
Masuda, T., 
An analogue of Longo's canonical endomorphism for bimodule theory and its application 
to asymptotic inclusions. 
{\em Internat. J. Math.} {\bf 8} (1997), 249--265.

\bibitem{M2}
Masuda, T.,
An analogue of Connes-Haagerup approach for classification of subfactors of type III$_1$. 
{\em J. Math. Soc. Japan}, {\bf 57} (2005), 959--1001. 

\bibitem{NT} 
Nakamura, M. and Takeda, Z., 
A Galois theory for finite factors. 
{\em Proc. Japan Acad.} {\bf 36} (1960), 258--260. 

\bibitem{O3}
Ocneanu, A. 
{\em Quantum symmetry, differential geometry of 
finite graphs and classification of subfactors},
University of Tokyo Seminary Notes 45, (1991), 
(Notes recorded by Kawahigashi, Y.).

\bibitem{Ok}
Okamoto, S., 
Invariants for subfactors arising from Coxeter 
graphs. {\em Current Topics in Operator Algebras},
World Scientific Publishing, (1991), 84--103.


\bibitem{PP}
Pimsner, M. and  Popa, S.,  
Entropy and index for subfactors.
{\em Annales Scientifiques de l'\'Ecole Normale Superieur}, 
{\bf 19} (1986), 57--106.


\bibitem{P1}
Popa, S., 
Classification of amenable subfactors of type II. 
{\em Acta Math.} {\bf 172} (1994), 163--255. 

\bibitem{P2}
Popa, S., 
Classification of subfactors and their endomorphisms. 
CBMS Regional Conference Series in Mathematics, 86. Published for the Conference Board 
of the Mathematical Sciences, Washington, DC; by the American Mathematical Society, Providence, 
RI, 1995. 


\bibitem{SW}
Sano, T. and Watatani, Y., 
Angles between two subfactors.
{\em Journal of Operator Theory}, {\bf 32} (1994), 209--241.



\bibitem{Was} Wassermann, A., 
Operator algebras and conformal field theory. III. Fusion of positive energy representations 
of ${\rm LSU}(N)$ using bounded operators. 
{\em Invent. Math.} {\bf 133} (1998), 467--538.

\bibitem{Wat} 
Watatani, Y., 
Lattices of intermediate subfactors.
{\em J. Funct. Anal.}, {\bf 140} (1996), 312--334.

\bibitem{Wen} 
Wenzl, H., 
$C^*$ tensor categories from quantum groups. 
{\em J. Amer. Math. Soc.} {\bf 11} (1998), 261--282. 

\bibitem{X} Xu, Feng, 
New braided endomorphisms from conformal inclusions. 
{\em Comm. Math. Phys.} {\bf 192} (1998), 349--403.

\endthebibliography

\bigskip

\flushleft{Pinhas Grossman, Department of Mathematics, 1326 Stevenson Center, Vanderbilt University, Nashville, TN, 37203 \\
{\it e-mail}: pinhas.grossman@vanderbilt.edu}

\flushleft{Masaki Izumi, Department of Mathematics, Graduate
School of Science, Kyoto University, Sakyo-ku, Kyoto 606-8502,
Japan\\
{\it e-mail}: izumi@math.kyoto-u.ac.jp}

\end{document}